\documentclass[a4paper, reqno, 11pt]{amsart}   	
\usepackage[DIV=12, oneside]{typearea}			
\usepackage[english]{babel}

\usepackage{csquotes}

\usepackage{amsmath}
\usepackage{amstext,amssymb,amsopn,amsthm}
\usepackage{dsfont}  
\usepackage{thmtools}

\usepackage{tikz}
\usepackage{pgfplots}
\pgfplotsset{compat=newest}
\usetikzlibrary{patterns}
\usepgfplotslibrary{fillbetween}

\usepackage[backgroundcolor=white, bordercolor=blue, linecolor=blue]{todonotes}
\usepackage{enumitem}  
\usepackage[titletoc,title]{appendix}

\theoremstyle{plain}
\declaretheorem[title=Theorem, parent=section]{Theorem}
\declaretheorem[title=Lemma,sibling=Theorem]{Lemma}
\declaretheorem[title=Proposition,sibling=Theorem]{Proposition}

\theoremstyle{definition}
\declaretheorem[title=Definition,sibling=Theorem]{Definition}
\declaretheorem[title=Remark,sibling=Theorem]{Remark}
\declaretheorem[title=Remark, numbered=no]{Remark*}
\declaretheorem[title=Example, sibling=Theorem]{Example}
\declaretheorem[title=Assumption, numbered=no]{Assumption*}

\def\Xint#1{\mathchoice
   {\XXint\displaystyle\textstyle{#1}}%
   {\XXint\textstyle\scriptstyle{#1}}%
   {\XXint\scriptstyle\scriptscriptstyle{#1}}%
   {\XXint\scriptscriptstyle\scriptscriptstyle{#1}}%
   \!\int}
\def\XXint#1#2#3{{\setbox0=\hbox{$#1{#2#3}{\int}$}
     \vcenter{\hbox{$#2#3$}}\kern-.5\wd0}}

\def\dashint{\Xint-}

\numberwithin{equation}{section} 
\parindent0ex    
\parskip1ex  
\addto\extrasenglish{} 
\addto\extrasenglish{}
\addto\extrasenglish{}

\usepackage[ maxbibnames=5, style=alphabetic, doi=false,isbn=false , url=false]{biblatex} 
\addbibresource{references.bib}
\AtBeginBibliography{\small}

\makeatletter
\providecommand\@dotsep{5}
\def\listtodoname{List of Todos}
\def\listoftodos{\@starttoc{tdo}\listtodoname}
\makeatother

\setlength {\marginparwidth }{2cm} 


\usepackage[colorlinks=true, linkcolor=black, citecolor=black, hypertexnames=false]{hyperref} 

\newcommand{\N}{\mathds{N}}
\newcommand{\R}{\mathds{R}}

\newcommand{\Sph}{\mathbb{S}}

\newcommand{\ind}{\mathds{1}}
\newcommand{\intd}{\, \mathrm{d}} 

\DeclareMathOperator{\supp}{supp}
\DeclareMathOperator{\pv}{p.v.}

\DeclareMathOperator{\Tail}{Tail}

\newcommand{\Poincare}{Poincar\'e}

\newcommand{\eps}{\varepsilon}
\renewcommand{\d}{\textnormal{\,d}}
\newcommand{\cE}{\mathcal{E}}

\begin{document}
	\allowdisplaybreaks
	\title{Dirichlet heat kernel estimates for parabolic nonlocal equations}
    \author{Philipp Svinger}
    \author{Marvin Weidner}
	
	\address{Fakult{\"a}t f{\"u}r Mathematik, Universit{\"a}t Bielefeld, Postfach 10 01 31, 33501 Bielefeld, Germany}
	\email{psvinger@math.uni-bielefeld.de}

    \address{Institute for Applied Mathematics, University of Bonn, Endenicher Allee 60, 53115, Bonn, Germany}
    \email{mweidner@uni-bonn.de}
    \urladdr{https://sites.google.com/view/marvinweidner/}

	\makeatletter
	\@namedef{subjclassname@2020}{%
		\textup{2020} Mathematics Subject Classification}
	\makeatother
	
	\subjclass[2020]{47G20, 35B65, 35K08}
	
	\keywords{Boundary regularity, nonlocal operator, Dirichlet heat kernel, Campanato-Morrey}

\begin{abstract} 
In this article we establish the optimal $C^s$ boundary regularity for solutions to nonlocal parabolic equations in divergence form in $C^{1,\alpha}$ domains and prove a higher order boundary Harnack principle in this setting. Our approach applies to a broad class of nonlocal operators with merely H\"older continuous coefficients, but  our results are new even in the translation invariant case. As an application, we obtain sharp two-sided estimates for the associated Dirichlet heat kernel. Notably, our estimates cover nonlocal operators with time-dependent coefficients, which had remained open in the literature. 
\end{abstract}

\maketitle

\section{Introduction}

The goal of this work is to establish the optimal $C^s$ boundary regularity for solutions to nonlocal parabolic equations of the form
\begin{equation}
    \label{eq:PDE-intro}
        \left\{ \begin{aligned}
        \partial_t u - \mathcal{L}_t u &= f && \text{ in } (-1,0) \times (B_1 \cap \Omega ),\\
        u &= 0 && \text{ in } (-1,0) \times (B_1 \setminus \Omega)
        \end{aligned} \right.
\end{equation}

in $C^{1,\alpha}$ domains $\Omega \subset \R^d$ and to prove fine estimates on the boundary behavior of solutions. In particular, we obtain sharp two-sided bounds on the Dirichlet heat kernel.

We consider integro-differential operators for some $s\in (0,1)$ with $d>2s$ of the form
\begin{equation}\label{eq:OperatorDivergenceForm}
    -\mathcal{L}_t u (t,x)= \pv \int _{\R^d} \big( u(t,x) -u(t,y) \big) K(t,x,y) \intd y,
\end{equation}
where $K : \R \times \R^d \times \R^d \to [0,\infty]$ is symmetric and uniformly elliptic, i.e. for some $0 < \lambda \le \Lambda$,
\begin{equation}\label{eq:KernelDivergenceForm}
    K(t,x,y)=K(t,y,x), \quad   0<\frac{\lambda}{|x-y|^{d+2s}} \leq K(t,x,y) \leq \frac{\Lambda}{|x-y|^{d+2s}}   .
\end{equation}
Under these assumptions $-\mathcal{L}_t$ can be considered as a nonlocal operator in divergence form, that is modeled upon the fractional Laplacian $(-\Delta)^s$, and has a time-dependent coefficient function $K$.
Since we are interested in higher order regularity estimates, it is necessary to impose some regularity condition on $K$. In this paper, we will assume $K$ to be H\"older continuous of class $C^{\sigma}$ for some $\sigma \in (0,s)$ in some domain $\mathcal{A}\subset \R \times \R^d$, as follows:
\begin{equation} \label{eq:KernelHoelderCont}
    |K(t+\tau ,x+h,y+h)-K(t,x,y)| \leq \Lambda \frac{|h|^\sigma + |\tau | ^{\sigma /(2s)}}{|x-y|^{d+2s}} \quad \forall (t,x),(t,y)\in\mathcal{A}, ~~ (\tau , h) \in Q_1.
\end{equation}

\subsection{Optimal boundary regularity}

Over the past two decades, the regularity theory for elliptic and parabolic equations driven by nonlocal operators modeled upon the fractional Laplacian has developed into a highly active area of research. In particular, the interior regularity of solutions is by now well understood in the elliptic case (see \cite{Bass2009,Kassmann2009,DiCastro2016,Cozzi2017,Brasco2017,Brasco2018,Fall2020,Mengesha2021,Nowak2021,Fall2022,Nowak2022,Nowak2023,Diening2024}), and more recently also for parabolic equations (see \cite{Caffarelli2011,Felsinger2013,Lara2014,Jin2015,Serra2015,FernandezReal2017,Grubb2018a,Byun2023,Kassmann2024,Nguyen2024,Diening2025b}). In fact, under suitable regularity assumptions on $K$ and $f$ one can prove interior regularity estimates of Schauder-, Cordes--Nirenberg, or Calder\'on--Zygmund-type for solutions to \eqref{eq:PDE-intro}.

The boundary behavior for nonlocal equations, however, is still much less explored. In fact, it was shown only recently that solutions to \emph{elliptic equations} 
\begin{equation*}
        \left\{ \begin{aligned}
        -\mathcal{L} u  &= f && \text{ in } B_1 \cap \Omega,\\
        u &= 0 && \text{ in } B_1 \setminus \Omega
        \end{aligned} \right.
\end{equation*}
governed by operators of the form $-\mathcal{L}$ as in \eqref{eq:OperatorDivergenceForm}-\eqref{eq:KernelDivergenceForm}-\eqref{eq:KernelHoelderCont} enjoy $C^s$ regularity up to the boundary \cite{RosOton2024,Kim2024} (see also \cite{Byun2025}). Previously, this was only known for the restrictive class of translation-invariant operators with homogeneous kernels, i.e.
\begin{align}
\label{eq:hom-kernel}
    K(x,y) = K\left(\frac{x-y}{|x-y|}\right)|x-y|^{-d-2s},
\end{align}
which are also known as ''$2s$-stable operators`` (see \cite{RosOton2014,RosOton2016a,RosOton2016b,RosOton2017}, \cite{Grubb2014,Grubb2015}, \cite{Grube2024,Song2025b}).
Note that $C^s$ regularity up to the boundary is optimal even for the fractional Laplacian. This stands in stark contrast to the interior regularity of solutions.

The generalization to the natural class of nonlocal operators in divergence form \eqref{eq:OperatorDivergenceForm}-\eqref{eq:KernelDivergenceForm}-\eqref{eq:KernelHoelderCont} was only possible due to the development of several new technical tools, including
\begin{itemize}
    \item suitable 1D barrier functions for translation invariant operators in \cite{RosOton2024},
    \item a nonlocal Morrey-Campanato-type theory at the boundary in \cite{Kim2024}.
\end{itemize}

For parabolic nonlocal equations of the form \eqref{eq:PDE-intro}, the boundary regularity theory remains far from understood and a general theory comparable to the elliptic case is wide open. To date, most of the available results are for the fractional Laplacian (e.g. \cite{FernandezReal2016,Biccari2017,Choi2023,Armstrong2025,Abdellaoui2025}) and there are only few articles considering more general kernels \cite{FernandezReal2017,RosOton2018,Grubb2018a,Grubb2019,Kukuljan2023}. In fact, the optimal $C^s$ boundary regularity in $x$ is only known for kernels of the restrictive form \eqref{eq:hom-kernel} that are translation invariant, with few extensions available for operators in non-divergence form \cite{FernandezReal2017,Grubb2018a}. See also \cite{Biccari2017,Biccari2018,Grubb2018b,Dong2024,Abels2025} for regularity results in spaces of Sobolev type. In these settings, one can employ barrier arguments to obtain regularity, a method that fails for equations in divergence form \eqref{eq:KernelDivergenceForm}. We are not aware of any higher order boundary regularity result for parabolic nonlocal equations with non-constant coefficients satisfying \eqref{eq:KernelDivergenceForm}.

The goal of this article is to develop the \emph{boundary regularity theory for nonlocal parabolic equations} \eqref{eq:PDE-intro}  that includes general nonlocal operators in divergence form \eqref{eq:OperatorDivergenceForm}-\eqref{eq:KernelDivergenceForm}-\eqref{eq:KernelHoelderCont} and dispenses with any kind of homogeneity assumption on the kernel.

It is important to stress that nonlocal effects lead to additional difficulties (and even to new phenomena) in parabolic problems that have no analog in the elliptic setting. This critical aspect was at the core of \cite{Kassmann2024}
\footnote{where it was shown that not all weak solutions, i.e. solutions with tails that are merely in $L^1_t$, to the fractional heat equation are H\"older continuous; a purely nonlocal phenomenon}
(see also \cite{Liao2024b,Byun2023a}), and it is also central to our approach and main results (see \autoref{example:counterex-2}). Roughly speaking, while irregular exterior data in elliptic problems still yields smooth solutions due to the regularization of the nonlocal operator in space, the parabolic case is fundamentally different. Here, rough data in time may propagate to the interior of the solution domain generating discontinuities since there is no regularization in the time variable. 

A natural quantity capturing these long range effects is the nonlocal tail term
\begin{align}
\label{eq:tail-def}
    \Tail(u(t,\cdot);R,x_0) = R^{2s} \int_{\R^d \setminus B_R(x_0)} \frac{|u(t,y)|}{|y-x_0|^{d+2s}} \d y, \qquad x_0 \in \R^d, ~~ R > 0,
\end{align}
and a key element of our approach is the fine study of this object.

Our main result is the optimal $C^s$ boundary regularity that we obtain for the first time for general nonlocal parabolic equations in divergence form. It reads as follows:
\begin{Theorem} \label{Thm:CsBoundaryRegAndHopfLemma}
    Let $s\in (0,1)$, $p >2$, $\alpha,\sigma \in (0,s)$, and $\Omega \subset \R^d$ be a $C^{1,\alpha}$ domain with $0\in \partial \Omega$.
    Furthermore, let $\mathcal{L}_t$ be an operator of the form \eqref{eq:OperatorDivergenceForm}-\eqref{eq:KernelDivergenceForm}, satisfying \eqref{eq:KernelHoelderCont} with $\mathcal{A}=Q_1$.
    Assume that $u$ is a weak solution to 
    \begin{equation*}
        \left\{ \begin{aligned}
          \partial _t u -\mathcal{L}_tu & =f & &\text{in} \quad (-1 ,0) \times (\Omega \cap B_1) , \\
           u & =0 & & \text{in} \quad  (-1 ,0) \times (B_1 \setminus \Omega) 
        \end{aligned} \right.
    \end{equation*}
    for some $f\in L^q_tL^r_x (Q_1^\Omega)$ where $q,r\geq 1$ satisfy $\frac{1}{q}+\frac{d}{2sr} <\frac12$, and $\Tail (u ;1/2,0) \in L^{p}_t ((-1,0))$.
    \begin{itemize}
        \item[(i)] Then, 
    \begin{equation*}
            \| u \| _{C^{s}_{p} (\overline{Q^\Omega_{1/2}})} \leq c \left( \| u\| _{L^2 (Q^\Omega _1)} + \Vert\Tail (u;1/2,0) \Vert_{L^p_t((-1,0))} + \| f\| _{L_{t}^q L_x^{r} (Q^\Omega_1)}\right)
       \end{equation*}
    for some constant $c=c(d,s,\sigma ,\lambda ,\Lambda,p,q,r , \alpha ,\Omega  )>0$.
    \item[(ii)]  If, furthermore, $f\geq 0$ in $Q_1^\Omega$ and $u\geq 0$ in $(-1,0) \times \R^d$, then, either $u\equiv 0$ in $(-1,0)\times \R^d$ or
    \begin{equation*}
        u \geq c d_\Omega ^s \quad \text{in }Q^\Omega _{1/2} 
    \end{equation*}
    for some constant $c=c(d,s,\sigma ,\lambda ,\Lambda,p,q,r , \alpha ,\Omega ,u ,f)>0$.
    \end{itemize}   
\end{Theorem}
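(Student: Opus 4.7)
The plan is to derive both claims from a boundary Campanato--Morrey excess-decay argument, extending to the parabolic divergence-form setting the elliptic strategy of \cite{Kim2024,RosOton2024}. After a $C^{1,\alpha}$ diffeomorphism flattening $\partial\Omega$ near the origin, the pushed-forward operator still satisfies \eqref{eq:KernelDivergenceForm}--\eqref{eq:KernelHoelderCont} (with possibly smaller $\sigma$, $\alpha$) since the map is $C^{1,\alpha}$-close to the identity at $0$, and the resulting additional right-hand side can be absorbed into $f$. I would then prove that the Campanato functional
\begin{equation*}
\Phi(r) = r^{-s}\Bigl( r^{-d/2-s}\,\|u-c_r\,d_\Omega^s\|_{L^2(Q^\Omega_r)} + \bigl\|\Tail(u-c_r\,d_\Omega^s;\,r,0)\bigr\|_{L^p_t((-r^{2s},0))}\Bigr),
\end{equation*}
with $c_r$ the optimal approximation constant, decays geometrically along the dyadic scales $r=2^{-k}$, up to controllable contributions from $f$ scaled parabolically by $r^{2s-d/r-2s/q}$.

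The decay would be established by a blow-up/compactness argument. A sequence $(u_k)$ violating the decay on scales $r_k\to 0$ is rescaled to $Q_1^\Omega$, normalized by $\Phi(r_k)$, and shown to converge to a limit $u_\infty$ solving a frozen, translation-invariant, half-space problem with zero source and zero exterior Dirichlet data; by the $C^\sigma$-Hölder hypothesis on $K$ the kernels converge to a homogeneous $2s$-stable kernel. A Liouville-type theorem in the spirit of \cite{FernandezReal2017,Grubb2018a,RosOton2018} for such limit problems then forces $u_\infty$ to be a multiple of $d_{\R^d_+}^s$, contradicting the optimality of $c_{r_k}$. The main obstacle, genuinely parabolic and highlighted by \autoref{example:counterex-2}, is that the tail $\Tail(u;r,\cdot)$ is only $L^p_t$-integrable and may be discontinuous in time, so the entire iteration has to be run in the mixed-norm $C^s_p$ rather than in a sup-in-$t$ norm. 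This forces me to upgrade the Caccioppoli estimate for divergence-form operators to an $L^p_t$-stable version on parabolic annuli and to replace pointwise-in-$t$ tail controls by weak-$L^p_t$ compactness in the blow-up limit, which is the most delicate technical step.

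Once the excess decay is proven at every boundary point, dyadic summation yields the pointwise $C^s_p$ bound at each $x_0\in\partial\Omega\cap B_{1/2}$; combined with the interior Schauder-type estimates for parabolic divergence-form operators (cf.\ \cite{Kassmann2024,Diening2025b}) through a standard covering, one obtains (i). For the Hopf estimate (ii) I would argue as in \cite{RosOton2016a}: the parabolic Harnack inequality for weak solutions to \eqref{eq:PDE-intro} together with the strong maximum principle yields $u\geq c_0>0$ on a smaller cylinder $Q_r^\Omega\Subset Q_1^\Omega$ whenever $u\not\equiv 0$. Using this as exterior data, I would solve a localized Dirichlet problem for a nonnegative subsolution $w\leq u$ and apply (i) together with the nondegeneracy of the explicit frozen-coefficient barrier $d_{\R^d_+}^s$, transferred back to $\Omega$ via the $C^{1,\alpha}$ diffeomorphism, to conclude $u\geq w\geq c\,d_\Omega^s$ in $Q^\Omega_{1/2}$.
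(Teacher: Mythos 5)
Your plan departs fundamentally from the paper's and several of its key steps would not go through. Flattening $\partial\Omega$ via a $C^{1,\alpha}$ diffeomorphism is precisely what the paper states is \emph{not} readily applicable to nonlocal equations: any local flattening map must be extended to all of $\R^d$ to act on a nonlocal operator, the pushed-forward kernel fails \eqref{eq:KernelHoelderCont} away from the patch where the map is close to the identity, and the commutator error is itself a nonlocal operator rather than an $L^q_t L^r_x$ forcing term that can be "absorbed into $f$." The paper works entirely in the original domain by freezing coefficients and comparing $u$ with its caloric replacement $v_{R,z_0}$ (Section~\ref{sec:FreezingEstimates}). More seriously, your claim that "by the $C^\sigma$-Hölder hypothesis on $K$ the kernels converge to a homogeneous $2s$-stable kernel" is false: \eqref{eq:KernelHoelderCont} controls only translation increments, and the frozen kernel $K_{z_0}(h)$ is merely an arbitrary translation-invariant kernel satisfying \eqref{eq:TranslationalInvariantKernel}; the paper emphatically imposes no homogeneity assumption \eqref{eq:hom-kernel}. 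Accordingly the Liouville theorem (\autoref{Thm:LiouvilleThm}) produces a multiple of the operator-dependent barrier $b$ of \cite[Theorem 1.4]{RosOton2024}, not of $d_{\R^d_+}^s$, and the Campanato iteration must approximate by the operator-dependent barrier $\psi_{z_0}$ of \eqref{eq:EllipticBarrierUBar} — this is the point of the expansion in \autoref{Prop:ExpansionTranslationalInvariant} and of \autoref{Lem:ClosenessOfBarriers}. Using $d_{\R^d_+}^s$ as the limit profile is only correct for the fractional Laplacian.

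A second structural gap concerns your excess functional: you put an $L^2_x$ local deviation together with an $L^p_t$ tail, but the freezing comparison (\autoref{Lem:FreezingEstimateIntW/ds}) controls the error $w_{R,z_0}$ only in $L^1_x(d_\Omega^{-s})$ relative to the energy, while the intermediate tails that arise in the dyadic split \eqref{eq:tail-split-intro} must be bounded in $L^\infty_t$ to close the iteration. Reconciling these incompatible integrabilities is precisely the new parabolic difficulty the paper identifies, and it is solved by proving a \emph{gain} of $t$-integrability for the caloric replacement $v$ when passing from scale $\rho$ to $R$ (\autoref{Lem:MorreyEstimateTranslInvariant} and \autoref{Lem:CampanatoEstimateTranslInvariant}); your single-step $L^p_t$ iteration does not supply such a gain, and there is no obvious way to recover the required $L^\infty_t$-control otherwise. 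Finally, for the Hopf estimate (ii) the combination of the Harnack inequality and the strong maximum principle only gives $u\geq c_0$ away from $\partial\Omega$; obtaining the sharp $d_\Omega^s$ lower growth requires comparing the divergence-form sub-barrier $\phi_\eps$ with a translation-invariant replacement $\bar{\phi}_\eps$ and proving the quantitative closeness estimate \autoref{Prop:ClosenessOfSolutionsForHopf}, followed by an appeal to the elliptic Hopf lemma of \cite{Kim2024}. Your argument via "the explicit frozen-coefficient barrier $d_{\R^d_+}^s$" fails here for the same reason as in the blow-up step.
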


Here we have used the notation \smash{$Q_R^\Omega:= (-R^{2s},0)\times (\Omega \cap B_R)$}. Moreover, by \smash{$C^{s}_p$} we denote the space of functions that are \smash{$C^{1/2}_t$} in $t$ and \smash{$C^s_x$} in $x$.
 This space matches the parabolic scaling of the equation and it is the optimal H\"older space for the problem \eqref{eq:PDE-intro} with this property. As we mentioned before, the $C^s_{x}$ regularity and also the lower bound on the growth rate in (ii) cannot be improved, even for the fractional Laplacian in the unit ball with smooth data. It is possible to prove higher regularity in $t$ in the setting of \autoref{Thm:CsBoundaryRegAndHopfLemma} since the $t$-variable is tangential to $(-1,0) \times \partial \Omega$, but we don't pursue this direction here.

In contrast to previous works (see \cite{FernandezReal2016,FernandezReal2017,RosOton2018,Kukuljan2023}), which assume tail terms to be bounded in time, we significantly relax this rather unnatural condition. In fact, we establish the $C^s_p$ regularity for solutions with tails that are merely in $L^{2+\eps}_t$. This result is already new for the fractional Laplacian and the integrability condition on the tail turns out to be optimal.
Quite strikingly, we find that energy solutions, i.e. solutions whose tail terms are in $L^2_t$, may fail to be $C^s_p$ up to the boundary. This phenomenon has no analog in the elliptic case, where all weak solutions are $C^s$ up to the boundary \cite{Kim2024}, nor in the parabolic interior theory, where finite $L^2_t$ tails are sufficient for $C^s_x$ regularity.\footnote{This can be seen by a straightforward generalization of the proof in \cite{Byun2023} when assuming coefficients to be $C^{\sigma}$, proceeding for instance as in the proof of \cite[Theorem 1.7]{Diening2025}. See also \cite[Theorem 1.6]{Diening2025b}.} We construct a counterexample (see \autoref{example:counterex-2}) showing that $C^s_x$ regularity up to the boundary fails if the tail is merely $L^2_t$.

Moreover, note that solutions are not $C^s_p$ if $\frac{1}{q} + \frac{d}{2sr} > \frac{1}{2}$. In this case, in \autoref{Thm:Cs-epsRegWithCoefficients} we establish the sharp $C^{\beta}_p$ regularity of solutions in flat Lipschitz domains, where $\beta = 2s-\frac{2s}{q} - \frac{d}{r}$.
 
 \autoref{Thm:CsBoundaryRegAndHopfLemma} can be regarded as a complete extension of the elliptic results in \cite{Kim2024} to nonlocal parabolic equations. Our approach is entirely parabolic in nature and as a key feature, we are able to consider kernels $K$ that also depend on $t$. Previous papers on boundary regularity were limited to kernels that are stationary in time. 

The parabolic Hopf lemma in \autoref{Thm:CsBoundaryRegAndHopfLemma}(ii) was only known for the fractional Laplacian (see \cite{Wang2022}) and seems to be entirely new for parabolic nonlocal operators in divergence form. 

In contrast to previous works we do not impose any homogeneity assumption \eqref{eq:hom-kernel} on the kernels. Note that the boundary behavior of solutions drastically changes when the kernel is assumed to be homogeneous. Indeed, despite $C^s_p$ regularity being optimal, for homogeneous kernels one can develop a higher order Schauder-type theory at the boundary for $u/d_{\Omega}^s$ (see \cite{RosOton2016a}, \cite{FernandezReal2017}, \cite[Theorem 1.8]{Kim2024}, \cite{Byun2025}). This is in stark contrast to the general case, where $u/d_{\Omega}^s$ might not even be continuous (see \cite[Remark 5.3]{RosOton2024}). Still, in this paper we establish a higher order boundary Harnack-type principle, showing that the quotient of two solutions to \eqref{eq:PDE-intro} enjoys a quantifiable amount of H\"older regularity.

\begin{Theorem} \label{Thm:BoundaryHarnack}
    Let $s\in (0,1)$, $p>2$, $\alpha,\sigma \in (0,s)$, and $\Omega \subset \R^d$ be a $C^{1,\alpha}$ domain with $0\in \partial \Omega$.
    Furthermore, let $\mathcal{L}_t$ be an operator of the form \eqref{eq:OperatorDivergenceForm}-\eqref{eq:KernelDivergenceForm}, satisfying \eqref{eq:KernelHoelderCont} with $\mathcal{A}=Q_1$.
    For $i\in \{ 1,2\}$, assume that $u_i$ is a weak solution to 
    \begin{equation*}
        \left\{ \begin{aligned}
          \partial _t u_i -\mathcal{L}_tu_i & =f_i & &\text{in} \quad (-1 ,0) \times (\Omega \cap B_1) , \\
           u_i & =0 & & \text{in} \quad  (-1 ,0) \times (B_1 \setminus \Omega) 
        \end{aligned} \right.
    \end{equation*}
    for some $f_i\in L^q_tL^r_x (Q_1^\Omega)$ where $q,r\geq 1$ satisfy $\frac{1}{q}+\frac{d}{2sr} < \frac12$.
    Moreover, assume that
    \begin{equation*}
        \| u_i\| _{L^2 (Q^\Omega_1)} +    \| \Tail (u_i;1/2,0) \| _{L^{p}_t ((0,1))} + \| f_i\| _{L_{t}^q L_x^{r} (Q^\Omega_1)} \leq 1,
    \end{equation*}
     $f_2 \geq 0$ in $Q^\Omega _1$, and $u_2 \geq 0$ in $(-1,0) \times \R^d$.
     Let $x_0\in \Omega \cap B _{1/2}$ and assume that we have a lower bound $0<c_0\leq u_2(-(3/4)^{2s},x_0)$.
     
     Then, for every positive $\beta$ satisfying $\beta < \sigma $ and $\beta \leq \min \{ s-\frac{2s}{q}-\frac{d}{r} , s-\frac{2s}{p}\}$, we have
    \begin{equation*}
         \left\| \frac{u_1}{u_2} \right\| _{C^{\beta}_{p} (\overline{Q^\Omega _{1/2}})} \leq c
    \end{equation*}
    for some constant $c=c(d,s,\sigma ,\lambda ,\Lambda,p,q,r , \alpha ,\Omega ,\beta ,c_0, d_\Omega (x_0) )>0$.
\end{Theorem}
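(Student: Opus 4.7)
The proof is an iterative, boundary Campanato-type argument at each boundary point $(t_0, z_0) \in (-(1/2)^{2s}, 0) \times (\partial \Omega \cap B_{1/2})$, combining the $C^s_p$ regularity from \autoref{Thm:CsBoundaryRegAndHopfLemma}(i) with a quantitative form of the Hopf-type bound in \autoref{Thm:CsBoundaryRegAndHopfLemma}(ii). The regularity of $u_1/u_2$ away from $\partial \Omega$ follows from interior parabolic estimates once $u_2$ is bounded below, so the whole argument reduces to the boundary. First I would promote the pointwise assumption $u_2(-(3/4)^{2s}, x_0) \geq c_0$ to the quantitative lower bound
\[ u_2(t, x) \geq c_1\, d_\Omega(x)^s \quad \text{in } \overline{Q_{1/2}^\Omega}, \]
with $c_1 = c_1(c_0, d_\Omega(x_0), \ldots) > 0$, by applying an interior parabolic Harnack inequality to propagate the pointwise lower bound forward in time to a ball at positive distance from $\partial \Omega$, then invoking \autoref{Thm:CsBoundaryRegAndHopfLemma}(ii) with quantitative constants. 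Having this in hand, it suffices to prove that for every such $(t_0, z_0)$ there is $\kappa = \kappa(t_0, z_0) \in \R$ with $\| u_1 - \kappa u_2 \|_{L^\infty(Q_r^\Omega(t_0, z_0))} \leq C r^{s+\beta}$ for all small $r$: applying \autoref{Thm:CsBoundaryRegAndHopfLemma}(i) to $u_1 - \kappa u_2$ upgrades this to the pointwise bound $|u_1 - \kappa u_2| \leq C r^\beta d_\Omega^s$, and dividing by $u_2 \geq c_1 d_\Omega^s$ yields the boundary H\"older estimate on the quotient.

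The constant $\kappa$ is produced by an improvement-of-flatness iteration constructing $\kappa_k \in \R$ and geometric radii $r_k = \rho^k r_0$ with
\[ \| u_1 - \kappa_k u_2 \|_{L^\infty(Q_{r_k}^\Omega(t_0, z_0))} \leq r_k^{s+\beta}, \qquad |\kappa_{k+1} - \kappa_k| \leq C r_k^\beta. \]
The inductive step is by contradiction. If it fails for some $k$, I would normalize $v_k := u_1 - \kappa_k u_2$ and $u_2$ to unit size on $Q_1$ via parabolic scaling, obtaining rescaled solutions $\tilde v_k$ and $\tilde u_{2,k}$ of rescaled equations. The $C^\sigma$ continuity of $K$ with $\beta < \sigma$ makes the kernels converge to a translation-invariant constant-coefficient limit; the $C^{1,\alpha}$ regularity of $\Omega$ makes the rescaled domains converge to a half-space; and the assumptions $\beta \leq s - 2s/q - d/r$ and $\beta \leq s - 2s/p$ are precisely what makes the rescaled right-hand sides in $L^q_t L^r_x$ and the rescaled $L^p_t$-tails uniformly bounded along the blow-up sequence (the tail contribution from $|z|>1$ scales exactly as $r_k^{s - \beta - 2s/p}$), with the strict inequalities $\frac{1}{q} + \frac{d}{2sr} < \frac{1}{2}$ and $p > 2$ forcing those quantities to vanish in the limit. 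The compactness provided by \autoref{Thm:CsBoundaryRegAndHopfLemma}(i) then yields limits $\tilde v_\infty$ and $\tilde u_{2,\infty} \geq c d^s$ in a half-space, both solving the same translation-invariant limit equation with zero exterior data. A Liouville-type rigidity theorem for this half-space problem forces $\tilde v_\infty = \kappa^\star \tilde u_{2,\infty}$ for some $\kappa^\star \in \R$, and setting $\kappa_{k+1} = \kappa_k + \kappa^\star r_k^\beta$ produces the contradiction and closes the iteration.

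The main obstacle is the Liouville-type rigidity invoked in the blow-up step: one must classify bounded parabolic solutions with zero exterior data to the translation-invariant constant-coefficient limit equation in a half-space as one-dimensional scalar multiples of a canonical profile. This is the parabolic analogue of the Morrey-Campanato rigidity developed in the elliptic setting of \cite{Kim2024}, and it is where the time-dependent nonlocal tail must be tracked carefully; the $L^p_t$-tail condition with $p > 2$ is sharp for it, precisely because, as pointed out in the discussion after \autoref{Thm:CsBoundaryRegAndHopfLemma}, energy-class solutions with merely $L^2_t$-tails can already fail to be $C^s_p$ up to the boundary, which would prevent the compactness argument from yielding a nondegenerate limit. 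Once the rigidity is in place, all the remaining ingredients -- the quantitative Hopf lower bound, the convergence of the coefficients and of the domains, the uniform bounds on blow-up sequences, and the Campanato-to-pointwise upgrade at the end -- are relatively standard.
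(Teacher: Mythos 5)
Your proposal is a genuinely different route from the one in the paper. The paper does \emph{not} run an improvement-of-flatness iteration on $u_1 - \kappa u_2$; instead it \emph{decouples} the two solutions by introducing the elliptic barrier $\psi_{(t,x)}$ (the solution to \eqref{eq:EllipticBarrierUBar} for the frozen operator $L_{(t,x)}$), proving separately in \autoref{Lem:uOverPsiHoelderContinuous} that $h_i(t,x):=u_i(t,x)/\psi_{(t,x)}(x)$ lies in $C^\beta_p$, and then concluding via the elementary quotient rule
\[
[h_1/h_2]_{C^\beta_p} \leq \|h_2^{-2}\|_{L^\infty}\bigl(\|h_2\|_{L^\infty}[h_1]_{C^\beta_p}+\|h_1\|_{L^\infty}[h_2]_{C^\beta_p}\bigr)
\]
together with $\|h_i\|_{L^\infty}\leq c$ (by $C^s$ regularity and $\psi\asymp d_\Omega^s$) and $\|h_2^{-1}\|_{L^\infty}\leq c$ (by the Hopf lemma). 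The hard work is front-loaded into the nonlocal Campanato iteration for a single solution ($\Psi$ in \eqref{eq:ExcessFunctional-higher}, \autoref{Lem:CampanatoEstimate}) and the closeness of barriers $\psi_{z_0},\psi_{z_1}$ in \autoref{Lem:ClosenessOfBarriers}. Your approach instead entangles $u_1$ and $u_2$ throughout: you track $u_1-\kappa_k u_2$ at each scale, compactify, and classify the blow-up limits $\tilde v_\infty$ and $\tilde u_{2,\infty}$ as scalar multiples of the canonical half-space profile $b$ via the parabolic Liouville theorem (\autoref{Thm:LiouvilleThm}). Both routes ultimately rest on the same three pillars -- the optimal $C^s_p$ boundary regularity, the Hopf lemma, and the half-space Liouville rigidity (which in the paper is used inside the proof of the boundary expansion \autoref{Prop:ExpansionTranslationalInvariant}) -- so they are mathematically close cousins, but organized very differently.

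Two things in your write-up are worth flagging, though neither is a fatal gap. First, your exponent accounting at the end is slightly imprecise: if $\beta = s-\tfrac{2s}{q}-\tfrac{d}{r}$ or $\beta = s-\tfrac{2s}{p}$, the rescaled source and far tail do not come with a positive power of $r_k$; they are only $O(1)$ a priori, and one must invoke absolute continuity of the integral (i.e.\ $\|f\|_{L^q_t L^r_x(Q_{r_k})} \to 0$ and $\|\Tail(u_i;1/2,0)\|_{L^p_t(I_{r_k}^\ominus(t_0))}\to 0$ as $r_k\to 0$) to get them to vanish in the limit; this is implicit in your phrasing but should be stated. Second, the intermediate-tail contribution on scales between $r_k$ and $1$ does \emph{not} vanish in the blow-up -- it converges to the nonlocal tail of the limit profile -- and one must verify that this tail is compatible with the Liouville classification. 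This is handled automatically if you formulate the growth condition as in \autoref{Thm:LiouvilleThm} (i.e.\ $\|\tilde v_\infty\|_{L^\infty(Q_R)}\leq cR^{s+\beta}$ for $R\geq 1$ with $s+\beta<2s$), but it does require the dyadic tail-splitting bookkeeping you allude to, which is precisely the part that the paper's $\overline\Psi$-based Campanato machinery is engineered to handle systematically. Overall your blueprint is sound and would give the same result; the paper's route is more modular (once \autoref{Lem:uOverPsiHoelderContinuous} is established, the boundary Harnack is a one-paragraph corollary), while yours is closer to the classical blow-up proofs for translation-invariant kernels in \cite{FernandezReal2017,Kukuljan2023} and has to carry the coupled tails through the entire iteration.
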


Note that \autoref{Thm:BoundaryHarnack} is a higher order regularity result as it provides a quantifiable H\"older exponent $\beta$ of the quotient $u_1/u_2$. From this viewpoint, it is natural to assume the solution domain to be $C^{1,\alpha}$ and the coefficients to be $C^{\sigma}$. By a refinement of the approach in \cite[Theorem 6.8]{Kim2024}, we are able to significantly improve the relation between $\beta$ and the parameters $\alpha,\sigma,q,r,p$ in comparison to \cite{Kim2024}.
In \autoref{Rem:BetaEquSigmaPossible}, we explain how one can improve \autoref{Thm:BoundaryHarnack}, so that it holds true for the full range $\beta \leq \min \{\sigma ,s-\frac{2s}{q}-\frac{d}{r} , s-\frac{2s}{p} \}$.

\autoref{Thm:BoundaryHarnack} seems to be the first higher order boundary Harnack result for nonlocal equations with coefficients. Previous articles (see \cite{Abatangelo2020}, \cite{Kukuljan2023}) are restricted to homogeneous and translation invariant kernels \eqref{eq:hom-kernel}. Moreover, our proof relies on completely different arguments. We believe that \autoref{Thm:BoundaryHarnack} will be useful in the study of nonlocal free boundary problems \cite{Abatangelo2020} governed by operators $-\mathcal{L}_t$ satisfying \eqref{eq:OperatorDivergenceForm}, \eqref{eq:KernelDivergenceForm}, \eqref{eq:KernelHoelderCont}.

Let us point out that the H\"older continuity of the quotient of two solutions to an elliptic (or parabolic) equation is a fundamental property in potential theory. It already holds true in very rough settings, i.e. for domains $\Omega$ that do not even have to be Lipschitz continuous and for equations with bounded measurable coefficients. This theory is rather well understood for nonlocal equations (see \cite{Caffarelli2018,RosOton2019,Figalli2023}, as well as \cite{Bogdan1997,Kim2009,Kim2014,Bogdan2015,Cao2024}). In either of these rough settings, the H\"older exponent is qualitative and cannot be quantified. Therefore, \autoref{Thm:BoundaryHarnack} is of a completely different nature.

\subsection{Dirichlet heat kernel estimates}

Given an initial datum $u_0 \in L^2(\Omega)$ and a forcing term $f \in L^2((0,1) \times \Omega)$, the unique solution $u$ to the parabolic equation 
    \begin{equation}
    \label{eq:Dirichlet-problem}
        \left\{ \begin{aligned}
        \partial_t u - \mathcal{L}_t u &= f && \text{ in } (0,1) \times \Omega,\\
        u &= 0 && \text{ in } (0,1) \times (\R^d \setminus \Omega),\\
        u &= u_0 && \text{ in } \{ 0 \} \times \Omega 
        \end{aligned} \right.
    \end{equation}
is given by the following representation formula
\begin{align}
\label{eq:Duhamel}
    u(t,x) = \int_{\Omega} p_{\Omega}(t,x,y) u_0(y) \d y + \int_0^t \int_{\Omega} p_{\Omega}(\tau,x,y) f(t-\tau,y) \d y \d \tau.
\end{align}
Here, $p_{\Omega}$ denotes the Dirichlet heat kernel associated to \eqref{eq:Dirichlet-problem}. The Dirichlet heat kernel is an important object in partial differential equations, potential analysis, and probability theory. Its relevance in the first two fields is evident from \eqref{eq:Duhamel}, whereas in probability theory, $p_{\Omega}$ arises as the transition density of the jump process generated by $-\mathcal{L}$ killed upon leaving $\Omega$.

In all of these fields it is a crucial task to gain a good understanding of fine properties of $p_{\Omega}$. The first article to establish sharp two-sided pointwise estimates for $p_{\Omega}$ is \cite{Chen2010}, where it was proved that 
\begin{align}
\label{eq:two-sided-intro}
    p_\Omega (t,x,y) \asymp \left( 1 \wedge \frac{d_\Omega (x)}{t^\frac{1}{2s}} \right) ^s  \left( 1 \wedge \frac{d_\Omega (y)}{t^\frac{1}{2s}} \right) ^s \left( t^{-\frac{d}{2s}} \wedge\frac{t}{|x-y|^{d+2s}} \right)
\end{align}
in case $-\mathcal{L} = (-\Delta)^s$ and $\Omega$ is a bounded $C^{1,1}$ domain. This estimate captures the precise boundary behavior of $p_{\Omega}$.

During the last 15 years there has been a huge interest in obtaining generalizations of the result in \cite{Chen2010}. We refer to \cite{Bogdan2010} where an approximate factorization of the Dirichlet heat kernel for $(-\Delta)^s$ was proved, which leads to two-sided bounds in Lipschitz (and less regular) domains in terms of the global heat kernel and exit probabilities of the associated L\'evy process. Further significant contributions include \cite{Chen2012,Chen2014, Bogdan2014,Kim2018,Grzywny2020,Chen2025}, where analogs of \eqref{eq:two-sided-intro} are studied for L\'evy operators which may be of variable order or include drift terms. Moreover, we mention \cite{Kim2022,Kim2023}, which focus on operators whose kernels degenerate or explode at $\partial \Omega$, and \cite{Kim2014b,Chen2022,Song2025}, where \eqref{eq:two-sided-intro} is established in less regular domains. 

Note that in all of the aforementioned works either rotational symmetry of the kernel is assumed, or  the authors impose certain additional structural assumptions under which the operator becomes a perturbation of a homogeneous operator in non-divergence form. In light of these limitations of the theory, we are led to ask the following natural question:
\begin{align*}
    \text{Does \eqref{eq:two-sided-intro} hold for } &\text{nonlocal operators in divergence form \eqref{eq:OperatorDivergenceForm}, \eqref{eq:KernelDivergenceForm}, \eqref{eq:KernelHoelderCont}} \\
   &\text{in $C^{1,\alpha}$ domains for any } \alpha \in (0,s)?
\end{align*}
An analogous question can also be posed for nonlocal elliptic equations and the Green's function, where it was recently resolved in \cite{Kim2024}. By \emph{applying the elliptic results} in \cite{Kim2024} and establishing suitable approximate factorization formulas, in the recent paper \cite{Cho2025}, the authors prove \eqref{eq:two-sided-intro} for the special class of \emph{time-independent} operators $-\mathcal{L}_t = -\mathcal{L}$ in $C^{1,\alpha}$ domains. This gives a positive partial answer to the aforementioned question. Moreover, they establish similar estimates in $C^{1,\text{dini}}$ domains under rather strong structural assumption on the coefficients that effectively reduces $-\mathcal{L}$ to a perturbation of a homogeneous operator in non-divergence form.

In this article, we settle the aforementioned question by \emph{developing a fully parabolic generalization} of the approach in \cite{Kim2024}. This allows us to treat time-dependent operators. Our main result reads as follows:

\begin{Theorem} \label{Thm:HeatKernelBounds}
    Let $s\in (0,1)$, $\alpha ,\sigma \in (0,s)$.
    Furthermore, let $\Omega \subset \R^d$ be a bounded $C^{1,\alpha}$ domain and let $\mathcal{L}_t$ for be an operator of the form \eqref{eq:OperatorDivergenceForm}-\eqref{eq:KernelDivergenceForm} satisfying \eqref{eq:KernelHoelderCont} with $\mathcal{A}= (0,1) \times \Omega '$ for some $\Omega \Subset \Omega ' \subset \R^d$.
    Then, for all $x,y\in \Omega$ and $t\in (0,1)$, the Dirichlet heat kernel $p_\Omega$ satisfies
    \begin{equation*}
        p_\Omega (t,x,y) \asymp _c \left( 1 \wedge \frac{d_\Omega (x)}{t^\frac{1}{2s}} \right) ^s  \left( 1 \wedge \frac{d_\Omega (y)}{t^\frac{1}{2s}} \right) ^s \left( t^{-\frac{d}{2s}} \wedge\frac{t}{|x-y|^{d+2s}} \right)
    \end{equation*}
    for some comparability constant $c=c(d,s, \sigma , \lambda ,\Lambda , \alpha , \Omega , \Omega ' )>0$.
\end{Theorem}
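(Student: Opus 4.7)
The plan is to combine the boundary regularity results from \autoref{Thm:CsBoundaryRegAndHopfLemma} with the \emph{interior} two-sided Aronson-type bounds
\begin{equation*}
p_\Omega(t,x,y) \leq c_1 \left( t^{-\frac{d}{2s}} \wedge \frac{t}{|x-y|^{d+2s}} \right),
\end{equation*}
valid globally in $\Omega$, and
\begin{equation*}
p_\Omega(t,x,y) \geq c_2 \left( t^{-\frac{d}{2s}} \wedge \frac{t}{|x-y|^{d+2s}} \right),
\end{equation*}
valid whenever $d_\Omega(x) \wedge d_\Omega(y) \geq A_0 t^{1/(2s)}$ for some universal $A_0$. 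For uniformly elliptic symmetric nonlocal operators satisfying \eqref{eq:KernelDivergenceForm} these bounds are by now classical and remain valid in the time-dependent setting: the upper bound follows from a Nash inequality combined with Davies' method, and the interior lower bound from the parabolic Harnack principle. The main task is then to produce the boundary factors $(d_\Omega(x)/t^{1/(2s)} \wedge 1)^s$ and $(d_\Omega(y)/t^{1/(2s)} \wedge 1)^s$, since the time-dependence of $K$ precludes both spectral methods and the approximate factorization of \cite{Cho2025} tailored to stationary kernels.

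For the \emph{upper bound}, fix $y \in \Omega$ and $t \in (0,1)$, let $x_0 \in \partial \Omega$ be a nearest boundary point to $x$, and set $r = t^{1/(2s)}$. The function $u(\tau,z) := p_\Omega(\tau, z, y)$ solves $\partial_\tau u - \mathcal{L}_\tau u = 0$ in $(t/2,t) \times (\Omega \cap B_r(x_0))$ and vanishes on the exterior. After parabolic rescaling of this cylinder to unit size, both the rescaled $L^2$ norm and $L^p_t$ tail of $u$ are controlled by the preliminary global upper bound, so \autoref{Thm:CsBoundaryRegAndHopfLemma}(i) with $f \equiv 0$ delivers, after unscaling,
\begin{equation*}
p_\Omega(t,x,y) \leq c \left(\frac{d_\Omega(x)}{t^{1/(2s)}} \wedge 1\right)^{\! s} \left( t^{-d/(2s)} \wedge \frac{t}{|x-y|^{d+2s}} \right).
\end{equation*}
By the symmetry $p_\Omega(t,x,y) = p_\Omega(t,y,x)$, inherited from $K(t,x,y) = K(t,y,x)$, the same argument applied in the $y$-variable (splitting the time interval into two halves and using the semigroup identity to separate the two symmetry applications) produces the matching factor $(d_\Omega(y)/t^{1/(2s)} \wedge 1)^s$.

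For the \emph{lower bound}, fix $y$ and $t$, and first restrict to the near-diagonal regime $|x-y| \leq A_1 t^{1/(2s)}$. The interior lower bound furnishes a point $x^\ast$ with $d_\Omega(x^\ast) \gtrsim t^{1/(2s)}$, $|x^\ast - y| \lesssim t^{1/(2s)}$, and $p_\Omega(t/2, x^\ast, y) \gtrsim t^{-d/(2s)}$. Rescaling a boundary cylinder of size $\sim t^{1/(2s)}$ around the nearest boundary point to $x$, and applying \autoref{Thm:CsBoundaryRegAndHopfLemma}(ii) to $u(\tau,z) := t^{d/(2s)} p_\Omega(\tau+t/2, z, y)$, which satisfies the quantitative positivity $u(-(3/4)^{2s}, x^\ast) \geq c_0 > 0$, yields
\begin{equation*}
p_\Omega(t,x,y) \geq c\, t^{-d/(2s)} \left( \frac{d_\Omega(x) \wedge t^{1/(2s)}}{t^{1/(2s)}} \right)^{\! s}.
\end{equation*}
Symmetrizing in $y$ produces both boundary factors in the near-diagonal regime. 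The far-field regime $|x-y| \geq A_1 t^{1/(2s)}$ is then handled by a Chapman--Kolmogorov chaining argument along $\sim |x-y|/t^{1/(2s)}$ interior balls combined with the L\'evy system estimate which, thanks to the pointwise bound $K \geq \lambda|x-y|^{-d-2s}$, recovers the long-range polynomial decay $t/|x-y|^{d+2s}$. The main technical obstacle lies in extracting from \autoref{Thm:CsBoundaryRegAndHopfLemma}(ii) a fully quantitative constant depending \emph{explicitly} on $c_0$ and on the $L^2$ and tail norms of $u$, so that the rescaling reproduces the correct prefactor $t^{-d/(2s)}$; this requires revisiting the Hopf argument with all dependencies tracked, rather than invoking it as a black box.
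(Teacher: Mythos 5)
Your overall strategy closely mirrors the paper's: establish the interior two-sided Aronson bounds first, then upgrade to boundary factors via the $C^s$ regularity (upper bound) and the Hopf lemma (lower bound). For the upper bound your bookkeeping differs slightly — you keep the exterior contribution in the tail term rather than truncating $v = p_\Omega(\cdot,\cdot,y)\ind_{B_{\tilde R}}$ and moving it to a source term $f$, as the paper does in \autoref{Lem:UpperHeatKernelBounds} — but both are viable. The lower bound differs more substantially: the paper never applies the Hopf lemma to (a rescaling of) the heat kernel itself. Instead it constructs an auxiliary barrier $u$ (equation \eqref{eq:BarrierForLowerHeatKernelEst}) with normalized exterior/initial datum $\ind_{B_{\kappa\tilde R}(z)}$, applies the Hopf lemma to the rescaled barrier, and then transfers the bound to $p_\Omega$ by the comparison principle, using the interior lower heat kernel bound only to see that $p_\Omega$ dominates the barrier's data on the relevant ball. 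This avoids the delicate normalization tracking you flag as a technical obstacle, precisely because the barrier's size is fixed to lie in $[0,1]$ from the start, and the $u$-dependence of the Hopf constant is then handled in a single remark (\autoref{Rem:HopfDependenceOnUAndF}). Your proposed Chapman--Kolmogorov chaining for the far-field regime is also unnecessary: the interior lower bound of \autoref{Prop:PropertiesDirichletHeatKernel} already covers both the near-diagonal and off-diagonal regimes, so the barrier comparison of \autoref{Lem:LowerHeatKernelBounds} does all four cases at once.

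There is, however, a genuine error: you invoke the symmetry $p_\Omega(t,x,y) = p_\Omega(t,y,x)$, ``inherited from $K(t,x,y) = K(t,y,x)$''. This symmetry fails for time-dependent kernels, which is precisely the regime that is the main novelty of the theorem. The correct relation, proved in \autoref{prop:representation}, is the duality $p_{\eta,t}^{\Omega}(x,y) = \hat p_{t,\eta}^{\Omega}(y,x)$, where $\hat p$ is the kernel of the time-reversed, adjoint problem \eqref{eq:DirichletSemiGroupDual}. To obtain the boundary factor in the $y$-variable one must therefore apply the boundary regularity (respectively, the Hopf lemma and the auxiliary barrier) to $(t,y) \mapsto \hat p_{t_0,t_0-t}^{\Omega}(y,x_0)$, which solves the equation with operator $\mathcal{L}_{t_0-t}$; this is exactly what Step 3 of \autoref{Lem:UpperHeatKernelBounds} and Case 3 of \autoref{Lem:LowerHeatKernelBounds} do. Your argument, as written, would only apply to time-independent operators, which defeats the purpose of the theorem and would not recover the contribution beyond \cite{Cho2025}.
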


We emphasize that \autoref{Thm:HeatKernelBounds} includes nonlocal operators whose kernels depend on time. This is in stark contrast to all previous results on Dirichlet heat kernel estimates in the literature. Note that the probabilistic approach in \cite{Cho2025} heavily uses the Green's function estimates from \cite{Kim2024} and their relation to exit time estimates for the associated stochastic process, which does not extend in a straightforward way to the time-dependent setting.

While previous articles mainly use probabilistic tools to establish \eqref{eq:two-sided-intro}, our approach is purely analytic. The two-sided estimates follow by a combination of the $C^s_x$ boundary regularity and the Hopf lemma from \autoref{Thm:CsBoundaryRegAndHopfLemma} together with a rather delicate barrier argument and the interior estimates for the heat kernel from \cite{Liao2025}.

\subsection{Strategy of the proofs}

The study of boundary regularity for nonlocal equations is arguably more delicate than in the local case. This is mainly because well-established techniques based on flattening the boundary are not readily applicable to nonlocal problems. Moreover, since solutions are in general not better than $C^s$ up to the boundary, there is a clear qualitative discrepancy in their behavior, which requires appropriate modifications of the standard techniques.

Our proof of the optimal $C^s_p$ regularity in \autoref{Thm:CsBoundaryRegAndHopfLemma}(i) is based on a Morrey-Campanato-type iterative scheme at the boundary, which we develop in the framework of parabolic nonlocal equations. Such an approach was introduced for nonlocal elliptic problems in the recent article \cite{Kim2024} and here we establish its complete parabolic generalization. However, as was already mentioned before, nonlocal effects become more critical in parabolic problems and might also lead to new phenomena (see \autoref{example:counterex-2}, or \cite[Example 5.2]{Kassmann2024}). In particular, any study of regularity results for parabolic problems requires a careful analysis of the nonlocal tail terms. 

In the following, we briefly sketch the main ideas of the proof of \autoref{Thm:CsBoundaryRegAndHopfLemma}(i), focusing on the difficulties that arise specifically from the \emph{parabolic nature of the problem}. We refer to \cite[Subsection 1.3]{Kim2024} for a more detailed description of the approach in the elliptic case.

\subsubsection{Freezing kernels and optimal regularity in the translation invariant case}

In order to prove regularity for solutions to \eqref{eq:PDE-intro} at a point $z_0 = (t_0,x_0) \in (-1,0) \times \Omega$, we freeze the coefficients at $z_0$ and consider the frozen operator $L_{z_0}$ with kernel
\begin{align*}
    K_{z_0}(t,x,y) = \frac{1}{2} \left( K(t_0,x_0+x-y,x_0) + K(t_0,x_0 +y-x,x_0) \right).
\end{align*}

Then, on any scale $R \in (0,1)$, we compare the solution $u$ to \eqref{eq:PDE-intro} with its $L_{z_0}$-caloric replacement $v$, which is given as the unique solution to
\begin{equation}
\label{eq:intro-replacement}
\left\{ \begin{aligned}
        \partial _t v -L_{z_0} v &=0 && \text{in }  (t_0 - R^{2s} , t_0 )  \times (\Omega \cap B_R(x_0)) , \\
           v &= u && \text{in }  (t_0 - R^{2s} , t_0 ) \times  \big(\R^d \setminus (\Omega \cap B_R(x_0)) \big) ,\\
           v &= u && \text{on } \{ t_0 - R^{2s}\} \times \Omega.
        \end{aligned} \right.
    \end{equation}

As usual, the goal is to transfer the regularity from $v$ to $u$.

\begin{Remark}
    Note that the regularity assumption \eqref{eq:KernelHoelderCont} on the kernel is crucial for the comparison between $u$ and $v$ to work. We emphasize that \eqref{eq:KernelHoelderCont} is a rather weak assumption, since it only requires regularity in $t$ and of the shift $h \mapsto K(t,x + h , y + h)$ for given $t,x,y$. This is much less restrictive than assuming regularity of $K$ in all variables (as in \cite[(1.9)]{Byun2025}), or than assuming regularity of $h \mapsto K(t,x,x+h)$ (as in \cite{Kim2014b,Grzywny2020,Kim2023}). In both of these cases, the frozen kernel is always the fractional Laplacian or essentially a homogeneous kernel \eqref{eq:hom-kernel}, whereas in our case, it can be \emph{any} translation invariant kernel satisfying \eqref{eq:KernelDivergenceForm}. 
\end{Remark}

First, we establish the optimal $C^s_p$ regularity for parabolic nonlocal equations \eqref{eq:PDE-intro} governed by the frozen operator $-L_{z_0}$. Note that this was previously only known in case of a homogeneous kernel $K_{z_0}$ \eqref{eq:hom-kernel} (see \cite{FernandezReal2017}).
Our proof relies on a rather general barrier argument that allows to deduce boundary regularity for parabolic problems from elliptic problems. In particular, here we use the $C^s$ regularity for nonlocal elliptic equations from \cite{RosOton2024} (see \autoref{Lem:BoundaryRegFromBoundOnU}).

Next, we employ a blow-up argument to establish an expansion of order $s + \eps$ for any $\eps \in (0,s)$ for solutions $v$ to \eqref{eq:intro-replacement} of the form
\begin{equation}
\label{eq:ExpansionTranslInvariant-intro}
    |v(t,x)-q_0 \psi_{z_0}(x)| \leq C \left( |x|^{s+\eps} + |t| ^\frac{s+\eps}{2s} \right) \qquad \forall (t,x) \in (-1,0)\times B_1
\end{equation}
for some $q_0 \in \R$ (see \autoref{Prop:ExpansionTranslationalInvariant}). Here, $\psi_{z_0}$ is the solution of an elliptic exterior value problem governed by $- L_{z_0}$ that is in $C^s$ and behaves like $d_{\Omega}^s$ at the boundary of $\Omega$ due to the results in \cite{RosOton2024}. The same barrier $\psi_{z_0}$ was already considered in \cite{Kim2024}. Note that we prove \eqref{eq:ExpansionTranslInvariant-intro} for any $\eps < s$, as opposed to \cite{Kim2024}, where it was only shown for $\eps < \alpha s$.

\subsubsection{Optimal regularity for equations in divergence form}

As in \cite{Kim2024}, we measure the behavior of a solution $u$ to \eqref{eq:PDE-intro} at a point $z_0 = (t_0,x_0)$ on scale $\rho$ in terms of suitable nonlocal excess functionals, given by
\begin{align*}
    \Phi _{\sigma}(u;\rho ,z_0)& = \iint_{Q ^\Omega_\rho (z_0) } \left| \frac{u}{d_\Omega ^s}  \right|\intd x \intd t \\
    & \quad + \max \{ \rho ,d_\Omega (x_0)\} ^{-s} \rho ^{d+2s} \sup_{t\in I^\ominus _\rho (t_0)} \Tail _{\sigma ,B_1} (u;\theta ^{-1}\rho  ,x_0) ,\\
    \Psi(u;\rho,z_0) &:= \iint_{Q_{\rho}^{\Omega}(z_0)} \left| \frac{u}{\psi_{z_0}} - \left( \frac{u}{\psi_{z_0}} \right)_{Q_{\rho}^{\Omega}(z_0)} \right| \d x \d t \\
    &\quad + \max\{ \rho , d_{\Omega}(x_0) \}^{-s} \rho^{d+2s} \sup_{t \in I_{\rho}^{\ominus}(t_0)} \Tail \left(u - \psi_{z_0} \left( \frac{u}{\psi_{z_0}} \right)_{Q_{\rho}^{\Omega}(z_0)} ; \theta^{-1} \rho , x_0 \right).
\end{align*}
Here, $\theta = 128$ is a large enough number, the tail is defined in \eqref{eq:tail-def}, and 
\begin{align*}
    \Tail_{\sigma,B_1}(u(t, \cdot );\rho,x_0) = \rho^{2s - \sigma} \int_{B_1 \setminus B_{\rho}(x_0)} |u(t,y)| |y - x_0|^{-d-2s+\sigma} \d y
\end{align*}
is a modification of $\Tail$ taking into account the $C^{\sigma}$ regularity of $K$.

The key difficulty in the study of nonlocal parabolic problems becomes apparent when comparing $\Phi_{\sigma}$ and $\Psi$ to their analogs in the elliptic case (see \cite{Kim2024}). In fact, while all the quantities in $\Phi_{\sigma}$ and $\Psi$ are $L^1$ in $x$, the integrability in $t$ of the local terms differs from the one of the tail terms. 
This is due to a competition between small and large scales that is at the core of the difficulties in regularity for parabolic nonlocal problems. On the one hand, we are forced to keep the order of the local terms as low as possible (below the energy level $L^2$) in order to apply the comparison estimates for $u$ and its replacement $v$. On the other hand, we must consider at least $L^{2+\eps}_t$ tails since $C^s_x$ regularity fails unless the tails possess sufficient integrability (see \autoref{example:counterex-2}). 

Note that any kind of excess decay estimate from scale $\rho$ to scale $R$ (where $0 < \rho \ll R \le 1$) for nonlocal problems requires a fine decomposition of the tail terms into their intermediate and purely nonlocal contributions on scales $[\rho,R]$ and $(R,\infty)$, respectively (see \cite{Nowak2022,Diening2024,Nguyen2024,Diening2025b,Kim2024}). To do so we split the tail terms as follows:
\begin{align}
\label{eq:tail-split-intro}
\begin{split}
    \sup_{I_{\rho}^{\ominus}(t_0)} \Tail(u ; \rho , x_0) &\le C \sum_{k = 0}^{[\log R/\rho]} 2^{-2sk} \sup_{I_{\rho}^{\ominus}(t_0)} \dashint_{B_{2^{k+1}\rho}(x_0) \setminus B_{2^{k}\rho} (x_0)} |u(y)| \d y \\
    &\quad + C(\rho/R)^{2s} \sup_{I_{\rho}^{\ominus}(t_0)} \Tail(u; R , x_0).
    \end{split}
\end{align}
The contributions on intermediate scales have to be estimated by using the underlying PDE, which requires them to possess the same integrability in $t$ as the local quantities within the excess functionals $\Phi_{\sigma}, \Psi$.

The key idea to overcome the apparent incompatibility between the integrability of the local and nonlocal contributions within $\Phi_{\sigma}, \Psi$ is to prove stronger excess decay estimates for the frozen solutions $v$ of the following form (see \autoref{Lem:MorreyEstimateTranslInvariant} and \autoref{Lem:CampanatoEstimateTranslInvariant}):
\begin{align*}
    \sup _{t\in I^\ominus _\rho (t_0)} \int_{B ^\Omega_\rho (x_0) } \left| \frac{v}{d_\Omega ^s}  \right|\intd x & \leq c \rho ^{-2s}\left( \frac{\rho}{R}\right) ^{d+2s-\eps } \Phi_{\sigma}(v;R,z_0),\\
    \sup _{t \in I_\rho ^\ominus(t_0)} \int _{B_\rho (x_0)}  \left| \frac{v}{\psi_{z_0}} - \left( \frac{v}{\psi_{z_0} } \right) _{Q_\rho ^\Omega (z_0) } \right| \intd x  
            & \leq c  \rho ^{-2s}\left( \frac{\rho}{R} \right) ^{d+2s+\eps} \Psi(v;R,z_0).
\end{align*}
Note that these estimates not only encode an excess decay for $v$, but also a gain of integrability in the $t$-variable, when passing from a smaller to a larger scale. Clearly, both estimates crucially rely on the fact that we already know $v \in C^{s-\eps}_p$ and \eqref{eq:ExpansionTranslInvariant-intro}, respectively. 

By comparing $u$ to the frozen solution $v$ also on intermediate scales, we can profit from the gain of integrability in $t$ and thereby estimate the intermediate $L^{\infty}_t$-tails arising in \eqref{eq:tail-split-intro}. Note that this is a key difference compared to the elliptic case, where the intermediate tails can be treated in a more direct way.

Note that while the excess function $\Phi_{\sigma}$ measures the size of $u/d^s_{\Omega}$, $\Psi$ measures oscillations. Hence, $\Phi_{\sigma}$ will be used in Morrey-type iterative schemes (see \autoref{Lem:ExcessFunctionalEstimate}) to deduce regularity for $u$ of order $C^{s - \eps}_p$ (see \autoref{Thm:Cs-epsRegWithCoefficients}). On the other hand, $\Psi$ is employed in Campanato-type estimates (see \autoref{Lem:CampanatoEstimate}) in order to upgrade the regularity from $C^{s-\eps}_p$ to $C^s_p$ (see \autoref{Thm:CsBoundaryRegAndHopfLemma}(i)) and to establish higher order estimates such as \autoref{Thm:BoundaryHarnack}. In fact, by proving a quantified decay for the excess functional $\Psi$, and using closeness of the barriers $\psi_{z_0}$ and $\psi_{z_1}$ in $C^{s-\eps}$ in terms of $|z_0 - z_1|$  (see \autoref{Lem:ClosenessOfBarriers}), we will prove in \autoref{Lem:uOverPsiHoelderContinuous} that 
\begin{align*}
    \left( (t,x) \mapsto \frac{u(t,x)}{\psi_{(t,x)}(x)} \right) \in C^{\beta}_p(\overline{Q_{1/2}^{\Omega}})
\end{align*}
for any $\beta \le \min\{ \sigma, s -\frac{2s}{p} , s - \frac{2s}{q} - \frac{d}{r} \}$. By a combination with the Hopf lemma (see \autoref{Thm:CsBoundaryRegAndHopfLemma}(ii)), we deduce the boundary Harnack-type result in \autoref{Thm:BoundaryHarnack}.

\subsection{Outline}
In \autoref{sec:Preliminaries}, we introduce some notation and define the weak solution concept. Furthermore, we provide some known regularity results for solutions from the literature and prove an iteration lemma on dyadic scales.
Next, in \autoref{sec:TranslationInvariantCase} we deal with translation invariant, time independent operators and prove $C^s_p$-boundary regularity results as well as an expansion of order $s+\eps$.
We establish comparison estimates between translation invariant equations and equations with coefficients in \autoref{sec:FreezingEstimates}.
In \autoref{sec:Cs-epsRegularity}, we prove $C^{s-\eps}_p$-regularity for solutions in flat Lipschitz domains.
We continue by upgrading the $C^{s-\eps}_p$-regularity to the optimal $C^s_p$-regularity in \autoref{sec:CsRegularity}, and proving a Hopf lemma in \autoref{sec:HopfLemma}.
\autoref{sec:BoundaryHarnack} is devoted to the proof of \autoref{Thm:BoundaryHarnack}.
Finally, in \autoref{sec:DirichletHeatKernelBounds}, we prove the Dirichlet heat kernel bounds from \autoref{Thm:HeatKernelBounds}.

\subsection{Acknowledgments}

Philipp Svinger was supported by the Deutsche Forschungsgemeinschaft (DFG,
German Research Foundation) under Project-ID 317210226 – SFB 1283. \\
Marvin Weidner was supported by the European Research Council under the Grant Agreement No. 101123223 (SSNSD), by the AEI project PID2024-156429NB-I00 (Spain), and by the Deutsche Forschungsgemeinschaft (DFG,
German Research Foundation) under Germany’s Excellence Strategy - EXC-2047/1 - 390685813 and through the CRC 1720.

\section{Preliminaries}\label{sec:Preliminaries}
\subsection{Notation and weak solution concept}
We use the following notation to denote the mean value integral
\begin{equation*}
    (u)_A:= \dashint _A u(x) \intd x = |A| ^{-1} \int _A u(x) \intd x , \qquad A\subset \R^d .
\end{equation*}
We will write $a\wedge b := \min \{ a,b\}$ and $a\vee b := \max \{ a,b\}$.
Furthermore, for $a\in \R$ we use the notation $a=a_+ -a_-$ where $a_+:= a\vee 0$ is the positive part and $a_-:=- (0 \wedge a)$ is the negative part.
In addition, we write $a\lesssim b$ if there exists a positive constant $c>0$ such that $a\leq c b$.
We will use the notation $a\asymp b$ if $a\lesssim b$ and $b \lesssim a$.

For $z_0=(t_0,x_0) \in \mathbb{R} \times \mathbb{R}^d$, $R>0$, and a domain $\Omega \subset \mathbb{R}^d$, we set
    \begin{align*}
        I ^\ominus _{R}(t_0) &:= (t_0-R^{2s},t_0], & Q_R(z_0) &:= I ^\ominus _{R}(t_0) \times B_R(x_0), \\
        B_R^\Omega (x_0) &:= \Omega \cap B_R(x_0), & Q^\Omega_R(z_0) &:= I ^\ominus _{R}(t_0) \times B_R^\Omega(x_0).
    \end{align*}
By $d_\Omega$, we denote the distance to the complement $\Omega^c$, i.e., $d_\Omega (x) := \inf \{ |y-x| \mid y\in \Omega^c\}$.
Moreover, for some $\beta \in (0,2s]$, we define the parabolic Hölder seminorm
\begin{equation*}
    [u]_{C^{\beta} _{p}(Q_R(z_0))} :=\sup _{\substack{t,t'\in I^\ominus _R (t_0) \\ x,x' \in B_R (x_0)}} \frac{|u(t,x)-u(t',x')|}{|t-t'|^{\frac{\beta}{2s}} + |x-x'|^\beta},
\end{equation*}
and use the subscript $t$ and $x$ to define $[u]_{C^\beta_x(I\times\Omega)} := \sup_{t\in I} [u(t,\cdot)]_{C^\beta(\Omega)}$ and $[u]_{C^{\beta/2s}_t(I\times\Omega)} := \sup_{x\in\Omega} [u(\cdot,x)]_{C^{\beta/2s}(I)}$.

The mixed Lebesgue norm is given by
\begin{equation*}
        \| f\| _{L_{t}^q L_x^{r} (Q_R(z_0))} :=  \left( \int _{I ^\ominus _{R}(t_0)} \| f(t,\cdot)\|_{L^r(B_R(x_0))}^q \intd t \right) ^{\frac{1}{q}} .
    \end{equation*}

For $\Omega \subset \Omega ' \subset \R^d$, we define
\begin{equation*}
    V^s(\Omega | \Omega ') := \left\{  v\in L^2 (\Omega ) \colon [v]^2_{V^s (\Omega | \Omega ')} := \iint _{\Omega ' \times \Omega ' \setminus (\Omega ^c \times \Omega ^c)} \frac{(v(x)-v(y))^2}{|x-y|^{d+2s}} \intd x \intd y <\infty\right\} 
\end{equation*}
for some $s\in (0,1)$, and equip this space with the norm
\begin{equation*}
     \| v\| _{ V^s(\Omega | \Omega ')} = \left( \| v\| ^2 _{L^2 (\Omega)} +[v]^2_{V^s (\Omega | \Omega ')} \right) ^{\frac{1}{2}} .
\end{equation*}
If $\Omega = \Omega '$, then we write $H^s(\Omega ):=V^s(\Omega | \Omega ) $ and denote the seminorm by $[v]_{H^s(\Omega)} = \| v\| _{\dot{H}^s(\Omega )} := [v]_{V^s (\Omega | \Omega )}$.
Furthermore, we define $H^s_\Omega (\R ^d ):= \{ v \in H^s(\R ^d)  \, : \, v \equiv 0 \text{ in }\R^d \setminus \Omega \}$.

For a given kernel $K(t,x,y)$ satisfying \eqref{eq:KernelDivergenceForm}, we define the bilinear form
\begin{equation*}
    \mathcal{E}_{t} (u,v) = \frac{1}{2}\int _{\R^d} \int _{\R^d}  (u(x)-u(y)) (v(x)-v(y)) K(t,x,y)  \intd y \intd x
\end{equation*}
for measurable functions $u,v \colon \R^d \to \R$.

Furthermore, let $L^1_{2s}(\R^d )$ be the tail space defined by
\begin{equation*}
    L^1_{2s}(\R^d ) := \left\{ v \in L^1_{loc}(\R^d ) \, : \, \| v\| _{L^1_{2s}(\R^d )} :=\int _{\R ^d} \frac{|v(y)|}{(1+|y|)^{d+2s}} \intd y < \infty \right\}.
\end{equation*}
Note that for $v\in L^1_{2s}(\R^d )$, $\Tail (v; R,x_0)$, as defined in \eqref{eq:tail-def}, is finite for any $x_0\in \R^d$, $R>0$.
\begin{Definition}\label{Def:WeakSolConcept}
     Let $\mathcal{L}_t$ be an operator of the form \eqref{eq:OperatorDivergenceForm}-\eqref{eq:KernelDivergenceForm}.
     Given an interval $I\subset \R$, a bounded domain $\Omega \Subset \Omega ' \subset  \R^d$, and a function $f\in ( L^2(I;H^s _\Omega (\R^d)) )^\ast $, we say that a function
    \begin{equation*}
        u\in L^\infty (I;L^2 (\Omega )) \cap L^2(I;V^s(\Omega | \Omega ')) \cap L^1(I;L^1_{2s}(\R ^d))
    \end{equation*}
    is a weak solution to
    \begin{equation*}
        \partial_t u - \mathcal{L}_tu =f \quad \text{in} \quad I \times \Omega
    \end{equation*}
    if for any $\varphi \in H^1(I;L^2 (\Omega)) \cap L^2(I;H^s _\Omega (\R^d))$ with $\supp (\varphi) \Subset I \times \R ^d$ it holds
    \begin{equation*}
        - \int _I \int _{\R^d} u(t,x) \partial_t \varphi (t,x) \intd x \intd t + \int _I \mathcal{E}_{t} (u(t,\cdot ),\varphi (t,\cdot )) \intd t = \int_I \int _{\R^d} f(t,x) \varphi(t,x)   \intd x \intd t .
    \end{equation*}
\end{Definition}

\begin{Remark}
    Note that the solution concept in \autoref{Def:WeakSolConcept} is particularly well-suited for studying the boundary regularity of solutions to $\partial_t u - \mathcal{L}_tu = f$ in $I \times \Omega$.
    The key reason is the assumption on the spatial regularity of $u$, namely that $u(t, \cdot) \in V^s(\Omega | \Omega ')$ rather than the standard $H^s_{loc}(\Omega)$ space used for interior problems. This is a natural condition for analyzing the boundary behavior, as discussed in \cite[pp. 5-6]{Kim2023a}.
 Indeed, if a solution also satisfies the zero exterior condition
    \begin{equation*}
     u=0 \qquad \text{in}\quad I \times \Omega ^c,
    \end{equation*}
    this framework permits testing with functions of the form $\varphi = \eta u$, where $\eta \in C^\infty_c(I)$ is a smooth cutoff function in time, after an appropriate time mollification (see, e.g., \cite[Appendix B]{Liao2023}).
\end{Remark}

\begin{Remark}
If $\Omega$ is unbounded, then we say that $u$ is a solution to $\partial_t u - \mathcal{L}_tu =f$ in $I \times \Omega$, if for any  bounded $\tilde {\Omega }  \subset \Omega$, $u$ is a solution in $I\times \tilde{ \Omega }$ in the sense of \autoref{Def:WeakSolConcept}.
\end{Remark}

As mentioned in the introduction, in order to capture the Hölder continuity of the kernel $K$ from \eqref{eq:KernelHoelderCont} for some $\sigma \in (0,s)$, we use the $\sigma$-Tail
\begin{equation*}
    \Tail _{\sigma ,B_1} (u(t,\cdot );R,x_0) = R^{2s-\sigma} \int_{B_1 \setminus B_R(x_0)} |u(t,y)| |y-x_0| ^{-d-2s+\sigma} \intd y
\end{equation*}
which was introduced in \cite{Kim2024}.
Note that $\Tail _{\sigma ,B_1} (u;R,x_0) \leq R^{-\sigma} \Tail  (u;R,x_0)$, hence, $\Tail _{\sigma ,B_1} (u;R,x_0)$ is finite for every $u\in L^{1}_{2s} (\R^d )$.
Furthermore, we have the estimate
\begin{equation} \label{eq:TailSplitSigmaTail}
     \Tail (u;R,x_0) \leq \Tail _{\sigma ,  B_1} (u;R,x_0) + cR^{2s}\| u\|_{L^\infty (\R^d )} .
\end{equation}

\subsection{Auxiliary results for weak solutions}

We start with the following Caccioppoli-type estimate.
\begin{Proposition}\label{Prop:CaccioppoliEstimate}
    Let $s\in (0,1)$, and $\Omega \subset \R^d$ be a domain.
    Let $z_0=(t_0,x_0) \in \R \times \R^d$ and $R>0$.
    Furthermore, let $\mathcal{L}_t$ be an operator of the form \eqref{eq:OperatorDivergenceForm}-\eqref{eq:KernelDivergenceForm}.
    Assume that $u$ is a weak solution of 
    \begin{equation*}
        \left\{ \begin{aligned}
          \partial _t u -\mathcal{L}_tu & =f & &\text{in} \quad I ^\ominus_R(t_0) \times ( B_R (x_0) \cap \Omega )  , \\
           u & =0 & & \text{in} \quad  I ^\ominus_R(t_0) \times ( B_R (x_0) \setminus \Omega ) 
        \end{aligned} \right.
    \end{equation*}
    for some $f\in L^q_tL^r_x (Q_R^\Omega (z_0))$ where $q,r\geq 1$ satisfy $\frac{1}{q}+\frac{d}{2sr} <1$.      
    Then,
    \begin{align*}
        \| u\|_{L^2_t \dot{H}_x^s(Q_{R/2} (z_0))} +\| u\|_{L^\infty_t L_x^2(Q_{R/2} (z_0))} & \leq c R^{\frac{d}{2}} \Bigg( R^{-\frac{d}{2}-s}\| u\| _{L^2 (Q_R(z_0))} + \sup _{t\in I_R^\ominus (t_0)} \Tail  (u;R/2,x_0)    \\
        & \qquad \qquad \qquad+ R^{2s -\frac{d}{r}-\frac{2s}{q}}\| f\| _{L^{q}_t L^r_{x} (Q_R^\Omega (z_0))} \Bigg) 
    \end{align*}
    for some constant $c=c(d,s,\lambda ,\Lambda ,q,r )>0$.
\end{Proposition}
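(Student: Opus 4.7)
The plan is a standard parabolic Caccioppoli computation, testing the equation against $\varphi(t,x) = \tau(t)\eta^2(x) u(t,x)$ where $\eta \in C^\infty_c(B_R(x_0))$ satisfies $\eta \equiv 1$ on $B_{R/2}(x_0)$ and $|\nabla \eta| \leq C R^{-1}$, and $\tau$ is a Lipschitz cutoff in time with $\tau(t_\ast)=1$ at an arbitrary $t_\ast \in I_{R/2}^\ominus(t_0)$ and $\tau \equiv 0$ on $(-\infty,t_0 - R^{2s}]$, with $|\tau'|\leq C R^{-2s}$. Since $u \equiv 0$ in $I_R^\ominus(t_0) \times (B_R(x_0) \setminus \Omega)$ by assumption, $\varphi$ vanishes outside $\Omega$ in $x$ on the relevant time-slab and is therefore admissible in the sense of \autoref{Def:WeakSolConcept} after a standard Steklov-average / time-mollification (cf.\ the remark after that definition).

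The time-derivative term produces $-\iint u \partial_t(\tau \eta^2 u)\intd x\intd t = \tfrac12 \int \eta^2 u^2(t_\ast,\cdot)\intd x - \tfrac12 \iint \tau' \eta^2 u^2\intd x\intd t$; taking the supremum over $t_\ast$ yields the $\|u\|_{L^\infty_t L^2_x(Q_{R/2})}$ contribution, while $-\iint \tau' \eta^2 u^2$ absorbs into the $R^{-2s}\|u\|^2_{L^2(Q_R)}$ term. For the bilinear form, the standard pointwise identity
\[ \bigl(u(x)\eta^2(x) - u(y)\eta^2(y)\bigr)\bigl(u(x) - u(y)\bigr) \geq \tfrac{1}{2}\bigl(u(x)\eta(x) - u(y)\eta(y)\bigr)^2 - 2 u(y)^2 \bigl(\eta(x) - \eta(y)\bigr)^2 \]
combined with \eqref{eq:KernelDivergenceForm} gives $\mathcal{E}_t(u, u\eta^2) \geq \tfrac{\lambda}{4}[u\eta]^2_{H^s(\R^d)} - C\Lambda \iint u(y)^2 \frac{(\eta(x)-\eta(y))^2}{|x-y|^{d+2s}}\intd x \intd y$. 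The cutoff error is split according to $|x-y| \leq R$ versus $|x-y|>R$: on the near-diagonal part I use $|\eta(x)-\eta(y)| \leq CR^{-1}|x-y|$ to bound the piece by $C R^{-2s}\|u\|_{L^2(B_R(x_0))}^2$, while on the far part $\eta(y)=0$ forces $y\notin B_R(x_0)$, and using $|x-y| \asymp |y-x_0|$ for $x \in \supp \eta$ yields $C R^{d}\sup_{t \in I_R^\ominus(t_0)} \Tail(u(t,\cdot); R/2, x_0)^2$.

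The inhomogeneity is handled by H\"older in space-time followed by a parabolic Gagliardo--Nirenberg interpolation: using the fractional Sobolev embedding $\|u\eta\|_{L^{2d/(d-2s)}_x} \leq C[u\eta]_{\dot H^s_x}$ (valid since $d > 2s$) together with interpolation against $L^\infty_t L^2_x$, one gets
\[ \|u\eta\|_{L^{q'}_t L^{r'}_x(Q_R)} \leq C R^{2s - \frac{d}{r} - \frac{2s}{q}} \bigl(\|u\eta\|_{L^\infty_t L^2_x(Q_R)} + [u\eta]_{L^2_t \dot H^s_x(Q_R)}\bigr)^{?}\cdot\text{(suitable power)} \]
with exponents admissible exactly because the subcriticality $\tfrac{1}{q}+\tfrac{d}{2sr}<1$ leaves room in the Gagliardo--Nirenberg scaling; Young's inequality then separates the bound into an $\epsilon$-fraction of the energy (absorbable into the left-hand side) and a $C_\epsilon R^{2(2s - d/r - 2s/q)} \|f\|_{L^q_t L^r_x(Q_R^\Omega)}^2$ remainder. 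Absorbing the $\epsilon$-term and taking square roots produces the claimed estimate.

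The main technical obstacle is the parabolic interpolation controlling the $f$-term under the sharp subcriticality condition, which requires simultaneous interpolation in space (via the fractional Sobolev embedding) and in time so that the Young's-inequality absorption works with both the $L^\infty_t L^2_x$ and $L^2_t \dot H^s_x$ pieces; everything else is the standard nonlocal cutoff bookkeeping and careful isolation of the tail contribution.
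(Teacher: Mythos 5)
Your overall strategy --- testing with $\tau(t)\eta^2(x)u(t,x)$, splitting off the nonlocal tail, and controlling the forcing term by a parabolic Gagliardo--Nirenberg interpolation under the subcriticality $\frac{1}{q}+\frac{d}{2sr}<1$ --- is the same route the paper takes (the paper gives only a reference to the literature for this step, but carries out the analogous $f$-estimate explicitly inside the proof of \autoref{Lem:EstimateOnHsw}). However, there is a genuine gap in how you handle the far-field contribution of the bilinear form.

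The pointwise inequality you invoke,
\[
\bigl(u(x)\eta^2(x)-u(y)\eta^2(y)\bigr)\bigl(u(x)-u(y)\bigr) \geq \tfrac12\bigl(u(x)\eta(x)-u(y)\eta(y)\bigr)^2 - 2\,u(y)^2\bigl(\eta(x)-\eta(y)\bigr)^2,
\]
is not valid as a pointwise statement: take $u(x)=M$ large, $u(y)=1$, $\eta(x)=0$, $\eta(y)=1$, so the left side is $1-M$ while the right side is $-\tfrac32$. What actually holds is the algebraic identity
\[
\bigl(u(x)\eta^2(x)-u(y)\eta^2(y)\bigr)\bigl(u(x)-u(y)\bigr) = \bigl(u(x)\eta(x)-u(y)\eta(y)\bigr)^2 - u(x)u(y)\bigl(\eta(x)-\eta(y)\bigr)^2,
\]
whose error carries $u(x)u(y)$, not $u(y)^2$. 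On the near-diagonal region $B_R\times B_R$ one may replace $u(x)u(y)$ by $\tfrac12(u^2(x)+u^2(y))$ and then use $|\eta(x)-\eta(y)|\le CR^{-1}|x-y|$ to land on $CR^{-2s}\|u\|^2_{L^2(B_R)}$ --- that part of your argument is fine. The problem is the far field, $x\in\supp\eta$, $y\notin B_R(x_0)$: there, crushing $u(x)u(y)$ down to $u(y)^2$ produces $R^{d-2s}\int_{B_R^c}u^2(y)|y-x_0|^{-d-2s}\,dy$, a \emph{weighted} $L^2$ quantity that is neither $\Tail(u;R/2,x_0)^2$ nor controlled by it, and which can be $+\infty$ under the standing hypothesis $u\in L^1(I;L^1_{2s}(\R^d))$. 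The right-hand side of the statement only sees the $L^1$-type tail $\Tail(u;R/2,x_0)=R^{2s}\int_{B_{R/2}^c}|u(y)||y-x_0|^{-d-2s}\,dy$.

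The fix is to split the double integral into $B_R\times B_R$ and $\supp\eta\times B_R^c$ \emph{before} applying any quadratic crushing, and keep $u(y)$ linear on the far field:
\[
\left|\iint_{\supp\eta\times B_R^c} u(x)\eta^2(x)\,u(y)\,K(t,x,y)\,\d y\,\d x\right| \lesssim \Bigl(\int_{B_R}|u(x)|\eta^2(x)\,\d x\Bigr)\,R^{-2s}\,\Tail(u(t,\cdot);R/2,x_0),
\]
using $|x-y|\gtrsim|y-x_0|$ for $x\in\supp\eta\subset B_{3R/4}(x_0)$, $y\notin B_R(x_0)$. Cauchy--Schwarz in $x$ gives $\|u\eta\|_{L^1_x}\le CR^{d/2}\|u\eta\|_{L^2_x}$, and Young's inequality after integrating in $t$ yields an $\eps\sup_t\|u\eta\|^2_{L^2_x}$ term (absorbable on the left) plus $C_\eps R^d\bigl(\sup_t\Tail(u;R/2,x_0)\bigr)^2$, which is exactly the tail contribution in the claimed bound after taking square roots.
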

\begin{proof}
    The proof follows using standard techniques.
    Let $\xi$ be a smooth cut-off function in space with $\ind _{B_{R/2}(x_0)}\leq \xi \leq \ind _{B_{3R/4}(x_0)}$ and $\| \nabla \xi \| _{L^\infty (\R^d)} \lesssim R^{-1}$.
    Given $\tau \in I_{R/2}^\ominus (t_0)$, let $\eta _k$ be a sequence of cut-off functions in time which for $k\to \infty$ approximates a function $\eta$ satisfying $\eta = 1$ on $[t_0-(R/2)^{2s},\tau]$, $\eta = 0$ on $(\tau , t_0 ]$ and $\| \eta ' \| _{L^\infty (t_0-R^{2s},t_0-(R/2)^{2s})} \lesssim R^{-2s}$.
    Note that we can test the equation (after appropriate time mollifications, see, e.g., \cite[Lemma 3.1]{Byun2023}) with $u \xi ^2 \eta _k$.
    From here, one can prove \autoref{Prop:CaccioppoliEstimate} with standard techniques.
    See, for example \cite[Lemma 3.2, p.13]{Byun2023} where also a right-hand side in $L^q_tL^r_x$ was considered (see also \autoref{Lem:EstimateOnHsw}).
    \end{proof}

Next, we have the following local boundedness result up to the boundary.

\begin{Proposition} \label{Prop:LocalBoundedness}
    Under the same assumptions as in \autoref{Prop:CaccioppoliEstimate}, we have
    \begin{equation}\label{eq:LocalBoundedness}
    \begin{aligned}
        \| u\| _{L^\infty (Q_{R/2}(z_0))} & \leq c \Bigg( R^{-d-2s}\| u \|  _{L^1(Q_R(z_0))}  +  \sup _{t\in I^\ominus _R (t_0)}\Tail  (u;R/2,x_0)    \\
        & \qquad \qquad+ R^{2s -\frac{d}{r}-\frac{2s}{q}}\| f\| _{L^{q}_t L^{r}_{x} (Q_R^\Omega (z_0))} \Bigg) 
    \end{aligned}
    \end{equation}
    for some constant $c=c(d,s,\lambda ,\Lambda ,q,r )>0$.
\end{Proposition}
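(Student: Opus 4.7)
The plan is to prove the $L^\infty$ bound by a standard De Giorgi truncation/iteration scheme up to the boundary, using the preceding Caccioppoli estimate as the main energy input. Since $u$ vanishes outside $\Omega$ in the relevant cylinder, replacing $u$ by $(u-\ell)_+$ for a level $\ell>0$ still yields a subsolution of a divergence-form nonlocal equation in $I_R^\ominus(t_0)\times B_R(x_0)$ after a harmless truncation argument (the nonlocal term contributes a nonpositive piece, plus a tail term involving $(u-\ell)_+$ on the exterior). Thus the full machinery of the interior/up-to-the-boundary De Giorgi scheme for nonlocal divergence-form parabolic equations applies.

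First I would fix shrinking parabolic cylinders $Q_k := Q_{r_k}(z_0)$ with $r_k = R/2 + 2^{-k-1}R$, together with levels $\ell_k = M(1-2^{-k})$, where $M>0$ is a free parameter to be chosen at the end of the iteration. Write $w_k := (u-\ell_k)_+$ and $A_k := \iint_{Q_k} w_k^2\d x\d t$. Applying \autoref{Prop:CaccioppoliEstimate} to $w_k$ on $Q_k$, with the pair of cutoff functions adapted to the geometric difference $r_{k-1}-r_k\asymp 2^{-k}R$, gives
\begin{equation*}
 \|w_k\|_{L^2_t\dot H^s_x(Q_k)}^2+\|w_k\|_{L^\infty_tL^2_x(Q_k)}^2 \lesssim \Big(2^{k}R^{-s}\Big)^2 A_{k-1} + R^{d}\!\sup_{I_{r_{k-1}}^\ominus(t_0)}\!\!\Tail(w_k;r_k,x_0)^2 + \big(2^{k}R^{2s-\tfrac{d}{r}-\tfrac{2s}{q}}\|f\|_{L^q_tL^r_x(Q^\Omega_R)}\big)^2.
\end{equation*}
Next I would invoke the parabolic fractional Sobolev embedding $L^\infty_tL^2_x\cap L^2_t\dot H^s_x \hookrightarrow L^{2\kappa}_{t,x}$ with $\kappa = 1+\tfrac{2s}{d}$, together with the level-set observation $\{w_k>0\}\subset\{w_{k-1}>2^{-k}M\}$, which yields $|\{w_{k-1}>2^{-k}M\}|\le (2^k/M)^2 A_{k-1}$ and allows us to use Hölder's inequality to convert the $L^{2\kappa}$ bound into a bound for $A_k$ in terms of a higher power of $A_{k-1}$.

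The tail terms are the genuinely nonlocal difficulty: I would control $\Tail(w_k;r_k,x_0)$ by splitting the exterior integral into the annulus $B_R\setminus B_{r_k}$, where one bounds $w_k\le u_+$ and then estimates $\dashint_{B_R}u_+\lesssim R^{-d/2}\|u\|_{L^2(Q_R)}$ (uniformly in $t$, after using the $L^\infty_tL^2_x$-bound which is already inside the Caccioppoli estimate applied on a slightly larger scale), and the far-field part $\R^d\setminus B_R(x_0)$, which is dominated by $\sup_{I_R^\ominus(t_0)}\Tail(u;R/2,x_0)$. The geometric factor $(r_k/r_{k-1})^{-d-2s}\lesssim 1$ keeps the tail contribution uniformly bounded in $k$ up to an extra $2^{ck}$ power, which is absorbed by the De Giorgi recursion.

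Combining these ingredients produces a nonlinear recursion of the form $A_k \le C\,2^{ck} M^{-\gamma}(A_{k-1})^{1+\delta}$ for some $\delta>0$ (with $\delta$ quantified by the subcriticality $\tfrac{1}{q}+\tfrac{d}{2sr}<1$, which is what lets the $f$-contribution be absorbed on the right scale). By the standard fast geometric convergence lemma, $A_k\to 0$ whenever $A_0$ is smaller than a certain universal power of $M$; choosing
\begin{equation*}
 M := c\Big(R^{-d-2s}\|u\|_{L^1(Q_R(z_0))} + \sup_{I_R^\ominus(t_0)}\Tail(u;R/2,x_0) + R^{2s-\tfrac{d}{r}-\tfrac{2s}{q}}\|f\|_{L^q_tL^r_x(Q^\Omega_R(z_0))}\Big)
\end{equation*}
ensures $A_0 \le$ the required threshold (here one uses the trivial inequality $\|u\|_{L^2(Q_R)}^2\le \|u\|_{L^\infty(Q_R)}\|u\|_{L^1(Q_R)}$, applied on a mildly enlarged cylinder, together with a standard interpolation step to turn the $L^2$ input of Caccioppoli into an $L^1$ input on the right-hand side of \eqref{eq:LocalBoundedness}). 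Then $(u-M)_+\equiv 0$ on $Q_{R/2}(z_0)$, giving $u\le M$; the lower bound follows by applying the same reasoning to $-u$.

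The main obstacle is the careful treatment of the nonlocal tail along the iteration: one must verify that replacing $u$ by the truncation $(u-\ell_k)_+$ preserves the subsolution property up to a controllable tail remainder and that this remainder does not destroy the geometric decay of $A_k$. The interpolation step $\|u\|_{L^2(Q_R)}^2\lesssim \|u\|_{L^\infty}\|u\|_{L^1}$ (used to pass from the natural $L^2$ input of Caccioppoli to the $L^1$ input in \eqref{eq:LocalBoundedness}) is the other place where some care is required, and is handled in the standard way by first proving an $L^2$-to-$L^\infty$ bound on a scale $R$, and then using Young's inequality to absorb an $\varepsilon\|u\|_{L^\infty(Q_R)}$ term on shrinking cylinders via a further covering/iteration argument.
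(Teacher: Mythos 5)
Your proposal is correct and uses essentially the same technique as the paper's proof: the paper simply cites \cite{Byun2023} for the interior $L^2$-based bound, adapts the Caccioppoli argument to reach the boundary, and invokes a standard iteration to upgrade the $L^2$-norm to an $L^1$-norm, and your De Giorgi iteration plus interpolation $\|u\|_{L^2}^2 \le \|u\|_{L^\infty}\|u\|_{L^1}$ with Young's inequality/covering is exactly what those references do. One small imprecision: Proposition \ref{Prop:CaccioppoliEstimate} cannot literally be ``applied to $w_k$'' since $(u-\ell_k)_+$ is not itself a solution; one must re-derive the Caccioppoli-type energy inequality for truncations by testing with $w_k\xi^2\eta$, but you acknowledge this in passing and it does not affect the substance of the argument.
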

\begin{proof}
    In \cite[Theorem 1.1]{Byun2023}, \eqref{eq:LocalBoundedness} was proven for interior solutions, i.e. in the case $B_R(x_0) \subset \Omega $, and with the $L^1$-norm $R^{-d-2s}\| u \|  _{L^1(Q_R(z_0))}$ on the right-hand side of \eqref{eq:LocalBoundedness} replaced by $R^{-d/2-s}\| u \|  _{L^2(Q_R(z_0))}$.
    First, note that by the same arguments as in the proof of \autoref{Prop:CaccioppoliEstimate}, the proof of \cite[Theorem 1.1]{Byun2023} can be adapted to also hold up to the boundary of $Q_{R/2}^\Omega(z_0)$.
    Second, note that by a standard iteration argument the $L^2$-norm of $u$ on the right-hand side of the estimate can be replaced by the $L^1$-norm (see, e.g., \cite[Theorem 6.2]{KassmannWeidner2022.2} or \cite[(5.1)]{Kassmann2024}).
\end{proof}
 
Finally, let us recall an interior regularity result.

\begin{Proposition} \label{Prop:InteriorRegularity}
Let $\mathcal{L}_t$ be an operator of the form \eqref{eq:OperatorDivergenceForm}-\eqref{eq:KernelDivergenceForm}, satisfying \eqref{eq:KernelHoelderCont} with $\mathcal{A}=Q_R(z_0)$.
Assume that $u$ is a weak solution of 
\begin{equation*}
    \partial _t u - \mathcal{L}_tu =f \quad \text{in}\quad Q_R(z_0) 
\end{equation*}
for some $f\in L^q_tL^r_x (Q_R(z_0))$ where $q,r\geq 1$ satisfy $\frac{1}{q}+\frac{d}{2sr} <1$.
Furthermore, let $\gamma$ be a positive number satisfying $\gamma <1$ and $\gamma \leq 2s- \left( \frac{d}{r}+\frac{2s}{q} \right) $.
Then, 
\begin{align*}
    [u]_{C^{\gamma}_{p} (Q_{R/2}(z_0))} & \leq c  R^{-\gamma }\Bigg( R^{-\frac{d}{2}-s} \| u\|_{L^2(Q_R(z_0)} + \sup _{t\in I^\ominus _R(t_0)} \Tail (u ;R/2 ,x_0)  \\
    & \qquad \qquad +R^{2s- \left( \frac{2s}{q} + \frac{d}{r}  \right)} \| f\| _{L^{q}_{t} L^r_x(Q_R(z_0))}\Bigg)
\end{align*}
for some constant $c=c(d,s,q,r, \lambda , \Lambda ,\sigma ,\gamma )>0$.
\end{Proposition}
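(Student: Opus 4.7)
The plan is to combine a kernel-freezing argument with a Campanato-type iteration on parabolic dyadic cylinders. By the parabolic scaling $(t,x) \mapsto (R^{-2s}(t - t_0), R^{-1}(x - x_0))$, one may assume $R = 1$ and $z_0 = 0$; it then suffices to bound $[u]_{C^\gamma_p(Q_{1/2})}$ by the normalized right-hand side, and the claim for general $R$ follows by rescaling back, since the kernel and all norms transform consistently under this scaling.

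For any reference point $z_1 = (t_1, x_1) \in Q_{1/2}$ and scale $\rho \in (0, 1/4)$, freeze the kernel at $z_1$ via the symmetrized shift
\begin{equation*}
    K_{z_1}(x,y) = \tfrac{1}{2}\bigl( K(t_1, x_1 + x - y, x_1) + K(t_1, x_1 + y - x, x_1) \bigr),
\end{equation*}
which yields a translation-invariant, time-independent kernel satisfying \eqref{eq:KernelDivergenceForm}, and let $L_{z_1}$ denote the associated operator. Let $v$ solve $\partial_t v - L_{z_1} v = 0$ in $Q_\rho(z_1)$ with $v = u$ on the parabolic boundary. Then $w = u - v$ solves $\partial_t w - L_{z_1} w = f + (\mathcal{L}_t - L_{z_1}) u$ in $Q_\rho(z_1)$ with zero parabolic boundary data. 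By \eqref{eq:KernelHoelderCont}, the defect operator $(\mathcal{L}_t - L_{z_1})u$ has kernel bounded by $c \rho^\sigma |x-y|^{-d-2s}$ on $Q_\rho(z_1)$; hence the Caccioppoli estimate of \autoref{Prop:CaccioppoliEstimate} applied to $w$, combined with \autoref{Prop:LocalBoundedness} on a slightly larger cylinder, yields
\begin{equation*}
    \rho^{-d/2 - s} \|w\|_{L^2(Q_{\rho/2}(z_1))} \leq c \rho^\sigma \bigl( \text{local averages of } u + \text{tail} \bigr) + c \rho^{2s - 2s/q - d/r} \|f\|_{L^q_t L^r_x(Q_\rho(z_1))}.
\end{equation*}

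For the frozen equation, translation-invariant parabolic interior regularity (a parabolic counterpart of the elliptic estimates in \cite{Fall2020}, as carried out in \cite{Kassmann2024,Nguyen2024,Diening2025b}) yields $v \in C^{2s - \varepsilon}_p$ in the interior of $Q_\rho(z_1)$ for any $\varepsilon > 0$, with $L^2$-control and a tail term on the right-hand side. In particular, the mean-value excess of $v$ decays like $\theta^{\gamma + \varepsilon}$ for any $\theta \in (0, 1/4)$. Combining this decay with the comparison estimate for $w$, and performing a dyadic splitting of the tail of $u$ in the spirit of \eqref{eq:tail-split-intro}, one obtains an excess-decay inequality
\begin{equation*}
    \inf_{c \in \R} \dashint_{Q_{\theta\rho}(z_1)} |u - c| \intd x \intd t \leq c \theta^{\gamma} \inf_{c \in \R} \dashint_{Q_{\rho}(z_1)} |u - c| \intd x \intd t + c \theta^{-d-2s}\rho^{\gamma} \cdot E,
\end{equation*}
where $E$ denotes the normalized data on the right-hand side of the claimed estimate. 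A standard geometric iteration together with Campanato's characterization of $C^\gamma_p$ then gives the desired H\"older estimate uniformly in $z_1 \in Q_{1/2}$.

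The main obstacle is the bookkeeping of the long-range tail contributions: each step of the iteration produces an intermediate tail term on scale $2^k \rho$, and these must be summed without spoiling the $L^q_t L^r_x$-integrability of $f$ nor the $L^\infty_t$-nature of the global tail quantity. \autoref{Prop:CaccioppoliEstimate} and \autoref{Prop:LocalBoundedness} are precisely the tools that convert the $L^2_{t,x}$ and intermediate $L^\infty_t$-tail quantities arising during the iteration into data controlled by the single global quantity $\sup_{t \in I^\ominus_R(t_0)} \Tail(u; R/2, x_0)$, so that the final estimate has the stated form. A subtler point is verifying that the iteration can be run uniformly for any admissible $\gamma$, including the borderline case $\gamma = 2s - (\frac{d}{r} + \frac{2s}{q})$; here one has to exploit that the forcing contribution in the excess-decay estimate scales exactly like $\rho^{2s - 2s/q - d/r}$, matching the claimed H\"older exponent.
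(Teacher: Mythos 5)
Your proposal reconstructs from scratch a freezing-plus-Campanato argument; the paper, by contrast, disposes of this proposition in two sentences by citing \cite[Theorem 1.2]{Byun2023} for the non-endpoint range $\gamma < 2s - (\tfrac{d}{r} + \tfrac{2s}{q})$ and then noting that the endpoint case $\gamma = 2s - (\tfrac{d}{r} + \tfrac{2s}{q}) < 1$ follows from the same proof without change (or alternatively from \cite[Theorem 1.7]{Diening2025}). So the two routes differ substantially in economy: the paper outsources, you rebuild.

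Your rebuilt argument is the right shape and is essentially what the cited work does internally, but two points deserve sharpening if you were to actually carry it out. First, the bound ``the defect operator $(\mathcal{L}_t - L_{z_1})u$ has kernel bounded by $c\rho^\sigma |x-y|^{-d-2s}$ on $Q_\rho(z_1)$'' is only valid for $x,y$ \emph{both} inside $B_\rho(x_1)$; since $\mathcal{L}_t$ and $L_{z_1}$ are nonlocal, the defect $(\mathcal{L}_t - L_{z_1})u(t,x)$ for $x \in B_\rho(x_1)$ also picks up contributions from $y$ far away, where \eqref{eq:KernelHoelderCont} gives no smallness and the kernel difference is only bounded by $2\Lambda|x-y|^{-d-2s}$; these far contributions must be absorbed into the tail term rather than gaining a power of $\rho^\sigma$. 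This is precisely the splitting carried out in \autoref{Lem:EstimateOnHsw} in the boundary setting (terms $I_2$--$I_4$), and you would need an interior analogue here. Second, your excess-decay inequality carries a prefactor $\theta^{-d-2s}$ in front of the data term $\rho^\gamma E$, which as written does not iterate cleanly; in a careful Campanato scheme one fixes a single small $\theta$ once and for all, after which the $\theta$-dependent constant is simply absorbed, but one should say so to avoid the appearance that the iteration is losing. Neither issue is fatal, but both are the places where a proof sketch of this kind can silently go wrong; the paper sidesteps them entirely by citing.
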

\begin{proof}
    First, note that \autoref{Prop:InteriorRegularity} was proven in \cite[Theorem 1.2]{Byun2023} in the case $\gamma \in (0, \min \{ 2s- \left( \frac{d}{r}+\frac{2s}{q} \right)  ,1 \}$ under slightly more general assumptions than \eqref{eq:KernelHoelderCont}.
    
    Note that the endpoint case, namely $\gamma = 2s- \left( \frac{d}{r}+\frac{2s}{q} \right) <1$, also follows from the proof of \cite{Byun2023}, without any change. Alternatively, one could proceed for instance as in the proof of \cite[Theorem 1.7]{Diening2025}.
\end{proof}

\subsection{An iteration lemma}

We need the following version of the iteration lemma from \cite[Lemma 5.13]{Giaquinta2012}, which only takes into account radii on geometric scales $\theta^{-m}$ for some $\theta > 1$.

\begin{Lemma}\label{Lem:IterationLemmaOnDyadicScale}
    Let $A,B,\alpha , \beta, R_0 >0$, $\theta >1$ and $\eps \geq 0 $ be constants such that $\alpha >\beta$.
    Furthermore, let $\Phi \colon \R_+ \to [0, \infty )$ be a function satisfying
    \begin{equation}\label{eq:AssumptionIterationLemmaDyadic}
        \Phi (\theta^{-m}R ) \leq A \left[ \left(  \theta^{-m}\right) ^\alpha + \eps \right] \Phi(R) +BR^\beta \quad \text{for all }0<R \leq R_0 \text{ and } m\in \N .
    \end{equation}
    Then, there exist $\eps _0 >0 $ and $c>0 $ depending on $A,\alpha ,\beta ,\theta$ such that if $\eps \leq \eps _0$, then we have
    \begin{equation}\label{eq:IterationLemmaDyadic}
        \Phi (\theta^{-m}R ) \leq c \left[ \frac{\Phi (R)}{R^\beta} +B \right] \left( \theta^{-m}R\right) ^\beta \quad \text{for all } 0 <R \leq R_0 \text{ and } m\in \N . 
    \end{equation}
\end{Lemma}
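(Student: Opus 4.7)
The plan is to follow the classical geometric iteration argument in the spirit of \cite[Lemma 5.13]{Giaquinta2012}, exploiting $\alpha > \beta$ to obtain a contractive recursion after a sufficient number of dyadic steps. The role of $\eps$ is merely to ensure that the perturbation in the prefactor does not destroy this contraction, which dictates the choice of $\eps_0$.

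First, I would fix $m_0 \in \N$ large enough (depending only on $A,\alpha,\beta,\theta$) that $A\theta^{-m_0(\alpha-\beta)} \leq 1/4$, which is possible since $\alpha > \beta$, and then set $\eps_0 := \theta^{-m_0\beta}/(4A)$. With these choices, for every $\eps \in [0,\eps_0]$ one has $A[\theta^{-m_0\alpha} + \eps] \leq \tfrac{1}{2}\theta^{-m_0\beta}$, so the hypothesis \eqref{eq:AssumptionIterationLemmaDyadic} applied with $m = m_0$ yields the contractive inequality
\begin{equation*}
\Phi(\theta^{-m_0}R) \leq \tfrac{1}{2}\,\theta^{-m_0\beta}\,\Phi(R) + B R^\beta \qquad \text{for all } 0 < R \leq R_0.
\end{equation*}

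Next, I would iterate this inequality at the geometric sequence of radii $R_k := \theta^{-km_0}R$. Setting $b_k := \Phi(R_k)/R_k^\beta$, a direct computation turns the recursion into $b_{k+1} \leq \tfrac{1}{2}b_k + B\theta^{m_0\beta}$, and a geometric summation gives $b_k \leq b_0 + 2B\theta^{m_0\beta}$. Multiplying through by $R_k^\beta$ recovers \eqref{eq:IterationLemmaDyadic} along the subsequence $m = km_0$. For arbitrary $m \in \N$, I would write $m = km_0 + r$ with $0 \leq r < m_0$, apply the hypothesis once more at scale $R_k$ (where the prefactor $A[(\theta^{-r})^\alpha + \eps] \leq A(1+\eps_0)$ is a constant depending only on $A,\alpha,\beta,\theta$), and then use $R_k^\beta = \theta^{r\beta}(\theta^{-m}R)^\beta \leq \theta^{m_0\beta}(\theta^{-m}R)^\beta$ to rewrite the bound in terms of the desired radius, yielding the claim with $c = c(A,\alpha,\beta,\theta)$.

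The argument is essentially routine and I do not anticipate any genuine obstacle; the only items requiring some care are (i) choosing $\eps_0$ only after $m_0$ has been fixed, so that both depend on just $A,\alpha,\beta,\theta$ and not on $R_0$ or the unknown $\Phi$, and (ii) bridging from the subsequence $m = km_0$ to arbitrary $m$, which is handled by applying the hypothesis one extra time on a scale bounded away from zero by $\theta^{-m_0}$.
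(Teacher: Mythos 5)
Your proof is correct and follows essentially the same strategy as the paper's: fix a large geometric step so that, for $\eps$ below a threshold determined only by $A,\alpha,\beta,\theta$, the prefactor in the hypothesis becomes contractive; iterate along the resulting subsequence of scales; and bridge from that subsequence to an arbitrary $m$ by one further application of the hypothesis with a remainder exponent $r<m_0$. The paper executes the contraction via an intermediate exponent $\gamma=(\alpha+\beta)/2$ and the bound $2A\tau^\alpha\le\tau^\gamma$, whereas you aim directly for the factor $\tfrac12\theta^{-m_0\beta}$ and phrase the iteration in terms of $b_k=\Phi(R_k)/R_k^\beta$; these are cosmetic variants of the same argument, and your version is if anything slightly cleaner.
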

\begin{proof}
    Set $\gamma := \frac{\alpha +\beta}{2}$ and choose $k=k(A,\alpha ,\beta ,\theta ) \in \N$ such that $2A\theta^{-k\alpha} \leq \theta^{-k \gamma}$. Furthermore, set $\tau =\theta^{-k}$, so that $2A\tau ^\alpha \leq \tau ^\gamma$.
    Now, fix $0 <R \leq R_0$ and $m \in \N$. The goal is to prove \eqref{eq:IterationLemmaDyadic} for this pair $( m ,R)$.
    First, we will choose $l,i\in \N$ with $i<k$ such that $m=lk+i$, i.e., $\theta^{-m}R =\tau ^l \theta^{-i}R$.
    By \eqref{eq:AssumptionIterationLemmaDyadic}, we have 
    \begin{equation}\label{eq:ProofIterationLemmaRephrasAssumpDyadic}
        \Phi (\tau ^j R) \leq \tau ^\alpha A \left[ 1+\frac{\eps}{\tau ^\alpha} \right] \Phi (\tau ^{j-1}R) +B \tau ^{(j-1)\beta} R^\beta 
    \end{equation}
    for every $j\in \N$.
    Next, we choose $\eps _0 >0$ such that $\frac{\eps _0}{\tau ^\alpha} <1$.
    Then, for $\eps \leq \eps _0$, \eqref{eq:ProofIterationLemmaRephrasAssumpDyadic} becomes
    \begin{equation*}
        \Phi (\tau ^j R) \leq \tau ^\gamma  \Phi (\tau ^{j-1}R) +B \tau ^{(j-1)\beta} R^\beta .
    \end{equation*}
    Iterating this estimate for all $j\in \{ 1, \dots , l \}$, yields
    \begin{align*}
        \Phi (\tau ^l R) & \leq \tau ^\gamma  \Phi (\tau ^{l-1}R) +B \tau ^{(l-1)\beta} R^\beta \\
        & \leq \tau ^{l\gamma} \Phi (R) + B \tau ^{(l-1)\beta} R^\beta \sum _{j=0}^{l-1} \tau ^{j(\gamma - \beta)}\\
        & \leq \left[ \tau ^{-\beta} +\tau ^{-2\beta } \sum _{j=0}^\infty  \tau ^{j(\gamma -\beta )} \right] \tau ^{(l+1)\beta } \left( \Phi (R) +BR^\beta \right) \\
        & = c_1 \tau ^{(l+1)\beta} (\Phi (R) + B R^\beta )
    \end{align*}
    for some constant $c_1=c_1(A,\alpha ,\beta ,\theta )>0$.
    Using $\theta^{-m}R = \tau ^l \theta^{-i}R$ and $i<k$, this yields
    \begin{equation}\label{eq:IterationDyadicFirstEstimate}
        \Phi (\tau ^l R) \leq c_1 \tau ^\beta \theta^{i \beta}\left[ \frac{\Phi (R)}{R^\beta} +B  \right] \left( \theta^{-m}R\right) ^\beta  \leq c_1 \tau ^\beta \theta^{k \beta}\left[ \frac{\Phi (R)}{R^\beta} +B  \right] \left( \theta^{-m}R\right) ^\beta .
    \end{equation}
    Moreover, by applying \eqref{eq:AssumptionIterationLemmaDyadic} and assuming $\eps \leq \eps _0 \leq 1$, we obtain
    \begin{equation} \label{eq:IterationDyadicSecondEstimate}
    \begin{aligned}
        \Phi ( \theta^{-m}R ) =\Phi (\theta^{-i}\tau ^l R) &\leq A \left[ \theta^{-i\alpha} + \eps \right] \Phi (\tau ^l R) + B \tau ^{l\beta }R^\beta \\
        & \leq 2A \Phi (\tau ^l R) + B\theta^{k\beta}\left( \theta^{-m}R\right) ^\beta.
    \end{aligned}
    \end{equation}
    Combining \eqref{eq:IterationDyadicFirstEstimate} and \eqref{eq:IterationDyadicSecondEstimate}, and observing that $\theta^{k\beta} \le C(A,\alpha,\beta,\theta) < \infty$ proves \eqref{eq:IterationLemmaDyadic}.
\end{proof}

\section{Translation invariant case}\label{sec:TranslationInvariantCase}
In this section, we will deal with translation invariant operators of the form
\begin{equation} \label{eq:TranslationalInvariantOperator}
   - Lu(t,x)= \pv \int _{\R ^d} (u(t,x)-u(t,x+h)) K(h) \intd h
\end{equation}
where $K\colon \R^d \to [0,\infty ]$ satisfies for some $0<\lambda \leq \Lambda$,
\begin{equation} \label{eq:TranslationalInvariantKernel}
    0<\frac{\lambda}{|h|^{d+2s}} \leq K(h) \leq \frac{\Lambda}{|h|^{d+2s}} , \quad K(h)=K(-h), \quad s\in (0,1) .
\end{equation}

\subsection{Regularity estimates via barrier functions}

We will use the following lemma to deduce boundary regularity estimates for parabolic equations from boundary regularity for elliptic equations and interior estimates.

\begin{Lemma} \label{Lem:BoundaryRegFromBoundOnU}
    Let $\Omega \subset \R^d$ be a Lipschitz domain with Lipschitz constant $\delta \geq 0$ and $0 \in \partial \Omega$, and let $L$ be a translation invariant operator of the form \eqref{eq:TranslationalInvariantOperator}-\eqref{eq:TranslationalInvariantKernel}.
    Let $D \subset B_1$ be a domain with $B_{4/5} \setminus \Omega \subset D \subset B_{5/6} \setminus \Omega$ and set $\tilde{ \Omega} := B_3 \setminus D$.
    Assume that there exists a solution $\phi$ to the following elliptic problem
    \begin{equation*}
        \left\{ \begin{aligned}
           -L\phi & \geq 1 & &\text{in} \quad \tilde{\Omega} , \\
           \phi & \geq 0 & & \text{in} \quad  \R^d \setminus \tilde{\Omega} , \\
        \end{aligned} \right.
    \end{equation*}
    which satisfies $0 \leq \phi  \leq c_1 d_{\tilde{\Omega}} ^\kappa$ in $\tilde{\Omega}$ and $ c_2 ^{-1}\leq \phi$ in $B_2 \setminus B_1$ for some $c_1 >0$, $c_2\geq 1$, and $\kappa \in (0,s]$.

    Then, given $f\in L^\infty ((-1,0)\times (\Omega \cap B_1))$, any bounded weak solution $u$ to
    \begin{equation*}
        \left\{ \begin{aligned}
          \partial _t u -Lu & =f & &\text{in} \quad (-1 ,0) \times (\Omega \cap B_1) , \\
           u & =0 & & \text{in} \quad  (-1 ,0) \times (B_1 \setminus \Omega)  
        \end{aligned} \right.
    \end{equation*}
    satisfies
    \begin{equation} \label{eq:C-kappa-boundary-regularity}
         \|  u \| _{C^{\kappa}_{p} (\overline{Q^\Omega_{1/2}})} \leq c \left( \| u \| _{L^\infty ((-1,0)\times \R ^d)} + \| f\| _{L^\infty (Q^\Omega_1)} \right)
    \end{equation}
    for some constant $c=c(d,s, \lambda , \Lambda ,\delta , \kappa ,c_1 ,c_2 )>0$.
\end{Lemma}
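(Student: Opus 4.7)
The plan is to convert the elliptic supersolution property of $\phi$ into a pointwise parabolic boundary decay for $u$, and then combine this with the interior parabolic regularity from \autoref{Prop:InteriorRegularity} to extract the full $C^\kappa_p$ estimate up to $\partial\Omega$.

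The first and decisive step is to establish
$$|u(t,x)| \le C\bigl(\|u\|_{L^\infty((-1,0)\times\R^d)} + \|f\|_{L^\infty(Q_1^\Omega)}\bigr)\, d_\Omega(x)^{\kappa}, \qquad (t,x)\in Q_{3/4}^\Omega,$$
via a parabolic comparison with $A\phi$. Since $\phi$ is stationary, $\partial_t(A\phi)-L(A\phi)=A(-L\phi)\ge A$ in $\tilde\Omega$, so $A\phi$ is a parabolic supersolution for every $A\ge \|f\|_{L^\infty}$. To compare $u$ with $A\phi$ on a cylinder $(t_0-\tau,t_0)\times \tilde\Omega$, I split $u = u\ind_{B_2}+u\ind_{B_2^c}$ and absorb $L(u\ind_{B_2^c})$ into an enlarged bounded forcing (which is well-defined on $B_1$ since $\mathrm{dist}(B_1,B_2^c)\ge 1$), leaving only the near-field part to be controlled. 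The lateral exterior condition on $D\cup \{|x|>2\}$ is automatic from $u\ind_{B_2}=0$ there, while on $B_2\setminus B_1$ the lower bound $\phi\ge c_2^{-1}$ gives $A\phi\ge \|u\|_{L^\infty}$ once $A\ge c_2\|u\|_{L^\infty}$. The uncontrolled initial datum at $t_0-\tau$ is handled by first applying \autoref{Prop:LocalBoundedness} to bound $\|u\|_{L^\infty}$ on shrunk cylinders and then restarting the barrier comparison from an intermediate time slice at which $u$ has already been pushed below $A\phi$; using $A\phi\le c_1 A\, d_\Omega^{\kappa}$ then yields the claimed decay, and the symmetric argument handles $-u$.

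The remaining two steps are by now standard. For $z=(t,x_0)\in Q_{1/2}^\Omega$ with $r:=d_\Omega(x_0)\in (0,1/8]$, the pointwise decay of the first step combined with \autoref{Prop:InteriorRegularity} applied on $Q_{r/2}(z)$ gives
$$[u]_{C^\gamma_p(Q_{r/2}(z))} \le C r^{\kappa-\gamma}\bigl(\|u\|_{L^\infty}+\|f\|_{L^\infty}\bigr)$$
for any fixed $\gamma\in(\kappa,\min\{1,2s\})$, with all tail and source terms absorbed into the right-hand side. A dyadic Campanato-type iteration delivers $[u]_{C^\kappa_x(\overline{Q_{1/2}^\Omega})}$, while the parabolic H\"older seminorm $[u]_{C^{\kappa/2s}_t}$ follows from the usual dichotomy of comparing $|t_1-t_2|^{1/2s}$ with $d_\Omega(x)$, using the pointwise bound in the far regime and the interior estimate in the close regime. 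Assembling the three steps gives \eqref{eq:C-kappa-boundary-regularity}.

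The main obstacle lies in the barrier comparison of the first step: the elliptic barrier $\phi$ is defined on a bounded set $\tilde\Omega$ whereas the nonlocal operator $L$ interacts with values on all of $\R^d$, and the equation must hold on a time interval with no a priori control at the initial slice. The two quantitative hypotheses on $\phi$ are precisely calibrated to overcome these difficulties: the upper decay $\phi\le c_1 d_{\tilde\Omega}^{\kappa}$ converts the supersolution bound into the optimal $\kappa$-H\"older decay, while the collar lower bound $\phi\ge c_2^{-1}$ on $B_2\setminus B_1$ controls the nonlocal exterior contribution. Crucially, the time-independence of $\phi$ is what allows it to serve directly as a parabolic supersolution, so that no separate parabolic barrier construction is required.
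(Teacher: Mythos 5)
Your overall plan matches the paper's: Step 1 is a barrier comparison giving $|u|\le C\,d_\Omega^\kappa$ in $Q^\Omega_{3/4}$, and Step 2 combines this decay with interior parabolic regularity and a dichotomy in space and in time. Steps 2 and 3 of your sketch are essentially the paper's Step 2. The gap is in Step 1, and it is the key technical point of the lemma.

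You propose to compare $\overline{u}=u\ind_{B_2}$ with $A\phi$ on a cylinder $(t_0-\tau,t_0)\times\tilde\Omega$ and to dispose of the initial slice by ``first applying local boundedness on shrunk cylinders and then restarting the barrier comparison from an intermediate time slice at which $u$ has already been pushed below $A\phi$.'' This does not close. At the initial slice $t=t_0-\tau$ one must have $A\phi(\cdot)\ge \overline{u}(t_0-\tau,\cdot)$ on $B_1^\Omega$; but $\phi\le c_1 d_{\tilde\Omega}^\kappa$ vanishes at the boundary, while at that stage the only bound available on $\overline{u}(t_0-\tau,\cdot)$ near $\partial\Omega$ is $\|\overline{u}\|_{L^\infty}$ (vanishing at $\partial\Omega$ pointwise, but with no rate). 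Local boundedness on shrunk cylinders does not produce a $d_\Omega^\kappa$ decay, so there is no time slice where $u$ has been ``pushed below $A\phi$'' near $\partial\Omega$ — that is exactly the conclusion of Step 1, so the proposed restart is circular. The missing ingredient is a stationary-in-$x$, linear-in-$t$ correction to the barrier: the paper uses
\begin{equation*}
v(t,x):= c_2 \phi (x) (1+k) +\bigl(1-(t+1)k\bigr) - \overline{u}(t,x), \qquad k:=(t_0+1)^{-1},
\end{equation*}
so that at $t=-1$ the term $1-(t+1)k$ equals $1$ and dominates $\overline{u}(-1,\cdot)$ uniformly (regardless of its boundary decay), while at $t=t_0$ it equals $0$ and contributes nothing. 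Since $\partial_t\bigl(1-(t+1)k\bigr)=-k$ and $L$ annihilates constants in $x$, the cost of this term is a bounded drift $-k$, which is absorbed by taking the factor $c_2(1+k)$ in front of $\phi$ (using $-L\phi\ge 1$ and $c_2\ge 1$). This single comparison on the full interval $(-1,t_0]$ replaces your restarting scheme and is what makes Step 1 work.
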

\begin{proof}
Let $\overline{u}=u \ind _{B_2}$. Then, $\overline{u}$ solves
\begin{equation*}
        \left\{ \begin{aligned}
          \partial _t \overline{u} -L\overline{u} & =\overline{f} & &\text{in} \quad (-1 ,0) \times (\Omega \cap B_1) , \\
           \overline{u} & =0 & & \text{in} \quad  (-1 ,0) \times (B_1 \setminus \Omega)  ,
        \end{aligned} \right.
    \end{equation*}
    for some bounded $\overline{f}$ which satisfies $ \| \overline{f}\| _{L^\infty (Q^\Omega_1)} \leq c \left( \| u \| _{L^\infty ((-1,0)\times \R ^d)} + \| f\| _{L^\infty (Q^\Omega_1)} \right)$ .
After dividing the equation by some constant, we may assume $\| \overline{u} \| _{L^\infty ((-1,0)\times \R ^d)} + \| \overline{f}\| _{L^\infty (Q^\Omega_1)} \leq 1$.

\textbf{Step 1:}
First, we will show 
\begin{equation} \label{eq:kappa-bound-u}
    |u| \leq c d_\Omega ^\kappa \quad \text{in } Q^\Omega _{3/4} 
\end{equation}
for some constant $c=c(c_1, c_2 , s, d, \kappa )>0$.

Recall that $c_2 \phi \geq 1$ in $B_2 \setminus B_1$ for some $c_2\geq 1$.
Fix some $z_0=(t_0,x_0) \in Q^\Omega _{3/4}$. We will show \eqref{eq:kappa-bound-u} at $z_0$.
Set $k:=(t_0+1)^{-1}$ and define
\begin{equation*}
    v(t,x):= c_2 \phi (x) (1+k) +(1-(t+1)k) - \overline{u}(t,x) .
\end{equation*}
Then, we get
\begin{equation*}
        \left\{ \begin{aligned}
          \partial _t v -Lv & \geq  c_2 (1+k) -k -\overline{f} \geq 0 & &\text{in} \quad (-1 ,k^{-1 }-1] \times  B_1^\Omega , \\
           v & \geq 1- \overline{u}(-1,x) \geq 0 & & \text{in} \quad  \{ -1\} \times   B_1^\Omega   , \\
           v & \geq c_2 \phi (1+k) \geq 0 & & \text{in} \quad  (-1, k^{-1}-1] \times \left( \R^d \setminus B_2  \right) , \\
            v & \geq c_2 \phi (1+k) \geq 0 & & \text{in} \quad  (-1, k^{-1}-1] \times \left( B_1 \setminus B_1^\Omega  \right) , \\
            v & \geq c_2 \phi -\overline{u} \geq 0 & & \text{in} \quad  (-1, k^{-1}-1] \times \left( B_2 \setminus B_1  \right) . \\
        \end{aligned} \right.
    \end{equation*}
    By the maximum principle (see, e.g., \cite[Lemma 5.2]{Kassmann2023}), we conclude $v\geq 0$ in $(-1,k^{-1} ] \times B_1^\Omega$, which in particular yields
    \begin{equation*}
        0 \leq v(t_0 ,x_0) =c_2 \phi (x_0) \left( 1+\frac{1}{t_0+1} \right) -\overline{u}(t_0,x_0) \leq c \phi (x_0) -\overline{u}(t_0,x_0) .
    \end{equation*}
    This proves \eqref{eq:kappa-bound-u}.

\textbf{Step 2:}
In a second step, we will show how \eqref{eq:kappa-bound-u} together with interior regularity estimates yields \eqref{eq:C-kappa-boundary-regularity}.
Let $(t_1,x_1),(t_2,x_2) \in Q^\Omega _{1/2}$. Then,
\begin{equation*}
    \frac{|u(t_1,x_1)-u(t_2,x_2)|}{|t_1-t_2|^{\frac{\kappa}{2s}} + |x_1 -x_2| ^{\kappa}} \leq  \frac{|u(t_1,x_1)-u(t_1,x_2)|}{ |x_1 -x_2| ^{\kappa}} + \frac{|u(t_1,x_2)-u(t_2,x_2)|}{|t_1-t_2|^{\frac{\kappa}{2s}} } =:I_1+I_2 .
\end{equation*}
It remains to prove that $I_1$ and $I_2$ are bounded by a constant independent of $(t_1,x_1),(t_2,x_2)$.
We start by estimating $I_1$.
Let us assume that $r:= d_\Omega(x_1) \leq d_\Omega(x_2)$ and set $R=|x_1 -x_2|$.

\emph{Case 1:}  
If $r \leq 4R$, then by \eqref{eq:kappa-bound-u}
\begin{equation*}
    |u(t_1,x_1)-u(t_1,x_2)| \leq c d_\Omega ^\kappa (x_1) + c d_\Omega ^\kappa (x_2) \leq c d_\Omega ^\kappa(x_1) +c ( R +d_\Omega (x_1) )^\kappa \leq c R^\kappa .
\end{equation*}

\emph{Case 2:}
If $r> 4R$, then note that $x_2 \in B_{r/4} (x_1) \subset B_{r/2}(x_1) \subset B_{3/4}^\Omega$.
Furthermore, we can choose $\tilde{t}\in (-1,0)$ such that $(t_1,x_1)\in Q_{r/2}(\tilde{t},x_1) \subset Q_{3/4}^\Omega$. Note that by using \eqref{eq:kappa-bound-u}, we get
\begin{equation*}
    \| \overline{u} \| _{L^\infty (Q_{r/2}(\tilde{t},x_1))} \leq c r ^\kappa .
\end{equation*}
and 
\begin{align*}
     \sup _{t \in I^\ominus _{r/2} (\tilde{t})} \Tail (\overline{u};r/4,x_1) & \leq  c r^{2s} \int _{B^\Omega_{3/4}\setminus B_{r/4}(x_1)}  \frac{|x_1 -y| ^\kappa}{|x_1 -y|^{d+2s}}\intd y + c r^{2s} \int _{B_2\setminus B_{3/4}} \frac{1}{|x_1-y|^{d+2s}} \intd y\\
     & \leq c r^\kappa + c r^{2s} \leq c r ^\kappa .
\end{align*}
Hence, we can apply interior regularity estimates (see \autoref{Prop:InteriorRegularity}), and obtain
\begin{equation} \label{eq:interior-estimates-on-I1}
    I_1 \leq [\overline{u}]_{C^\kappa (Q_{r/4}(\tilde{t},x_1))} \leq c r^{-\kappa} \left( \| \overline{u} \| _{L^\infty (Q_{r/2}(\tilde{t},x_1))}  + \sup _{t \in I^\ominus _{r/2} (\tilde{t})} \Tail (\overline{u};r/4,x_1) + r ^{2s} \| \overline{f}\| _{L^\infty (Q^\Omega_1)}  \right) \leq c.
\end{equation}
Combining Case 1 and 2 yields $I_1\leq c$.

For $I_2$, first note that we can assume that $|t_1-t_2|$ is small.
Next, notice that there exists some $\eps_0 >0$ depending only on $d$ and the Lipschitz constant $\delta$ of $\Omega$, such that whenever $|t_1 -t_2| \leq \xi_0$ for some $\xi_0=\xi _0 (d,\delta ,s )>0$, there exists some $\tilde{x}\in B^\Omega _{3/5}$ such that 
\begin{equation} \label{eq:geometric-conditions-tilde-x}
    \eps _0 |x_2-\tilde{x}| \leq |t_1 -t_2 | ^\frac{1}{2s} \leq  \frac{d_\Omega (\tilde{x})}{2} .
\end{equation}


Then, we have
\begin{equation*}
    |u(t_1,x_2) - u(t_2,x_2)| \leq 2 \sup _{t\in I^\ominus _{1/2}} | u(t,x_2)-u(t,\tilde{x})| + |u (t_1,\tilde{x})-u(t_2,\tilde{x})| =:J_1 +J_2 .
\end{equation*}
For $J_2$, note that by \eqref{eq:geometric-conditions-tilde-x}, we can choose some $\tilde{t} \in I^\ominus _{1/2}$ such that
\begin{equation*}
    (t_1,\tilde{x}) , (t_2,\tilde{x}) \in Q_{d_\Omega(\tilde{x})/2}(\tilde{t}, \tilde{x}) \subset Q_{d_\Omega(\tilde{x})}(\tilde{t}, \tilde{x}) \subset Q^\Omega _{3/4} .
\end{equation*}
Applying interior regularity estimates as in \eqref{eq:interior-estimates-on-I1}, yields
\begin{equation}\label{eq:estimate-J2-t-reg}
    J_2 \leq c |t_1 -t_2| ^{\frac{\kappa}{2s}}.
\end{equation}
For $J_1$, we conclude by the estimates proven for $I_1$ and \eqref{eq:geometric-conditions-tilde-x},
\begin{equation}\label{eq:estimate-J1-t-reg}
    J_1 \leq c |x_2-\tilde{x}| ^{\kappa} \leq c |t_1-t_2| ^{\frac{\kappa}{2s}} .
\end{equation}
Combining \eqref{eq:estimate-J2-t-reg} and \eqref{eq:estimate-J1-t-reg} shows $I_2 \leq c$ which finishes the proof.
\end{proof}

\begin{Theorem} \label{Thm:BoundaryRegularityTranslationInvar}
    Let $L$ be a translation invariant operator of the form \eqref{eq:TranslationalInvariantOperator}-\eqref{eq:TranslationalInvariantKernel} and let $\Omega \subset \R^d$ be a domain with $0\in \partial \Omega$.
    Given $f\in L^\infty ((-1,0)\times (\Omega \cap B_1))$, assume that $u$ is a bounded weak solution to
    \begin{equation*}
        \left\{ \begin{aligned}
          \partial _t u -Lu & =f & &\text{in} \quad (-1 ,0) \times (\Omega \cap B_1) , \\
           u & =0 & & \text{in} \quad  (-1 ,0) \times (B_1 \setminus \Omega) . 
        \end{aligned} \right.
    \end{equation*}
        \begin{itemize}
        \item[(i)] For every $\eps \in (0,s)$ there exists some $\delta _0 =\delta_0 (d,s,\lambda ,\Lambda ,\eps )>0$ such that if $\Omega$ is a Lipschitz domain with Lipschitz constant $\delta \geq 0$ which satisfies $\delta\leq \delta_0$, then
    \begin{equation*}
            \|  u \| _{C^{s-\eps }_{p} (\overline{Q^\Omega_{1/2}})} \leq c \left( \| u \| _{L^\infty ((-1,0)\times \R ^d)} + \| f\| _{L^\infty (Q^\Omega_1)}\right)
       \end{equation*}
       for some constant $c=c(d,s, \lambda ,\Lambda , \eps )>0$.
        
        \item[(ii)] If $\Omega$ is a $C^{1,\alpha}$ domain for some $\alpha \in (0,s)$, then
        \begin{equation}\label{eq:CsBoundaryEstimate}
            \|  u \| _{C^{s}_{p} (\overline{Q^\Omega_{1/2}})} \leq c \left( \| u \| _{L^\infty ((-1,0)\times \R ^d)} + \| f\| _{L^\infty (Q^\Omega_1)}\right)
       \end{equation}
    for some constant $c=c(d,s,\Omega ,\alpha , \lambda , \Lambda )>0$.
        \end{itemize}
\end{Theorem}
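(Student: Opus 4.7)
The plan is to deduce both parts from the elliptic barrier criterion of \autoref{Lem:BoundaryRegFromBoundOnU}: once an elliptic supersolution $\phi$ on $\tilde\Omega = B_3 \setminus D$ with H\"older exponent $\kappa$ is produced, that lemma delivers the claimed $C^{\kappa}_p$ estimate up to the boundary with $\kappa = s-\eps$ in part (i) and $\kappa = s$ in part (ii). In both cases I would take $\phi$ to be the unique weak solution of the elliptic exterior value problem
\begin{equation*}
\left\{ \begin{aligned}
-L\phi &= 1 && \text{in } \tilde\Omega,\\
\phi &= 0 && \text{in } \R^d \setminus \tilde\Omega,
\end{aligned} \right.
\end{equation*}
obtained by a standard Lax--Milgram argument in $H^s_{\tilde\Omega}(\R^d)$. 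The condition $B_{4/5} \setminus \Omega \subset D \subset B_{5/6} \setminus \Omega$ forces $\partial D$ to coincide with $\partial \Omega$ in a neighbourhood of $0$, and I would choose $D$ so that its boundary is smooth in the transition annulus $B_{5/6} \setminus B_{4/5}$; in this way $\partial \tilde\Omega$ inherits the regularity of $\partial\Omega$ near the origin and is smooth elsewhere.

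The sign condition $\phi \geq 0$ outside $\tilde\Omega$ is trivially satisfied. The lower bound $\phi \geq c_2^{-1}$ on $B_2 \setminus B_1$ follows from the strong maximum principle: since $-L\phi = 1 > 0$ with nonnegative exterior data, $\phi > 0$ throughout $\tilde\Omega$, and $B_2 \setminus B_1$ is a compact subset of $\tilde\Omega$ with distance to $\partial\tilde\Omega$ bounded below by $1/6$ (because $\partial\tilde\Omega \subset \partial B_3 \cup \overline{B_{5/6}}$), so continuity of $\phi$ on this compact set yields the required uniform positive lower bound, with a constant depending only on the geometry of $\Omega$.

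The upper bound $\phi \leq c_1 d_{\tilde\Omega}^{\kappa}$ is the crux of the argument and the main obstacle. For part (ii), $\partial\tilde\Omega$ is $C^{1,\alpha}$ by the choice of $D$, and the optimal $C^s$ boundary regularity for elliptic equations driven by translation-invariant operators $L$ with bounded right-hand side, established in \cite{RosOton2024}, gives $\phi \in C^s(\overline{\tilde\Omega})$ and in particular $\phi \lesssim d_{\tilde\Omega}^s$; invoking \autoref{Lem:BoundaryRegFromBoundOnU} with $\kappa = s$ concludes (ii). For part (i), elliptic $C^s$ boundary regularity fails in general Lipschitz domains, but the same elliptic machinery (see also \cite{RosOton2024,Kim2024}) yields the weaker $C^{s-\eps}$ estimate $\phi \lesssim d_{\tilde\Omega}^{s-\eps}$ once the Lipschitz constant of $\Omega$, and hence of $\partial D$, is smaller than a threshold $\delta_0 = \delta_0(d,s,\lambda,\Lambda,\eps)$. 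Applying \autoref{Lem:BoundaryRegFromBoundOnU} with $\kappa = s - \eps$ then gives (i), with $\delta_0 \to 0$ as $\eps \to 0$ reflecting the well-known loss of optimal regularity in non-smooth domains.
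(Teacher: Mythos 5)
Your approach is essentially identical to the paper's: the paper likewise reduces the theorem to \autoref{Lem:BoundaryRegFromBoundOnU} together with the elliptic boundary regularity of \cite{RosOton2024} (Theorems 6.8 and 1.1 there), and your torsion-function construction of $\phi$ on $\tilde\Omega$ with a smooth transition in $B_{5/6}\setminus B_{4/5}$ is the natural way to produce the barrier. One small caveat: the lower bound $\phi \ge c_2^{-1}$ on $B_2 \setminus B_1$ should be derived quantitatively, e.g.\ by comparing $\phi$ with torsion functions of finitely many fixed small balls compactly contained in $\tilde\Omega$ covering $B_2\setminus B_1$, since a bare appeal to continuity of $\phi$ does not show that $c_2$ can be taken to depend only on $d,s,\lambda,\Lambda$ (as required for part (i)) rather than on the particular domain.
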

\begin{proof}
    \autoref{Thm:BoundaryRegularityTranslationInvar} follows from \autoref{Lem:BoundaryRegFromBoundOnU} and known boundary regularity results for elliptic equations.
    In particular, \autoref{Thm:BoundaryRegularityTranslationInvar}(i) follows from \cite[Theorem 6.8]{RosOton2024} together with \autoref{Lem:BoundaryRegFromBoundOnU}, and \autoref{Thm:BoundaryRegularityTranslationInvar}(ii) follows from \cite[Theorem 1.1]{RosOton2024} together with \autoref{Lem:BoundaryRegFromBoundOnU}.
\end{proof}

\subsection{Liouville theorem in half-space}

We have the following Liouville theorem for translation invariant parabolic equations in the half-space.

\begin{Theorem}\label{Thm:LiouvilleThm}
    Let $L$ be of the form \eqref{eq:TranslationalInvariantOperator}-\eqref{eq:TranslationalInvariantKernel} and $\nu \in \Sph ^{d-1}$. Let $u$ be a weak solution of
    \begin{equation*}
      \left\{ \begin{aligned}
        \partial _t u -Lu & =0 & & \text{in} \quad (-\infty ,0) \times \{ x\cdot \nu >0 \} , \\
            u & =0 & & \text{in} \quad (-\infty ,0) \times \{ x\cdot \nu \leq 0 \} .
       \end{aligned} \right.
    \end{equation*}
    Furthermore, assume that $u$ satisfies the growth condition
    \begin{equation*}
        \| u\| _{L^\infty (Q_R)} \leq c R^\gamma \qquad \forall R\geq 1
    \end{equation*}
    for some $\gamma \in (0,2s)$ and $c>0$.
    Then, $u(t,x)=A b(x)$ for some constant $A\in \R$ where $b$ is the barrier from \cite[Theorem 1.4]{RosOton2024} corresponding to $L$ and $\nu$.
\end{Theorem}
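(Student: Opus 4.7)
The plan is to prove this Liouville theorem in three stages: first establish a quantitative $C^s_p$ Hölder estimate on every scale, then use incremental quotients in tangential and temporal directions to reduce $u$ to a function of $x\cdot\nu$ only, and finally invoke the elliptic Liouville theorem cited from \cite[Theorem 1.4]{RosOton2024}.

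For the first stage I would exploit the scale invariance of the half-space. For $R\geq 1$, set $u_R(t,x):=R^{-\gamma}u(R^{2s}t,Rx)$; since $L$ is translation invariant, $u_R$ solves an equation of the same form (with rescaled kernel $\tilde K(h)=R^{d+2s}K(Rh)$, which lies in the same ellipticity class \eqref{eq:TranslationalInvariantKernel}) in the same half-space, with zero exterior datum and $\|u_R\|_{L^\infty(Q_\rho)}\leq c\max(\rho,1/R)^\gamma$. The subtlety is that $u_R$ is not globally bounded, so \autoref{Thm:BoundaryRegularityTranslationInvar}(ii) does not apply directly. I would split $u_R=v_R+w_R$ with $v_R:=u_R\mathbf{1}_{B_2}$ and observe that $v_R$ satisfies $\partial_tv_R-Lv_R=Lw_R$ in $(-1,0)\times(B_1\cap\{x\cdot\nu>0\})$. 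For $x\in B_1$, the right-hand side obeys
\begin{equation*}
    |Lw_R(t,x)|\leq C\int_{|y|>2}|y|^{\gamma-d-2s}\intd y\leq C,
\end{equation*}
which is finite precisely because $\gamma<2s$. Applying \autoref{Thm:BoundaryRegularityTranslationInvar}(ii) to $v_R$ (the half-space is trivially a $C^{1,\alpha}$ domain) and unscaling gives the target estimate
\begin{equation*}
    [u]_{C^s_p(\overline{Q^\Omega_{R/2}})}\leq CR^{\gamma-s}\qquad\forall R\geq 1.
\end{equation*}

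In the second stage, fix $h\in\R^d$ with $h\cdot\nu=0$. Tangential translation preserves both the domain and the zero exterior datum, so $v(t,x):=u(t,x+h)-u(t,x)$ satisfies the same half-space Liouville problem, and the first stage yields $\|v\|_{L^\infty(Q_R)}\leq CR^{\gamma-s}|h|^s$. Thus $v$ satisfies the hypothesis of the theorem with improved growth exponent $\gamma-s$, and a second application of the first stage to $v$ yields $[v]_{C^s_p(\overline{Q^\Omega_{R/2}})}\leq CR^{\gamma-2s}|h|^s\to 0$ as $R\to\infty$, using $\gamma<2s$. Hence $v$ is constant on the half-space, and the vanishing exterior forces $v\equiv 0$. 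Therefore $u(t,x)=F(t,x\cdot\nu)$ for some $F$. The identical argument applied to the temporal shift $w(t,x):=u(t+\tau,x)-u(t,x)$, which solves the analogous problem on $(-\infty,-\tau)\times\{x\cdot\nu>0\}$ and satisfies $\|w\|_{L^\infty(Q_R)}\leq CR^{\gamma-s}\tau^{1/2}$ by the $C^{1/2}_t$ part of the estimate, shows $w\equiv 0$, so $u$ is also independent of $t$.

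For the third stage, $u(t,x)=U(x)$ where $U$ solves $-LU=0$ in $\{x\cdot\nu>0\}$, vanishes in $\{x\cdot\nu\leq 0\}$, and has growth $\|U\|_{L^\infty(B_R)}\leq cR^\gamma$ with $\gamma<2s$. The classification result \cite[Theorem 1.4]{RosOton2024} then gives $U=Ab$ for some $A\in\R$, concluding the proof. The main obstacle is the tail control in the first stage: one must convert the mere polynomial growth $\|u\|_{L^\infty(Q_R)}\leq cR^\gamma$ into a pointwise $C^s_p$ estimate on every dyadic scale, and the splitting into $v_R$ and $w_R$ together with the explicit bound on $Lw_R$ is what makes this work and crucially uses $\gamma<2s$.
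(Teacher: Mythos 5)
Your proposal is correct and follows essentially the same route as the paper: the paper's proof defers to the incremental-quotient argument of \cite[Theorem 4.11]{FernandezReal2017} (which is exactly the combination of the rescaled/truncated $C^s_p$ estimate and the tangential and temporal shift argument you spell out) and then invokes the elliptic half-space Liouville theorem from \cite{RosOton2024}. One small slip: the classification $U=Ab$ comes from \cite[Theorem 6.2]{RosOton2024}, not Theorem 1.4 (the latter only constructs the barrier $b$).
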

\begin{proof}
Following the proof of \cite[Theorem 4.11]{FernandezReal2017} using the translation invariance in the tangential direction of $\{ x\cdot \nu >0\}$ together with \autoref{Thm:BoundaryRegularityTranslationInvar}(ii), we get that $u$ is constant in time. 
By the elliptic Liouville theorem in the half-space from \cite[Theorem 6.2]{RosOton2024}, we conclude that $u(t,x)= A b(x)$ for some constant $A\in \R$.
\end{proof}

\subsection{Barrier functions for translation invariant elliptic problems}

The goal of this subsection is to prove a higher order boundary expansion for solutions to parabolic nonlocal equations with a translation invariant operator $L$ as in \eqref{eq:TranslationalInvariantOperator}-\eqref{eq:TranslationalInvariantKernel}. To do so, we need to recall the barrier function $\psi$ from \cite[Section 6.1]{Kim2024}.

Let $L$ be a translation invariant operator satisfying \eqref{eq:TranslationalInvariantOperator}-\eqref{eq:TranslationalInvariantKernel}. Let $\Omega \subset \R^d$ be a $C^{1,\alpha}$ domain with $0 \in \partial\Omega$ for some $\alpha \in (0,s)$. We take a set $D \subset \R^d$ with $\partial D \in C^{1,\alpha}$ and $B^\Omega_1 \subset D \subset B^\Omega_2$. Then, we define the barrier $\psi$ with respect to $L$ as the solution to the exterior value problem
\begin{equation} \label{eq:EllipticBarrierUBar}
     \left\{ \begin{aligned}
        -L \psi  &=0  & & \text{in} \quad D , \\
        \psi&= g   & & \text{in} \quad D^c,
    \end{aligned}
    \right.
\end{equation}
where we fix some $g \in C^{\infty}_c(\R^d \setminus B_3)$ such that $0 \le g \le 1$ and $g \not\equiv 0$.
In particular, $\psi = 0$ in $B_3 \setminus D$.

We recall the following lemma from \cite{Kim2024}.
\begin{Lemma}
\label{Lem:psi-properties}
    Let $L,\Omega,D,g,\psi$ be as before. Then, there exists $C = C(d,s,\lambda,\Lambda,\alpha,\Omega  ) > 0$ such that the following hold true:
    \begin{itemize}
        \item[(i)] $\psi \in C^s_x(\R^d)$ and
        \begin{align*}
            \Vert \psi \Vert_{C^s_x(\R^d)} \le C.
        \end{align*}
        \item[(ii)] It holds $\psi \le C d_D^s$ in $D$ and
        \begin{align}
        \label{eq:UBarComparableDPowerS}
            C^{-1} d_{\Omega}^s \le \psi \le C d_{\Omega}^s ~~ \text{ in } B^\Omega_{1/2}.
        \end{align}
        \item[(iii)] For any $0 < R \le 1/4$ and $x_0 \in B^\Omega_{1/4}$ it holds
        \begin{align}\label{eq:TailPsiBound}
            \Tail(\psi ;R,x_0) + \Tail_{\sigma,B_1}(\psi ;R,x_0) \le C \max\{R , d_{\Omega}(x_0)\}^s.
        \end{align}
    \end{itemize}
\end{Lemma}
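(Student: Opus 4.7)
The plan is to derive the three properties from two deep elliptic ingredients due to \cite{RosOton2024}, namely the sharp $C^s$ boundary regularity and the Hopf lemma for translation invariant operators without homogeneity assumption, plus elementary pointwise/integral estimates.

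First, I would set up $\psi$ as the unique bounded weak solution to \eqref{eq:EllipticBarrierUBar}, so that $0 \le \psi \le \Vert g\Vert_{L^\infty}$ in $\R^d$ by the maximum principle. Since $\partial D \in C^{1,\alpha}$ and $L$ is translation invariant satisfying \eqref{eq:TranslationalInvariantOperator}-\eqref{eq:TranslationalInvariantKernel}, the optimal $C^s$ boundary regularity from \cite[Theorem 1.1]{RosOton2024} applies to $\psi$ on $D$ and gives $\psi \in C^s(\overline{D})$. Combining this with the boundedness of $\psi$ outside $D$ and the continuous matching $\psi|_{\partial D} = 0$ yields $\psi \in C^s(\R^d)$, which is (i). Using $\psi|_{\partial D}=0$, the $C^s$ regularity immediately upgrades to the pointwise bound $\psi \le C d_D^s$ in $D$, which is the upper bound in (ii).

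For the lower bound in (ii), since $g \not\equiv 0$ and $g \ge 0$, the strong maximum principle forces $\psi > 0$ in $D$. I would then apply the Hopf lemma for translation invariant elliptic operators (available in the setting of \cite{RosOton2024}) to obtain $\psi \ge c \, d_D^s$ in a one-sided neighborhood of $\partial D$. Because $B_1 \cap \Omega \subset D \subset B_2 \cap \Omega$ and in particular $\partial\Omega \cap B_1 \subset \partial D$, one has $d_D = d_\Omega$ on $B^\Omega_{1/2}$, so the Hopf bound translates into $\psi \ge C^{-1} d_\Omega^s$ there. Together with the upper bound this proves \eqref{eq:UBarComparableDPowerS}.

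For (iii), I would split the integration domain in $\Tail(\psi;R,x_0)$ into $A_1 = B_{1/2} \setminus B_R(x_0)$, $A_2 = B_3 \setminus B_{1/2}$, and $A_3 = \R^d \setminus B_3$. On $A_1$, use $\psi(y) \le C\,d_\Omega(y)^s \le C(d_\Omega(x_0) + |y-x_0|)^s \le C(d_\Omega(x_0)^s + |y-x_0|^s)$; polar coordinates yield a contribution bounded by $C(d_\Omega(x_0)^s + R^s) \le C\max\{R,d_\Omega(x_0)\}^s$. On $A_2$, $\psi$ is bounded, giving at most $C R^{2s} \le C R^s$ for $R \le 1/4$. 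On $A_3$, $\psi = g$ has compact support, contributing at most $C R^{2s}$. The estimate for $\Tail_{\sigma,B_1}$ is identical after replacing the kernel by $|y-x_0|^{-d-2s+\sigma}$ and the prefactor by $R^{2s-\sigma}$, and the integrand $|y-x_0|^{s-d-2s+\sigma}$ remains integrable at infinity since $\sigma < s$.

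The essential difficulty is encapsulated in the two cited elliptic inputs: the sharp $C^s$ up-to-the-boundary regularity and the Hopf lemma for \emph{general} translation invariant operators satisfying only \eqref{eq:TranslationalInvariantKernel}. Once these are invoked, the proof reduces to bookkeeping of pointwise bounds and explicit integration in polar coordinates.
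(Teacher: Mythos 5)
Your approach is correct but takes a genuinely different route from the paper's in both (i)--(ii) and (iii). For (i) and (ii), the paper simply cites \cite[Proposition~6.2]{Kim2024}, whereas you re-derive them from \cite{RosOton2024} together with a Hopf lemma; this is morally the same mathematics, since that proposition itself rests on the elliptic theory of \cite{RosOton2024}, but two spots need more care. First, patching $\psi|_D\in C^s(\overline D)$ with $\psi|_{D^c}=g$ into a globally $C^s$ function uses that $\psi\equiv 0$ on \emph{all} of $B_3\setminus D$ (not merely that $\psi$ vanishes on $\partial D$) and that $g$ is smooth and supported outside $B_3$; as written, ``continuous matching'' understates what is needed. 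Second, a Hopf lemma gives $\psi\ge c\,d_D^s$ only in a one-sided neighborhood of $\partial D$, so to get the lower bound on all of $B^\Omega_{1/2}$ you still need the strong maximum principle together with a compactness argument to bound $\psi$ from below on the interior part of $B^\Omega_{1/2}$; your text acknowledges the ``one-sided neighborhood'' but then silently applies the estimate everywhere.

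For (iii), your route is actually cleaner than the paper's. The paper first bounds $\Tail_{\sigma,B_1}$, splitting into the regime $d_\Omega(x_0)\le R$ (treated explicitly) and the regime $R\le d_\Omega(x_0)$ (deferred to \cite{Kim2024}), and then deduces the bound on $\Tail$ via \eqref{eq:TailSplitSigmaTail}. Your unified estimate $\psi(y)\le C\big(d_\Omega(x_0)^s+|y-x_0|^s\big)$ on $B_{1/2}$ combined with polar integration gives $C\big(d_\Omega(x_0)^s+R^s\big)\le C\max\{R,d_\Omega(x_0)\}^s$ directly, covering both tails and both regimes at once and avoiding the case split entirely. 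A small correction: $\Tail_{\sigma,B_1}$ integrates only over the bounded set $B_1\setminus B_R(x_0)$, so there is no integrability-at-infinity issue; the hypothesis $\sigma<s$ is used only to ensure $\int_R^1 r^{-1-s+\sigma}\,dr\le C R^{\sigma-s}$.
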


\begin{proof}
    The claims in (i) and (ii) are contained in \cite[Proposition 6.2]{Kim2024}.
    
    For the claim in (iii), note that by (ii), $|\psi (y) | \leq c|y-x_0|^s+cd_\Omega (x_0) ^s$ for all $y\in B_{1/4}(x_0) \subset B_{1/2}$.
    Hence, assuming $d_\Omega (x_0) \leq R$,
    \begin{align*}
        &\Tail_{\sigma ,B_1} (\psi ;R,x_0) \\
        & \qquad = R^{2s- \sigma} \int _{B_1 \setminus B_{1/4}(x_0)} \frac{|\psi (y)|}{|y-x_0|^{d+2s-\sigma}} \intd y
        +R^{2s- \sigma} \int _{ B_{1/4}(x_0) \setminus B_R(x_0)} \frac{|\psi (y)|}{|y-x_0|^{d+2s-\sigma}} \intd y \\
        &\qquad \leq c R^{2s-\sigma} +c R^s +cd_\Omega (x_0) ^s \leq c R^s,
    \end{align*}
    where we have used $\| \psi \|_{L^\infty (B_1)} \leq c$ and $\sigma <s$.
    For the case $R\leq d_\Omega (x_0)$, the estimate $\Tail_{\sigma ,B_1} (\psi ;R,x_0) \leq c d_\Omega (x_0) ^s$ follows exactly as in \cite[(6.20) and on p.33]{Kim2024}.
    This proves the bound on $\Tail_{\sigma ,B_1}$ in \eqref{eq:TailPsiBound}.
    Together with \eqref{eq:TailSplitSigmaTail} and $\| \psi \| _{L^\infty (\R^d) } \leq c$, this also shows the bound on $\Tail (\psi ;R,x_0 )$ in \eqref{eq:TailPsiBound}.
\end{proof}

The following lemma is a slight modification of \cite[Proposition 6.4]{Kim2024}.

\begin{Lemma}\label{Lem:ClosenessOfBarriers}
    Let $\Omega,D,g$ be as before. Let $L_1,L_2$ be two translation invariant operators as in \eqref{eq:TranslationalInvariantOperator}-\eqref{eq:TranslationalInvariantKernel} with kernels $K_1,K_2$ satisfying for some $\gamma > 0$,
    \begin{align}
    \label{eq:K1K2-ass}
        |K_1(h) - K_2(h)| \le \gamma |h|^{-d-2s} ~~ \forall h \in \R^d.
    \end{align}
    Let $\psi_1,\psi_2$ be as in \eqref{eq:EllipticBarrierUBar} with respect to $L_1,L_2$, respectively. Then, it holds for any $\eps \in (0,s)$,
    \begin{align*}
        [\psi_1 - \psi_2]_{C^{s-\eps}_x(\overline{B^\Omega_{1/2}})} \le c \gamma,
    \end{align*}
    where $c > 0$ depends only on $d,s,\lambda,\Lambda,\alpha,\eps$, and $\Omega$, but not on $\gamma$.
\end{Lemma}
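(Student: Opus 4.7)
I would follow the elliptic strategy of \cite[Proposition 6.4]{Kim2024}, adapted to the present comparison between two different translation invariant operators. The starting observation is that the difference $w := \psi_1 - \psi_2$ vanishes on $D^c$ (both barriers coincide with $g$ there) and, using $-L_1 \psi_1 = -L_2 \psi_2 = 0$ in $D$, satisfies
\[
-L_1 w \;=\; L_1 \psi_2 - L_2 \psi_2 \;=\; (L_1 - L_2)\psi_2 \quad \text{in } D, \qquad w = 0 \;\text{ in } D^c.
\]
So $w$ solves an elliptic exterior value problem whose right-hand side encodes the assumed closeness of $K_1$ and $K_2$.

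The first main step is the pointwise bound
\[
|(L_1 - L_2)\psi_2(x)| \;\le\; C\gamma \, d_D(x)^{-s} \qquad \forall\, x \in D.
\]
Fix $x \in D$, set $r = d_D(x)$, and use the symmetry of $K_1 - K_2$ to rewrite
\[
(L_1 - L_2)\psi_2(x) = \tfrac{1}{2} \int_{\R^d} \bigl[\psi_2(x+h) + \psi_2(x-h) - 2\psi_2(x)\bigr] (K_1 - K_2)(h) \d h.
\]
For $|h| \ge r/2$ one uses the global regularity $\psi_2 \in C^s_x(\R^d)$ from \autoref{Lem:psi-properties}(i) together with \eqref{eq:K1K2-ass} to produce a contribution of size $C\gamma r^{-s}$. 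For $|h| < r/2$, the function $\psi_2$ is $L_2$-harmonic on $B_r(x) \subset D$; combining the scaled bounds $\|\psi_2\|_{L^\infty(B_r(x))} + \Tail(\psi_2;r,x) \le Cr^s$ from \autoref{Lem:psi-properties}(ii)--(iii) with interior higher order estimates for translation invariant nonlocal operators yields $r^{2s+\alpha_0}[\psi_2]_{C^{2s+\alpha_0}(B_{r/2}(x))} \le C r^s$ for some $\alpha_0 > 0$. A second-order Taylor expansion of the bracket, together with \eqref{eq:K1K2-ass}, then produces another contribution of size $C\gamma r^{-s}$.

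The second main step is to convert this weighted right-hand side estimate into a H\"older bound on $w$. Since $d_D^{-s}$ does not belong to $L^r$ for $r > d/(2s)$, $L^r$-based boundary regularity cannot be applied directly to the equation for $w$. Instead, one constructs a supersolution $\Phi_\eps \ge 0$ in $D^c$ satisfying $-L_1 \Phi_\eps \ge d_D^{-s}$ in $D$ and $\Phi_\eps \le C d_\Omega^{s-\eps}$ on $B^\Omega_{1/2}$; the natural candidate is built from the translation invariant elliptic barriers of \cite{RosOton2024} (suitably powered and rescaled so that all constants are independent of $K_2$). The comparison principle then yields $|w| \le C\gamma \, d_\Omega^{s-\eps}$ on $B^\Omega_{1/2}$. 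Combining this pointwise decay with interior regularity for $L_1$-harmonic perturbations on balls of radius $\sim d_\Omega(\cdot)$ finally upgrades the bound to $[w]_{C^{s-\eps}_x(\overline{B^\Omega_{1/2}})} \le C\gamma$, in complete analogy with \cite[Proposition 6.4]{Kim2024}.

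The principal difficulty is the construction of the barrier $\Phi_\eps$ capturing both the precise $d_\Omega^{s-\eps}$ boundary decay and a strong enough lower bound on $-L_1 \Phi_\eps$ at the critical order $d_D^{-s}$, with constants independent of $\gamma$. This is the step where the $C^{1,\alpha}$ regularity of $\partial D$ and the flexibility offered by $\eps \in (0,s)$ strictly below $s$ are essential. Once this barrier is available, the remaining pieces --- the pointwise RHS estimate, the comparison principle, and the upgrade from pointwise decay to H\"older regularity via interior estimates --- are standard and follow the template of the elliptic treatment in \cite{Kim2024}.
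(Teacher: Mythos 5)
Your plan is correct and reconstructs the argument that the paper delegates to \cite[Proposition 6.4]{Kim2024}: the paper's own proof is a one-line citation to that reference, noting only that $\gamma$ replaces $|x_0-y_0|^\sigma$ there and that the indicator $\ind_{B_{1/2}}(x-y)$ appearing in [Kim2024, (6.15)] may be bounded by one since your hypothesis \eqref{eq:K1K2-ass} holds for all $h\in\R^d$. Your outline --- the exterior problem for $w=\psi_1-\psi_2$, the right-hand-side bound $|(L_1-L_2)\psi_2|\lesssim\gamma\,d_D^{-s}$ via a near/far split combined with rescaled interior regularity of $\psi_2$, the barrier comparison giving $|w|\lesssim\gamma\,d_\Omega^{s-\eps}$, and the upgrade to $C^{s-\eps}$ by interior estimates at scale $d_\Omega(\cdot)$ --- is precisely the template of that cited argument and goes through.
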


\begin{proof}
    The proof is contained in \cite[Proposition 6.4]{Kim2024}, where the role of $\gamma > 0$ is played by $|x_0 - y_0|^{\sigma}$ and $\psi_1,\psi_2$ are denoted by $\psi_{x_0}, \psi_{y_0}$, respectively and belong to two operators $L_{x_0},L_{y_0}$ with kernels $K_{x_0}, K_{y_0}$. Note that the assumption \eqref{eq:K1K2-ass} is used only in \cite[(6.15)]{Kim2024} and that the indicator function $\ind_{B_{1/2}}(x-y)$ can be estimated from above by one without any further change in the proof.
\end{proof}

\subsection{Expansion in the translation invariant case}

Finally, we establish an expansion of order $s + \eps$ at boundary points (see \eqref{eq:ExpansionTranslInvariant}) for solutions to parabolic translation invariant nonlocal equations. Compared to the corresponding elliptic result in \cite[Theorem 6.9]{RosOton2024}, we improve the range of $\eps$ from $\eps\in (0,\alpha s)$ to $\eps \in (0,s)$. This is achieved by employing the barrier $\psi$ from \eqref{eq:EllipticBarrierUBar} instead of the one-dimensional barrier $b$ used in \cite[Theorem 1.4]{RosOton2024}.

\begin{Proposition}\label{Prop:ExpansionTranslationalInvariant}
Let $L$ be of the form \eqref{eq:TranslationalInvariantOperator}-\eqref{eq:TranslationalInvariantKernel} and let $\Omega \subset \R^d$ be a $C^{1,\alpha}$ domain for some $\alpha \in (0,s)$ with $0\in \partial \Omega$.
Moreover, let $u$ be a weak solution of
\begin{equation*}
    \left\{ \begin{aligned}
        \partial _t u -Lu & =f & &\text{in} \quad (-1 ,0) \times (\Omega \cap B_1) , \\
        u & =0 & & \text{in} \quad  (-1 ,0) \times (B_1 \setminus \Omega) .
    \end{aligned} \right.
\end{equation*}
Then, for any $\eps \in (0, s)$ there exists some $q_0 \in \R$ and constants $c_1,c_2 >0$ depending only on $d,s, \lambda ,\Lambda, \Omega ,\alpha, \eps$ such that $|q_0 | \leq c_1 \left( \| u\| _{L^\infty ((-1,0)\times \R^d)} + \| f\| _{L^\infty (Q^\Omega _1)} +1 \right) $ and
\begin{equation}\label{eq:ExpansionTranslInvariant}
    |u(t,x)-q_0 \psi(x)| \leq c_2 \left( |x|^{s+\eps} + |t| ^\frac{s+\eps}{2s} \right)
    \left( \| u\| _{L^\infty ((-1,0)\times \R^d)} + \| f\| _{L^\infty ( Q^\Omega _1)} \right)  
\end{equation}
for all $(t,x) \in Q_1$, where $\psi$ is the solution of \eqref{eq:EllipticBarrierUBar}.
\end{Proposition}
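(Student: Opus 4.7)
The plan is to run a blow-up contradiction scheme, in the spirit of the elliptic argument in \cite{Kim2024}, but now driven by \autoref{Thm:LiouvilleThm}. After normalizing, we may assume $\|u\|_{L^\infty((-1,0)\times\R^d)}+\|f\|_{L^\infty(Q_1^\Omega)}\le 1$. For every scale $\rho\in(0,1]$, let $q^*(\rho)\in\R$ achieve $\inf_{q\in\R}\|u-q\psi\|_{L^\infty(Q_\rho)}$, and introduce the monotone quantity
\begin{equation*}
    \theta(r):=\sup_{\rho\ge r}\rho^{-s-\eps}\|u-q^*(\rho)\psi\|_{L^\infty(Q_\rho)}, \qquad r\in (0,1].
\end{equation*}
If the proposition fails, then $\theta(r)\to\infty$ as $r\to 0$, and a standard ``largest bad scale'' selection yields a sequence $\rho_m\to 0$ and $q_m:=q^*(\rho_m)$ such that $\theta_m:=\rho_m^{-s-\eps}\|u-q_m\psi\|_{L^\infty(Q_{\rho_m})}\to\infty$ and the almost-maximality property
\begin{equation*}
    \rho^{-s-\eps}\|u-q^*(\rho)\psi\|_{L^\infty(Q_\rho)}\le 2\theta_m \qquad \forall \rho\in [\rho_m,1].
\end{equation*}

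Next, I would define the rescaled sequence
\begin{equation*}
    v_m(t,x):=\frac{u(\rho_m^{2s}t,\rho_m x)-q_m\psi(\rho_m x)}{\rho_m^{s+\eps}\theta_m}, \qquad \Omega_m:=\rho_m^{-1}\Omega.
\end{equation*}
Since $L\psi=0$ in $B_1^\Omega\subset D$, the function $v_m$ solves $\partial_t v_m - L v_m=\tilde f_m$ in $(-\rho_m^{-2s},0)\times(B_{1/\rho_m}\cap\Omega_m)$ and vanishes in $(-\rho_m^{-2s},0)\times(B_{1/\rho_m}\setminus\Omega_m)$, with $\tilde f_m(t,x):=\rho_m^{s-\eps}\theta_m^{-1}f(\rho_m^{2s}t,\rho_m x)\to 0$ uniformly because $\eps<s$ and $\theta_m\to\infty$. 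By construction, $\|v_m\|_{L^\infty(Q_1)}\ge 1$. The minimality of $q^*(\cdot)$ combined with $\|\psi\|_{L^\infty(Q_\rho)}\asymp \rho^s$ (which follows from \eqref{eq:UBarComparableDPowerS} and the interior cone condition for $C^{1,\alpha}$ domains) gives the dyadic bound $|q^*(\rho)-q^*(2\rho)|\le C\rho^\eps\theta_m$ for every $\rho\in[\rho_m,1/2]$. Summing this estimate telescopically and using the triangle inequality with the almost-maximality property yields the crucial polynomial growth
\begin{equation*}
    \|v_m\|_{L^\infty(Q_R)}\le CR^{s+\eps} \qquad \forall R\in[1,\rho_m^{-1}].
\end{equation*}

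Equipped with this growth, the parabolic tail of $v_m$ is controlled on any bounded range of $R$: the contribution from $|x|>\rho_m^{-1}$ is handled via the global $L^\infty$-bounds on $u$ and $\psi$, the dyadic bound on $|q_m|$, and the extra factor $\rho_m^{s-\eps}/\theta_m\to 0$. Combining the uniform bounds with \autoref{Thm:BoundaryRegularityTranslationInvar}(i) (applicable on the asymptotically flat domains $\Omega_m$) and \autoref{Prop:InteriorRegularity} in the interior, one obtains locally uniform $C_p^{s-\eps'}$-estimates for some $\eps'>0$. Arzelà--Ascoli then gives, along a subsequence, a locally uniform limit $v_m\to v_\infty$, where $v_\infty$ weakly solves $\partial_t v_\infty-Lv_\infty=0$ in $(-\infty,0)\times\{x\cdot\nu>0\}$, vanishes in the complement (with $\nu$ the inward normal to $\partial\Omega$ at $0$), and inherits $\|v_\infty\|_{L^\infty(Q_R)}\le CR^{s+\eps}$ for all $R\ge 1$. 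Since $s+\eps\in(0,2s)$, \autoref{Thm:LiouvilleThm} forces $v_\infty=A\,b$ for some $A\in\R$ and the $1$D barrier $b$. The same compactness applied to $\Psi_m(x):=\psi(\rho_m x)/\rho_m^s$, which is uniformly bounded by $C|x|^s$ thanks to $\psi\in C^s(\R^d)$ with $\psi(0)=0$, yields a locally uniform limit $\Psi_\infty$ that is $L$-harmonic in $\{x\cdot\nu>0\}$, vanishes in the complement, grows at most like $|x|^s$, and is nontrivial by \eqref{eq:UBarComparableDPowerS}; the elliptic Liouville theorem \cite[Theorem 6.2]{RosOton2024} then forces $\Psi_\infty=A_0 b$ with $A_0>0$. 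Finally, testing the minimality of $q_m$ against $q_m+\tilde q\rho_m^\eps\theta_m$ gives $\|v_m-\tilde q\,\Psi_m\|_{L^\infty(Q_1)}\ge\|v_m\|_{L^\infty(Q_1)}\ge 1$, and passage to the limit produces $\|(A-\tilde q A_0)b\|_{L^\infty(Q_1)}\ge 1$ for every $\tilde q\in\R$; the choice $\tilde q=A/A_0$ gives the desired contradiction. The expansion \eqref{eq:ExpansionTranslInvariant} and the bound $|q_0|\le c_1(\|u\|_{L^\infty}+\|f\|_{L^\infty}+1)$ then follow by setting $q_0:=\lim_{r\to 0}q^*(r)$ (whose existence is ensured by the now summable dyadic estimate) and choosing, for $(t,x)\in Q_1$, the scale $\rho=|x|\vee|t|^{1/(2s)}$. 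The main obstacle is the two-scale tail control needed to justify compactness: unlike the elliptic setting in \cite{Kim2024}, the parabolic rescaling forces $q_m\psi$ to blow up, so one must simultaneously track the intermediate dyadic scales (through the minimality estimate) and the genuinely far-field contribution (through the improved factor $\rho_m^{s-\eps}$).
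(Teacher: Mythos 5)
Your proof is correct and runs the same blow-up/compactness/Liouville scheme that the paper uses, but with two genuine variations in the implementation that are worth recording. First, where the paper uses the $L^2$-type projection coefficient $q_{k,r}=\bigl(\iint_{Q_r}u_k\psi_k\bigr)\big/\bigl(r^{2s}\int_{B_r}\psi_k^2\bigr)$, you use the $L^\infty$-minimizer $q^*(\rho)$. With the paper's choice, the normalization $\iint_{Q_1}v_m\Psi_m=0$ passes trivially to the limit and gives $A\cdot B\iint b^2=0$, hence $A=0$; with yours, the contradiction is extracted by testing the minimality against the one-parameter family $q_m+\tilde q\,\rho_m^\eps\theta_m$ and letting $\tilde q=A/A_0$. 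Both mechanisms are sound; the paper's is perhaps slightly cleaner at the very last line, but the $L^\infty$-minimizer makes the dyadic almost-maximality estimate and the growth bound $\|v_m\|_{L^\infty(Q_R)}\le CR^{s+\eps}$ more direct and transparent. Second, you work with a single fixed $(\Omega,L,u,f)$, whereas the paper runs the contradiction over sequences $(\Omega_k,L_k,u_k,f_k)$ with $\Omega_k$ ranging over domains with a fixed $C^{1,\alpha}$-bound. This is not needed to prove the statement as written (the constants are allowed to depend on $\Omega$), but the sequence version is what lets the constant depend only on a $C^{1,\alpha}$-norm bound of $\partial\Omega$, which is implicitly used later when Proposition~\ref{Prop:ExpansionTranslationalInvariant} is applied after rescaling and recentering in \autoref{Lem:CampanatoEstimateTranslInvariant}. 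Your argument extends to this setting with no change (the blow-ups $\rho_m^{-1}\Omega_{k_m}$ still converge to half-spaces), so this is a presentation choice rather than a gap, but it is worth being aware that the slightly stronger uniformity is what the downstream proofs actually need.
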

\begin{proof}
We will prove \eqref{eq:ExpansionTranslInvariant} by contradiction (for similar proofs, see, e.g., \cite[Proposition 2.7.3]{RosOton2024Book}, \cite[Proposition 5.1]{FernandezReal2017}).
Assume that there exist sequences $(\Omega _k)_k$, $(L_k)_k$, $(u_k)_k$, $(f_k)_k$, where $\Omega _k$ is a $C^{1,\alpha}$ domain with $C^{1,\alpha}$-norm bounded by one and normal vector $\nu_k \in \Sph ^{d-1}$ at $0\in \partial \Omega _k$, and $L_k$ is an operator of the form \eqref{eq:TranslationalInvariantOperator}-\eqref{eq:TranslationalInvariantKernel}.
Furthermore, assume that
\begin{equation*}
\| u_k\| _{L^\infty ((-1,0)\times \R^d)} + \| f_k\| _{L^\infty ( Q^{\Omega _k}_1)} \leq 1 ,
\end{equation*}
and $u_k$ satisfies
\begin{equation*}
    \left\{ \begin{aligned}
        \partial _t u_k -L_ku_k & =f_k & & \text{in} \quad (-1 ,0) \times (\Omega_k \cap B_1) , \\
        u_k & =0 & & \text{in} \quad (-1 ,0) \times (B_1 \setminus \Omega _k) .
    \end{aligned} \right.
\end{equation*}
Additionally, we assume that 
\begin{equation*}
    \sup _{k\in \N} \sup _{r\in (0,1)} r^{-s-\eps} \left\| u_k-q_{k,r} \psi_k \right\| _{L^\infty (Q_r)} = \infty
\end{equation*}
where $\psi_k$ is the solution of \eqref{eq:EllipticBarrierUBar} corresponding to $L=L_k$ and $\Omega =\Omega _k$, and 
\begin{equation*}
    q_{k,r} := \frac{\int_{I^\ominus_r} \int _{B_r}u_k \psi_k  \intd x \intd t}{r^{2s} \int _{B_r}\psi_k ^2  \intd x }.
\end{equation*}
Note that these assumptions negate \autoref{Prop:ExpansionTranslationalInvariant} (see, e.g., \cite[Lemma 5.2]{FernandezReal2017}).
We set
\begin{equation*}
    \theta (r) := \sup _{k\in \N} \sup _{\rho \in (r,1)} \| u_k-q_{k,\rho} \psi_k \| _{L^\infty ( Q_\rho )} .
\end{equation*}
Note that $\theta (r)$ is finite for every $r>0$ and $\theta (r)\to \infty$ as $r\searrow 0$.
Hence, we can pick sequences $(r_m)_m$, $(k_m)_m$ such that $r_m\searrow 0$ and
\begin{equation*}
    r_m^{-s-\eps} \| u_{k_m} - q_{k_m,r_m}\psi_{k_m} \| _{L^\infty ( Q_{r_m})} \geq \theta (r_m) /2 , \qquad \forall m \in \N .
\end{equation*}
Let us consider the sequence 
\begin{equation*}
    v_m(t,x):= \frac{u_{k_m}(r_m^{2s}t,r_mx) -q_{k_m,r_m} \psi_{k_m} (r_m x)}{r_m^{s+\eps} \theta (r_m)}
\end{equation*}
which by construction satisfies 
\begin{equation}\label{eq:PropertiesVm}
     \iint _{Q_1}  v_m(t,x) \frac{\psi_{k_m}(r_m x)}{r_m^s} \intd x \intd t =0  , \quad
    \| v_m\| _{L^\infty ( Q_1)} \geq \frac{1}{2}
\end{equation}
for all $m\in \N$.
Following the proofs of \cite[(6.18)]{RosOton2024} or \cite[(5.14)]{FernandezReal2017}, using the bounds $\psi_{k_m}\asymp d_{\Omega _{k_m}}^s$ from \eqref{eq:UBarComparableDPowerS}, we get
\begin{equation}\label{eq:GrowthBoundVm}
    \| v_m \| _{L^\infty ( Q_R)} \leq c R^{s + \eps}, \quad \forall R\geq 1, \forall m\in \N .
\end{equation}
Now, let $\tilde{L}_{m}$ be the operator of the form \eqref{eq:TranslationalInvariantOperator}-\eqref{eq:TranslationalInvariantKernel} with kernel $\tilde{K}_{m}(h):= r_m^{d+2s} K_{k_m}(r_mh)$ where $K_{k_m}$ is the kernel of $L_{k_m}$.
Furthermore, we define 
\begin{equation*}
    \Omega _{R,m} := \left\{ x\in B_R \mid r_mx \in \Omega _{k_m} \right\} .
\end{equation*}
Then, the $v_m$ satisfy
\begin{equation}\label{eq:Equation-vm}
    \left\{ \begin{aligned}
   \partial_t v_m -\tilde{L}_{m} v_m  & = \tilde{f}_m & & \text{in} \quad (-R^{2s},0) \times \Omega _{R,m} ,  \\
        v_m & = 0 & & \text{in} \quad (-R^{2s} ,0) \times (B_R \setminus \Omega _{R,m}) 
    \end{aligned} \right.
\end{equation}
for all $0\leq R \leq r_m^{-1}$, where 
\begin{equation*}
    \begin{aligned}
    \tilde{f}_m(t,x) &:= \frac{r_m^{s-\eps }}{\theta (r_m)} f_{k_m}(r_m^{2s} t, r_m x).  
    \end{aligned}
\end{equation*}
Since $\eps <s$, we have
\begin{equation}\label{eq:FmConvergesToZero}
    \tilde{f}_m \to 0 \qquad \text{locally uniformly as $m\to \infty$}.
\end{equation}

Moreover, from \autoref{Thm:BoundaryRegularityTranslationInvar}(ii) (use a standard truncation argument to replace the global in space $L^\infty$-norm of $u$ in \eqref{eq:CsBoundaryEstimate} by a weighted $L^\infty _{s+\eps}$-norm), we obtain
\begin{equation*}
    \| v_m\| _{C^{s}_{p} (Q_R)} \leq c(R)
\end{equation*}
for some constant $c(R)$ independent of $m$.
As a consequence, using the Arzelà-Ascoli theorem, up to a subsequence, $v_m$ converges locally uniformly to some $v_\infty \in C( (-\infty ,0 ) \times \R^d)$.
By applying a stability result (see \cite[Lemma 3.1]{FernandezReal2017} and \cite[Proposition 2.2.36]{RosOton2024Book}) and using \eqref{eq:GrowthBoundVm}, \eqref{eq:Equation-vm}, and \eqref{eq:FmConvergesToZero}, there exists an operator $L_\infty$ of the form \eqref{eq:TranslationalInvariantOperator}-\eqref{eq:TranslationalInvariantKernel} such that
\begin{equation*}
    \left\{ \begin{aligned}
        \partial _t v_\infty - L_\infty v_\infty &= 0 & &\text{in} \quad (-\infty,0) \times \{ x\cdot \nu >0\} , \\
        v_\infty &= 0 & & \text{in} \quad (-\infty,0) \times \{ x\cdot \nu \leq 0 \} , \\
         \| v_\infty \| _{L^\infty ( Q_R)} & \leq c R^{s+\eps} & & \text{for all} \quad  R\geq 1
    \end{aligned} \right.
\end{equation*}
where $\nu \in \Sph ^{d-1}$ is the limit of the normal vectors $\nu _{k_m}$ of $\Omega _{k_m}$ at $0$ (after passing to a subsequence).
Hence, the Liouville theorem (see \autoref{Thm:LiouvilleThm}) implies that $v_\infty (t,x) =A b(x)$ for some constant $A\in \R$ where $b$ is the barrier from \cite[Theorem 1.4]{RosOton2024} corresponding to $L_\infty$ and $\nu$.

Note that $\phi_m(x):=\psi_{k_m}(r_m x) / r_m^s$ satisfies
\begin{equation*}
    \left\{ \begin{aligned}
        -\tilde{L}_m \phi _m &= 0 & & \qquad \text{ in } \Omega _{R,m},\\
    \phi_m &=  0  & & \qquad \text{ in } B_R \setminus \Omega _{R,m}
    \end{aligned} \right.
\end{equation*}
for all $0\leq R \leq r_m^{-1}$.
Hence, by \autoref{Thm:BoundaryRegularityTranslationInvar}(ii), $\phi_m$ is uniformly bounded in $C^s_\text{loc} (\R^d )$.
As above, $\phi_m$ converges locally uniformly to some $\phi \in C(\R ^d)$ which solves
\begin{equation*}
    \left\{ \begin{aligned}
        - L_\infty \phi &=0 & & \text{in} \quad \{ x\cdot \nu >0\}, \\
        \phi &=0 & & \text{in} \quad \{ x\cdot \nu \leq 0\}, \\
        \| \phi \| _{L^\infty (  B_R)} & \leq c R^{s} & & \text{for all} \quad  R\geq 1.
    \end{aligned} \right.
\end{equation*}
By the Liouville theorem in the half-space (see \cite[Theorem 6.2]{RosOton2024}), we get that $\phi (x) =B b(x)$ for some constant $B$ which by \eqref{eq:UBarComparableDPowerS} is positive $B>0$.
Taking the limit $m\to \infty$ in the first expression of \eqref{eq:PropertiesVm}, we obtain
\begin{equation*}
    0= \lim _{m\to \infty} \int _{-1}^0\int _{B_1} v_m (t,x) \phi _m(x)  \intd x \intd t = \int _{-1}^0\int _{B_1} A b(x) Bb(x)  \intd x \intd t
\end{equation*}
which implies $A=0$.
However, this contradicts the second expression of \eqref{eq:PropertiesVm}.
\end{proof}

\section{Freezing estimates}\label{sec:FreezingEstimates}
Given an operator $\mathcal{L}_t$ of the form \eqref{eq:OperatorDivergenceForm}, \eqref{eq:KernelDivergenceForm}, satisfying \eqref{eq:KernelHoelderCont} with $\mathcal{A}=Q_1$, we assume that $u$ is a weak solution of
\begin{equation}\label{eq:EquationUForFreezingEstimates}
       \left\{ \begin{aligned}
          \partial _t u -\mathcal{L}_tu & =f & &\text{in} \quad (-1 ,0) \times (\Omega \cap B_1) , \\
           u & =0 & & \text{in} \quad  (-1 ,0) \times (B_1 \setminus \Omega) .
        \end{aligned} \right.
\end{equation}
For some $z_0=(t_0,x_0) \in Q_{1/2}$, we fix the translation invariant kernel
\begin{equation} \label{eq:FrozenKernel}
    K_{z_0} (x,y) = \frac{1}{2} \left( K(t_0,x_0+x-y,x_0) + K(t_0,x_0 +y-x,x_0) \right)
\end{equation}
which defines an operator $L_{z_0}$.
Fix some $0<R<1/16$, so that $Q_{8R}(z_0) \subset Q_1$ for all $z_0\in Q_{1/2}$. 
Let $v_{R,z_0}$ be the solution to (which exists by \cite[Corollary 5.5]{Felsinger2014})
\begin{equation} \label{eq:DefinitionV}
        \left\{\begin{aligned}
          \partial _t v_{R,z_0} -L_{z_0}v_{R,z_0} & =0 & &\text{in} \quad Q_R^\Omega (z_0)  , \\
           v_{R,z_0} & =u & & \text{in} \quad  ( (-1 ,t_0 ) \times  \R^d ) \setminus Q^\Omega_R(z_0) .
        \end{aligned} \right.
\end{equation}
Set $w_{R,z_0}:=u-v_{R,z_0}$, then $w_{R,z_0}$ satisfies
\begin{equation}\label{eq:DefinitionW}
     \left\{   \begin{aligned}
          \partial _t w_{R,z_0} -L_{z_0}w_{R,z_0} & =  (\mathcal{L}_t-L_{z_0}) u +f & &\text{in} \quad Q_R^\Omega (z_0) , \\
           w_{R,z_0} & =0 & & \text{in} \quad  ( (-1 ,t_0 ) \times  \R^d ) \setminus Q_R^\Omega (z_0) .
        \end{aligned} \right.
\end{equation}

The following three lemmas will allow us to obtain regularity results for $u$ by comparison with its caloric replacement $v_{R,z_0}$.

\begin{Lemma} \label{Lem:EstimateOnHsw}
Let $\Omega \subset\R^d$ be a domain and let $\mathcal{L}_t$ be an operator of the form \eqref{eq:OperatorDivergenceForm}, \eqref{eq:KernelDivergenceForm}, satisfying \eqref{eq:KernelHoelderCont} with $\mathcal{A}=Q_1$. 
Given $f\in L^q_tL^r_x (Q_1^\Omega)$ where $q,r\geq 1$ satisfy $\frac{1}{q}+\frac{d}{2sr} <1$, and given some $z_0\in Q^\Omega _{1/2}$ and $R\in (0,1/16)$, assume that $u,v_{R,z_0},w_{R,z_0}$ are solutions to \eqref{eq:EquationUForFreezingEstimates}, \eqref{eq:DefinitionV}, \eqref{eq:DefinitionW}. Then, we have
    \begin{align*}
        \sup _{t\in I^\ominus _R(t_0)} & \| w_{R,z_0}\| ^2_{L^2_x (B_R(x_0))}  +  \| w_{R,z_0}\| ^2_{L^2_t \dot{H}^s_x(I^\ominus_R(t_0) \times \R^d )} \\
        &\leq c  R^{2\sigma }\Bigg(  \| u\| ^2_{L^2_t \dot{H}^s_x(I^\ominus_R(t_0) \times B_{2R}(x_0) )}   
        + R^{ d-2s}   \int _{I^\ominus _R(t_0)} \left( \Tail _{\sigma ,B_1} (u -(u)_{B_R(x_0)} ; R ,x_0) \right) ^2 \intd t \Bigg)\\
        & \quad +c  R^{d+4s}    \left[ \| u\|^2 _{L^\infty ((-1,0) \times \R^d)} +  R^{-\frac{2d}{r}-\frac{4s}{q}}\| f\| ^2 _{L^q_t L^r_x (Q_R^\Omega (z_0))} \right]  
    \end{align*}
    for some $c=c(d,s, \lambda , \Lambda ,\sigma ,q,r)>0$.
\end{Lemma}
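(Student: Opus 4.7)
\emph{Energy identity.} The plan is to test the equation \eqref{eq:DefinitionW} for $w:=w_{R,z_0}$ against $w$ itself, after the standard time mollification (cf.\ the proof of \autoref{Prop:CaccioppoliEstimate}). Since $w$ vanishes on $B_R(x_0)^c$ at every time and at $t=t_0-R^{2s}$ (where $v_{R,z_0}=u$ by \eqref{eq:DefinitionV}), integrating in time produces the energy identity
\begin{equation*}
    \tfrac12\|w(\tau,\cdot)\|^2_{L^2(B_R(x_0))} + \int_{t_0-R^{2s}}^{\tau}\mathcal{E}_{z_0}(w,w)\intd t = -\int_{t_0-R^{2s}}^{\tau}(\mathcal{E}_t-\mathcal{E}_{z_0})(u,w)\intd t + \iint f\,w\intd x\intd t
\end{equation*}
for every $\tau\in I_R^{\ominus}(t_0)$, and the coercivity $\mathcal{E}_{z_0}(w,w)\gtrsim[w]^2_{\dot{H}^s(\R^d)}$ accounts for both norms appearing on the LHS of the claim.

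\emph{Kernel-difference term.} Since constants lie in the kernel of $\mathcal{E}_t$ and $\mathcal{E}_{z_0}$, I replace $u$ by $\tilde u(t,\cdot):=u(t,\cdot)-(u(t,\cdot))_{B_R(x_0)}$. Applying \eqref{eq:KernelHoelderCont} to each summand of \eqref{eq:FrozenKernel} (after invoking $K(t,x,y)=K(t,y,x)$ to realign the shift in the second one) gives the pointwise bound
\begin{equation*}
    |K(t,x,y)-K_{z_0}(x,y)| \leq C\,\frac{|x-x_0|^\sigma+|y-x_0|^\sigma+|t-t_0|^{\sigma/(2s)}}{|x-y|^{d+2s}}
\end{equation*}
on the set where the relevant shifts lie in $Q_1$; elsewhere only the uniform bound $|K-K_{z_0}|\lesssim|x-y|^{-d-2s}$ is available. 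I split $(\mathcal{E}_t-\mathcal{E}_{z_0})(\tilde u,w)$ into four regions, using throughout that $w(y)=0$ for $y\notin B_R(x_0)$, hence $w(x)-w(y)=w(x)$ in the nonlocal pieces: (a) $x,y\in B_R(x_0)$; (b) $x\in B_R(x_0)$, $y\in B_{2R}(x_0)\setminus B_R(x_0)$; (c) $x\in B_R(x_0)$, $2R<|y-x_0|\leq 1$; (d) $x\in B_R(x_0)$, $|y-x_0|>1$. On (a) and (b), the Hölder numerator is $\lesssim R^\sigma$, and Cauchy--Schwarz over $B_{2R}(x_0)\times B_{2R}(x_0)$ produces $\lesssim R^\sigma[u]_{\dot{H}^s(B_{2R}(x_0))}[w]_{\dot{H}^s(\R^d)}$. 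On (c), $|x-y|\asymp|y-x_0|$, so the $y$-integral yields exactly $R^{\sigma-2s}\Tail_{\sigma,B_1}(\tilde u(t,\cdot);R,x_0)$ by definition of the $\sigma$-tail; combined with the fractional Friedrichs inequality $\|w\|_{L^2(B_R(x_0))}\leq CR^s[w]_{\dot{H}^s(\R^d)}$ (valid because $w\equiv 0$ off $B_R(x_0)$) and Cauchy--Schwarz in $x$, this contribution is $\lesssim R^{\sigma-s+d/2}[w]_{\dot{H}^s(\R^d)}\Tail_{\sigma,B_1}(\tilde u(t,\cdot);R,x_0)$. On (d), $|x-y|\geq 1/2$, so we merely use $|K-K_{z_0}|\lesssim 1$ together with $u\in L^\infty$ to obtain $\lesssim R^{d/2+s}\|u\|_{L^\infty((-1,0)\times\R^d)}[w]_{\dot{H}^s(\R^d)}$.

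\emph{Conclusion.} The forcing integral $\iint f\,w$ is handled exactly as in \autoref{Prop:CaccioppoliEstimate} via Hölder in $(t,x)$ and the parabolic Sobolev embedding available under $\tfrac1q+\tfrac{d}{2sr}<1$, producing a bound of the form $\delta\bigl(\sup_\tau\|w(\tau,\cdot)\|^2_{L^2(B_R(x_0))}+\|w\|^2_{L^2_t\dot{H}^s_x}\bigr)+C_\delta R^{d+4s-\frac{2d}{r}-\frac{4s}{q}}\|f\|^2_{L^q_tL^r_x(Q_R^{\Omega}(z_0))}$. Inserting the four regional estimates into the time-integrated energy identity, applying Cauchy--Schwarz in $t$ together with Young's inequality with a sufficiently small parameter $\delta$, and taking the supremum over $\tau\in I_R^{\ominus}(t_0)$, absorbs every factor of $[w]_{\dot{H}^s}$ and $\sup_\tau\|w\|_{L^2}$ into the LHS and produces the four claimed contributions; the factor $R^{2s}$ distinguishing $R^{d+2s}$ from $R^{d+4s}$ simply reflects the length $|I_R^{\ominus}(t_0)|=R^{2s}$. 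The most delicate step is the balance in region (c): the factor $|y-x_0|^\sigma$ supplied by \eqref{eq:KernelHoelderCont} is exactly what converts the $y$-integral of $|K-K_{z_0}|$ against $|\tilde u(y)|$ into the $\sigma$-tail with the correct exponent $R^{2\sigma+d-2s}$, which is precisely why the $\sigma$-tail (rather than the ordinary tail) is the natural quantity here.
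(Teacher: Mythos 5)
Your strategy mirrors the paper's proof of \autoref{Lem:EstimateOnHsw} closely: test with $w$ to get the energy identity, apply the H\"older estimate on $K - K_{z_0}$, split the kernel-difference integral into a local region and several nonlocal regions, use \Poincare/Friedrichs and Young to absorb $[w]_{\dot H^s}$, and treat $\iint f w$ as in \autoref{Prop:CaccioppoliEstimate}. Your regions (a)+(b), (c), (d) roughly correspond to the paper's $I_1$, $I_2 + I_3$, $I_4$, and the split of $\tilde u = u - (u)_{B_R(x_0)}$ at the start is a clean way to exploit that constants lie in the kernel of both bilinear forms.

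There is, however, a genuine gap in region (c). The integrand there is $\big(\tilde u(x)-\tilde u(y)\big)w(x)\big(K_{z_0}-K\big)$, and only the $\tilde u(y)$ part yields a $\sigma$-tail after integrating in $y$. Your claim that the $y$-integral \emph{equals} $R^{\sigma-2s}\Tail_{\sigma,B_1}(\tilde u;R,x_0)$ ignores the $\tilde u(x)$ part, which is $y$-independent: for it, the $y$-integral is simply $\int_{B_1\setminus B_{2R}(x_0)}|K_{z_0}-K|\intd y\lesssim R^{\sigma-2s}$, leaving the $x$-integral $\int_{B_R}|\tilde u(x)||w(x)|\intd x$. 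This piece (the paper's $I_2$) must then be handled by \Poincare--Wirtinger for $\tilde u$ and Friedrichs for $w$ before Young's inequality, and it gets absorbed into the $R^{2\sigma}\|u\|^2_{L^2_t\dot H^s_x}$ term — but it cannot be folded into the tail, and it cannot simply be dropped. The same issue, in milder form, would also arise on region (b) if you tried to move the $\tilde u(y)$ part into a tail; there you correctly use the $\dot H^s$ seminorm instead.

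A smaller slip occurs in region (d): $|K-K_{z_0}|\lesssim 1$ together with $u\in L^\infty$ gives a bounded integrand, but $\int_{\{|y-x_0|>1\}}1\intd y$ diverges. The decay $|K-K_{z_0}|\lesssim|x-y|^{-d-2s}$ (with $|x-y|\gtrsim|y|$ for $x\in B_R(x_0)$, $x_0\in B_{1/2}$, $R<1/16$, $|y-x_0|>1$) is needed to make the $y$-integral finite. The intended conclusion is correct, but the stated justification does not go through as written.
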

\begin{proof}
    Set $w:=w_{R,z_0}$ and let $t\in I^\ominus _R(t_0)$.
    We want to test the equation \eqref{eq:DefinitionW} with $\varphi =w \eta_k$, where $\eta_k$ approximates $\ind _{[t_0-R^{2s},t]}$ and $\partial_t \eta_k$ approximates $\delta_{t} - \delta_{t_0-R^{2s}}$. Testing the equation yields (after a proper time-mollification of $w$, see \cite[Appendix B]{Liao2023})
    \begin{align*}
        \frac{1}{2} \int_{I_R^{\ominus}(t_0)} \partial_t \eta_k(\tau) \int_{B_R(x_0)} (w(\tau,x))^2 &\d x \d \tau + \int_{I_R^{\ominus}(t_0)} \eta_k(\tau) \cE^{K_{z_0}}(w,w) \d \tau \\
        &= \int_{I_R^{\ominus}(t_0)} \eta_k(\tau) \cE^{K_{z_0} - K}(u,w) \d \tau + \iint_{Q_R^{\ominus}(z_0)} \eta_k f w \d x \d \tau.
    \end{align*}
    Hence, using that $w(t_0-R^{2s},\cdot) = 0$ in $B_R(x_0)$, we obtain by taking $k \to \infty$,
    \begin{align*}
        \frac{1}{2}\int_{B_R(x_0)} (w(t,x))^2 \d x &+ \int_{I_R^{\ominus}(t_0)}  \cE^{K_{z_0}}(w,w) \d \tau \\
        &= \int_{I_R^{\ominus}(t_0)} \cE^{K_{z_0} - K}(u,w) \d \tau + \int_{I_R^{\ominus}(t_0)}\int_{B_R(x_0)} f w \d x \d \tau.
    \end{align*}
    Since $t \in I_R^{\ominus}(t_0)$ was arbitrary, we deduce
    \begin{align*}
        \sup _{t\in I^\ominus _R(t_0)} \int _{B_R(x_0) \cap \Omega} &\left( w(t,x) \right) ^2 \intd x + \int _{I^\ominus_R(t_0)} [w(t,\cdot )]^2_{H^s(\R^d )} \intd t \\
        & \leq c  \int _{I^\ominus_R(t_0)} |\mathcal{E}^{K_{z_0} - K} (u,w)| \intd t + c\iint _{Q_R(z_0)} |fw| \intd x \intd t \\
        & =: cI .
    \end{align*}
    We decompose $I$ into five parts
    \begin{align*}
        I & \leq \int _{I^\ominus_R(t_0)}   \int _{B_{2R}(x_0)} \int _{B_{2R}(x_0)}  |u(x)-u(y)||w(x)-w(y)| |K_{z_0} (x,y) - K(t,x,y)| \intd y \intd x  \intd t \\
    & \quad + 2 \int _{I^\ominus_R(t_0)}  \int _{B_{R}(x_0)} \int _{B_1\setminus B_{2R}(x_0)}  |u(x)- (u)_{B_R(x_0)}||w(x)| |K_{z_0} (x,y) - K(t,x,y)| \intd y \intd x  \intd t \\
    & \quad + 2 \int _{I^\ominus_R(t_0)}  \int _{B_{R}(x_0)} \int _{B_1\setminus B_{2R}(x_0)}  |(u)_{B_R(x_0)}- u(y)||w(x)| |K_{z_0} (x,y) - K(t,x,y)| \intd y \intd x  \intd t \\
    & \quad +  2 \int _{I^\ominus_R(t_0)}  \int _{B_{R}(x_0)} \int _{\R^d \setminus B_{1}}  |u(x)- u(y)||w(x)| |K_{z_0} (x,y) - K(t,x,y)| \intd y \intd x  \intd t\\
    & \quad + \iint _{Q_R(z_0)} |f||w| \intd x \intd t \\
    & = I_1+I_2+I_3+I_4+I_5 .
    \end{align*}
    Note that by construction of $K_{z_0}$ together with \eqref{eq:KernelDivergenceForm} and \eqref{eq:KernelHoelderCont}, we have
    \begin{equation} \label{eq:DifferenceFrozenKernelToKernel}
      |K_{z_0} (x,y) - K(t,x,y)| \leq \Lambda \frac{\min \{ \frac{|x-x_0|^\sigma + |y-x_0|^\sigma}{2} +|t-t_0|^\frac{\sigma}{2s} , 2 \}}{|x-y|^{d+2s}} .
    \end{equation}
    This implies
    \begin{equation} \label{eq:DifferenceFrozenKernelXYApart}
        |K_{z_0} (x,y) - K(t,x,y)| \leq c |y-x_0| ^{-d-2s+\sigma } \quad \forall x\in B_R(x_0), y\in B_1\setminus B_{2R}(x_0) , t\in I^\ominus _R(t_0)
    \end{equation}
    since $|x-x_0| \leq |y-x_0|/2$ and $ |t-t_0|^{\frac{1}{2s}}\leq |y-x_0|/2 $ as well as $|y-x_0| \leq  2|x-y|$.
    
    For $I_1$, we use \eqref{eq:DifferenceFrozenKernelToKernel} and Young's inequality, to get
    \begin{equation*}
      I_1 \leq c \left( c(\eps ) R^{2\sigma } \| u\| ^2_{L^2_t \dot{H}^s_x(I^\ominus_R(t_0) \times B_{2R} (x_0))} + \eps   \| w\| ^2_{L^2_t \dot{H}^s_x(I^\ominus_R(t_0) \times \R^d )} \right)
    \end{equation*}
    where we choose $\eps>0$ so small that we can absorb the second summand on the left-hand side.
    For $I_2$, we use \eqref{eq:DifferenceFrozenKernelXYApart}, the {\Poincare} inequality
    \begin{equation} \label{eq:EstimateL1toL2toHs}
        \| w(t, \cdot )\| _{L^1(B_R(x_0))} \leq c R^{\frac{d}{2}} \| w(t,\cdot )\| _{L^2(B_R(x_0))} \leq c R^{\frac{d}{2}+s}[w(t,\cdot )] _{H^s(\R^d)}   ,
    \end{equation}
    and the \Poincare -Wirtinger inequality
    \begin{equation}\label{eq:PoincareWirtingerInequality}
         R^{-2s} \int _{B_R(x_0)} | u(t,\cdot ) -(u(t,\cdot ))_{B_R(x_0)} | ^2 \intd x \leq c [u(t,\cdot )]^2_{H^s_x(B_R(x_0))},
    \end{equation}
    to obtain
    \begin{align*}
        I_2 & \leq c   \int _{I^\ominus_R(t_0)}  \int _{B_{R}(x_0)} \int _{B_1\setminus B_{2R}(x_0)}  |u(x)- (u)_{B_R(x_0)}||w(x)| |y-x_0| ^{-d-2s+\sigma } \intd y \intd x  \intd t \\
        & \leq c  \int _{I^\ominus_R(t_0)} \int _{B_{R}(x_0)}R^{\sigma -2s}|u(x)- (u)_{B_R(x_0)}||w(x)| \intd x \intd t  \\
        & \leq c  \Bigg( c(\eps )  \int _{I^\ominus_R(t_0)} \int _{B_{R}(x_0)}R^{2\sigma -2s}|u(x)- (u)_{B_R(x_0)}| ^2 \intd x \intd t  \\
        & \qquad\qquad \qquad + \eps  \int _{I^\ominus_R(t_0)} \int _{B_{R}(x_0)}R^{-2s}|w(t,x)| ^2 \intd x \intd t \Bigg) \\
        & \leq c \left( c(\eps ) R^{2\sigma}   \| u\| ^2_{L^2_t \dot{H}^s_x(I^\ominus_R(t_0) \times B_{2R}(x_0) )}  +\eps   \| w\| ^2_{L^2_t \dot{H}^s_x(I^\ominus_R(t_0) \times \R^d )} \right) .
    \end{align*}
    For $I_3$, we use \eqref{eq:DifferenceFrozenKernelXYApart} and \eqref{eq:EstimateL1toL2toHs} to get
    \begin{align*}
        I_3 & \leq c \int _{I^\ominus_R(t_0)}   \left( \int _{B_1\setminus B_{2R}(x_0)}  \frac{|u(y)-(u)_{B_R(x_0)}|}{|y-x_0|^{d+2s-\sigma}} \intd y \right)  \left( \int _{B_{R}(x_0)}|w(t,x)|   \intd x \right) \intd t \\
        & \leq c \int _{I^\ominus_R(t_0)}  R^{\frac{d}{2}-s+\sigma} \Tail _{\sigma ,B_1} (u-(u)_{B_R(x_0)};R,x_0) [w] _{H^s_x(\R^d)}  \intd t \\
        & \leq c \left( c(\eps ) R^{d-2s+2\sigma}   \int _{I^\ominus _R(t_0)} \left( \Tail _{\sigma ,B_1} (u -(u)_{B_R(x_0)} ; R ,x_0) \right) ^2 \intd t +\eps   \| w\| ^2_{L^2_t \dot{H}^s_x(I^\ominus_R(t_0) \times \R^d )} \right) .
    \end{align*}
    For $I_4$, we use \eqref{eq:DifferenceFrozenKernelToKernel}, $|x-y|\geq |y|/2$, and \eqref{eq:EstimateL1toL2toHs}, to obtain
    \begin{align*}
        I_4 & \leq c \int _{I^\ominus_R(t_0)}  \int _{B_{R}(x_0)} \int _{\R^d \setminus B_{1}}  |u(x)- u(y)||w(x)| |y| ^{-d-2s} \intd y \intd x  \intd t \\
        & \leq c \int _{I^\ominus_R(t_0)} \| u\| _{L^\infty ((-1,0) \times \R^d)} \| w\| _{L^1_x (B_R(x_0))} \intd t \\
        & \leq c \int _{I^\ominus_R(t_0)} \| u\| _{L^\infty ((-1,0) \times \R^d)} R^{\frac{d}{2}+s}[w] _{H^s_x(\R^d)} \intd t \\
        & \leq c \left( c(\eps ) R^{d+4s} \| u\| ^2 _{L^\infty ((-1,0) \times \R^d)} + \eps   \| w\| ^2_{L^2_t \dot{H}^s_x(I^\ominus_R(t_0) \times \R^d )} \right) .
    \end{align*}

   Finally, let us consider $I_5$. By $q'$, $r'$ we denote the Hölder conjugate exponents of $q$ and $r$.
   Then, by the assumption $\frac{1}{q} + \frac{d}{2sr} < 1$, we can find $\bar{r} \in [ \max \{ 2 ,r' \} , \frac{2d}{d-2s}]$ and $\bar{q} \in [ \max \{ 2,q' \},\infty)$ such that
    \begin{align*}
        \frac{1}{\bar{r}} + \frac{2s}{d \bar{q}} = \frac{1}{2}, \qquad \text{ and thus } \qquad  d\left(\frac{1}{r'} - \frac{1}{\bar{r}} \right) + 2s \left( \frac{1}{q'} - \frac{1}{\bar{q}} \right) = \frac{d}{2} + 2s - \left(\frac{d}{r} + \frac{2s}{q} \right).
    \end{align*}
    For example, let $\bar{q}= \max \{ 2, q'\}$ and $\frac{1}{\bar{r}} =\frac{1}{2}-\frac{2s}{d \max \{ 2,q'\} }$.
    Then, using that by Lebesgue interpolation and the Sobolev embedding (see \cite[Lemma 2.2]{Byun2023a}) it holds
   \begin{align*}
       \Vert w \Vert_{L^{\bar{q}}_t L^{\bar{r}}_x(Q_R^{\Omega}(z_0))}^2 \le C \| w\| ^2_{L^\infty_t L^2_x(Q_R^{\Omega}(z_0))}  + C\| w\| ^2_{L^2_t \dot{H}^s_x(I_R^{\ominus}(t_0) \times B_R )} ,
   \end{align*} 
    we obtain
    \begin{align*}
        R^{-d-4s+\frac{2d}{r}+\frac{4s}{q}} \| w \| ^2 _{L^{q'}_t L^{r'}_x (Q^\Omega _R(z_0))} &\le R^{-d-4s+\frac{2d}{r}+\frac{4s}{q}} \| 1 \| ^2 _{L^{(\frac{1}{q'} - \frac{1}{\bar{q}})^{-1}}_t L^{(\frac{1}{r'} - \frac{1}{\bar{r}})^{-1}}_x (Q^\Omega _R(z_0))} \| w \| ^2 _{L^{\bar{q}}_t L^{\bar{r}}_x (Q^\Omega _R(z_0))} \\
        &\le C \| w\| ^2_{L^\infty_t L^2_x(Q^\ominus_R(z_0) )}  + C\| w\| ^2_{L^2_t \dot{H}^s_x(I^\ominus_R(t_0) \times \R^d )}.
    \end{align*}
    Hence, altogether, we deduce that    
    \begin{align*}
        I_5 &\le \| f\|  _{L^q_t L^r_x (Q^\Omega _R(z_0))} \| w \|  _{L^{q'}_t L^{r'}_x (Q^\Omega _R(z_0))}  \\
        &\le c(\eps) R^{d+4s-\frac{2d}{r}-\frac{4s}{q}}\| f\| ^2 _{L^q_t L^r_x (Q^\Omega _R(z_0))}  + \eps \big(\| w\| ^2_{L^\infty_t L^2_x(Q^\ominus_R(z_0) )}  + \| w\| ^2_{L^2_t \dot{H}^s_x(I^\ominus_R(t_0) \times \R^d )}   \big).
    \end{align*}

    Combining these estimates and absorbing $\| w\| ^2_{L^\infty_t L^2_x(I^\ominus_R(t_0) \times \R^d )} + \| w\| ^2_{L^2_t \dot{H}^s_x(I^\ominus_R(t_0) \times \R^d )}$ concludes the proof of this lemma.
\end{proof}

\begin{Lemma}\label{Lem:EstimateOnHswWithuL1}
 Under the same assumptions as in \autoref{Lem:EstimateOnHsw}, we have
    \begin{align*}
       \sup _{t\in I^\ominus _R(t_0)}  \| w_{R,z_0}\| _{L^2_x (B_R(x_0))}+& \| w_{R,z_0}\| _{L^2_t \dot{H}_x^s ( I_R^{\ominus}(t_0) \times \R^d )}  \\
       &\leq c \Bigg( R^{\sigma-2s-\frac{d}{2}} \| u\| _{L^1 (Q_{8R} (z_0))} + R^{\sigma +\frac{d}{2}} \sup _{t\in {I^\ominus _{8R}(t_0)}} \Tail _{\sigma ,B_1}(u;4R,x_0) \\
        & \qquad + R^{\frac{d}{2}+2s}    \left[ \| u\| _{L^\infty ((-1,0) \times \R^d)} +  R^{-\frac{d}{r}-\frac{2s}{q}}\| f\| _{L^q_t L^r_x (Q^\Omega _{8R}(z_0))} \right]  \Bigg)
    \end{align*}
    for some $c=c(d,s, \lambda , \Lambda ,\sigma ,q,r )>0$.
\end{Lemma}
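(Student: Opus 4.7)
The plan is to start from \autoref{Lem:EstimateOnHsw} and reduce its right-hand side to the quantities asserted in the statement, by controlling $\|u\|_{L^2_t \dot H^s_x(I^\ominus_R(t_0)\times B_{2R}(x_0))}$ and the $L^2_t$ tail of $u - (u)_{B_R(x_0)}$ via two applications of the Caccioppoli estimate (\autoref{Prop:CaccioppoliEstimate}) and the local boundedness estimate (\autoref{Prop:LocalBoundedness}) on enlarged cylinders. The inclusion $Q_{8R}(z_0)\subset Q_1$ (which holds by $R<1/16$) is what allows us to work on scale $8R$.

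First, I would apply \autoref{Prop:CaccioppoliEstimate} to $u$ on $Q_{4R}(z_0)$ to bound
\[
\|u\|_{L^2_t \dot H^s_x(I^\ominus_R(t_0)\times B_{2R}(x_0))} \lesssim R^{d/2}\bigl(R^{-d/2-s}\|u\|_{L^2(Q_{4R})} + \sup_{I^\ominus_{4R}(t_0)}\Tail(u;2R,x_0) + R^{2s-d/r-2s/q}\|f\|_{L^q_t L^r_x(Q^\Omega_{4R})}\bigr),
\]
and then invoke \autoref{Prop:LocalBoundedness} on $Q_{8R}(z_0)$ to obtain
\[
\|u\|_{L^\infty(Q_{4R}(z_0))} \lesssim R^{-d-2s}\|u\|_{L^1(Q_{8R}(z_0))} + \sup_{I^\ominus_{8R}(t_0)}\Tail(u;4R,x_0) + R^{2s-d/r-2s/q}\|f\|_{L^q_t L^r_x(Q^\Omega_{8R})}.
\]
Using $\|u\|_{L^2(Q_{4R})}\lesssim R^{(d+2s)/2}\|u\|_{L^\infty(Q_{4R})}$ and \eqref{eq:TailSplitSigmaTail} to pass from $\Tail$ to $\Tail_{\sigma,B_1}$ (at the cost of an $L^\infty(\R^d)$ remainder) produces a bound on $\|u\|_{L^2_t \dot H^s_x(I^\ominus_R\times B_{2R})}$ purely in terms of the quantities on the right-hand side of the statement.

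For the $\sigma$-tail term, I would use $\Tail_{\sigma,B_1}(1;R,x_0)\lesssim 1$ to split
\[
\Tail_{\sigma,B_1}(u(t,\cdot)-(u(t,\cdot))_{B_R(x_0)};R,x_0) \le \Tail_{\sigma,B_1}(u(t,\cdot);R,x_0) + c\,|(u(t,\cdot))_{B_R(x_0)}|,
\]
and then decompose $\Tail_{\sigma,B_1}(u(t,\cdot);R,x_0)$ into an intermediate annulus $B_{4R}\setminus B_R(x_0)$ and a far part in $B_1\setminus B_{4R}$. The intermediate annulus contributes pointwise in $t$ a quantity controlled by $R^{-d}\|u(t,\cdot)\|_{L^1(B_{4R})}\lesssim \|u\|_{L^\infty(Q_{4R})}$, the far part equals, up to a universal constant, $\Tail_{\sigma,B_1}(u(t,\cdot);4R,x_0)$, and the average is again bounded by $\|u\|_{L^\infty(Q_{4R})}$. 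Squaring and integrating over $I^\ominus_R(t_0)$, of length $R^{2s}$, converts these $L^\infty_t$ bounds into $L^2_t$ norms with an extra factor $R^{2s}$, which combines with the prefactor $R^{d-2s}$ from \autoref{Lem:EstimateOnHsw} to produce $R^d$, matching the $\|u\|^2_{L^2_t\dot H^s_x}$ term.

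Finally, I would plug these bounds into \autoref{Lem:EstimateOnHsw}, take square roots, and simplify. Since $R<1/16$ and $\sigma\in(0,s)$, terms of the form $R^{d/2+\sigma+2s}\|u\|_{L^\infty(\R^d)}$ and $R^{d/2+\sigma+2s-d/r-2s/q}\|f\|_{L^q_t L^r_x}$ are dominated by their $R^{d/2+2s}$ counterparts and thus absorbed. The main difficulty is bookkeeping: tracking the precise powers of $R$ through each application of \autoref{Prop:CaccioppoliEstimate}, \autoref{Prop:LocalBoundedness}, and \eqref{eq:TailSplitSigmaTail}, and ensuring that every scale appearing ($R$, $2R$, $4R$, $8R$) remains inside $Q_1$. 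There is no new conceptual ingredient beyond the two input propositions and the tail splitting.
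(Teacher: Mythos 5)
Your proposal is correct and follows essentially the same route as the paper's proof: start from \autoref{Lem:EstimateOnHsw}, use \autoref{Prop:CaccioppoliEstimate} to bound the $\dot H^s_x$ norm of $u$ on $Q_{2R}$, bound the $\sigma$-tail of $u-(u)_{B_R(x_0)}$ by an annular split plus the average (controlled by $\|u\|_{L^\infty(Q_{4R})}$), and then apply \autoref{Prop:LocalBoundedness} on $Q_{8R}$ together with \eqref{eq:TailSplitSigmaTail} to reach the $L^1$-norm on $Q_{8R}$ and the $\sigma$-tail at scale $4R$. The bookkeeping of $R$-powers you describe matches the paper's computation, and the absorption of the harmless $R^{\sigma}$-weighted $L^\infty$ and $f$ terms is the same observation the paper uses.
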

\begin{proof}
    Set $w:=w_{R,z_0}$.
    We start with the estimate from \autoref{Lem:EstimateOnHsw}. Using \autoref{Prop:CaccioppoliEstimate} and \eqref{eq:TailSplitSigmaTail},
    we get
    \begin{align*}
        \sup _{t\in I^\ominus _R(t_0)} & \| w\| _{L^2_x (B_R(x_0))}+ \| w\| _{L^2_t \dot{H}_x^s ( I_R^{\ominus}(t_0) \times \R^d )}  \\
        & \leq cR^{\sigma }\Bigg(  \| u\| _{L^2_t \dot{H}^s_x(Q_{2R}(z_0))}   
        + R^{ \frac{d}{2}-s}  \left( \int _{I^\ominus _R(t_0)} \left( \Tail _{\sigma ,B_1} (u -(u)_{B_R(x_0)} ; R ,x_0) \right) ^2 \intd t \right) ^\frac{1}{2} \Bigg)\\
        & \quad +c  R^{\frac{d}{2}+2s}    \left[ \| u\| _{L^\infty ((-1,0) \times \R^d)} +  R^{-\frac{d}{r}-\frac{2s}{q}}\| f\|  _{L^q_t L^r_x (Q^\Omega_{R}(z_0))} \right] \\
         & \leq c  R^{\sigma-s} \| u\| _{L^2 (Q_{4R}(z_0))} + R^{\sigma +\frac{d}{2}} \sup _{t\in I^\ominus _{4R}(t_0)} \Tail _{\sigma ,B_1}(u;2R,x_0) \\
        & \qquad +c R^{\sigma +\frac{d}{2}-s}  \left( \int _{I^\ominus _R(t_0)} \left( \Tail _{\sigma ,B_1} (u -(u)_{B_R(x_0)} ; R ,x_0) \right) ^2 \intd t\right) ^\frac{1}{2} \\
        & \qquad + c R^{\frac{d}{2}+2s}    \left[ \| u\| _{L^\infty ((-1,0) \times \R^d)} +  R^{-\frac{d}{r}-\frac{2s}{q}}\| f\| _{L^q_t L^r_x (Q^\Omega _{4R}(z_0))} \right]   \\
        & \leq c \Bigg( R^{\sigma-s} \| u\| _{L^2 (Q_{4R}(z_0))} + R^{\sigma +\frac{d}{2}} \sup _{t\in I^\ominus _{4R}(t_0)} \Tail _{\sigma ,B_1}(u;R,x_0) \\
        & \qquad + R^{\frac{d}{2}+2s}    \left[ \| u\| _{L^\infty ((-1,0) \times \R^d)} +  R^{-\frac{d}{r}-\frac{2s}{q}}\| f\| _{L^q_t L^r_x (Q^\Omega _{4R}(z_0))} \right]  \Bigg) .
    \end{align*}
    Note that 
    \begin{align*}
        \sup _{t\in I^\ominus _{4R}(t_0)} \Tail _{\sigma ,B_1}(u;R,x_0) & \leq c \sup _{t\in I^\ominus _{4R}(t_0)} \Tail _{\sigma ,B_1}(u;4R,x_0) + c \| u\| _{L^\infty (Q_{4R}(z_0))} .
    \end{align*}
    With \autoref{Prop:LocalBoundedness}, we obtain
    \begin{align*}
        \| u\| & _{L^2 (Q_{4R}(z_0))}  \leq c  R^{\frac{d}{2}+s}\| u\|_{L^\infty (Q_{4R}(z_0))} \\
        & \leq c R^{\frac{d}{2}+s}\Bigg( R^{-d-2s}\| u \|  _{L^1(Q_{8R} (z_0))}  +  \sup _{t\in I^\ominus_{8R}(t_0)}\Tail  (u;4R,x_0)  + R^{2s -\frac{d}{r}-\frac{2s}{q}}\| f\| _{L^{q}_t L^{r}_{x}(Q^\Omega _{8R}(z_0))} \Bigg) .
    \end{align*}
    Combining the estimates above and using \eqref{eq:TailSplitSigmaTail}, proves the lemma.
\end{proof}

\begin{Lemma} \label{Lem:FreezingEstimateIntW/ds}
Assume that we are in the setting of \autoref{Lem:EstimateOnHsw}.
In addition, assume that $\Omega \subset \R^d$ is a Lipschitz domain with Lipschitz constant bounded by $\delta> 0$.
Then, we have
\begin{equation}\label{eq:FreezingEstimateIntW/ds}
    \begin{aligned}
        &\left\|\frac{w_{R,z_0}}{d_\Omega ^s} \right\|  _{L^1 (Q^\Omega_R(z_0))}  + \max \{  R,d_\Omega (x_0) \} ^{-s} R^{2s+\frac{d}{2}} \| w_{R,z_0} \| _{L^\infty_t L^2_x (Q_R(z_0))} \\
        & \qquad \leq c \Bigg(  R ^{\sigma} \left[ \left\|\frac{u}{d_\Omega ^s} \right\|_{L^1 (Q^\Omega_{8R}(z_0))}  + R^{d+2s} \max \{ 8R, d_\Omega (x_0) \} ^{-s} \sup _{t\in {I^\ominus _{8R}(t_0)}} \Tail _{\sigma , B_1}(u;4R,x_0) \right] \\
        & \qquad \qquad + R^{d+3s}    \left[ \| u\| _{L^\infty ((-1,0) \times \R^d)} +  R^{-\frac{d}{r}-\frac{2s}{q}}\| f\| _{L^q_t L^r_x (Q^\Omega _{8R}(z_0))} \right]  \Bigg) 
    \end{aligned}
\end{equation}
for some $c=c(d,s, \lambda , \Lambda ,\sigma ,q,r, \delta )>0$.
\end{Lemma}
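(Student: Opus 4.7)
The strategy is to combine \autoref{Lem:EstimateOnHswWithuL1} with the fractional Hardy inequality on Lipschitz domains, after a dichotomy on the relative size of $R$ and $d_\Omega(x_0)$. First, the elementary pointwise bound $d_\Omega(y)\leq |y-x_0|+d_\Omega(x_0)\leq 9\max\{R,d_\Omega(x_0)\}$, valid on $B_{8R}^\Omega(x_0)$, integrates to
\begin{equation*}
\|u\|_{L^1(Q_{8R}^\Omega(z_0))}\leq c\max\{R,d_\Omega(x_0)\}^s\,\|u/d_\Omega^s\|_{L^1(Q_{8R}^\Omega(z_0))}.
\end{equation*}
Multiplying the estimate from \autoref{Lem:EstimateOnHswWithuL1} by $\max\{R,d_\Omega(x_0)\}^{-s}R^{2s+d/2}$ and inserting this comparison shows that each of the three resulting contributions matches the corresponding term on the right-hand side of \eqref{eq:FreezingEstimateIntW/ds}; this handles the $L^\infty_t L^2_x$ summand on the left-hand side (the $\|u\|_\infty+\|f\|$ contribution only uses the trivial $\max\{R,d_\Omega(x_0)\}^{-s}\leq R^{-s}$).

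For the weighted $L^1$ term $\|w_{R,z_0}/d_\Omega^s\|_{L^1(Q_R^\Omega(z_0))}$ a case split is required. In the boundary regime $d_\Omega(x_0)\leq 4R$ I would exploit that $w_{R,z_0}(t,\cdot)\equiv 0$ on $\R^d\setminus\Omega$ for each $t$ and apply the fractional Hardy inequality on the Lipschitz domain $\Omega$, giving $\int_{\R^d}w_{R,z_0}^2/d_\Omega^{2s}\,\d x\leq c[w_{R,z_0}(t,\cdot)]_{H^s(\R^d)}^2$ with a constant depending on the Lipschitz character $\delta$. Cauchy--Schwarz in space followed by Cauchy--Schwarz in time then yields
\begin{equation*}
\|w_{R,z_0}/d_\Omega^s\|_{L^1(Q_R^\Omega(z_0))}\leq cR^{d/2+s}\,\|w_{R,z_0}\|_{L^2_t\dot H^s_x(I_R^\ominus(t_0)\times\R^d)},
\end{equation*}
into which the bound from \autoref{Lem:EstimateOnHswWithuL1} can be inserted; since $\max\{R,d_\Omega(x_0)\}\sim R$ throughout this regime, the desired right-hand side drops out.

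In the complementary interior regime $d_\Omega(x_0)>4R$, the ball $B_R(x_0)$ lies in $\Omega$ and the triangle inequality gives $d_\Omega(y)\geq d_\Omega(x_0)-R\geq \tfrac{3}{4}d_\Omega(x_0)$ for every $y\in B_R(x_0)$, so that
\begin{equation*}
\|w_{R,z_0}/d_\Omega^s\|_{L^1(Q_R(z_0))}\leq cd_\Omega(x_0)^{-s}R^{d/2+2s}\,\|w_{R,z_0}\|_{L^\infty_t L^2_x(Q_R(z_0))}.
\end{equation*}
Since $d_\Omega(x_0)=\max\{R,d_\Omega(x_0)\}$ in this regime, the right-hand side is precisely the $L^\infty_t L^2_x$ quantity already controlled in the first step, and the estimate follows. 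The only non-routine ingredient in the whole argument is the fractional Hardy inequality, which is where the Lipschitz hypothesis on $\Omega$ enters; all remaining difficulty is careful bookkeeping between the two regimes for $d_\Omega(x_0)$.
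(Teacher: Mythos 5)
Your proposal is correct and follows essentially the same route as the paper: a dichotomy on $d_\Omega(x_0)$ versus $R$, the fractional Hardy inequality (the paper applies it with the weight $d_{\Omega\cap B_R(x_0)}^{-s}$ rather than $d_\Omega^{-s}$, but since $d_{\Omega\cap B_R(x_0)}\le d_\Omega$ on the support of $w_{R,z_0}$ both variants do the job) in the near-boundary case, the elementary lower bound $d_\Omega\gtrsim d_\Omega(x_0)$ on $B_R(x_0)$ in the interior case, and a final insertion of \autoref{Lem:EstimateOnHswWithuL1}. The only cosmetic differences are the case-split threshold ($4R$ vs.\ $16R$) and that the paper passes through $\|w\|_{L^1_t\dot H^s_x}$ in the interior case while you go directly through $\|w\|_{L^\infty_t L^2_x}$; both are controlled by \autoref{Lem:EstimateOnHswWithuL1}.
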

\begin{proof}
Set $w:=w_{R,z_0}$.
We start by estimating the first summand on the left-hand side of \eqref{eq:FreezingEstimateIntW/ds}.
Assume that $B_{16R}(x_0)\cap \Omega ^c \neq \emptyset$.
Then, using Hardy's inequality (see \cite[Lemma 2.4]{Kim2024}) and \autoref{Lem:EstimateOnHswWithuL1}, we get
\begin{align*}
   \| w/d_\Omega ^s \|_{L^1 (Q^\Omega _R(z_0))} & \leq
        c R^{\frac{d}{2}+s} \| w/d_{\Omega\cap B_R(x_0)} ^s \|_{L^2 (Q^\Omega _R(z_0))} 
        \leq c R^{\frac{d}{2}+s}   \| w\| _{L^2_t \dot{H}_x^s ( I_R^{\ominus}(t_0) \times \R^d )} \\
          &\leq c R^{\frac{d}{2}+s}\Bigg( R^{\sigma-2s-\frac{d}{2}} \| u\| _{L^1 (Q_{8R} (z_0))} + R^{\sigma +\frac{d}{2}} \sup _{t\in {I^\ominus _{8R}(t_0)}} \Tail _{\sigma ,B_1}(u;4R,x_0) \\
        & \qquad + R^{\frac{d}{2}+2s}    \left[ \| u\| _{L^\infty ((-1,0) \times \R^d)} +  R^{-\frac{d}{r}-\frac{2s}{q}}\| f\| _{L^q_t L^r_x (Q^\Omega _{8R}(z_0))} \right]  \Bigg) .
\end{align*}
Consequently, the first part of \eqref{eq:FreezingEstimateIntW/ds} follows in the case $B_{16R}(x_0)\cap \Omega ^c \neq \emptyset$ since $d_\Omega \leq 24 R$ in $Q_{8R}(z_0)$ and $ \max \{ 8R, d_\Omega (x_0) \} \leq 16 R$.

Now, let us assume that $B_{16R}(x_0) \subset \Omega$.
Then, we have $d_\Omega(x_0)\leq R+d_\Omega (x)$ for all $x\in B_R(x_0)$ which implies $d_\Omega(x)\geq d_\Omega (x_0) -R \geq \frac{15}{16} d_\Omega (x_0)$ for all $x\in B_R(x_0)$.
Together with \eqref{eq:EstimateL1toL2toHs}, and \autoref{Lem:EstimateOnHswWithuL1}, we obtain
\begin{align*}
    \| w/d_\Omega ^s \|&_{L^1 (Q^\Omega_R(z_0))}  \leq 
    c d_\Omega (x_0)^{-s} \| w \| _{L^1 (Q^\Omega _R(z_0))} \\
    & \leq  c d_\Omega (x_0)^{-s} R^{\frac{d}{2}+s}\| w \| _{L^1_t \dot{H}_x^s (I^\ominus _R(t_0)\times \R ^d )} \\
    &\leq  c d_\Omega (x_0)^{-s} R^{\frac{d}{2}+2s}\| w \| _{L^2_t \dot{H}_x^s (I^\ominus _R(t_0)\times \R ^d )} \\
    & \leq c d_\Omega (x_0)^{-s} R^{\frac{d}{2}+2s} \Bigg( R^{\sigma-2s-\frac{d}{2}} \| u\| _{L^1 (Q_{8R} (z_0))} + R^{\sigma +\frac{d}{2}} \sup _{t\in {I^\ominus _{8R}(t_0)}} \Tail _{\sigma ,B_1}(u;4R,x_0) \\
        & \qquad + R^{\frac{d}{2}+2s}    \left[ \| u\| _{L^\infty ((-1,0) \times \R^d)} +  R^{-\frac{d}{r}-\frac{2s}{q}}\| f\| _{L^q_t L^r_x (Q^\Omega _{8R}(z_0))} \right]  \Bigg) .
\end{align*}
Hence, the first part of \eqref{eq:FreezingEstimateIntW/ds} follows also in the case $B_{16R}(x_0) \subset \Omega$ using $\frac{3}{2}d_\Omega (x_0) \geq  d_\Omega(x_0) +8R\geq d_\Omega (x)$ for all $x\in B_{8R}(x_0)$ and $ \max \{ 8R, d_\Omega (x_0) \} =d_\Omega (x_0)$.

The second summand on the left-hand side of \eqref{eq:FreezingEstimateIntW/ds} can be estimated similarly by distinguishing between the cases $B_{16R}(x_0)\cap \Omega ^c \neq \emptyset$ and $B_{16R}(x_0) \subset \Omega$ and using \autoref{Lem:EstimateOnHswWithuL1}.
\end{proof}

\section{\texorpdfstring{$C^{s-\eps}$}{Cs-eps} regularity for operators with coefficients}
\label{sec:Cs-epsRegularity}
The goal of this section is to prove the following theorem.
\begin{Theorem}\label{Thm:Cs-epsRegWithCoefficients}
    Let $s\in (0,1)$, $\sigma \in (0,s)$, and $\Omega \subset \R^d$ be a Lipschitz domain with Lipschitz constant $\delta >0$.
    Furthermore, let $\mathcal{L}_t$ be an operator of the form \eqref{eq:OperatorDivergenceForm}-\eqref{eq:KernelDivergenceForm}, satisfying \eqref{eq:KernelHoelderCont} with $\mathcal{A}=Q_1$.
    Assume that $u$ is a weak solution of 
    \begin{equation*}
        \left\{ \begin{aligned}
          \partial _t u -\mathcal{L}_tu & =f & &\text{in} \quad (-1 ,0) \times (\Omega \cap B_1) , \\
           u & =0 & & \text{in} \quad  (-1 ,0) \times (B_1 \setminus \Omega) 
        \end{aligned} \right.
    \end{equation*}
    for some $f\in L^q_tL^r_x (Q_1^\Omega)$ where $q,r\geq 1$ satisfy $\frac{1}{q}+\frac{d}{2sr} <1$. 
    \begin{itemize}
        \item If $\frac{1}{q}+\frac{d}{2sr} \leq \frac{1}{2}$, then for every $\eps >0$ there exists $\delta _0 = \delta _0(d,s,\sigma ,\lambda ,\Lambda,q,r , \eps )>0$ such that if $\delta \leq \delta _0$, then
        \begin{equation*}
            [ u ] _{C^{s- \eps}_{p} (\overline{Q^\Omega_{1/2}})} \leq c \left( \| u \| _{L^\infty ((-1,0)\times \R ^d)} + \| f\| _{L_{t}^q L_x^{r} (Q^\Omega_1)}\right)
        \end{equation*}
        for some constant $c=c(d,s,\sigma ,\lambda ,\Lambda,q,r , \eps )>0$.
        \item If $\frac{1}{2}<\frac{1}{q}+\frac{d}{2sr} < 1$, then there exists $\delta _0 = \delta _0 (d,s,\sigma ,\lambda ,\Lambda,q,r )>0$ such that if $\delta \leq \delta _0$, then for $\beta = 2s-\frac{2s}{q}-\frac{d}{r}$
        \begin{equation*}
            [ u ] _{C^{\beta }_{p} (\overline{Q^\Omega_{1/2}})} \leq c \left( \| u \| _{L^\infty ((-1,0)\times \R ^d)} + \| f\| _{L_{t}^q L_x^{r} (Q^\Omega_1)}\right)
        \end{equation*}
        for some constant $c=c(d,s,\sigma ,\lambda ,\Lambda,q,r )>0$.
    \end{itemize}
\end{Theorem}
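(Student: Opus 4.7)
The plan is to apply a Morrey-type iterative scheme at the boundary, a parabolic adaptation of the strategy of \cite{Kim2024}. Working with the excess functional
\begin{equation*}
    \Phi_\sigma(u;\rho,z_0) := \iint_{Q^\Omega_\rho(z_0)} \left|\frac{u}{d_\Omega^s}\right| \intd x \intd t + \max\{\rho,d_\Omega(x_0)\}^{-s} \rho^{d+2s} \sup_{t \in I^\ominus_\rho(t_0)} \Tail_{\sigma,B_1}(u;\theta^{-1}\rho,x_0),
\end{equation*}
with $\theta > 1$ a large absolute constant, the goal is a power-type decay $\Phi_\sigma(u;\rho,z_0) \lesssim \rho^{d+s+\gamma}$ as $\rho \to 0$, uniformly in $z_0 \in \overline{Q^\Omega_{1/2}}$, with $\gamma = s-\eps$ in the subcritical case $\frac{1}{q}+\frac{d}{2sr} \le \frac{1}{2}$ and $\gamma = 2s - \frac{2s}{q} - \frac{d}{r} < s$ in the critical case. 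A Morrey-type embedding in the spirit of \cite[Section 7]{Kim2024} then converts the decay into the pointwise $C^\gamma_p$ estimate on $u$.

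First, I would prove the analogous decay for the frozen caloric replacement. Fix $z_0$ and $R \in (0,1/16)$, and let $v = v_{R,z_0}$, $w = w_{R,z_0}$ be as in \eqref{eq:DefinitionV}-\eqref{eq:DefinitionW}. Choosing the Lipschitz constant $\delta \le \delta_0(\eps/2)$ as provided by \autoref{Thm:BoundaryRegularityTranslationInvar}(i), rescaling $Q^\Omega_{R/2}(z_0)$ to $Q^\Omega_{1/2}$ and applying that theorem yield the pointwise bound $|v| \le C d^s_\Omega$ on $Q^\Omega_{R/4}(z_0)$ with the natural scaling in terms of the $L^\infty$ and tail data of $v$. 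Combined with a dyadic decomposition of $\Tail_{\sigma,B_1}(v;\theta^{-1}\rho,x_0)$, this produces the excess decay
\begin{equation*}
    \Phi_\sigma(v;\rho,z_0) \le C(\rho/R)^{d+2s-\eps/2}\,\Phi_\sigma(v;R,z_0) \qquad \forall\, \rho \in (0,R/\theta].
\end{equation*}

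Next, writing $u = v + w$ and invoking \autoref{Lem:FreezingEstimateIntW/ds} to bound $\|w/d^s_\Omega\|_{L^1(Q^\Omega_R)}$ together with the $L^\infty_tL^2_x$-type tail contribution coming from $w$ by $CR^\sigma\Phi_\sigma(u;8R,z_0) + CR^{d+s+\gamma}(\|u\|_{L^\infty} + \|f\|_{L^q_tL^r_x})$, I deduce
\begin{equation*}
    \Phi_\sigma(u;\rho,z_0) \le C\bigl[(\rho/R)^{d+2s-\eps/2} + R^\sigma\bigr]\Phi_\sigma(u;8R,z_0) + C R^{d+s+\gamma}\bigl(\|u\|_{L^\infty} + \|f\|_{L^q_tL^r_x}\bigr).
\end{equation*}
Restricting $R$ so that $R^\sigma \le \eps_0$, taking $\rho = \theta^{-m}R$, and applying \autoref{Lem:IterationLemmaOnDyadicScale} with $\alpha = d+2s-\eps/2$ and $\beta = d+s+\gamma$ (note $\beta < \alpha$ because $\gamma < s$) yields the target $\Phi_\sigma(u;\rho,z_0) \lesssim \rho^{d+s+\gamma}$ uniformly in $z_0$.

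The main technical obstacle is the intermediate tail handling. Because the local part of $\Phi_\sigma$ is $L^1_t$ whereas its tail part is $L^\infty_t$, the dyadic split
\begin{equation*}
    \sup_{I^\ominus_\rho(t_0)}\Tail_{\sigma,B_1}(u;\theta^{-1}\rho,x_0) \le C\sum_{k=0}^{\lfloor\log R/\rho\rfloor} 2^{-k(2s-\sigma)}\sup_{I^\ominus_\rho(t_0)}\dashint_{B_{2^k\rho}(x_0)}|u|\intd x + C(\rho/R)^{2s-\sigma}\Tail_{\sigma,B_1}(u;R,x_0)
\end{equation*}
forces one to control $L^\infty_t$-averages of $u$ on each intermediate annulus by quantities extractable from $\Phi_\sigma$ at the larger scale $R$. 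This gain in time-integrability is achieved by comparing $u$ to its $L_{z_0}$-caloric replacement on every intermediate scale and exploiting the $L^\infty_tL^2_x$-estimate on $w$ from \autoref{Lem:FreezingEstimateIntW/ds} combined with the $L^\infty$-bound on $v$ from \autoref{Thm:BoundaryRegularityTranslationInvar}(i). This step has no analog in the elliptic argument of \cite{Kim2024} and is the key new parabolic ingredient making the iteration close; it is also where the smallness assumption on the Lipschitz constant $\delta$ ultimately enters.
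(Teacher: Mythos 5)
Your proposal matches the paper's strategy closely: freeze the coefficients, prove a Morrey-type decay for the $L_{z_0}$-caloric replacement $v$ via the optimal regularity in the translation-invariant case, bound the difference $w$ by the freezing estimate, handle the $L^\infty_t$-tails via a dyadic split and the gain in time-integrability encoded in the decay estimate for $v$, and close the iteration with the geometric-scale iteration lemma. That is precisely the content of \autoref{Lem:MorreyEstimateTranslInvariant}, \autoref{Lem:FreezingEstimateIntW/ds}, and \autoref{Lem:ExcessFunctionalEstimate} in the paper, and you have correctly identified the intermediate-tail issue as the key parabolic novelty and the gain of time-integrability from $L^1_t$ to $L^\infty_t$ as the mechanism.

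Two points should however be tightened. First, \autoref{Thm:BoundaryRegularityTranslationInvar}(i) gives $v\in C^{s-\eps}_p$ for Lipschitz domains with small enough constant, so the pointwise bound you can extract is $|v|\le C d_\Omega^{s-\eps}$, not $|v|\le Cd_\Omega^s$; one then integrates $d_\Omega^{-\eps}$ over $B^\Omega_\rho$ (as in \cite[Lemma 2.5]{Kim2024}) to obtain $\rho^{d-\eps}$, which is exactly where the $-\eps$ loss in the decay exponent $(\rho/R)^{d+2s-\eps}$ comes from. Second, the final conversion from the excess decay $\Phi_\sigma(u;\rho,z_0)\lesssim\rho^{d+s+\gamma}$ to the pointwise $C^\gamma_p$ estimate is not a straight Morrey embedding: near the boundary (when $d_\Omega(x_0)\lesssim\rho$) the decay of $\int|u/d_\Omega^s|$ directly controls the Campanato oscillation of $u$ at scale $\rho$ because $u$ vanishes on $\Omega^c$; but when $d_\Omega(x_0)\gg\rho$ the decay of $\int|u/d_\Omega^s|$ only gives an $L^\infty$-bound and carries no oscillation information, and one must switch to the interior Campanato estimate (\autoref{Lem:InteriorCampanatoEstimate}), crossing over at scale $\rho\sim d_\Omega(x_0)$. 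This three-case matching argument is carried out explicitly in the paper's proof and is where the Campanato embedding (cited from \cite{Byun2023}, not a Morrey-type result) is ultimately invoked; your proposal should make this step explicit rather than delegating it to the elliptic reference.
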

We start with the following Morrey-type estimate.
\begin{Lemma}\label{Lem:MorreyEstimateTranslInvariant}
    Let $\Omega \subset \R^d$ be a Lipschitz domain with Lipschitz constant $\delta >0$.
    Let $L$ be a translation invariant operator of the form \eqref{eq:TranslationalInvariantOperator}-\eqref{eq:TranslationalInvariantKernel}.
    Given some $z_0=(t_0,x_0) \in \R \times \R ^d$ and $R \in (0,1)$, assume that $v$ solves
    \begin{equation*}
        \left\{ \begin{aligned}
          \partial _t v -Lv & =0 & &\text{in} \quad I^\ominus _R(t_0) \times (B_R(x_0)\cap \Omega ) , \\
           v & =0 & & \text{in} \quad  I^\ominus _R(t_0) \times ( B_R(x_0) \setminus \Omega ) .
        \end{aligned} \right.
    \end{equation*}
    Then, for every $\eps \in (0,s)$, there exist $\delta _0=\delta _0(d,s,\lambda , \Lambda ,\eps )>0$ and $c=c(d,s,\lambda , \Lambda ,\eps )>0$, such that whenever $\delta \leq \delta_0$, it holds that
    \begin{equation}\label{eq:MorreyEstimateTranslInvariant}
    \begin{aligned}
        \sup _{t\in I^\ominus _\rho (t_0)}\int_{B ^\Omega_\rho (x_0) } \left| \frac{v}{d_\Omega ^s}  \right|\intd x &  \leq c \rho ^{-2s}\left( \frac{\rho}{R}\right) ^{d+2s-\eps } \Bigg[  \iint_{Q_R^\Omega (z_0) } \left| \frac{v}{d_\Omega ^s}  \right|\intd x \intd t  \\
        & \qquad  + \max \{ R,d_\Omega (x_0)\} ^{-s}  R^{d+2s} \sup _{t \in I^\ominus _{R}(t_0)} \Tail (v;R/2,x_0)  \Bigg] 
    \end{aligned}
    \end{equation}
    for all $0<\rho \leq R/16$.
\end{Lemma}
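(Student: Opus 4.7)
The plan is to combine the $C^{s-\eps}_p$ boundary regularity from Theorem 3.2(i) (applicable because of the smallness assumption $\delta\leq \delta_0(d,s,\lambda,\Lambda,\eps)$) with the local $L^\infty$ bound from Proposition 2.3, using a dichotomy on $d_\Omega(x_0)$. The boundary case $d_\Omega(x_0)\leq R$ uses the Hölder regularity together with the vanishing exterior data, while the interior case $d_\Omega(x_0)>R$ reduces to a direct $L^\infty$ argument on an interior ball.

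In the boundary case, after rescaling from $Q_R$ to $Q_1$ and truncating $v$ to $\tilde v:=v\chi_{B_R(x_0)}$ (so that $\tilde v$ solves the equation with a bounded source of size at most $cR^{-2s}\sup_t\Tail(v;R/2,x_0)$), Theorem 3.2(i) applied at scale $R/2$ and Proposition 2.3 applied at scale $R$ yield separately
\begin{equation*}
[v]_{C^{s-\eps}_x(\overline{B_{R/4}^\Omega(x_0)})} \leq c R^{-(s-\eps)}\mathcal{R}, \qquad \|v\|_{L^\infty(Q_{R/4}(z_0))} \leq c\,\mathcal{R},
\end{equation*}
where $\mathcal{R}:=R^{-d-2s}\|v\|_{L^1(Q_R(z_0))}+\sup_{I_R^\ominus(t_0)}\Tail(v;R/2,x_0)$. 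Since $v\equiv 0$ on $\R^d\setminus\Omega$, for any $x\in B_\rho^\Omega(x_0)$ with $d_\Omega(x)\leq R/8$ the nearest boundary point $y$ satisfies $|x_0-y|\leq d_\Omega(x)+\rho\leq R/8+R/16<R/4$, so $|v(t,x)|=|v(t,x)-v(t,y)|\leq [v]_{C^{s-\eps}_x}d_\Omega(x)^{s-\eps}$, giving $|v/d_\Omega^s|\leq [v]_{C^{s-\eps}_x}d_\Omega^{-\eps}$; on the complementary set $\{d_\Omega(x)>R/8\}$ one uses $|v/d_\Omega^s|\leq cR^{-s}\|v\|_{L^\infty(Q_{R/4})}$. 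Splitting $\int_{B_\rho^\Omega}$ and applying the standard Lipschitz integral $\int_{B_\rho}d_\Omega^{-\eps}\leq c\rho^{d-\eps}$ (valid for $\eps<1$) together with $R^{-s}\rho^d\leq R^{-(s-\eps)}\rho^{d-\eps}$ yields $\sup_{I_\rho^\ominus}\int_{B_\rho^\Omega}|v/d_\Omega^s|\leq c\rho^{d-\eps}R^{-(s-\eps)}\mathcal{R}$. Finally, $d_\Omega\leq R+d_\Omega(x_0)\leq 2R$ on $B_R(x_0)$ gives $\|v\|_{L^1(Q_R)}\leq cR^s\|v/d_\Omega^s\|_{L^1(Q_R^\Omega)}$, and matching powers against $\max\{R,d_\Omega(x_0)\}^{-s}R^{d+2s}=R^{d+s}$ produces \eqref{eq:MorreyEstimateTranslInvariant}.

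In the interior case, $B_\rho(x_0)\subset B_{d_\Omega(x_0)}(x_0)\subset\Omega$ and $d_\Omega(x)\asymp d_\Omega(x_0)$ on $B_\rho(x_0)$ since $\rho\leq R/16<d_\Omega(x_0)/8$. Then Proposition 2.3 combined with $\|v\|_{L^1(Q_R)}\leq cd_\Omega(x_0)^s\|v/d_\Omega^s\|_{L^1(Q_R^\Omega)}$ (again from $d_\Omega\leq 2d_\Omega(x_0)$ on $B_R$) gives
\begin{equation*}
\int_{B_\rho^\Omega}|v/d_\Omega^s| \leq cd_\Omega(x_0)^{-s}\rho^d\|v\|_{L^\infty(Q_{R/4})} \leq c\rho^d R^{-d-2s}\|v/d_\Omega^s\|_{L^1(Q_R^\Omega)} + c\rho^d d_\Omega(x_0)^{-s}\sup_{I_R^\ominus(t_0)}\Tail(v;R/2,x_0).
\end{equation*}
Using $\rho^d\leq R^\eps\rho^{d-\eps}$ and $\max\{R,d_\Omega(x_0)\}^{-s}=d_\Omega(x_0)^{-s}$, the right-hand side matches $\rho^{-2s}(\rho/R)^{d+2s-\eps}$ times the bracket in \eqref{eq:MorreyEstimateTranslInvariant}. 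The main technical obstacle is the geometric mismatch in the boundary case: the Hölder bound is only available inside $\overline{B_{R/4}^\Omega(x_0)}$, yet one needs pointwise control of $v/d_\Omega^s$ at points of $B_\rho^\Omega(x_0)$ whose nearest boundary point might a priori escape this region. The dichotomy between $d_\Omega(x)\leq R/8$ and $d_\Omega(x)>R/8$ exactly balances the two contributions and is the only step where the Lipschitz smallness $\delta\leq\delta_0$ enters, through the applicability of Theorem 3.2(i).
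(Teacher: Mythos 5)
Your proposal is correct and follows essentially the same route as the paper: combine the $C^{s-\eps}_p$ boundary estimate of \autoref{Thm:BoundaryRegularityTranslationInvar}(i) (applied after truncating $v$ to $v\ind_{B_R(x_0)}$, which moves the tail into a bounded source term), the local boundedness of \autoref{Prop:LocalBoundedness}, and the Lipschitz integral estimate $\int_{B_\rho} d_\Omega^{-\eps}\leq c\rho^{d-\eps}$ from \cite[Lemma 2.5]{Kim2024}, then split on the size of $d_\Omega(x_0)$ relative to $R$. The only real difference is cosmetic: you take the threshold $d_\Omega(x_0)\leq R$ and compensate with an internal sub-dichotomy on $d_\Omega(x)\lessgtr R/8$ inside $B_\rho^\Omega(x_0)$, whereas the paper chooses the tighter threshold $d_\Omega(x_0)\leq R/8$, which makes $d_\Omega(x)\leq 3R/16$ for every $x\in B_\rho^\Omega(x_0)$ (since $\rho\leq R/16$) and hence guarantees the nearest boundary point of every such $x$ lies in $B_{R/4}(x_0)$ — so the sub-dichotomy you flag as the "main technical obstacle" never arises in the paper's version and is a consequence of your wider threshold rather than an intrinsic difficulty of the lemma. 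Both organizations give the same estimate with the same dependencies.
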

\begin{proof}
First, we will assume that $d_\Omega(x_0) \leq R/8$ and $\rho \leq R/16$.
Then, for every $x\in B^\Omega_\rho (x_0)$ there exists $y\in B_{R/4}(x_0) \cap \partial \Omega$ such that $d_\Omega (x)=|x-y|$.
Note that $v(t,y)=0$.
Hence, we can estimate
\begin{align*}
     \sup _{t\in I^\ominus _\rho (t_0)}\int_{B^\Omega_\rho (x_0)} \left| \frac{v}{d_\Omega ^s}  \right|\intd x & \leq  \sup _{t\in {I^\ominus _\rho (t_0)}}  \int _{B_{\rho}^\Omega (x_0) } d_\Omega^{-\eps} (x) \sup _{y\in B^\Omega_{R/4}(x_0)} \frac{|v(t,x)-v(t,y)|}{|x-y|^{s- \eps }} \intd x \\
    & \leq \left( \int _{B^\Omega _{\rho}(x_0) } d_\Omega^{-\eps} (x)\intd x \right) [v]_{C_x^{s-\eps} (Q^\Omega _{R/4} (z_0))} \\
    &\leq c \rho ^{d-\eps}  [v]_{C_x^{s-\eps} (Q^\Omega_{R/4} (z_0))},
\end{align*}
where we have used \cite[Lemma 2.5]{Kim2024} in the last step.
Recall that $[\cdot ]_{C_x^{s-\eps} (Q^\Omega _{R/4} (x_0))}$ denotes the Hölder semi-norm in space with the supremum in time.
Next, we want to use the boundary regularity estimate for translation invariant operators from \autoref{Thm:BoundaryRegularityTranslationInvar}(i).
By scaling and truncating, we get 
\begin{equation*}
    [v]_{C_x^{s-\eps} (Q^\Omega _{R/4} (x_0))} \leq c R^{-(s-\eps )} \left(  \| v\| _{L^\infty (Q_{R/2}(z_0))} + \sup _{t \in I^\ominus _{R/2}(t_0)} \Tail (v;R/2,x_0) \right) .
\end{equation*}
Hence, together with \autoref{Prop:LocalBoundedness}, we obtain
\begin{equation*} 
\begin{aligned}
\sup _{t\in I^\ominus _\rho (t_0)}&\int_{B_\rho ^\Omega (x_0) }  \left| \frac{v}{d_\Omega ^s}  \right|\intd x  \leq
    c  \rho ^{d-\eps} R^{-(s-\eps )} \left(  \| v\| _{L^\infty (Q_{R/2}(z_0))} + \sup _{t \in I^\ominus _{R/2}(t_0)} \Tail (v;R/2,x_0) \right) \\
    & \leq  c \rho ^{d-\eps}R^{-(s-\eps )} \Bigg( R^{-d-2s} \| v\| _{L^1(Q_R(z_0))} +\sup _{t \in I^\ominus _{R}(t_0)} \Tail (v;R/2,x_0) \Bigg) \\
    & \leq c \rho ^{-2s}R^{-s} \left(\frac{\rho}{R} \right) ^{d+2s-\eps} \Bigg(  \| v\| _{L^1(Q_R(z_0))} +  R^{d+2s}\sup _{t \in I^\ominus _{R}(t_0)} \Tail (v;R/2,x_0) \Bigg)  .
\end{aligned}
\end{equation*}
Therefore, \eqref{eq:MorreyEstimateTranslInvariant} follows in the case $d_\Omega (x_0) \leq R/8$ since $d_\Omega (x) \leq 2R$ for $x\in B_R(x_0)$ and $\max \{ R,d_\Omega (x_0)\}=R$.

Now, let us assume that $d_\Omega (x_0) \geq R/8$ and $\rho \leq R/16$.
Then, $d_\Omega (x_0) \leq \rho +d_\Omega (x)$ for all $x\in B_\rho (x_0)$ which implies $d_\Omega (x) \geq d_\Omega (x_0) -\rho \geq \frac{1}{2}d_\Omega(x_0)$ for all $x\in B_\rho (x_0)$. 
Hence, together with \autoref{Prop:LocalBoundedness}, we obtain
\begin{equation*} 
\begin{aligned}
    \sup _{t\in I^\ominus _\rho (t_0)}\int_{B ^\Omega _\rho (x_0) } \left| \frac{v}{d_\Omega ^s}  \right|\intd x & \leq c d_\Omega(x_0)^{-s} \rho ^d \| v\| _{L^\infty (Q_{R/2}(z_0))} \\
    & \leq c d_\Omega (x_0) ^{-s} \rho ^d \left( R^{-d-2s} \| v\| _{L^1(Q_R(z_0))} + \sup _{t\in I_R^\ominus (t_0)} \Tail (v;R,x_0)  \right) \\
    &\leq c \rho^{-2s} \left( \frac{\rho}{R} \right) ^{d+2s-\eps} \Bigg( \left\| \frac{v}{d_\Omega ^s}\right\| _{L^1 (Q^\Omega_R(z_0))} \\
    & \qquad \qquad \qquad+ \max \{ R,d_\Omega (x_0)\} ^{-s} R^{d+2s} \sup _{t\in I_R^\ominus (t_0)} \Tail (v;R,x_0)  \Bigg)      ,
\end{aligned}
\end{equation*}
where in the last step we have used that $d_\Omega (x) \leq d_\Omega (x_0) +R \leq 9 d_\Omega (x_0)$ for all $x\in B_R(x_0)$ and $\max \{ R,d_\Omega (x_0)\} \leq 8 d_\Omega(x_0)$.
Hence, \eqref{eq:MorreyEstimateTranslInvariant} also follows in the case $d_\Omega (x_0) \geq R/8$.
\end{proof}

Set $\theta =128$.
For $\rho >0$ and $z_0=(t_0,x_0)\in Q_1^\Omega$, we define the following excess functional
\begin{equation}\label{eq:ExcessFunctional}
\begin{aligned}
    \Phi _\sigma (u;\rho )= \Phi _{\sigma}(u;\rho ,z_0)& = \iint_{Q ^\Omega_\rho (z_0) } \left| \frac{u}{d_\Omega ^s}  \right|\intd x \intd t \\
    & \qquad + \max \{ \rho ,d_\Omega (x_0)\} ^{-s} \rho ^{d+2s} \sup_{t\in I^\ominus _\rho (t_0)} \Tail _{\sigma ,B_1} (u;\theta ^{-1}\rho  ,x_0) .
    \end{aligned}
\end{equation}

We are now in a position to establish an excess decay estimate for solutions to nonlocal parabolic equations with H\"older continuous coefficients.

\begin{Lemma} \label{Lem:ExcessFunctionalEstimate}
    Assume that we are in the same setting as in \autoref{Thm:Cs-epsRegWithCoefficients}.
    Let $\eps \in (0,s)$.
   Then, there exist $R_0 \in (0,1/16)$, $c>0$, and $\delta _0>0$ depending only on $d,s,\lambda , \Lambda , \sigma ,q,r $, and $\eps$, such that whenever the Lipschitz constant $\delta$ of $\Omega$ satisfies $\delta \leq \delta _0$, we have 
    \begin{equation} \label{eq:ExcessFunctionalEstimate}
        \begin{aligned}
        \Phi _\sigma (u;\theta^{-m}R,z_0) &\leq c \left( \theta^{-m}\right) ^{d+2s+\min \{ s-\frac{d}{r}-\frac{2s}{q},-\eps \}} \Phi _\sigma (u;R,z_0) \\
        & \quad+ c \left( \theta^{-m}R\right) ^{d+2s+\min \{ s-\frac{d}{r}-\frac{2s}{q},-\eps \}} \left( \| u \| _{L^\infty ((-1,0)\times \R ^d)} + \| f\| _{L_{t}^q L_x^{r} (Q^\Omega _1)} \right) 
        \end{aligned}
    \end{equation}
    for all $z_0=(t_0,x_0)\in Q_{1/2}^\Omega$, $0<R\leq R_0$, and $m\in \N$.
\end{Lemma}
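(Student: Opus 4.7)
The plan is to prove a one-step excess decay estimate of the form
\begin{equation*}
    \Phi_\sigma(u; \theta^{-1}R', z_0) \leq c\bigl[\theta^{-\alpha} + R_0^\sigma\bigr] \Phi_\sigma(u; R', z_0) + c (R')^\beta E
\end{equation*}
uniformly in $R' \in (0, R_0]$, where $\beta = d+2s+\min\{s-\tfrac{d}{r}-\tfrac{2s}{q}, -\eps\}$, $\alpha$ is strictly larger than $\beta$, and $E := \| u \|_{L^\infty((-1,0)\times\R^d)} + \| f \|_{L^q_t L^r_x(Q_1^\Omega)}$. Given this, \autoref{Lem:IterationLemmaOnDyadicScale} applied on the dyadic scales $\{\theta^{-m}R\}_{m\geq 0}$, with $R_0$ chosen so that $cR_0^\sigma \leq \eps_0$, yields \eqref{eq:ExcessFunctionalEstimate}.

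To establish the one-step estimate, fix $R \in (0, R_0]$ and introduce the caloric replacement $v = v_{R,z_0}$ solving \eqref{eq:DefinitionV} with the frozen operator $L_{z_0}$, together with $w = u - v$ solving \eqref{eq:DefinitionW}. I then decompose $\Phi_\sigma(u; \theta^{-1}R, z_0)$ into its local part $\iint_{Q_{\theta^{-1}R}^\Omega(z_0)} |u/d_\Omega^s|$ and its tail part and, for the local part, write $u = v + w$. To $v$ I apply \autoref{Lem:MorreyEstimateTranslInvariant} with a slightly smaller exponent $\eps/2$, obtaining
\begin{equation*}
    \iint_{Q_{\theta^{-1}R}^\Omega(z_0)} |v/d_\Omega^s| \intd x \intd t \leq c \theta^{-(d+2s-\eps/2)} \Phi(v; R, z_0),
\end{equation*}
where the excess $\Phi(v; R, z_0)$ (with ordinary $\Tail$ in place of $\Tail_{\sigma,B_1}$) is bounded by $\Phi_\sigma(u;R,z_0)$ plus controllable error terms, by using that $v=u$ outside $Q_R^\Omega(z_0)$, the conversion \eqref{eq:TailSplitSigmaTail}, and \autoref{Prop:LocalBoundedness}. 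For $w$, \autoref{Lem:FreezingEstimateIntW/ds} produces the small factor $R^\sigma$ in front of excess-type quantities controlled by $\Phi_\sigma(u; 8R, z_0)$, together with error terms $cR^{d+3s}\bigl(\|u\|_{L^\infty} + R^{-d/r - 2s/q}\|f\|_{L^q_t L^r_x}\bigr)$, which are precisely of order $R^\beta E$.

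For the tail part, I split the integration region $B_1 \setminus B_{\theta^{-2}R}(x_0)$ into an outer shell $B_1 \setminus B_{\theta^{-1}R}(x_0)$ and an intermediate shell $B_{\theta^{-1}R}(x_0) \setminus B_{\theta^{-2}R}(x_0)$. The outer shell produces a factor $\theta^{-(2s-\sigma)}$ times $\Tail_{\sigma,B_1}(u; \theta^{-1}R, x_0)$; comparing the prefactor $\max\{\theta^{-1}R, d_\Omega(x_0)\}^{-s}(\theta^{-1}R)^{d+2s}$ to $\max\{R, d_\Omega(x_0)\}^{-s} R^{d+2s}$ through a case distinction on $d_\Omega(x_0) \lessgtr R$ yields a decay of at least $\theta^{-(d+3s-\sigma)} \Phi_\sigma(u; R, z_0)$, which beats the target exponent. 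The intermediate shell is decomposed dyadically into annuli of radius $\sim 2^{-j} R$, and on each of them $\dashint |u(t,\cdot)|$ is bounded via \autoref{Prop:LocalBoundedness} applied on the corresponding scale, so that the resulting geometric series is controlled by a small multiple of $\Phi_\sigma(u; R, z_0)$ (absorbed into $R_0^\sigma$) plus the desired error $c R^\beta E$.

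The main obstacle is the intermediate-shell tail contribution, where one has to keep track of the prefactors $\max\{\cdot , d_\Omega(x_0)\}^{-s}$ uniformly in $z_0$ across both regimes $d_\Omega(x_0) \leq \theta^{-1}R$ and $d_\Omega(x_0) > \theta^{-1}R$, so that the resulting geometric decay in the dyadic shell sum matches the target exponent $\beta$ rather than being lost to lower-order contributions. Once the one-step estimate is assembled from these ingredients, choosing $R_0$ small so that $cR_0^\sigma \leq \eps_0$ and invoking \autoref{Lem:IterationLemmaOnDyadicScale} closes the proof of \eqref{eq:ExcessFunctionalEstimate}.
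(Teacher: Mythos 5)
There is a genuine gap in your treatment of the intermediate-shell tail, and it is precisely the part of the argument the paper identifies as the central parabolic difficulty. You propose to bound $\sup_t \dashint_{B_{\theta^{-1}R}(x_0)\setminus B_{\theta^{-2}R}(x_0)}|u(t,\cdot)|$ directly by \autoref{Prop:LocalBoundedness}. But tracing through that estimate, local boundedness returns $(\theta^{-1}R)^{-d-2s}\|u\|_{L^1(Q_{\theta^{-1}R})}+\sup_t\Tail(u;\theta^{-1}R/2,x_0)+\dots$, so you get a coefficient of order $O(1)$ (in fact $\sim\theta^d$) in front of $\Phi_\sigma(u;R,z_0)$, not a small multiple absorbable into $R_0^\sigma$, and worse, the $\sup_t\Tail$ term on the right re-introduces exactly the intermediate $L^\infty_t$-tail you are trying to control, creating a circular dependence. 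The point the paper makes (and encodes in \eqref{eq:estimate-iteration-for-v-w-seperated}) is that one must instead write $u=v+w$ also on the intermediate scales $\theta^{k-m}R$ and exploit that $v$ satisfies the $\sup_t L^1_x$ decay of \autoref{Lem:MorreyEstimateTranslInvariant} — a genuine gain of integrability in $t$ over the $L^1_t$ quantities inside $\Phi_\sigma$ — while $w$ carries the small factor $R^\sigma$ from \autoref{Lem:FreezingEstimateIntW/ds}. Only that combination produces the convergent geometric factor $\sum_k\theta^{k(\sigma-s-\eps/2)}$ and hence the decay $(\theta^{-m})^{d+2s-\eps/2}$ in the tail estimate \eqref{eq:TailEstimateForIteration}. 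Without the caloric replacement at intermediate scales your estimate closes only with a constant $A\gtrsim\theta^d>1$ in front of $\Phi_\sigma(u;R,z_0)$, and the iteration degenerates.

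A secondary, structural issue: you set up a one-step decay $\Phi_\sigma(u;\theta^{-1}R')\le c[\theta^{-\alpha}+R_0^\sigma]\Phi_\sigma(u;R')+c(R')^\beta E$ and then invoke \autoref{Lem:IterationLemmaOnDyadicScale}. But the hypothesis \eqref{eq:AssumptionIterationLemmaDyadic} of that lemma must hold for \emph{all} $m\in\N$, not just $m=1$; indeed its proof chooses $k$ with $2A\theta^{-k\alpha}\le\theta^{-k\gamma}$ and applies the hypothesis with $m=k$ and with $m=i<k$. Since $\theta=128$ is fixed, iterating the one-step inequality does not recover the multi-step hypothesis without further control on the constant $A$ (which, as explained above, your estimate would not provide). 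The paper avoids this by proving the multi-step version \eqref{eq:ExcessFunctionalEstimateBeforeIteration} directly, which is also what naturally produces the sum over all intermediate dyadic scales in the tail.
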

\begin{proof}
    Let us denote
\begin{equation*}
    B= \| u \| _{L^\infty ((-1,0)\times \R ^d)} + \| f\| _{L_{t}^q L_x^{r} (Q^\Omega _1)} .
\end{equation*}
By \autoref{Lem:IterationLemmaOnDyadicScale} with $\alpha = d+2s-\frac{\eps}{2}$ and $\beta = d+2s+\min \{ s-\frac{d}{r}-\frac{2s}{q},-\eps \}$ (note that $\beta < \alpha $), it is enough to prove that for all $0<R\leq 1/2$ and all $m\in \N$ it holds
\begin{equation}\label{eq:ExcessFunctionalEstimateBeforeIteration}
    \begin{aligned}
        \Phi _\sigma (u;\theta^{-m}R) & \leq c \left[ \left( \theta^{-m} \right) ^{d+2s-\frac{\eps}{2}}+ R^\sigma  \right] \Phi _\sigma (u;R) + c R^{d+3s-\frac{d}{r}-\frac{2s}{q}} B .
    \end{aligned}
\end{equation}

Fix some $0<R\leq 1/2$ and let $v:=v_{R/8,z_0}$ and $w:=w_{R/8,z_0}$ be defined as in \eqref{eq:DefinitionV} and \eqref{eq:DefinitionW}.
First, we claim that for all $0< \rho \leq \theta ^{-1}R$, we have
\begin{equation}\label{eq:estimate-iteration-for-v-w-seperated}
    \begin{aligned}
       & \rho ^{2s}\left\| \frac{v}{d_\Omega ^s} \right\| _{L^\infty _t L^1_x (Q^\Omega_\rho (z_0))} + \left\| \frac{w}{d_\Omega ^s} \right\| _{L^1 (Q^\Omega _{R/8}(z_0))} + \max \{ R,d_\Omega (x_0)\} ^{-s} R^{2s+\frac{d}{2}} \| w \| _{L^\infty_t L^2_x (Q_{R/8}(z_0))} \\
       & \qquad  \leq c \left[ \left(\frac{\rho}{R} \right) ^{d+2s-\frac{\eps}{2}}+ R^\sigma  \right] \Phi _\sigma (u;R) + c R^{d+3s-\frac{d}{r}-\frac{2s}{q}} B .
    \end{aligned}
\end{equation}
To prove \eqref{eq:estimate-iteration-for-v-w-seperated}, note that by \autoref{Lem:MorreyEstimateTranslInvariant}, we obtain
\begin{align*}
    \rho ^{2s}  \left\| \frac{v}{d_\Omega ^s} \right\| _{L^\infty _t L^1_x (Q^\Omega_\rho (z_0))}    &\leq   c \left( \frac{\rho}{R}\right) ^{d+2s-\eps } \Bigg[  \iint_{Q_{R/8}^\Omega (z_0) } \left| \frac{v}{d_\Omega ^s}  \right|\intd x \intd t  \\
        & \qquad  + \max \{ R,d_\Omega (x_0)\} ^{-s}  R^{d+2s} \sup _{t \in I^\ominus _{R/8}(t_0)} \Tail (v;R/16,x_0)  \Bigg]  \\
    & \leq c \left( \frac{\rho}{R}\right) ^{d+2s-\eps } \Phi _\sigma (u;R) + c R^{d+3s}B \\
    & \qquad  + c \left\| \frac{w}{d_\Omega ^s} \right\| _{L^1 (Q_{R/8}(z_0))} + c \max \{ R,d_\Omega (x_0)\} ^{-s} R^{2s+\frac{d}{2}} \| w \| _{L^\infty_t L^2_x (Q_{R/8}(z_0))} ,
\end{align*}
where in the last step, we have again used that $u=v+w$ and the following estimate:
\begin{align*}
    &\max \{ R,d_\Omega (x_0)\} ^{-s}  R^{d+2s} \sup _{t \in I^\ominus _{R/8}(t_0)} \Tail (v;R/16,x_0) \\
    &\quad \le c \max \{ R,d_\Omega (x_0)\} ^{-s}  R^{d+2s} \left( \sup _{t \in I^\ominus _{R}(t_0)} \Tail (u;\theta^{-1} R,x_0) + R^{-d} \sup _{t \in I^\ominus _{R/8}(t_0)} \int_{B_{R/8}(x_0)} |w| \d x \right) \\
    &\quad \le c\Phi _\sigma (u;R) +c R^{d+3s} \| u\| _{L^\infty ((-1,0)\times \R^d )}+ c \max \{ R,d_\Omega (x_0)\} ^{-s} R^{2s+\frac{d}{2}} \| w \| _{L^\infty_t L^2_x (Q_{R/8}(z_0))}.
\end{align*}
Here we used that $ \theta^{-1} R < R/16$ and $w = 0$ in $I_{R/8}^{\ominus}(t_0) \times (\R^d \setminus B_{R/8}(x_0))$ and applied \eqref{eq:TailSplitSigmaTail}.
Hence, from here, \eqref{eq:estimate-iteration-for-v-w-seperated} follows from \autoref{Lem:FreezingEstimateIntW/ds}.

Note that \eqref{eq:estimate-iteration-for-v-w-seperated} proves that the first summand in the excess functional \eqref{eq:ExcessFunctional} satisfies \eqref{eq:ExcessFunctionalEstimateBeforeIteration} since $u=v+w$ and
\begin{equation*}
     \left\| \frac{u}{d_\Omega ^s} \right\| _{L^1 (Q^\Omega _{\theta ^{-m}R}(z_0))} \leq (\theta ^{-m}R) ^{2s}\left\| \frac{v}{d_\Omega ^s} \right\| _{L^\infty _t L^1_x (Q^\Omega_{\theta ^{-m}R} (z_0))} + \left\| \frac{w}{d_\Omega ^s} \right\| _{L^1 (Q^\Omega_{R/8}(z_0))} .
\end{equation*}

It remains to prove that
\begin{equation} \label{eq:TailEstimateForIteration}
\begin{aligned}
    \max \{ \theta^{-m}R ,d_\Omega (x_0)\} ^{-s} & (\theta^{-m}R) ^{d+2s} \sup_{t\in I^\ominus _{\theta^{-m}R} (t_0)} \Tail _{\sigma ,B_1} (u;\theta^{-m-1}R  ,x_0) \\
    & \leq c \left[ \left( \theta^{-m} \right) ^{d+2s-\frac{\eps}{2}}+ R^\sigma  \right] \Phi _\sigma (u;R) + c R^{d+3s-\frac{d}{r}-\frac{2s}{q}} B .
\end{aligned}
\end{equation}
We start by proving \eqref{eq:TailEstimateForIteration} for the case $d_\Omega (x_0) \leq \theta^{-m}R$.
Then, we have
\begin{align*}
    \max \{ & \theta^{-m}R ,d_\Omega (x_0)\} ^{-s}  (\theta^{-m}R) ^{d+2s} \sup_{t\in I^\ominus _{\theta^{-m}R} (t_0)} \Tail _{\sigma ,B_1} (u;\theta^{-m-1}R  ,x_0) \\
    & = c\left( \theta^{-m}R\right) ^{d+s} \sup_{t\in I^\ominus _{\theta^{-m} R} (t_0)} \left( \theta^{-m}R \right) ^{2s-\sigma} \sum _{k=0}^{m-1} \int _{B_{\theta^{k-m}R}(x_0) \setminus B_{\theta^{k-m-1}R}(x_0)} \frac{|u(t,y)|}{|x_0-y|^{d+2s-\sigma }} \intd y \\
    & \qquad + c\left( \theta^{-m}R\right) ^{d+s} \sup_{t\in I^\ominus _{\theta^{-m} R} (t_0)} \left( \theta^{-m}R \right) ^{2s-\sigma}  \int _{B_1 \setminus B_{\theta^{-1}R}(x_0)} \frac{|u(t,y)|}{|x_0-y|^{d+2s-\sigma }} \intd y \\
    & =I_1+ I_2 .
\end{align*}
Note that we can estimate $I_2$ by
\begin{equation*}
    I_2 \leq c  \left( \theta^{-m} \right) ^{d+3s-\sigma} R^{d+s} \sup_{t\in I^\ominus _R (t_0)} \Tail _{\sigma ,B_1} (u;\theta^{-1}R  ,x_0) \leq c \left( \theta^{-m} \right) ^{d+2s-\frac{\eps}{2}} \Phi _\sigma (u;R),
\end{equation*}
where we have used that $\sigma <s$.
For $I_1$, we use the decomposition $u=v+w$ and \eqref{eq:estimate-iteration-for-v-w-seperated}, to obtain
\begin{align*}
    I_1 & \leq c \left( \theta^{-m}R\right) ^{d+3s-\sigma} \sum _{k=0}^{m-1} \left( \theta^{k-m} R \right) ^{-d-2s+\sigma} \| u\| _{L^\infty _t L^1_x (Q_{\theta^{k-m}R} (z_0))}  \\
    & \leq  c \left( \theta^{-m}R\right) ^{d+3s-\sigma} \sum _{k=0}^{m-1} \left( \theta^{k-m} R \right) ^{-d-2s+\sigma} \left[ \| v\| _{L^\infty _t L^1_x (Q_{\theta^{k-m}R} (z_0))} + \| w\| _{L^\infty _t L^1_x (Q_{\theta^{k-m}R} (z_0))} \right] \\
    & \leq c  \sum _{k=0}^{m-1} \left( \theta^{k}  \right) ^{-d-3s+\sigma} \left[ \left( \theta^{k-m} R \right) ^{2s}\| v /d_\Omega ^s\| _{L^\infty _t L^1_x (Q_{\theta^{k-m}R} (z_0))} +  R^{s+\frac{d}{2}} \| w\| _{L^\infty _t L^2_x (Q_{R/8} (z_0))}\right] \\
    & \leq c \sum _{k=1}^{m}  \theta^{k(-d-3s+\sigma )} \Bigg( \left[ \left( \theta^{k-m} \right) ^{d+2s-\frac{\eps}{2}}+ R^\sigma  \right] \Phi _\sigma (u;R) + c R^{d+3s-\frac{d}{r}-\frac{2s}{q}} B \Bigg) \\
    & = c \left( \sum _{k=1}^{m} \theta^{k(\sigma-s-\frac{\eps}{2})} \right) \left( \theta^{-m} \right) ^{d+2s-\frac{\eps}{2}} \Phi _\sigma (u;R) \\
    & \qquad + c \left( \sum _{k=1}^{m}  \theta^{k(-d-3s+\sigma )}  \right) \left( R^\sigma \Phi _\sigma (u;R) + R^{d+3s-\frac{d}{r}-\frac{2s}{q}} B \right) .
\end{align*}
Note that the last term is bounded by the right-hand side of \eqref{eq:TailEstimateForIteration} because both of the series $\sum _{k=1}^{\infty} \theta^{k(\sigma-s-\frac{\eps}{2})}$ and $\sum _{k=1}^{\infty}  \theta^{k(-d-3s+\sigma )}$ are finite since $\sigma <s$.
This proves \eqref{eq:TailEstimateForIteration} for the case $d_\Omega (x_0) \leq \theta^{-m}R$.

Now, let us assume $d_\Omega (x_0) \geq \theta^{-m}R$.
Then, \eqref{eq:TailEstimateForIteration} can be proved exactly as above.
We only have to treat the term $\max \{  \theta^{-m}R ,d_\Omega (x_0)\} ^{-s} = d^{-s}_\Omega (x_0)$ on the left-hand side of \eqref{eq:TailEstimateForIteration} differently.
For the term $I_2$ we keep the term $d^{-s}_\Omega (x_0)$ in the case $d_\Omega(x_0)\geq R$ and use  the trivial estimate $d^{-s}_\Omega (x_0) \leq (\theta^{-m}R)^{-s}$ in the case $R\geq d_\Omega(x_0) \geq \theta^{-m}R$.
In the computation for the term $I_1$, we use the estimate
\begin{align*}
\| v\| _{L^\infty_t L^1_x (Q_{\theta ^{k-m}R}(z_0))} &\leq c \max \{ d_\Omega (x_0) ,\theta ^{k-m}R\} ^s \| v /d_\Omega ^s\| _{L^\infty _tL^1_x (Q_{\theta ^{k-m}R}(z_0))}\\
&\leq c \theta ^{ks}  d^s_\Omega (x_0)  \| v /d_\Omega ^s\| _{L^\infty _t L^1_x (Q_{\theta ^{k-m}R}(z_0))} 
\end{align*}
for the summand containing $v$.
For the summand with $w$, we again keep the term $d^{-s}_\Omega (x_0)$ from the left-hand side of \eqref{eq:TailEstimateForIteration} in the case $d_\Omega(x_0)\geq R$ and use the trivial estimate $d^{-s}_\Omega (x_0) \leq (\theta^{-m}R)^{-s}$ in the case $R\geq d_\Omega(x_0) \geq \theta^{-m}R$.
\end{proof}

The following lemma is a consequence of the interior regularity for solutions to nonlocal parabolic equations (see \autoref{Prop:InteriorRegularity}).

\begin{Lemma}\label{Lem:InteriorCampanatoEstimate}
    Let $\mathcal{L}_t$ be an operator of the form \eqref{eq:OperatorDivergenceForm}-\eqref{eq:KernelDivergenceForm}, satisfying \eqref{eq:KernelHoelderCont} with $\mathcal{A}=Q_R(z_0)$.
    Let $R>0$ be given.
Assume that $u$ is a weak solution of 
\begin{equation*}
    \partial _t u - \mathcal{L}_tu =f \quad \text{in}\quad Q_R(z_0) 
\end{equation*}
for some $f\in L^q_tL^r_x (Q_R(z_0))$ where $q,r\geq 1$ satisfy $\frac{1}{q}+\frac{d}{2sr} <1$.
Furthermore, let $\gamma$ be a positive number satisfying $\gamma <1$ and $\gamma \leq  2s- \left( \frac{d}{r}+\frac{2s}{q} \right) $.
    Then, for all $0<\rho < R$
    \begin{align*}
        \dashint _{Q_\rho (z_0)} |u -(u) _{Q_\rho (z_0)} |\intd z & \leq c \left( \frac{\rho}{R} \right) ^\gamma  \Bigg( R^{-d-2s} \| u\|_{L^1(Q_R(z_0))} + \sup _{t\in I^\ominus _R(t_0)} \Tail (u ;R/2 ,x_0)  \\
    & \qquad \qquad \qquad \qquad +R^{2s- \left( \frac{2s}{q} + \frac{d}{r}  \right)} \| f\| _{L^{q}_{t} L^r_x(Q_R(z_0))}\Bigg)
    \end{align*}
    for some constant $c=c(d,s,\lambda ,\Lambda, \sigma , \gamma ,q ,r)>0$
\end{Lemma}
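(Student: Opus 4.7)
The plan is to derive the Campanato-type estimate directly from the interior Hölder regularity in \autoref{Prop:InteriorRegularity} after upgrading the $L^2$ term in that estimate to an $L^1$ term. The key observation is that an oscillation estimate of Campanato type follows mechanically from any $C^\gamma_p$ bound by the trivial inequality
\begin{equation*}
    \dashint_{Q_\rho(z_0)} |u - (u)_{Q_\rho(z_0)}| \intd z \leq c \rho^{\gamma}\, [u]_{C^\gamma_p(Q_\rho(z_0))},
\end{equation*}
so the real task is to pair this with an estimate on $[u]_{C^\gamma_p}$ that has the right norm of $u$ on the right-hand side.

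First, I would dispose of the easy range $\rho \in [R/4, R)$ using the trivial bound $\dashint_{Q_\rho(z_0)} |u - (u)_{Q_\rho(z_0)}| \leq 2 \dashint_{Q_\rho(z_0)} |u| \leq c R^{-d-2s}\|u\|_{L^1(Q_R(z_0))}$, absorbing the prefactor $(\rho/R)^\gamma \asymp 1$ into the constant. The substantive range is $\rho < R/4$, and there the idea is to apply \autoref{Prop:InteriorRegularity} on $Q_{R/2}(z_0)$ to obtain
\begin{equation*}
    [u]_{C^\gamma_p(Q_{R/4}(z_0))} \leq c R^{-\gamma}\bigl( R^{-d/2-s}\|u\|_{L^2(Q_{R/2}(z_0))} + \sup_{I^\ominus_{R/2}(t_0)}\Tail(u;R/4,x_0) + R^{2s-2s/q-d/r}\|f\|_{L^q_tL^r_x(Q_{R/2}(z_0))}\bigr),
\end{equation*}
and then combine with $\dashint_{Q_\rho}|u-(u)_{Q_\rho}| \leq c\rho^\gamma [u]_{C^\gamma_p(Q_\rho(z_0))}$ to produce the desired factor $(\rho/R)^\gamma$.

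The remaining task, and really the only nontrivial step, is to convert $R^{-d/2-s}\|u\|_{L^2(Q_{R/2}(z_0))}$ into $R^{-d-2s}\|u\|_{L^1(Q_R(z_0))}$. For this I would use the interpolation inequality $\|u\|_{L^2(Q_{R/2})}^2 \leq \|u\|_{L^\infty(Q_{R/2})}\|u\|_{L^1(Q_{R/2})}$ together with the local boundedness estimate from \autoref{Prop:LocalBoundedness} applied on $Q_R(z_0)$, giving $\|u\|_{L^\infty(Q_{R/2})} \leq c \Phi_1(R,z_0)$ where $\Phi_1(R,z_0)$ denotes the full right-hand side of the claimed estimate. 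This yields
\begin{equation*}
    R^{-d/2-s}\|u\|_{L^2(Q_{R/2})} \leq c\, \Phi_1(R,z_0)^{1/2}\bigl(R^{-d-2s}\|u\|_{L^1(Q_R)}\bigr)^{1/2} \leq c\, \Phi_1(R,z_0).
\end{equation*}
Similarly, the intermediate tail $\sup_{I^\ominus_{R/2}}\Tail(u;R/4,x_0)$ splits into the contribution on $B_R(x_0)\setminus B_{R/4}(x_0)$, controlled by $\|u\|_{L^\infty(Q_{R/2})}$ up to a constant depending on the ratio of radii, and the outer piece $\sup_{I^\ominus_R}\Tail(u;R/2,x_0)$ appearing in $\Phi_1$.

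No genuine obstacles arise here; the argument is a standard replacement of the $L^2$ seed by an $L^1$ seed via local boundedness, completely analogous to what the authors already invoke in the proofs of \autoref{Prop:LocalBoundedness} and \autoref{Prop:InteriorRegularity}. The mildly delicate point to keep track of is simply the bookkeeping of radii (applying $L^\infty$ bounds on $Q_{R/2}$ rather than $Q_R$, and estimating the tail on the intermediate annulus), but no new ideas are required beyond those already present in \autoref{sec:Preliminaries}.
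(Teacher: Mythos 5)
Your proposal follows the same route as the paper's proof, which in fact simply invokes \autoref{Prop:InteriorRegularity} and \autoref{Prop:LocalBoundedness} in one line to pass from the oscillation bound $\dashint_{Q_\rho}|u-(u)_{Q_\rho}|\le c\rho^\gamma[u]_{C^\gamma_p(Q_{R/4})}$ to the stated $L^1$-seeded estimate; your spelling out of the $L^2\to L^1$ upgrade via local boundedness and the tail splitting is precisely the omitted bookkeeping. One small slip: when decomposing $\sup_{I^\ominus_{R/2}}\Tail(u;R/4,x_0)$, the annulus should be $B_{R/2}\setminus B_{R/4}$ rather than $B_R\setminus B_{R/4}$, so that the near-field piece is indeed controlled by $\|u\|_{L^\infty(Q_{R/2})}$ from \autoref{Prop:LocalBoundedness}; with that correction the argument closes.
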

\begin{proof}
    First, note that we can assume $0<\rho \leq R/4$.
    Using the interior regularity estimate from \autoref{Prop:InteriorRegularity} and local boundedness from \autoref{Prop:LocalBoundedness}, we get
    \begin{align*}
        \dashint _{Q_\rho (z_0)} |u -(u) _{Q_\rho (z_0)} |\intd z & \leq 
        \sup _{z,\tilde{z} \in Q_\rho (z_0)} |u(z)-u(\tilde{z})| 
        \leq c \rho ^\gamma [u]_{C^{\gamma} _{p} (Q_\rho (z_0))} 
         \leq c \rho ^\gamma [u]_{C^{\gamma} _{p} (Q_{R/4} (z_0))} \\
         & \leq c \left( \frac{\rho}{R} \right) ^\gamma  \Bigg( R^{-d-2s} \| u\|_{L^1(Q_R(z_0))} + \sup _{t\in I^\ominus _R(t_0)} \Tail (u ;R/2 ,x_0)  \\
    & \qquad \qquad \qquad \qquad +R^{2s- \left( \frac{2s}{q} + \frac{d}{r}  \right)} \| f\| _{L^{q}_{t} L^r_x(Q_R(z_0))}\Bigg) .
    \end{align*}
\end{proof}

\begin{proof}[Proof of \autoref{Thm:Cs-epsRegWithCoefficients}]
First, we will show \autoref{Thm:Cs-epsRegWithCoefficients} for the case $\frac{1}{q}+\frac{d}{2sr} \leq \frac{1}{2}$.
Set $\beta = s-\eps$.
Let $\delta _0$ and $R_0$ be given from \autoref{Lem:ExcessFunctionalEstimate}.
Assume that 
\begin{equation*}
   \| u \| _{L^\infty ((-1,0)\times \R ^d)} + \| f\| _{L_{t}^q L_x^{r} (Q^\Omega_1)}  \leq 1.
\end{equation*}
It is enough to show (see \cite[Lemma 2.1]{Byun2023}) that for all $z_0\in Q^\Omega _{1/2}$, we have
\begin{equation} \label{eq:CampanatoDecayCs-eps}
    \int _{Q ^\Omega_\rho (z_0) }  | u -(u) _{Q^\Omega_\rho (z_0) } | \intd z \leq c \rho ^{d+2s+ \beta } 
\end{equation}
for all $0 < \rho \leq R_0$.

First, assume that $d_\Omega(x_0) /2 \leq \rho < R_0$.
Choose $m\in \N$ such that $\theta ^{-m-1}R_0 \leq \rho \leq \theta ^{-m}R_0$.
Then, by applying \autoref{Lem:ExcessFunctionalEstimate}, we obtain 
\begin{equation} \label{eq:CampanatoDecayCs-epsFirstCase}
\begin{aligned}
    \int _{Q ^\Omega_\rho (z_0) }  | u -(u) _{Q^\Omega_\rho (z_0) } | \intd z & \leq 2\int _{Q ^\Omega_\rho (z_0) }  | u  | \intd z 
    \leq c \rho ^s \int _{Q ^\Omega_\rho (z_0) }  \left| u/d_\Omega^s  \right| \intd z
    \leq c \rho ^s \int _{Q ^\Omega_{\theta ^{-m}R_0} (z_0) }  \left| u/d_\Omega^s  \right| \intd z \\
    & \leq c \rho ^{d+3s+ \min \{  s-\frac{2s}{q}-\frac{d}{r},- \eps \}} \left(  \Phi _\sigma (u;R_0,z_0) +1 \right) \\
    & \leq c \rho ^{d+2s+ \beta } \left(  \Phi _\sigma (u;R_0,z_0) +1 \right) .
\end{aligned}
\end{equation}
Let $\eta \in C^\infty _c (B_{7/8})$ be a cut-off function with $\eta = 1$ on $B_{3/4}$, $0 \leq \eta \leq 1$.
Then, using Hardy's inequality (see \cite[Lemma 2.4]{Kim2024}), and \autoref{Prop:CaccioppoliEstimate} (and assuming $R_0\leq 1/4$), we get
\begin{align*}
    \| u/d_\Omega^s \| _{L^1(Q_{R_0}(z_0))} & \leq c \| u\eta / d_\Omega ^s \| _{L^2 (I_{3/4}^\ominus \times B_1)} \leq c \| u\eta \| _{L^2_t \dot{H}_x^s(I_{3/4}^\ominus \times \R^d)} \\
    & \leq c \| u\| _{L^2_t \dot{H}^s_x (I_{3/4}^\ominus \times B_{15/16})} +c \| u\| _{L^2(I_{3/4}^\ominus \times B_{7/8})} \leq c  .
\end{align*}
Since we also have $\sup _{t\in I^\ominus_{R_0}(t_0)} \Tail _{\sigma ,B_1}(u;\theta ^{-1} R_0,x_0 ) \leq c$, we obtain $\Phi (u;R_0 ,z_0) \leq c $ where $c$ also depends on $R_0$.
Hence, this proves \eqref{eq:CampanatoDecayCs-eps} in the case $d_\Omega(x_0) /2 \leq \rho < R_0$.

Now, let us assume $\rho < d_\Omega(x_0) /2 \leq R_0$ and set $K:=d_\Omega(x_0)$.
We use \autoref{Lem:InteriorCampanatoEstimate} with $\rho := \rho$, $R:=K/2$, and $\gamma = \beta $, to obtain
\begin{align*}
    \int _{Q ^\Omega_\rho (z_0) }  | u -(u) _{Q^\Omega_\rho (z_0) } | \intd z & \leq 
    c \left( \frac{\rho}{K} \right) ^{d+2s+\beta} \| u\|_{L^1 (Q_{K/2}(z_0))} \\
    & \quad + c \left( \frac{\rho}{K} \right) ^{d+2s+\beta}  K^{d+2s}\sup _{t\in I^\ominus _{K/2}(t_0)} \Tail (u ;K/4 ,x_0) \\
    & \quad + c \left( \frac{\rho}{K} \right) ^{d+2s+\beta}   K^{d+4s- \left( \frac{2s}{q} + \frac{d}{r}  \right)} \| f\| _{L^{q}_{t} L^r_x(Q_{K/2}(z_0))} .
\end{align*}
Applying \autoref{Lem:ExcessFunctionalEstimate} as in \eqref{eq:CampanatoDecayCs-epsFirstCase}, yields
\begin{align*}
     \| u\|_{L^1 (Q_{K/2}(z_0))} & \leq  c K^s \| u/d_\Omega ^s\|_{L^1 (Q_{K/2}(z_0))} \leq c K^{d+2s+\beta  } .
\end{align*}
Furthermore, choose $\mu \in [1/2,1]$ and $m\in \N$ such that $\theta ^{-m-1}\mu R_0 \leq K/4 \leq K/2 \leq \theta ^{-m} \mu R_0$.
Using \autoref{Lem:ExcessFunctionalEstimate} and $\Phi _\sigma (u;\mu R_0,z_0)\leq c$ for a constant $c$ independent of $\mu$, we obtain
\begin{align*}
   K^{d+s} \sup _{t\in I^\ominus _{K/2}(t_0)} \Tail (u ;K/4 ,x_0) & \leq c \Phi_\sigma (u; \theta ^{-m}\mu R_0,z_0) \leq c (\theta^{-m}) ^{d+s+\beta}(\Phi _\sigma (u;\mu R_0,z_0)+1) \\
   & \leq c K^{d+s+\beta }.
\end{align*}
Since $K^{2s-\frac{2s}{q}-\frac{d}{r}-\beta} \leq c$, this proves \eqref{eq:CampanatoDecayCs-eps} for the case $\rho < d_\Omega(x_0) /2 \leq R_0$.

Finally, if we assume $d_\Omega(x_0) \geq 2 R_0$, then \eqref{eq:CampanatoDecayCs-eps} follows immediately from \autoref{Lem:InteriorCampanatoEstimate}.
This proves \autoref{Thm:Cs-epsRegWithCoefficients} for the case $\frac{1}{q}+\frac{d}{2sr} \leq \frac{1}{2}$.

If we assume $\frac{1}{2}<\frac{1}{q}+\frac{d}{2sr} < 1$, then the proof follows exactly as above for $\beta = 2s-\frac{2s}{q}-\frac{d}{r}$. Note that we can apply \autoref{Lem:ExcessFunctionalEstimate} with $\eps = s-\beta$.
\end{proof}

\section{\texorpdfstring{$C^{s}$}{Cs} regularity} \label{sec:CsRegularity}
In this section we will show the following theorem.
\begin{Theorem}\label{Thm:CsBoundaryRegularityGlobalAAssumption}
    Let $s\in (0,1)$, $\alpha,\sigma \in (0,s)$, and $\Omega \subset \R^d$ be a $C^{1,\alpha}$ domain.
    Furthermore, let $\mathcal{L}_t$ be an operator of the form \eqref{eq:OperatorDivergenceForm}-\eqref{eq:KernelDivergenceForm}, satisfying \eqref{eq:KernelHoelderCont} with $\mathcal{A}=(-1,0)\times \R^d$.
    Assume that $u$ is a weak solution to 
    \begin{equation*}
        \left\{ \begin{aligned}
          \partial _t u -\mathcal{L}_tu & =f & &\text{in} \quad (-1 ,0) \times (\Omega \cap B_1) , \\
           u & =0 & & \text{in} \quad  (-1 ,0) \times (B_1 \setminus \Omega) 
        \end{aligned} \right.
    \end{equation*}
    for some $f\in L^q_tL^r_x (Q_1^\Omega)$ where $q,r\geq 1$ satisfy $\frac{1}{q}+\frac{d}{2sr} < \frac12$.
    Then, 
    \begin{equation*}
            [ u ] _{C^{s}_{p} (\overline{Q^\Omega_{1/2}})} \leq c \left( \| u \| _{L^\infty ((-1,0)\times \R ^d)} + \| f\| _{L_{t}^q L_x^{r} (Q^\Omega_1)}\right)
       \end{equation*}
    for some constant $c=c(d,s,\sigma ,\lambda ,\Lambda,q,r , \alpha ,\Omega )>0$.
\end{Theorem}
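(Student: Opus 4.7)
The plan is to upgrade the $C^{s-\eps}_p$ estimate provided by \autoref{Thm:Cs-epsRegWithCoefficients} to the optimal $C^s_p$ regularity through a Campanato-type iteration on the excess functional
\begin{align*}
\Psi(u;\rho,z_0) &= \iint_{Q^\Omega_\rho(z_0)}\left|\frac{u}{\psi_{z_0}} - \left(\frac{u}{\psi_{z_0}}\right)_{Q^\Omega_\rho(z_0)}\right|\intd x\intd t \\
&\quad + \max\{\rho,d_\Omega(x_0)\}^{-s}\rho^{d+2s}\sup_{t\in I^\ominus_\rho(t_0)}\Tail\left(u-\psi_{z_0}\left(\frac{u}{\psi_{z_0}}\right)_{Q^\Omega_\rho(z_0)};\theta^{-1}\rho,x_0\right),
\end{align*}
introduced in the introduction, where $\psi_{z_0}$ is the barrier of \eqref{eq:EllipticBarrierUBar} associated with the operator $L_{z_0}$ obtained by freezing the kernel at $z_0$. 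Since $\Omega$ is $C^{1,\alpha}$ with arbitrarily small Lipschitz constant on small scales, after the usual normalization $\Vert u\Vert_{L^\infty((-1,0)\times\R^d)}+\Vert f\Vert_{L^q_tL^r_x(Q^\Omega_1)}\le 1$, \autoref{Thm:Cs-epsRegWithCoefficients} applies locally and yields $u\in C^{s-\eps}_p(\overline{Q^\Omega_{3/4}})$; in particular $|u|\le c\,d_\Omega^{s-\eps}$ up to the boundary.

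At every $z_0\in\overline{Q^\Omega_{1/2}}$ and scale $R\in(0,R_0)$ I freeze the kernel via \eqref{eq:FrozenKernel} and split $u=v+w$ with $v=v_{R,z_0}$ and $w=w_{R,z_0}$ as in \eqref{eq:DefinitionV}--\eqref{eq:DefinitionW}. For the frozen piece $v$, the expansion of \autoref{Prop:ExpansionTranslationalInvariant} applied at scale $R$, with $q_0$ chosen (up to a harmless correction) equal to $(v/\psi_{z_0})_{Q^\Omega_R(z_0)}$, combined with the barrier bounds $\psi_{z_0}\asymp d_\Omega^s$ and $\psi_{z_0}\in C^s$ from \autoref{Lem:psi-properties}, yields for any $\eps\in(0,s)$ and $0<\rho\le R/16$ the parabolic Campanato-type estimate
\begin{equation*}
\sup_{t\in I^\ominus_\rho(t_0)}\int_{B^\Omega_\rho(x_0)}\left|\frac{v}{\psi_{z_0}}-\left(\frac{v}{\psi_{z_0}}\right)_{Q^\Omega_\rho(z_0)}\right|\intd x \le c\,\rho^{-2s}\left(\frac{\rho}{R}\right)^{d+2s+\eps}\Psi(v;R,z_0).
\end{equation*}
As highlighted in the introduction, the decisive feature is the gain in $t$-integrability from $L^1_t$ on the right-hand side to $L^\infty_t$ on the left-hand side, mirroring the role of \autoref{Lem:MorreyEstimateTranslInvariant} in the $C^{s-\eps}$ proof. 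The error $w$ is controlled by the freezing estimates \autoref{Lem:EstimateOnHswWithuL1} and \autoref{Lem:FreezingEstimateIntW/ds} (with $d_\Omega^s$ replaced by $\psi_{z_0}$ thanks to \autoref{Lem:psi-properties}), producing
\begin{equation*}
\left\Vert\frac{w}{\psi_{z_0}}\right\Vert_{L^1(Q^\Omega_{R/8}(z_0))}+\max\{R,d_\Omega(x_0)\}^{-s}R^{2s+d/2}\Vert w\Vert_{L^\infty_tL^2_x(Q_{R/8}(z_0))}\le c\,R^\sigma\Psi(u;R,z_0)+c\,R^{d+3s-d/r-2s/q}.
\end{equation*}

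Combining the two ingredients and splitting the tail inside $\Psi$ dyadically as in \eqref{eq:tail-split-intro}, so that the intermediate-scale $L^\infty_t$-annular averages are absorbed via the frozen Campanato estimate applied at each dyadic scale in $[\rho,R]$, one arrives at an inequality of the form
\begin{equation*}
\Psi(u;\theta^{-m}R,z_0) \le c\big[(\theta^{-m})^{d+2s+\eps}+R^\sigma\big]\Psi(u;R,z_0)+c\,R^{d+2s+\beta}
\end{equation*}
for any $\beta<s$ and any $\eps\in(\beta,s)$. \autoref{Lem:IterationLemmaOnDyadicScale} with $\alpha=d+2s+\eps$ then gives $\Psi(u;\rho,z_0)\le c\,\rho^{d+2s+\beta}$ for all $\rho\le R_0$. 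A Campanato-type characterization, combined with the interior bound of \autoref{Prop:InteriorRegularity} in the bulk and the barrier properties in \autoref{Lem:psi-properties} (namely $\psi_{z_0}\in C^s$ and $\psi_{z_0}\asymp d_\Omega^s$), converts this decay into the spatial $C^s_x$ bound for $u$, and the sharp endpoint $\beta=s$ is recovered by running the same argument using \autoref{Thm:BoundaryRegularityTranslationInvar}(ii) instead of part~(i), which is where the $C^{1,\alpha}$ assumption on $\Omega$ enters. The temporal $C^{1/2}_t$ seminorm at boundary points is then handled by the same interpolation device used in Step~2 of the proof of \autoref{Lem:BoundaryRegFromBoundOnU}.

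The main obstacle is the parabolic tail: the $\sup_t\Tail$ structure of $\Psi$ forces the intermediate-annular averages produced by \eqref{eq:tail-split-intro} to be estimated in $L^\infty_t$, which is only possible through the $L^1_t\to L^\infty_t$ upgrade afforded by the frozen Campanato decay. Checking that this upgrade is compatible with an exponent $\eps$ strictly larger than the target $\beta$, and that the freezing perturbation $R^\sigma$ can be absorbed by the smallness threshold of \autoref{Lem:IterationLemmaOnDyadicScale}, is precisely where the hypothesis $\sigma\in(0,s)$ and the strict condition $\frac{1}{q}+\frac{d}{2sr}<\frac{1}{2}$ enter: the latter guarantees that the forcing exponent $d+3s-d/r-2s/q$ is strictly larger than $d+2s+\beta$ for $\beta$ close to $s$, so that the inhomogeneity does not spoil the iteration.
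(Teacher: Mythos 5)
Your proposal correctly identifies the main structural ingredients: the Campanato-type iteration on the excess functional $\Psi$, the freezing decomposition $u=v+w$, the dyadic tail decomposition, the $L^1_t\to L^\infty_t$ upgrade afforded by the frozen estimates, and the use of \autoref{Lem:IterationLemmaOnDyadicScale}. This matches the general strategy of Section~6 (Lemmas~\ref{Lem:CampanatoEstimateTranslInvariant}, \ref{Lem:CampanatoEstimateTranslInvariant-2}, \ref{Lem:CampanatoEstimate}, \ref{Lem:uOverPsiHoelderContinuous}) quite closely. However, the final step — how the small-exponent Campanato decay yields the full $C^s_p$ estimate — is where your proposal goes wrong, and the error is substantive.

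You claim that ``the sharp endpoint $\beta=s$ is recovered by running the same argument using \autoref{Thm:BoundaryRegularityTranslationInvar}(ii) instead of part~(i)'', and earlier that ``the forcing exponent $d+3s-\frac{d}{r}-\frac{2s}{q}$ is strictly larger than $d+2s+\beta$ for $\beta$ close to $s$''. Neither is correct. The freezing error $R^\sigma\Psi(u;R)$ and the forcing term $R^{d+3s-\frac{d}{r}-\frac{2s}{q}}$ in \autoref{Lem:CampanatoEstimate} irrevocably cap the Campanato exponent at $\beta<\min\{\sigma,\,s-\tfrac{2s}{q}-\tfrac{d}{r}\}$, which is \emph{strictly less than} $s$ regardless of which part of \autoref{Thm:BoundaryRegularityTranslationInvar} is used; indeed (ii) is already used in the proof of \autoref{Prop:ExpansionTranslationalInvariant}, and it has no bearing on the freezing perturbation $R^\sigma$. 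The inequality $d+3s-\tfrac{d}{r}-\tfrac{2s}{q}>d+2s+\beta$ is equivalent to $\beta<s-\tfrac{2s}{q}-\tfrac{d}{r}$, a quantity bounded away from $s$. So the Campanato machinery only produces $u/\psi\in C^\beta_p$ for a \emph{small} exponent $\beta$, and the $C^s$ regularity must come from elsewhere.

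The missing step is the following chain, which the paper makes explicit via \autoref{Lem:uOverPsiHoelderContinuous} and the last paragraph of the proof of \autoref{Thm:CsBoundaryRegularityGlobalAAssumption}. From the Campanato decay one deduces that $(t,x)\mapsto u(t,x)/\psi_{(t,x)}(x)$ belongs to $C^\beta_p(\overline{Q^\Omega_{1/8}})$ for some small $\beta>0$; this is nontrivial because the freezing point varies, and one must invoke closeness of the barriers $\psi_{z_0}$ and $\psi_{z_1}$ in $C^{s-\eps}$ from \autoref{Lem:ClosenessOfBarriers} to stitch together the decays at different base points — your proposal never addresses this stitching. Boundedness of $u/\psi$ then yields the sharp growth bound $|u|\le c\,d_\Omega^s$ in $Q^\Omega_{1/8}$, since $\psi\asymp d_\Omega^s$. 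Finally, the spatial $C^s_x$ and temporal $C^{1/2}_t$ regularity are \emph{both} obtained by running the barrier/interior-estimate argument of Step~2 in \autoref{Lem:BoundaryRegFromBoundOnU} starting from the growth bound $|u|\le c\,d_\Omega^s$; in your proposal this step is misattributed only to the temporal part, leaving a gap in the spatial argument. Without these two ingredients — the barrier-closeness stitching and the conversion of the $d_\Omega^s$ growth bound into $C^s_p$ regularity via \autoref{Lem:BoundaryRegFromBoundOnU} — the proposal does not close.
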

Note that in comparison to \autoref{Thm:CsBoundaryRegAndHopfLemma}(i) from the introduction, in \autoref{Thm:CsBoundaryRegularityGlobalAAssumption} we assume that the Hölder continuity of the kernel from \eqref{eq:KernelHoelderCont} holds true globally on $\mathcal{A}=(-1,0)\times \R^d$.
First, we will show how \autoref{Thm:CsBoundaryRegularityGlobalAAssumption} implies \autoref{Thm:CsBoundaryRegAndHopfLemma}(i) by a cut-off argument.
\begin{proof} [Proof of \autoref{Thm:CsBoundaryRegularityGlobalAAssumption} $\Rightarrow$ \autoref{Thm:CsBoundaryRegAndHopfLemma}(i)] 
    Assume that the kernel $K$ satisfies \eqref{eq:KernelHoelderCont} with $\mathcal{A}=Q_1$.
    Then, we define the kernel
    \begin{equation*}
        \tilde{K}(t,x,y)=K(t,x,y) \varphi (x) \varphi (y) + \frac{\lambda+\Lambda}{2} (1- \varphi (x) \varphi (y)) |x-y|^{-d-2s}
    \end{equation*}
    where $\varphi$ is a smooth cut-off function, which satisfies $0\leq \varphi \leq 1$, $\varphi (x) =1$ for all $x\in B_{7/9}$, and $\varphi (x)=0$ for all $x\in B_{8/9}^c$.
    Note that $\tilde{K}$ satisfies \eqref{eq:KernelDivergenceForm} and \eqref{eq:KernelHoelderCont} with $\mathcal{A}=I^\ominus_1 \times \R ^d$.
    
    Let us denote by $\tilde{\mathcal{L}}_t$ the operator with kernel $\tilde{K}$ and set $\tilde{u} = u \ind _{B_{7/9}}$.
    Then $\tilde{u}$ solves the equation
    \begin{equation*}
        \partial_t \tilde{u} -\tilde{\mathcal{L}}_t \tilde{u} = f +\left( \mathcal{L}_t u - \mathcal{L}_t \tilde{u} \right) + \left( \mathcal{L}_t \tilde{u} - \tilde{\mathcal{L}}_t \tilde{u} \right) =: f+f_1 +f_2 \quad \text{in } Q^\Omega _{6/9} .
    \end{equation*}
    Note that we have
    \begin{equation}\label{eq:Boundsf1f2}
    \begin{aligned}
        \| f_1 \| _{L^p_t L^\infty _x(Q^\Omega _{6/9} )} &\leq c  \| \Tail (u;1/2,0) \| _{L^p_t (I^\ominus _{6/9})} , \\
         \text{and} \quad \| f_2 \| _{L^p_t L^\infty _x(Q^\Omega _{6/9} )} & \leq  c \| u \| _{L^\infty (Q_{7/9}^\Omega)}.
        \end{aligned}
    \end{equation}
    For $i=1,2$, let $v_i$ be the unique solution to
    \begin{equation} \label{eq:EquationForViInCutOff}
        \left\{ \begin{aligned}
          \partial _t v_i -\tilde{\mathcal{L}}_tv_i & =f_i & &\text{in} \quad Q^\Omega _{6/9} , \\
           v_i & =0 & & \text{in} \quad  ( I^\ominus_1 \times \R^d ) \setminus Q^\Omega _{6/9} .
        \end{aligned} \right.
    \end{equation}
    Then, $(\partial_t -\tilde{\mathcal{L}}_t) (\tilde{u}-v_1 -v_2) = f$ in $Q^\Omega _{5/9}$ and by using \autoref{Thm:CsBoundaryRegularityGlobalAAssumption} for $\tilde{u}-v_1-v_2$, $v_1$, and $v_2$, we obtain
    \begin{equation} \label{eq:ApplicationCsBoundForCutOff}
        \begin{aligned}
       &  [u ] _{C^{s}_{p} (\overline{Q^\Omega_{1/2}})}  \leq [\tilde{u} -v_1 -v_2 ] _{C^{s}_{p} (\overline{Q^\Omega_{1/2}})} + [v_1 ] _{C^{s}_{p} (\overline{Q^\Omega_{1/2}})} +[v_2 ] _{C^{s}_{p} (\overline{Q^\Omega_{1/2}})} \\
         &\qquad \leq c \left( \| \tilde{u} \| _{L^\infty (I^\ominus _{5/9} \times \R^d )} +\| f\| _{L^q_t L^r_x (Q^\Omega _{5/9})} \right)+c \sum_{i=1}^2 \left( \| v_i \| _{L^\infty (I ^\ominus _{5/9} \times \R^d )} +   \| f_i \| _{L^p_t L^\infty _x(Q^\Omega_{5/9})} \right) .
    \end{aligned}
    \end{equation}
    Furthermore, note that
    \begin{equation}\label{eq:BoundednessVi}
       \| v_i \| _{L^\infty (I ^\ominus _{5/9} \times \R^d )}= \| v_i \| _{L^\infty (I ^\ominus _{5/9} \times B^\Omega_{6/9})} \leq c \left(  \|v_i\| _{L^2 (Q^\Omega _{6/9})} + \|f_i\|_{L^p_t L^\infty _x(Q^\Omega _{6/9})} \right) \leq c \|f_i\|_{L^p_tL^\infty_x(Q^\Omega _{6/9})},
    \end{equation}
    where the first inequality follows by applying local boundedness (\autoref{Prop:LocalBoundedness}) in suitable subdomains of $I^{\ominus}_{5/9} \times B_{6/9}^{\Omega}$ and a straightforward covering argument, and the second inequality is consequence of \cite[Theorem 5.3]{Felsinger2014}.
    Combining \eqref{eq:Boundsf1f2}, \eqref{eq:ApplicationCsBoundForCutOff}, and \eqref{eq:BoundednessVi} yields
    \begin{equation*}
        [u ] _{C^{s}_{p} (\overline{Q^\Omega_{1/2}})}   \leq c \left( \| u\| _{L^\infty (Q_{7/9}^\Omega)} + \| \Tail (u;1/2,0) \|_{L^p_t (I^\ominus_1)} + \| f\| _{L_{t}^q L_x^{r} (Q^\Omega_1)} \right) .
    \end{equation*}
    Applying local boundedness (\autoref{Prop:LocalBoundedness}) for $u$ (here one has to truncate $u$ again to treat the tail as a right-hand side) proves \autoref{Thm:CsBoundaryRegAndHopfLemma}(i).
\end{proof}

Given a translation invariant operator $L$ of the form \eqref{eq:TranslationalInvariantOperator}-\eqref{eq:TranslationalInvariantKernel}, we define $\psi$ as the solution of \eqref{eq:EllipticBarrierUBar}.
Furthermore, for $z_0 =(t_0,x_0)\in Q^\Omega _1$, we set
\begin{equation}\label{eq:ExcessFunctional-higher-Bar}
    \begin{aligned}
        \overline{\Psi}(v;R,z_0) &= \int _{Q^\Omega _R (z_0)} \left| \frac{v}{\psi} - \left( \frac{v}{\psi} \right) _{Q_R ^\Omega (z_0) } \right| \intd z\\ 
            & \qquad+ \max \{ R,d_\Omega (x_0) \} ^{-s} R^{d+2s} \sup _{t \in I^\ominus _R(t_0)}\Tail (v-\psi (v/\psi )_{Q^\Omega _R (z_0)} ; R/4,x_0) .
    \end{aligned}
\end{equation}
Note that for technical reasons later (see proof of \autoref{Lem:CampanatoEstimate}), we introduce here another excess functional $\overline{\Psi}$ which (compared to $\Psi$ from \eqref{eq:ExcessFunctional-higher}) has a tail with a different constant ($R/4$ compared to $\theta ^{-1}R$).

\begin{Lemma}\label{Lem:CampanatoEstimateTranslInvariant}
    Let $\alpha \in (0, s)$ and $\eps \in (0, s)$.
     Let $\Omega \subset \R^d$ be a $C^{1,\alpha }$ domain with $0\in \partial \Omega$.
    Furthermore, let $L$ be a translation invariant operator of the form \eqref{eq:TranslationalInvariantOperator}-\eqref{eq:TranslationalInvariantKernel}.
    By $\psi$ we denote the solution to \eqref{eq:EllipticBarrierUBar} with respect to $L$.
    Given some $z_0=(t_0,x_0) \in Q^\Omega_{1/8}$ and $R \in (0,1/8)$, assume that $v$ solves
    \begin{equation*}
        \left\{ \begin{aligned}
          \partial _t v -Lv & =0 & &\text{in} \quad I^\ominus _R(t_0) \times (B_R(x_0)\cap \Omega ) , \\
           v & =0 & & \text{in} \quad  I^\ominus _R(t_0) \times ( B_R(x_0) \setminus \Omega ) .
        \end{aligned} \right.
    \end{equation*}
    Then, there exists some constant $c=c(d,s, \lambda , \Lambda ,\alpha , \eps , \Omega )>0$ such that
    \begin{equation} \label{eq:CampanatoEstimateTranslInvariantSupT}
        \begin{aligned}
             \sup _{t \in I_\rho ^\ominus(t_0)} \int _{B^\Omega_\rho (x_0)} & \left| \frac{v}{\psi} - \left( \frac{v}{\psi } \right) _{Q_\rho ^\Omega (z_0) } \right| \intd x  
            \leq c  \rho ^{-2s}\left( \frac{\rho}{R} \right) ^{d+2s+\eps}  \overline{\Psi}(v;R,z_0)
        \end{aligned}
    \end{equation}
    holds for all $0<\rho \leq R /16$, where $\overline{\Psi}(v;R,z_0)$ is defined in \eqref{eq:ExcessFunctional-higher-Bar}.
\end{Lemma}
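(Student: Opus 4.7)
The plan is to mimic the case analysis of \autoref{Lem:MorreyEstimateTranslInvariant}, using the higher-order expansion \autoref{Prop:ExpansionTranslationalInvariant} instead of the plain boundary regularity \autoref{Thm:BoundaryRegularityTranslationInvar}. We split according to whether the ball $B_\rho^\Omega(x_0)$ sees the boundary. When $\rho \le d_\Omega(x_0)/2$ the ball lies in $\{d_\Omega \ge d_\Omega(x_0)/2\}$; the barrier $\psi$ is smooth and bounded below there (being strict $L$-harmonic in $B_1^\Omega$), so \autoref{Prop:InteriorRegularity} applied to $v$ yields the required $C^{s+\eps}_p$ modulus of continuity of $v/\psi$ by a direct computation, and the Campanato bound follows after a reduction to $\overline{\Psi}$ as in the second part of the proof.

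The main case is $\rho > d_\Omega(x_0)/2$. Pick $y_0 \in \partial\Omega$ with $|y_0 - x_0| = d_\Omega(x_0)$ and set $\lambda := R/4$. Define $\tilde v := v - c_0\psi$ with $c_0 := (v/\psi)_{Q_R^\Omega(z_0)}$; since $-L\psi = 0$ in $B_1^\Omega$ and $\psi$ vanishes in $B_1 \setminus \Omega$, $\tilde v$ solves the same homogeneous translation invariant equation and vanishes in $B_R(x_0) \setminus \Omega$. Rescaling $\hat v(\tau,\xi) := \tilde v(t_0 + \tau\lambda^{2s}, y_0 + \lambda\xi)$ produces a solution of $(\partial_\tau - \hat L)\hat v = 0$ on $(-1,0)\times(\hat\Omega\cap B_1)$ vanishing on $(-1,0)\times(B_1\setminus\hat\Omega)$, where $\hat\Omega$ is $C^{1,\alpha}$ with $0 \in \partial\hat\Omega$. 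Applying \autoref{Prop:ExpansionTranslationalInvariant} to $\hat v$ (via a standard truncation to handle global $L^\infty$) yields a coefficient $\hat q_0$ with
\[
|\hat v(\tau,\xi) - \hat q_0 \hat\psi(\xi)| \le c(|\xi|^{s+\eps} + |\tau|^{(s+\eps)/(2s)})\|\hat v\|_{L^\infty((-1,0)\times\R^d)}
\]
on $Q_1$, where $\hat\psi$ is the barrier of \eqref{eq:EllipticBarrierUBar} for $\hat L, \hat\Omega$. To transfer this expansion back to the original barrier, I apply the same proposition to the time-independent function $\hat\psi_{\mathrm{resc}}(\xi) := \psi(y_0 + \lambda\xi)$, which is $\hat L$-harmonic in $\hat\Omega\cap B_1$ and vanishes in $B_1\setminus\hat\Omega$ (using $D\subset\Omega$ and $\lambda\le 1$); this produces a coefficient $\hat q_\psi$ with the analogous $(s+\eps)$-order expansion. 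Since \autoref{Lem:psi-properties}(ii) gives $\hat\psi_{\mathrm{resc}}(\xi) \asymp \lambda^s \hat\psi(\xi)$ on $B^{\hat\Omega}_{1/2}$, we obtain $\hat q_\psi \asymp \lambda^s$ bounded below, and $q_0^* := \hat q_0/\hat q_\psi$ is well-defined. Combining and unscaling gives on $Q_\lambda(t_0,y_0)$
\[
|\tilde v(t,x) - q_0^*\psi(x)| \le c\lambda^{-(s+\eps)}\bigl(|x-y_0|^{s+\eps} + |t-t_0|^{(s+\eps)/(2s)}\bigr)\|\hat v\|_{L^\infty}.
\]

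Dividing by $\psi(x) \asymp d_\Omega^s(x)$, using $|x-y_0| \le 3\rho$ on $B_\rho^\Omega(x_0)$ (valid since $\rho > d_\Omega(x_0)/2$) and $|t-t_0| \le \rho^{2s}$, and applying the Hardy-type bound $\int_{B_\rho^\Omega(x_0)} d_\Omega^{-s}\,dx \le c\rho^{d-s}$ from \cite[Lemma 2.5]{Kim2024}, we obtain for each $t \in I_\rho^\ominus(t_0)$
\[
\int_{B_\rho^\Omega(x_0)} |(\tilde v/\psi)(t,x) - q_0^*|\,dx \le c\rho^{d+\eps}\lambda^{-(s+\eps)}\|\hat v\|_{L^\infty}.
\]
It remains to bound $\|\hat v\|_{L^\infty} \le c R^{-d-s}\overline{\Psi}(v;R,z_0)$. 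This follows from \autoref{Prop:LocalBoundedness} applied to $\tilde v$ on a slight enlargement of $Q_\lambda(t_0,y_0)$: the $L^1$ contribution uses $\|\tilde v\|_{L^1(Q_R^\Omega(z_0))} \lesssim R^s\overline{\Psi}(v;R,z_0)$ (writing $|\tilde v| = \psi|v/\psi - c_0|$ with $\psi \lesssim d_\Omega^s \lesssim R^s$), while the tail is comparable to the tail of $v - \psi c_0$ on scale $R/4$ around $x_0$ appearing in the definition \eqref{eq:ExcessFunctional-higher-Bar} of $\overline{\Psi}$. Using $\tilde v/\psi - q_0^* = v/\psi - (c_0 + q_0^*)$ and $\int|f-(f)| \le 2\inf_c\int|f-c|$, we replace the constant by $(v/\psi)_{Q_\rho^\Omega(z_0)}$ and take the supremum over $t \in I_\rho^\ominus(t_0)$ to conclude \eqref{eq:CampanatoEstimateTranslInvariantSupT}.

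The main obstacle is that \autoref{Prop:ExpansionTranslationalInvariant} is inherently phrased in terms of the rescaled local barrier $\hat\psi$, and a naive pointwise comparison $\hat\psi(\cdot/\lambda) \asymp \lambda^{-s}\psi$ would introduce a zeroth-order discrepancy whose contribution is only of order $\|\hat v\|_{L^\infty}$ (not $(s+\eps)$-order) and would spoil the decay. The device of simultaneously expanding $\hat v$ \emph{and} $\hat\psi_{\mathrm{resc}}$ in the same basis $\hat\psi$, combined with the non-degeneracy $\hat q_\psi \asymp \lambda^s$ supplied by \autoref{Lem:psi-properties}(ii), is what absorbs this discrepancy into the ratio $q_0^* = \hat q_0/\hat q_\psi$ and produces the sharp $(s+\eps)$-order decay.
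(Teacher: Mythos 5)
Your boundary-case argument is essentially the paper's, and in fact somewhat more careful: where the paper says only that one may apply \autoref{Prop:ExpansionTranslationalInvariant} ``by scaling, shifting, and truncating'', you make explicit that the rescaled barrier $\hat\psi$ is not literally a rescaled copy of $\psi$, and you close the gap by expanding both $\hat v$ and $\hat\psi_{\mathrm{resc}}$ against $\hat\psi$ and taking the quotient of coefficients. The accompanying non-degeneracy $\hat q_\psi\asymp\lambda^s$ needs one more line (pick $|\xi|$ small enough so that the error $c|\xi|^{s+\eps}$ is dominated by $\lambda^s|\xi|^s$), but the idea is correct, and the bookkeeping that turns $\|\hat v\|_{L^\infty}\lesssim R^{-d-s}\overline\Psi$ into the target $\rho^{d+\eps}R^{-d-2s-\eps}\overline\Psi$ checks out. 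Your interior case is also OK in spirit, with a minor overreach: \autoref{Prop:InteriorRegularity} cannot give a $C^{s+\eps}_p$ modulus when $s+\eps\ge1$; the paper instead uses $C^{\eps}_x$ and extracts the additional $\rho^d$ from the integral, which is the right fix.

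The genuine gap is the \emph{intermediate regime} $2\rho\lesssim d_\Omega(x_0)\lesssim R$, which your two-way split $\rho\lessgtr d_\Omega(x_0)/2$ does not handle. When $\rho\ll d_\Omega(x_0)\ll R$: your boundary expansion at scale $\lambda=R/4$ around $y_0$ controls $\tilde v-q_0^*\psi$ only to order $(|x-y_0|/\lambda)^{s+\eps}\approx(d_\Omega(x_0)/R)^{s+\eps}$, not $(\rho/R)^{s+\eps}$, so after dividing by $\psi\asymp d_\Omega(x_0)^s$ you lose a factor $(d_\Omega(x_0)/\rho)^\eps\to\infty$; and your interior argument only reaches from scale $\rho$ up to scale $d_\Omega(x_0)$ (the largest interior ball), producing $\overline\Psi(v;d_\Omega(x_0),z_0)$ rather than $\overline\Psi(v;R,z_0)$ on the right. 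Bridging from $d_\Omega(x_0)$ to $R$ is precisely where the tail term of $\overline\Psi$ must be compared across scales, and this is nontrivial: the paper's Case $4\rho\le d_\Omega(x_0)\le R/4$ performs a dyadic decomposition of $\Tail(v-\psi(v/\psi)_{Q_{d_0}};d_0/4,x_0)$, reuses the already-established Campanato decay at each intermediate scale $4^{-i}R$ to control the resulting telescoping averages $|(v/\psi)_{Q_{4^{-i}R}}-(v/\psi)_{Q_{d_0}}|$, and sums a geometric series using $\eps<s$. This is the core technical step of the lemma, and it is absent from your outline. Without it the ``reduction to $\overline\Psi$'' you invoke cannot be closed in the intermediate regime.
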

\begin{proof}
    First, assume $d_\Omega (x_0) \leq \rho \leq R/4 $.
    Let $z\in \partial \Omega \cap B_{\rho }  (x_0)$ such that $d_\Omega(x_0) =|z-x_0|$ and let $c_0 , q\in \R$ be constants to be chosen later.
    Then,
    \begin{align*}
        \sup _{t \in I^\ominus _\rho (t_0)} & \int _{B ^\Omega_\rho (x_0)} \left| \frac{v}{\psi} - \left( \frac{v}{\psi} \right) _{Q_{\rho}^\Omega (z_0)} \right| \intd x  =
        \sup _{t \in I^\ominus _\rho (t_0)} \int _{B^\Omega _\rho (x_0)} \left| \frac{v-c_0\psi }{\psi} - \left( \frac{v-c_0 \psi}{\psi} \right) _{Q_{\rho}^\Omega (z_0)} \right| \intd x \\
        & \leq  2\sup _{t \in I^\ominus _\rho (t_0)} \int _{B ^\Omega _\rho (x_0)} \left| \frac{v-c_0\psi }{\psi} - q \right| \intd x   \\
        & \leq c \left( \int _{B_\rho ^\Omega (x_0)} d_\Omega ^{-s} \intd x \right) \rho ^{s+\eps } \sup _{(t,x) \in Q_\rho ^\Omega (z_0)} \left| \frac{(v-c_0\psi ) (t,x)-q \psi (x)}{|x-z|^{s +\eps }} \right| \\
         & \leq c  \rho ^{d+\eps } \sup _{(t,x) \in Q_{R/4} ^\Omega (z_0)} \left| \frac{(v-c_0\psi ) (t,x)-q \psi (x)}{|x-z|^{s +\eps }} \right| ,
    \end{align*}
    where we have used $\psi \asymp d_\Omega^s$ (\autoref{Lem:psi-properties}(ii)) and \cite[Lemma 2.5]{Kim2024}.
    Next, we will use the expansion from \autoref{Prop:ExpansionTranslationalInvariant} for the function $\tilde{v} := v-c_0\psi $.
    By scaling, shifting, and truncating \autoref{Prop:ExpansionTranslationalInvariant}, we can choose $q\in \R$ such that
    \begin{align*}
       |\tilde v(t,x) -q \psi (x)| \leq c R^{-s-\eps} |x-z| ^{s+\eps } \left( \| \tilde v\| _{L^\infty (Q^\Omega _{R/2}(z_0))} + \sup _{t\in I^\ominus _{R/2}(t_0)} \Tail (\tilde v;R/2,x_0) \right)  
    \end{align*}
    for all $(t,x) \in Q_{R/4}(z_0)$.
    Hence, also using local boundedness (\autoref{Prop:LocalBoundedness}), we obtain
    \begin{align*}
        \sup _{t \in I^\ominus _\rho (t_0)} & \int _{B ^\Omega_\rho (x_0)} \left| \frac{v}{\psi} - \left( \frac{v}{\psi} \right) _{Q_{\rho}^\Omega (z_0)} \right| \intd x \\
        & \leq c \rho ^{d+\eps} R^{-s-\eps } \left( \| v-c_0 \psi \| _{L^\infty (Q^\Omega _{R/2}(z_0))} + \sup _{t\in I^\ominus _{R/2}(t_0)} \Tail ( v-c_0 \psi ;R/2,x_0) \right) \\
        & \leq c \rho ^{d+\eps} R^{-s-\eps } \left( R^{-d-2s} \| v-c_0 \psi \| _{L^1 (Q^\Omega _{R}(z_0))} + \sup _{t\in I^\ominus _{R}(t_0)} \Tail ( v-c_0 \psi ;R/2,x_0) \right) \\
        & \leq c \rho ^{d+\eps} R^{-s-\eps } \left( R^{-d-s} \| v/\psi -c_0  \| _{L^1 (Q^\Omega _{R}(z_0))} + \sup _{t\in I^\ominus _{R}(t_0)} \Tail ( v-c_0 \psi ;R/2,x_0) \right) ,
    \end{align*}
    where in the last step we have used that $\psi \leq c d_\Omega ^s \leq c R^s$ in $B^\Omega_R(x_0)$ by \autoref{Lem:psi-properties}(ii).
    Consequently, \eqref{eq:CampanatoEstimateTranslInvariantSupT} follows in the case $d_\Omega (x_0) \leq \rho\leq R/4$ by setting $c_0= (v/ \psi) _{Q_R(z_0)}$.

    Now, we assume $d_\Omega(x_0)\geq R \geq 4\rho$.
    Then,
    \begin{equation}\label{eq:CampanatoCaseD0geqR}
    \begin{aligned}
        \sup _{t \in I^\ominus _\rho (t_0)}  \int _{B ^\Omega_\rho (x_0)} \left| \frac{v}{\psi} - \left( \frac{v}{\psi} \right) _{Q_{\rho}^\Omega (z_0)} \right| \intd x
        & \leq 2 \sup _{t \in I^\ominus _\rho (t_0)}  \int _{B ^\Omega_\rho (x_0)} \left| \frac{v}{\psi} -  \frac{v}{\psi}(t,x_0)  \right| \intd x \\
         & \leq c \rho ^{d+\eps } \left[ \frac{v-c_0 \psi}{\psi}  \right] _{C ^\eps_{x} (Q ^\Omega _{R/4} (z_0))} \\
         & \leq c \rho ^{d+ \eps} [v-c_0 \psi ]_{C ^\eps_{x} (Q^\Omega _{R/4} (z_0))} \| \psi ^{-1} \| _{L^\infty (B^\Omega_{R/4}(x_0))} \\
         & \qquad + c \rho ^{d+\eps } \| v-c_0 \psi \| _{L^\infty (Q^\Omega_{R/4}(z_0))} \| \psi ^{-1} \| _{C ^\eps_x (B ^\Omega_{R/4} (x_0))} .
    \end{aligned}
    \end{equation}
    Using the interior regularity estimates for translation invariant operators (\autoref{Prop:InteriorRegularity}) and local boundedness (\autoref{Prop:LocalBoundedness}), we obtain
    \begin{align*}
        &[v-c_0 \psi ]_{C ^\eps_{x} (Q^\Omega _{R/4} (z_0))}  \leq c R^{-\eps} \left( \| v-c_0 \psi \| _{L^\infty (Q ^\Omega _{R/2} (z_0))} + \sup _{t\in I ^\ominus _{R/2}(t_0)} \Tail (v-c_0\psi ; R/2 ,x_0 ) \right) \\
        & \qquad  \leq c R^{-\eps}\left(  R^{-d-2s}\| v-c_0 \psi \| _{L^1 (Q ^\Omega _{R} (z_0))} + \sup _{t\in I ^\ominus _{R}(t_0)} \Tail (v-c_0\psi ; R/2 ,x_0 )\right) \\
        &\qquad  \leq c R^{-d-2s-\eps} \left(d^s_\Omega (x_0) \int _{Q^\Omega _R(z_0)}\left| \frac{v}{\psi} - c_0 \right| \intd z + \sup _{t\in I ^\ominus _{R}(t_0)} \Tail (v-c_0\psi ; R/2 ,x_0 ) \right),
    \end{align*}
    where we have used $|\psi | \leq cd_\Omega ^s \leq c d_\Omega (x_0 )^s$ in $B_R(x_0)$ (\autoref{Lem:psi-properties}(ii)).
    Furthermore, we get
    \begin{align*}
        [\psi ^{-1}]_{C^\eps_x (B_{R/4}(x_0))} & \leq c \| \psi ^{-1} \|_{L^\infty (B_{R/4}(x_0))} ^2 [\psi ]_{C^\eps_x (B_{R/4}(x_0))} \\
        & \leq c d_\Omega ^{-2s} (x_0) R^{-\eps} \left( \|\psi \| _{L^\infty (B_R(x_0))} + \Tail (\psi ;R,x_0) \right) \leq c R^{-\eps}d^{-s}_\Omega (x_0),
    \end{align*}
    where we have also applied \autoref{Lem:psi-properties}(iii).
    Combining the above estimates yields \eqref{eq:CampanatoEstimateTranslInvariantSupT} also in the case $d_\Omega(x_0) \geq R\geq 4\rho$.

    Next, let us assume $4\rho \leq d_\Omega(x_0) \leq R/4$. This case is essentially a combination of the two cases that have been treated before, but it requires a careful treatment of the nonlocal tail term. 
    Set $d_0:= d_\Omega (x_0)$.
    Then, we first use \eqref{eq:CampanatoEstimateTranslInvariantSupT} in the already proven case for $\rho =\rho$ and $R:= d_0$.
    \begin{align*}
        \sup _{t \in I_\rho ^\ominus(t_0)} \int _{B^\Omega_\rho (x_0)} & \left| \frac{v}{\psi} - \left( \frac{v}{\psi } \right) _{Q_\rho ^\Omega (z_0) } \right| \intd x  
            \leq c  \rho ^{-2s}\left( \frac{\rho}{d_0} \right) ^{d+2s+\eps}  \Bigg[ \int _{Q^\Omega _{d_0} (z_0)} \left| \frac{v}{\psi} - \left( \frac{v}{\psi} \right) _{Q_{d_0} ^\Omega (z_0) } \right| \intd z \\
            & + d_0^{d+s} \sup _{t \in I^\ominus _{d_0}(t_0)}\Tail (v-\psi (v/\psi )_{Q^\Omega _{d_0} (z_0)} ; d_0/4,x_0) \Bigg] =:I_1 +I_2 .
    \end{align*}
    Note that for $I_1$, we can use \eqref{eq:CampanatoEstimateTranslInvariantSupT} for $\rho =d_0$ and $R:=R$.
    Hence, it remains to show that
    \begin{equation} \label{eq:TailEstimated0ToR}
       \tilde{I}_2:= d_0^{d+s} \sup _{t \in I^\ominus _{d_0}(t_0)}\Tail (v-\psi (v/\psi )_{Q^\Omega _{d_0} (z_0)} ; d_0/4,x_0) \leq c\left( \frac{d_0}{R} \right) ^{d+2s+\eps} \overline{\Psi} (v;R) .
    \end{equation}
    Let $m\in \N$ be such that $4^{-m-1}R\leq d_0 \leq 4^{-m}R$.
    Then, by decomposing the tail dyadically,
    \begin{align*}
        \tilde{I}_2 & \leq c \frac{d_0^{d+2s+\eps}}{R^{s+\eps}} \left( \frac{d_0}{R} \right) ^{s-\eps}\sup _{t\in I^\ominus _R (t_0)} \Tail (v-\psi (v/\psi )_{Q^\Omega _R (z_0)} ; R/4,x_0) \\
        & \qquad + c \frac{d_0^{d+2s+\eps}}{R^{s+\eps}} \left( \frac{d_0}{R} \right) ^{s-\eps} \left| (v/\psi )_{Q^\Omega _{d_0} (z_0)} - (v/\psi )_{Q^\Omega _R (z_0)}\right| \Tail (\psi  ; R/4,x_0) \\
        & \qquad + c d_0 ^{d+2s+\eps}d_0^{s-\eps} \sum _{i=1}^{m} \left(  4^{-i}R\right) ^{-d-s} \sup _{t\in I^\ominus _{4^{-i}R}(t_0)}  \int _{B^\Omega_{4^{-i}R}(x_0) } | v/\psi - (v/\psi )_{Q^\Omega _{4^{-i}R} (z_0)}   | \intd y \\
        & \qquad + c d_0 ^{d+2s+\eps}d_0^{s-\eps} \sum _{i=1}^{m} \left(  4^{-i}R\right) ^{-s}  \left| (v/\psi )_{Q^\Omega _{4^{-i}R} (z_0)} - (v/\psi )_{Q^\Omega _{d_0} (z_0)} \right|  \\
        & =: J_1 +J_2+J_3+J_4 .
    \end{align*}
    Note that $J_1$ is bounded by $cd_0 ^{d+2s+\eps} R^{-d-2s-\eps}\overline{\Psi} (v, R)$ since $\eps <s$.
    To bound $J_2$ and $J_4$, we first observe that by applying \eqref{eq:CampanatoEstimateTranslInvariantSupT} on intermediate scales, we get for every $i\in \{ 0, 1, \dots ,m \}$
    \begin{equation}\label{eq:DifferencesAverages}
    \begin{aligned}
         \left| (v/\psi )_{Q^\Omega _{4^{-i}R} (z_0)} - (v/\psi )_{Q^\Omega _{d_0} (z_0)} \right| &  \leq \left| (v/\psi )_{Q^\Omega _{4^{-i}R} (z_0)} - (v/\psi )_{Q^\Omega _{4^{-m}R} (z_0)} \right| \\
         & \qquad + \left| (v/\psi )_{Q^\Omega _{4^{-m}R} (z_0)} - (v/\psi )_{Q^\Omega _{d_0} (z_0)} \right| \\
         &\leq c\sum _{l=0}^{m-i} \dashint  _{Q_{4^{l-m}R}(z_0)} \left| \frac{v}{\psi} - \left( \frac{v}{\psi}\right) _{Q_{4^{l-m}R}(z_0)} \right| \intd z \\
         & \leq c\sum _{l=0}^{m-i} \left( 4^{l-m}R \right) ^{-d-2s} \left( 4^{l-m} \right) ^{d+2s+\eps}  \overline{\Psi} (v;R) \\
         & \leq c R^{-d-2s} 4^{-i \eps} \overline{\Psi} (v;R) .
    \end{aligned}
    \end{equation}
    Hence, we obtain $J_2 \leq cd_0 ^{d+2s+\eps} R^{-d-2s-\eps}\overline{\Psi} (v; R)$ using $\eps <s$, \eqref{eq:DifferencesAverages}, and \autoref{Lem:psi-properties}(iii).
    For $J_4$, we get
    \begin{equation*}
        J_4 \leq c d_0 ^{d+2s+\eps} \left( \frac{d_0}{R}\right) ^{s-\eps} R^{-d-2s-\eps}\sum _{i=1}^m 4^{i(s-\eps)} \overline{\Psi} (v;R) \leq c d_0 ^{d+2s+\eps} R^{-d-2s-\eps} \overline{\Psi} (v;R) ,
    \end{equation*}
    where we have used that $d_0 / R \leq  4^{-m}$.
    For $J_3$, we apply again \eqref{eq:CampanatoEstimateTranslInvariantSupT} on intermediate scales, to obtain
    \begin{align*}
        J_3 \leq c d_0 ^{d+2s+\eps} d_0^{s-\eps} R^{-d-3s}\sum _{i=1}^m 4^{i(s-\eps)} \overline{\Psi} (v;R) \leq c d_0 ^{d+2s+\eps} R^{-d-2s-\eps} \overline{\Psi} (v;R) .
    \end{align*}
    This finishes the proof of \eqref{eq:TailEstimated0ToR}.

    Now, let us assume that $\rho \leq d_\Omega(x_0) \leq 4\rho \leq R/4$, then \eqref{eq:CampanatoEstimateTranslInvariantSupT} follows by first observing
    \begin{equation*}
        \sup _{t \in I_\rho ^\ominus(t_0)} \int _{B^\Omega _\rho (x_0)}  \left| \frac{v}{\psi} - \left( \frac{v}{\psi } \right) _{Q_\rho ^\Omega (z_0) } \right| \intd x  
             \leq c \sup _{t \in I_{4\rho} ^\ominus(t_0)} \int _{B^\Omega _{4\rho} (x_0)}  \left| \frac{v}{\psi} - \left( \frac{v}{\psi } \right) _{Q_{4\rho} ^\Omega (z_0) } \right| \intd x  
    \end{equation*}
    and then using \eqref{eq:CampanatoEstimateTranslInvariantSupT} for $\rho := 4\rho$ and $R:=R$.

    Finally, assume $4\rho \leq R/4 \leq d_\Omega (x_0) \leq R$.
    Then, following the proof of case 2 above (see \eqref{eq:CampanatoCaseD0geqR}) with $\rho := \rho$ and $R:= R/4$, we get
    \begin{align*}
        &\sup _{t \in I_\rho ^\ominus(t_0)} \int _{B_\rho (x_0)}  \left| \frac{v}{\psi} - \left( \frac{v}{\psi } \right) _{Q_\rho ^\Omega (z_0) } \right| \intd x   \\
        & \qquad \leq c\rho ^{d+\eps} R^{-\eps} d_\Omega^{-s} (x_0) \left( \| v-c_0 \psi \| _{L^\infty (Q ^\Omega _{R/8} (z_0))} + \sup _{t\in I ^\ominus _{R/8}(t_0)} \Tail (v-c_0\psi ; R/8 ,x_0 ) \right) \\
        & \qquad \leq c \rho ^{d+\eps} R^{-\eps-s}  \left( \| v-c_0 \psi \| _{L^\infty (Q ^\Omega _{R/2} (z_0))} + \sup _{t\in I ^\ominus _{R}(t_0)} \Tail (v-c_0\psi ; R/4 ,x_0 ) \right)  .
    \end{align*}
    Now, the proof follows exactly as above in case 2, by applying local boundedness and using $\| \psi\| _{L^\infty (Q^\Omega _R (z_0))} \leq c R^s$.
\end{proof}

Next, we prove a Campanato-type estimate for solutions to translation invariant equations involving the parabolic nonlocal tail terms. Note that this procedure is slightly different from the one in \cite{Kim2024}, where the Campanato-type estimate was directly established for solutions to equations in divergence form. Here, we take an alternative route, since one has to use the equation once again in order to control the parabolic tail terms (see the proof of \autoref{Lem:CampanatoEstimateTranslInvariant}).
Furthermore, note that in comparison to \cite{Kim2024}, the excess functional $\overline{\Psi}$ (and also $\Psi$ in \eqref{eq:ExcessFunctional-higher}) is not equipped with the modified $\Tail _{\sigma ,B_1}$ but only takes into account the usual $\Tail$.
This is sufficient since in \eqref{eq:ExcessFunctionalEstimate-higher}, the $C^\sigma$ regularity of $K$ is still captured in $\Phi _{\sigma}(u;R)$.

\begin{Lemma}
\label{Lem:CampanatoEstimateTranslInvariant-2}
    Let $\alpha \in (0, s)$ and $\eps \in (0, s)$.
     Let $\Omega \subset \R^d$ be a $C^{1,\alpha }$ domain with $0\in \partial \Omega$.
    Furthermore, let $L$ be a translation invariant operator of the form \eqref{eq:TranslationalInvariantOperator}-\eqref{eq:TranslationalInvariantKernel}.
    Given some $z_0=(t_0,x_0) \in Q^\Omega_{1/8}$ and $R \in (0,1/8)$, assume that $v$ solves
    \begin{equation*}
        \left\{ \begin{aligned}
          \partial _t v -Lv & =0 & &\text{in} \quad I^\ominus _R(t_0) \times (B_R(x_0)\cap \Omega ) , \\
           v & =0 & & \text{in} \quad  I^\ominus _R(t_0) \times ( B_R(x_0) \setminus \Omega ) .
        \end{aligned} \right.
    \end{equation*}
    Then, there exists some constant $c=c(d,s, \lambda , \Lambda ,\alpha , \eps , \Omega )>0$ such that
    \begin{equation} \label{eq:CampanatoEstimateTranslInvariant}
        \begin{aligned}
             \overline{\Psi}(v;\rho ,z_0) \le c \left( \frac{\rho}{R} \right)^{d + 2s + \eps}  \overline{\Psi } (v;R,z_0)  
        \end{aligned}
    \end{equation}
    holds for all $0<\rho \leq R/16$, where $\overline{\Psi} (v;R,z_0)$ is defined in \eqref{eq:ExcessFunctional-higher-Bar}.
\end{Lemma}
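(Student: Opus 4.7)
The plan is to use Lemma \ref{Lem:CampanatoEstimateTranslInvariant} together with a dyadic decomposition of the tail term, following closely the computation already carried out in the proof of \eqref{eq:TailEstimated0ToR}. Denote for brevity $c_r := (v/\psi)_{Q_r^\Omega(z_0)}$. I split $\overline{\Psi}(v;\rho,z_0)$ into its two defining pieces and bound each by a multiple of $(\rho/R)^{d+2s+\eps} \overline{\Psi}(v;R,z_0)$.

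For the local term, the bound follows immediately from \eqref{eq:CampanatoEstimateTranslInvariantSupT} by integrating in $t\in I^\ominus_\rho(t_0)$, since $|I^\ominus_\rho(t_0)|=\rho^{2s}$ cancels precisely the factor $\rho^{-2s}$ on the right-hand side of \eqref{eq:CampanatoEstimateTranslInvariantSupT}.

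For the tail term, the idea is to write $v - \psi c_\rho = (v-\psi c_R) + \psi(c_R - c_\rho)$, so that
\begin{align*}
\sup_{t\in I^\ominus_\rho(t_0)} \Tail(v-\psi c_\rho;\rho/4,x_0)
&\le \sup_{t\in I^\ominus_\rho(t_0)} \Tail(v-\psi c_R;\rho/4,x_0) + |c_R-c_\rho|\,\Tail(\psi;\rho/4,x_0).
\end{align*}
For the $\psi$-term, Lemma \ref{Lem:psi-properties}(iii) gives $\Tail(\psi;\rho/4,x_0)\le c\max\{\rho,d_\Omega(x_0)\}^s$, and the difference $|c_R-c_\rho|$ is controlled by iterating \eqref{eq:CampanatoEstimateTranslInvariantSupT} exactly as in the telescoping argument \eqref{eq:DifferencesAverages}, yielding $|c_R - c_\rho|\le c R^{-d-2s}(\rho/R)^{\eps}\,\overline{\Psi}(v;R,z_0)$. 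For the remaining term, pick $m\in\N$ with $4^{-m-1}R\le \rho\le 4^{-m}R$ and decompose the integration domain as
\begin{equation*}
\R^d\setminus B_{\rho/4}(x_0) = \big(\R^d\setminus B_{R/4}(x_0)\big)\cup \bigcup_{i=0}^{m-1}\big(B_{4^{i+1}\rho/4}(x_0)\setminus B_{4^i\rho/4}(x_0)\big).
\end{equation*}
The outer piece is immediately bounded by $(\rho/R)^{2s}\sup_t\Tail(v-\psi c_R;R/4,x_0)$, which enters directly into $\overline{\Psi}(v;R,z_0)$. On each intermediate annulus of radius $\asymp 4^i\rho$, replace $c_R$ by $c_{4^i\rho}$ at the cost of $|c_R - c_{4^i\rho}|\,\Tail(\psi;4^i\rho/4,x_0)$ (controlled again via \eqref{eq:DifferencesAverages} and Lemma \ref{Lem:psi-properties}), and bound the resulting $L^1$ integral of $v-\psi c_{4^i\rho}$ over $B_{4^{i+1}\rho}(x_0)$ by $|I^\ominus_\rho| = \rho^{2s}$ times \eqref{eq:CampanatoEstimateTranslInvariantSupT} at scale $4^i\rho$; the geometric series in $i$ is summable because $\eps<s$.

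The main technical nuisance will be the bookkeeping of the weight $\max\{\rho,d_\Omega(x_0)\}^{-s}$ appearing in front of the tail: one must verify the correct ratio $\max\{\rho,d_\Omega(x_0)\}^{-s}\big/\max\{R,d_\Omega(x_0)\}^{-s}$ in the three regimes $d_\Omega(x_0)\le\rho$, $\rho\le d_\Omega(x_0)\le R$, and $d_\Omega(x_0)\ge R$; in each regime this ratio is bounded by $(R/\rho)^{s}$ at worst, which is absorbed by the gain $(\rho/R)^{d+2s+\eps}$ since $\eps<s$. This is exactly the case analysis that was performed at the end of the proof of \autoref{Lem:CampanatoEstimateTranslInvariant} (for the factor $\max\{\theta^{-m}R,d_\Omega(x_0)\}^{-s}$), and it goes through verbatim here with $\rho$ in place of $\theta^{-m}R$. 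Collecting the bounds on the local and tail pieces yields \eqref{eq:CampanatoEstimateTranslInvariant}.
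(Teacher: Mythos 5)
Your approach matches the paper's proof of this lemma, which is deliberately terse: the paper controls the local term exactly as you do, by integrating \eqref{eq:CampanatoEstimateTranslInvariantSupT} in $t$ over $I^\ominus_\rho(t_0)$, and then declares that the tail estimate \eqref{eq:TailEstimateCampanato} ``follows exactly as in the proof of \eqref{eq:TailEstimated0ToR}'' when $d_\Omega(x_0)\le\rho$, with the remaining regimes handled ``by the same modifications as in the proof of \autoref{Lem:ExcessFunctionalEstimate}.'' Your dyadic split of the integration domain, the recentering by $c_{4^i\rho}$ on intermediate annuli, the telescoping control of $|c_R-c_{4^i\rho}|$ via \eqref{eq:DifferencesAverages}, the use of \autoref{Lem:psi-properties}(iii) for $\Tail(\psi;\cdot,x_0)$, and the three-regime bookkeeping of the weight $\max\{\rho,d_\Omega(x_0)\}^{-s}$ are precisely the ingredients of \eqref{eq:TailEstimated0ToR} with $\rho$ in place of $d_0$. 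Two minor slips worth flagging: first, you should not multiply by $|I^\ominus_\rho|=\rho^{2s}$ when treating the intermediate-annulus contributions --- the tail term in $\overline{\Psi}$ carries a $\sup_{t}$, not a time integral, and \eqref{eq:CampanatoEstimateTranslInvariantSupT} already delivers a sup-in-time bound on the spatial $L^1$-integral (the factor $\rho^{2s}$ only cancels $\rho^{-2s}$ for the \emph{local} piece of $\overline{\Psi}$); second, the regimes $\rho<d_\Omega(x_0)$ are treated in the paper by the modifications of \autoref{Lem:ExcessFunctionalEstimate}, not those at the end of \autoref{Lem:CampanatoEstimateTranslInvariant} as you cite, though the case splitting is analogous. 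Neither slip affects the substance of the argument.
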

\begin{proof}
    First, note that \autoref{Lem:CampanatoEstimateTranslInvariant} implies that we can bound the first summand of $\overline{\Psi}(v;\rho)$ by $c (\rho /R)^{d + 2s + \eps}  \overline{\Psi} (v;R)$.
    Hence, it remains to show that
    \begin{equation}\label{eq:TailEstimateCampanato}
        \begin{aligned}
        &\max \{ \rho ,d_\Omega (x_0)\} ^{-s} \rho ^{d+2s} \sup_{t\in I^\ominus _{\rho} (t_0)} \Tail  (v - \psi(v/\psi)_{Q_{\rho}^{\Omega}(z_0)};\rho /4  ,x_0) \leq c (\rho /R)^{d + 2s + \eps}  \overline{\Psi} (v;R,z_0) .
    \end{aligned}
    \end{equation}
    If we assume that $d_\Omega(x_0) \leq \rho$, then \eqref{eq:TailEstimateCampanato} follows exactly as in the proof of \eqref{eq:TailEstimated0ToR}.
    The other cases follow by the same modifications as in the proof of \autoref{Lem:ExcessFunctionalEstimate}.
\end{proof}

Let $\theta = 128$.
Assume that $-\mathcal{L}_t$ is an operator of the form \eqref{eq:OperatorDivergenceForm}-\eqref{eq:KernelDivergenceForm}.
Then, for some point $z_0 = (t_0,x_0) \in Q_1^\Omega$, we define $\psi _{z_0}$ as the function $\psi$ from \eqref{eq:EllipticBarrierUBar} with respect to the frozen operator $L_{z_0}$ (see \eqref{eq:FrozenKernel}).
Furthermore, for a given $\rho > 0$, we define the higher order nonlocal excess functional by
\begin{align}
\label{eq:ExcessFunctional-higher}
\begin{split}
    \Psi(u;\rho,z_0) &:= \iint_{Q_{\rho}^{\Omega}(z_0)} \left| \frac{u}{\psi_{z_0}} - \left( \frac{u}{\psi_{z_0}} \right)_{Q_{\rho}^{\Omega}(z_0)} \right| \d x \d t \\
    &\quad + \max\{ \rho , d_{\Omega}(x_0) \}^{-s} \rho^{d+2s} \sup_{t \in I_{\rho}^{\ominus}(t_0)} \Tail \left(u - \psi_{z_0} \left( \frac{u}{\psi_{z_0}} \right)_{Q_{\rho}^{\Omega}(z_0)} ; \theta^{-1} \rho , x_0 \right) .
    \end{split}
\end{align}
\begin{Lemma}
\label{Lem:CampanatoEstimate}
    Assume that we are in the setting of \autoref{Thm:CsBoundaryRegularityGlobalAAssumption}.
    Let $\eps \in (0,s )$.
    Then, for every $m\in \N$, $0<R \leq 1/16$, and $z_0 = (t_0,x_0) \in Q^\Omega_{1/8}$, we have
    \begin{equation}\label{eq:ExcessFunctionalEstimate-higher}
        \begin{aligned}
            \Psi(u;\theta^{-m} R,z_0) &\le c (\theta^{-m})^{d + 2s + \eps} \Psi (u; R,z_0) + c R^{\sigma} \Phi _{\sigma}(u;R,z_0) \\
        &\quad + c R^{d+3s - \frac{d}{r} - \frac{2s}{q}} \left( \| u \| _{L^\infty ((-1,0)\times \R ^d)} + \| f\| _{L_{t}^q L_x^{r} (Q^\Omega _1)} \right) 
        \end{aligned}
    \end{equation}
    for some constant $c=c(d,s,\lambda,\Lambda ,\alpha,q,r,\sigma ,\Omega ,\eps )>0$ where $\Psi(u; R,z_0)$ is defined in \eqref{eq:ExcessFunctional-higher}.
    \end{Lemma}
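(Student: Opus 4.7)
The plan is a freezing argument at scale $R/8$: set $v = v_{R/8,z_0}$ and $w = w_{R/8,z_0}$ as in \eqref{eq:DefinitionV}-\eqref{eq:DefinitionW}, so that $u = v + w$. The case $m=0$ is trivial; for $m \geq 1$ one has $\theta^{-m}R \leq R/128 = (R/8)/16$, so \autoref{Lem:CampanatoEstimateTranslInvariant-2} applies to $v$ at base scale $R/8$ and yields
\begin{equation*}
    \overline{\Psi}(v;\theta^{-m}R,z_0) \leq c\, \theta^{-m(d+2s+\eps)}\, \overline{\Psi}(v;R/8,z_0).
\end{equation*}

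Next I would compare $\Psi(u;\theta^{-m}R,z_0)$ to $\overline{\Psi}(v;\theta^{-m}R,z_0)$ term by term. Writing $c^u, c^v, c^w$ for the averages of $u/\psi_{z_0}, v/\psi_{z_0}, w/\psi_{z_0}$ over $Q^\Omega_{\theta^{-m}R}(z_0)$, so that $c^u = c^v + c^w$, the triangle inequality together with $\psi_{z_0} \asymp d_\Omega^s$ on $B_{1/2}^\Omega$ from \autoref{Lem:psi-properties}(ii) gives the local bound
\begin{equation*}
    \iint_{Q_{\theta^{-m}R}^\Omega(z_0)} \left|\frac{u}{\psi_{z_0}} - c^u \right| \intd z \leq \iint \left|\frac{v}{\psi_{z_0}} - c^v \right| \intd z + c\iint_{Q_{R/8}^\Omega(z_0)} \left|\frac{w}{d_\Omega^s} \right| \intd z.
\end{equation*}
For the tail part, the radius $\theta^{-m-1}R$ in $\Psi$ differs from $\theta^{-m}R/4$ in $\overline{\Psi}$; I would split the tail of $u - \psi_{z_0}c^u$ at radius $\theta^{-m}R/4$. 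The exterior piece is absorbed into $\Tail(\cdot;\theta^{-m}R/4,x_0)$ (up to the decomposition $c^u = c^v + c^w$, which produces extra controllable terms) and hence into $\overline{\Psi}(v;\theta^{-m}R,z_0)$. The intermediate piece reduces to $(\theta^{-m}R)^{-d}\sup_t\int_{B_{\theta^{-m}R/4}^\Omega(x_0)}|v - \psi_{z_0}c^v|\intd x$, which is controlled by the sup-in-time $L^1_x$-oscillation bound for $v/\psi_{z_0}$ from \autoref{Lem:CampanatoEstimateTranslInvariant}. The $w$-contributions to the tail exploit $w \equiv 0$ outside $B_{R/8}(x_0)$ and reduce to $\sup_t\|w\|_{L^2_x(B_{R/8}(x_0))}$.

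The analogous decomposition $v = u-w$ with averages at scale $R/8$ then yields
\begin{equation*}
    \overline{\Psi}(v;R/8,z_0) \leq c\, \Psi(u;R,z_0) + c \left\|\frac{w}{d_\Omega^s}\right\|_{L^1(Q_{R/8}^\Omega(z_0))} + c\max\{R,d_\Omega(x_0)\}^{-s} R^{\frac{d}{2}+2s} \|w\|_{L^\infty_t L^2_x(Q_{R/8}(z_0))},
\end{equation*}
where passing from $\overline{\Psi}(u;R/8,z_0)$ to $\Psi(u;R,z_0)$ uses the inclusion $Q_{R/8}^\Omega \subset Q_R^\Omega$ for the local part together with the elementary bound $\Tail(\cdot;R/32,x_0) \leq 4^{2s}\Tail(\cdot;R/\theta,x_0)$ and an estimate of the difference of averages at scales $R/8$ and $R$ against the local oscillation of $u/\psi_{z_0}$ over $Q_R^\Omega$. \autoref{Lem:FreezingEstimateIntW/ds} then bounds the two $w$-terms by $cR^\sigma \Phi_\sigma(u;R,z_0) + c R^{d+3s-\frac{d}{r}-\frac{2s}{q}}B$ with $B = \|u\|_{L^\infty((-1,0)\times\R^d)} + \|f\|_{L^q_t L^r_x(Q_1^\Omega)}$. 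Combining everything and using $\theta^{-m(d+2s+\eps)} \leq 1$ yields \eqref{eq:ExcessFunctionalEstimate-higher}.

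The main obstacle is the careful tracking of the averaging scales $\rho \in \{R, R/8, \theta^{-m}R\}$ and the tail radii $\theta^{-m-1}R$, $\theta^{-m}R/4$, $R/32$, $R/\theta$, ensuring each conversion between them produces only absorbable multiplicative constants and that the geometric factor $\max\{\theta^{-m}R,d_\Omega(x_0)\}^{-s}/\max\{R,d_\Omega(x_0)\}^{-s} \lesssim \theta^{ms}$ does not destroy the decay $\theta^{-m(d+2s+\eps)}$. The most delicate point is the tail-radius mismatch when passing from $\Psi(u;\theta^{-m}R)$ to $\overline{\Psi}(v;\theta^{-m}R)$: the intermediate local term produced by the tail splitting must be absorbed into the decay rate already provided by \autoref{Lem:CampanatoEstimateTranslInvariant-2}, which is precisely what the pointwise-in-time oscillation estimate of \autoref{Lem:CampanatoEstimateTranslInvariant} makes possible.
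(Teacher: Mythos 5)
Your proposal is correct and follows essentially the same route as the paper's proof: freeze at scale $R/8$, apply \autoref{Lem:CampanatoEstimateTranslInvariant-2} for the decay of $\overline{\Psi}(v)$ together with \autoref{Lem:CampanatoEstimateTranslInvariant} for the intermediate local contribution arising from the tail-radius mismatch ($\theta^{-m-1}R$ in $\Psi$ versus $\theta^{-m}R/4$ in $\overline{\Psi}$), absorb all $w$-contributions via \autoref{Lem:FreezingEstimateIntW/ds}, and convert $\overline{\Psi}(v;R/8)$ back to $\Psi(u;R)$ by controlling the difference of averages and the innocuous change of tail radius. You have correctly identified the delicate point — that the intermediate piece of the split tail must be handled by the sup-in-time oscillation estimate rather than by the purely integrated Campanato decay — which is exactly the mechanism used in the paper.
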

    \begin{proof}
         We write $\psi := \psi_{z_0}$ and set
\begin{equation*}
    B= \| u \| _{L^\infty ((-1,0)\times \R ^d)} + \| f\| _{L_{t}^q L_x^{r} (Q^\Omega _1)} .
\end{equation*}
We fix $0<R\leq 1/16$.
Let $v:=v_{R/8,z_0}$ and $w:=w_{R/8,z_0}$ be defined as in \eqref{eq:DefinitionV} and \eqref{eq:DefinitionW}.

First, note that we have
\begin{align*}
     &\Psi (u;\theta^{-m}R)  \leq \Psi(w;\theta ^{-m}R)+ c\overline{\Psi}(v;\theta ^{-m}R) \\
     & \qquad\qquad + c\max \{ d_\Omega(x_0),\theta ^{-m}R  \}^{-s} (\theta ^{-m}R) ^{2s} \sup _{t\in I^\ominus _{\theta^{-m}R}(t_0)} \int _{B ^\Omega_{\theta ^{-m}R}(x_0)} | v - \psi (v/\psi )_{Q^\Omega _{\theta ^{-m}R}(z_0)} | \intd x \\
     & \qquad\leq \Psi(w;\theta ^{-m}R)+ \overline{\Psi}(v;\theta ^{-m}R) \\
     & \qquad \qquad + c (\theta ^{-m}R) ^{2s} \sup _{t\in I^\ominus _{\theta^{-m}R}(t_0)} \int _{B ^\Omega_{\theta ^{-m}R}(x_0)} \left| \frac{v}{\psi} - \left(\frac{v}{\psi } \right) _{Q^\Omega _{\theta ^{-m}R}(z_0)} \right| \intd x .
\end{align*}
Hence, by \autoref{Lem:CampanatoEstimateTranslInvariant} and \autoref{Lem:CampanatoEstimateTranslInvariant-2}, it holds
\begin{align}
\label{eq:Campanato-help-1}
\begin{split}
    & \Psi (u;\theta^{-m}R)  \le  c (\theta^{-m})^{d + 2s + \eps} \Bigg[ \int _{Q^\Omega _{R/8}} \left| \frac{v}{\psi} - \left( \frac{v}{\psi} \right) _{Q^\Omega _{R/8}} \right| \intd z \\
    & \qquad \qquad\qquad+ \max\{ R , d_{\Omega}(x_0) \}^{-s} R^{d+2s} \sup_{t \in I_{R/8}^{\ominus}(t_0)} \Tail \left(v - \psi \left( v /\psi \right)_{Q_{R/8}^{\Omega}(z_0)} ;  R/32, x_0 \right)  \Bigg]  \\
             & \quad + 2 \| w/ \psi \| _{L^1(Q_{\theta ^{-m}R}^{\Omega}(z_0))}  \\
            &\quad + \max\{ \theta^{-m}R , d_{\Omega}(x_0) \}^{-s} (\theta^{-m}R)^{d+2s} \sup_{t \in I_{\theta ^{-m}R}^{\ominus}(t_0)} \Tail \left(w - \psi \left( w /\psi \right)_{Q_{\theta ^{-m}R}^{\Omega}(z_0)} ; \theta^{-m-1} R , x_0 \right).
            \end{split}
\end{align}
Now, we observe that since $w = 0$ in $I^\ominus_{R/8}(t_0) \times (\R^d \setminus B_{R/8}(x_0))$ and by \autoref{Lem:psi-properties}(ii) and (iii), 
\begin{align}
\label{eq:Campanato-help-2}
\begin{split}
    &\max\{ \theta^{-m}R , d_{\Omega}(x_0) \}^{-s} (\theta^{-m}R)^{d+2s} \sup_{t \in I_{\theta ^{-m}R}^{\ominus}(t_0)} \Tail \left(w - \psi \left( w /\psi \right)_{Q_{\theta ^{-m}R}^{\Omega}(z_0)} ; \theta^{-m-1} R , x_0 \right) \\
    &\quad\le c \max\{ \theta^{-m}R , d_{\Omega}(x_0) \}^{-s} (\theta^{-m}R)^{2s} \sup_{t \in I_{\theta^{-m}R}^{\ominus}(t_0)} \int_{B_{R/8}(x_0)} |w| \d x + c \left\Vert \frac{w}{\psi} \right\Vert_{L^1(Q_{\theta^{-m}R}^{\Omega}(z_0))} \\
    &\quad\le c \max\{ R , d_{\Omega}(x_0) \}^{-s} R^{2s + \frac{d}{2}} \Vert w \Vert_{L^{\infty}_t L^2_x(Q_{R/8}(z_0))} + c \left\Vert \frac{w}{d_\Omega ^s} \right\Vert_{L^1(Q_{R/8}^{\Omega}(z_0))}.
    \end{split}
\end{align}

Moreover, we recall that $u=v+w$ and $\theta ^{-1} R \leq R/32$, and hence,
\begin{align*}
    &\max \{ R,d_\Omega (x_0) \} ^{-s} R^{d+2s} \sup _{t \in I^\ominus _{R/8}(t_0)}\Tail (v-\psi (v/\psi)_{Q^\Omega _{R/8} (z_0)} ; R/32,x_0) \\
    &\quad \le c \max \{ R,d_\Omega (x_0)\} ^{-s}  R^{d+2s} \Bigg( \sup _{t \in I^\ominus _{R/8}(t_0)} \Tail (u - \psi(u/\psi)_{Q_{R/8}^{\Omega}(z_0)};\theta^{-1} R,x_0) \\
    &\qquad\qquad\qquad\qquad\qquad\qquad\qquad\qquad\quad +\sup _{t \in I^\ominus _{R/8}(t_0)} \Tail (w - \psi(w/\psi)_{Q_{R/8}^{\Omega}(z_0)};\theta^{-1} R,x_0) \Bigg) \\
    &= I_1 + I_2.
\end{align*}
We estimate $I_1$ and $I_2$ separately. For $I_2$, we use that $w = 0$ in $I_{R/8}^{\ominus}(t_0) \times (\R^d \setminus B_{R/8}(x_0))$ and \autoref{Lem:psi-properties}(ii) and (iii) to obtain
\begin{align*}
    I_2 &\le c\max \{ R,d_\Omega (x_0) \} ^{-s}  \left( R^{2s} \sup _{t \in I^\ominus _{R/8}(t_0)} \int_{B_{R/8}(x_0)} |w| \d x +   \left\| \frac{w}{d_\Omega ^s} \right\| _{L^1 (Q^\Omega _{R/8}(z_0))}  \Tail(\psi; \theta^{-1}R,x_0) \right) \\
    &\le c \max \{ R,d_\Omega (x_0)\} ^{-s} R^{2s+\frac{d}{2}} \| w \| _{L^\infty_t L^2_x (Q_{R/8}(z_0))} + c\left\| \frac{w}{d_\Omega ^s} \right\| _{L^1 (Q^\Omega_{R/8}(z_0))}.
\end{align*}

Moreover, for $I_1$ we have by \autoref{Lem:psi-properties}(iii), 
\begin{align*}
    I_1 &\le c \max \{ R,d_\Omega (x_0) \} ^{-s} R^{d+2s} \Bigg( \sup _{t \in I^\ominus _{R/8}(t_0)}\Tail (u-\psi (u/\psi)_{Q^\Omega _{R} (z_0)} ; \theta^{-1} R,x_0) \\
    & \qquad\qquad\qquad\qquad\qquad\qquad\qquad\qquad + \Big| (u/\psi)_{Q^\Omega _{R/8} (z_0)} - (u/\psi)_{Q^\Omega _{R} (z_0)}\Big| \Tail(\psi;\theta^{-1}R,x_0) \Bigg) \\
    & \leq c \Psi (u;R) + R^{d+2s }\Big| (u/\psi)_{Q^\Omega _{R/8} (z_0)} - (u/\psi)_{Q^\Omega _{R} (z_0)}\Big|\\
    & \leq c \Psi (u;R) .
\end{align*}

This implies
\begin{align}
\label{eq:Campanato-help-3}
\begin{split}
    &\max \{ R,d_\Omega (x_0) \} ^{-s} R^{d+2s} \sup _{t \in I^\ominus _{R/8}(t_0)}\Tail (v-\psi (v/\psi)_{Q^\Omega _{R/8} (z_0)} ; R/16,x_0) \\
    &\qquad \le  c\Psi(u,R) + c\max \{ R,d_\Omega (x_0)\} ^{-s} R^{2s+\frac{d}{2}} \| w \| _{L^\infty_t L^2_x (Q_{R/8}(z_0))} + c\left\| \frac{w}{d_\Omega ^s} \right\| _{L^1 (Q_{R/8}(z_0))}.
    \end{split}
\end{align}

Finally, we have
\begin{equation}\label{eq:Campanato-help-4}
\begin{aligned}
    &\int _{Q^\Omega _{R/8}} \left| \frac{v}{\psi} - \left( \frac{v}{\psi} \right) _{Q^\Omega _{R/8}} \right| \intd z  \leq \int _{Q^\Omega _{R/8}} \left| \frac{u}{\psi} - \left( \frac{u}{\psi} \right) _{Q^\Omega _{R/8}} \right| \intd z +\int _{Q^\Omega _{R/8}} \left| \frac{w}{\psi} - \left( \frac{w}{\psi} \right) _{Q^\Omega _{R/8}} \right| \intd z\\
    &\qquad \leq c \Psi (u;R) + c R^{d+2s} \Big| (u/\psi)_{Q^\Omega _{R/8} (z_0)} - (u/\psi)_{Q^\Omega _{R} (z_0)}\Big|
   +c \left\| \frac{w}{d_\Omega ^s} \right\| _{L^1(Q^\Omega_{R/8}(z_0))} \\
   & \qquad\leq c \Psi (u;R) +c \left\| \frac{w}{d_\Omega ^s} \right\| _{L^1(Q^\Omega_{R/8}(z_0))}.
\end{aligned}
\end{equation}
Altogether, combining \eqref{eq:Campanato-help-1}, \eqref{eq:Campanato-help-2}, \eqref{eq:Campanato-help-3}, and \eqref{eq:Campanato-help-4}, we deduce
\begin{align*}
    \Psi(u;\theta^{-m}R) &\le c (\theta^{-m})^{d + 2s + \eps} \Psi(u; R) \\
    &\quad + c \max \{ R,d_\Omega (x_0)\} ^{-s} R^{2s+\frac{d}{2}} \| w \| _{L^\infty_t L^2_x (Q_{R/8}(z_0))} + c\left\| \frac{w}{d_\Omega ^s} \right\| _{L^1 (Q_{R/8}(z_0))}.
\end{align*}
By applying \autoref{Lem:FreezingEstimateIntW/ds}, this concludes the proof.
    \end{proof}

\begin{Lemma}\label{Lem:uOverPsiHoelderContinuous}
    Assume that we are in the same setting as in \autoref{Thm:CsBoundaryRegularityGlobalAAssumption}.
    Let $\beta$ be a positive constant satisfying $\beta < \sigma <s$ and $\beta \leq s-\frac{2s}{q}-\frac{d}{r}$.
    Then,
    \begin{equation}\label{eq:uOverPsiHoelderContinuous}
        \left[ \frac{u(t,x)}{\psi _{(t,x)}(x)}\right] _{C^{ \beta}_{p}( \overline{Q^\Omega_{1/8}})} \leq c  \left( \| u \| _{L^\infty ((-1,0)\times \R ^d)} + \| f\| _{L_{t}^q L_x^{r} (Q^\Omega _1)} \right)
    \end{equation}
    for some constant $c=c(d,s,\lambda , \Lambda ,\alpha ,\sigma , q,r,\beta , \Omega)>0$.
\end{Lemma}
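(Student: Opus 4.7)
Write $M := \|u\|_{L^\infty((-1,0)\times\R^d)} + \|f\|_{L^q_tL^r_x(Q_1^\Omega)}$ and let $\theta = 128$. The plan is to first iterate the Campanato estimate from \autoref{Lem:CampanatoEstimate} to get a decay $\Psi(u;\rho,z_0)\le cM\rho^{d+2s+\beta}$, then convert it into a pointwise expansion for $u/\psi_{z_0}$ near each $z_0$, and finally compare the frozen quotients at nearby base points by means of the closeness-of-barriers bound \autoref{Lem:ClosenessOfBarriers}.

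For the first step I would combine \autoref{Lem:ExcessFunctionalEstimate} with the dyadic iteration \autoref{Lem:IterationLemmaOnDyadicScale} to obtain the preliminary Morrey bound $\Phi_\sigma(u;R,z_0) \le cMR^{d+2s-\eps'}$ for any small $\eps'>0$, uniformly for $z_0\in \overline{Q_{1/8}^\Omega}$ and $R\le R_0$. Feeding this estimate into \autoref{Lem:CampanatoEstimate} rewrites the latter as a self-improving bound
\[\Psi(u;\theta^{-m}R,z_0) \le c(\theta^{-m})^{d+2s+\eps_1}\Psi(u;R,z_0) + cM R^{d+2s+\beta+\delta_0}\]
for some $\eps_1\in(\beta,s)$ and $\delta_0>0$; the hypothesis $\beta<\sigma$ and $\beta\le s-\frac{2s}{q}-\frac{d}{r}$ is exactly what allows such a margin $\delta_0$ to be chosen. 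A second application of \autoref{Lem:IterationLemmaOnDyadicScale} then yields $\Psi(u;\rho,z_0) \le cM\rho^{d+2s+\beta}$ for all $z_0\in \overline{Q_{1/8}^\Omega}$ and $\rho\le R_0$. A standard telescoping argument then produces a unique $q_{z_0}\in\R$ with $|q_{z_0}-(u/\psi_{z_0})_{Q_\rho^\Omega(z_0)}|\le cM\rho^\beta$, and comparing averages on overlapping cylinders upgrades this to the pointwise expansion
\[\Big|\tfrac{u(t,x)}{\psi_{z_0}(x)} - q_{z_0}\Big| \le cM\big(|x-x_0|+|t-t_0|^{1/(2s)}\big)^\beta\]
for every $(t,x)\in Q_{R_0/2}^\Omega(z_0)$. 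Because $u$ and $\psi_{z_0}$ are continuous, at an interior base point $z_0$ the left-hand side vanishes at $(t,x)=z_0$, forcing $u(z_0)/\psi_{z_0}(x_0)=q_{z_0}$; the target Hölder continuity therefore reduces to proving that $z\mapsto q_z$ lies in $C^\beta_p(\overline{Q_{1/8}^\Omega})$.

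For the comparison step, fix $z_0,z_1\in Q_{1/8}^\Omega$ and set $\delta:=|x_0-x_1|+|t_0-t_1|^{1/(2s)}$. From \eqref{eq:KernelHoelderCont} and the definition \eqref{eq:FrozenKernel} of the frozen kernels one checks that $|K_{z_0}(h)-K_{z_1}(h)|\le c\delta^\sigma|h|^{-d-2s}$, so \autoref{Lem:ClosenessOfBarriers} applied with $\eps := \sigma-\beta\in(0,s)$ gives $[\psi_{z_0}-\psi_{z_1}]_{C^{s-\eps}_x(\overline{B_{1/2}^\Omega})}\le c\delta^\sigma$. Since $\psi_{z_0}-\psi_{z_1}$ vanishes on $\partial\Omega\cap B_3$, this upgrades to the weighted bound $|\psi_{z_0}(x)-\psi_{z_1}(x)|\le c\delta^\sigma d_\Omega(x)^{s-\eps}$. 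In the regime $d_\Omega(x_1)\ge 2\delta$, applying the pointwise expansion from the previous paragraph at $z_1$ and combining with the $C^s_x$ bound $|u(z_1)|\le cMd_\Omega(x_1)^s$ from \autoref{Thm:CsBoundaryRegularityGlobalAAssumption} and the two-sided bound $\psi_{z_i}\asymp d_\Omega^s$ from \autoref{Lem:psi-properties}(ii) yields
\[|q_{z_0}-q_{z_1}| \le cM\delta^\beta + \frac{cM\,d_\Omega(x_1)^s\cdot\delta^\sigma d_\Omega(x_1)^{s-\eps}}{d_\Omega(x_1)^{2s}} = cM\delta^\beta + cM\frac{\delta^\sigma}{d_\Omega(x_1)^{\sigma-\beta}} \le cM\delta^\beta.\]
In the degenerate regime $d_\Omega(x_1)<2\delta$ I would project $x_1$ to its nearest boundary point $x_1^*\in\partial\Omega$, set $z_1^*:=(t_1,x_1^*)$ (so that $|x_1-x_1^*|=d_\Omega(x_1)<2\delta$), apply the previous case to $(z_0,z_1^*)$, and control $|q_{z_1}-q_{z_1^*}|$ directly via the pointwise expansion around $z_1$ with $d_\Omega(x_1)$ playing the role of the distance.

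\textbf{Main obstacle.} The delicate point is the comparison step: \autoref{Lem:ClosenessOfBarriers} only supplies $C^{s-\eps}$ closeness of the two barriers, and after dividing by $\psi_{z_i}\asymp d_\Omega^s$ an unwanted factor $d_\Omega(x)^{-\eps}$ appears that threatens to blow up near $\partial\Omega$. Balancing $\delta^\sigma d_\Omega(x)^{-\eps}$ against the target rate $\delta^\beta$ pins down the choice $\eps=\sigma-\beta$ and is precisely what forces the strict inequality $\beta<\sigma$ in the hypothesis; the near-boundary regime $d_\Omega(x_1)\ll\delta$ is the one place where the projection onto $\partial\Omega$ must be executed carefully, whereas everything else is standard Campanato bookkeeping on top of the $C^s_p$ regularity already supplied by \autoref{Thm:CsBoundaryRegularityGlobalAAssumption}.
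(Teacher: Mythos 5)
Your Step 1 — iterating \autoref{Lem:ExcessFunctionalEstimate} and \autoref{Lem:CampanatoEstimate} together with \autoref{Lem:IterationLemmaOnDyadicScale} to obtain the uniform decay $\Psi(u;\rho,z_0)\le cM\rho^{d+2s+\beta}$ — is exactly the paper's Step 1, and your identification of the role played by $\beta<\sigma$ and $\beta\le s-\tfrac{2s}{q}-\tfrac{d}{r}$ is on target. Your Step 2, however, departs from the paper in a way that introduces a genuine logical error. You invoke the bound $|u(z_1)|\le cMd_\Omega(x_1)^s$ "from \autoref{Thm:CsBoundaryRegularityGlobalAAssumption}", but that theorem is proven \emph{after} and \emph{by means of} the present lemma: the paper's proof of \autoref{Thm:CsBoundaryRegularityGlobalAAssumption} deduces $|u|\le cd_\Omega^s$ precisely from $[u/\psi]_{C^\beta_p}\le c$, which is the statement you are trying to establish. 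With the full exponent $s$ your balancing at $\eps=\sigma-\beta$ is exactly tight, so the circularity is load-bearing, not cosmetic. The correct input, and what the paper uses, is the suboptimal bound $|u|\le cMd_\Omega^{s-\tilde\eps}$ from \autoref{Thm:Cs-epsRegWithCoefficients}, together with the choice $\eps=\tilde\eps\le(\sigma-\beta)/2$ in \autoref{Lem:ClosenessOfBarriers}, so that the two exponent losses sum to at most $\sigma-\beta$ and the estimate $\delta^{\sigma}d_\Omega^{-\eps-\tilde\eps}\lesssim\delta^\beta$ (in the regime $d_\Omega\gtrsim\delta$) still goes through.

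Beyond this, the packaging of your Step 2 via pointwise expansions $|u(z)/\psi_{z_0}(x)-q_{z_0}|\lesssim M\,\mathrm{dist}_p(z,z_0)^\beta$ conceals a second difficulty: this bound at $z\ne z_0$ requires Campanato control of $u/\psi_{z_0}$ on cylinders \emph{centered at $z$}, but the decay you established is for $\Psi(u;\cdot,z)$, which involves the barrier $\psi_z$, not $\psi_{z_0}$. Switching barriers on $Q_{\rho'}^\Omega(z)$ produces an error of order $M\,\mathrm{dist}(z,z_0)^\sigma(\rho')^{d+2s}\max\{\rho',d_\Omega(x)\}^{-\eps-\tilde\eps}$, and summing this over shrinking cylinders around $z$ does not close once $\rho'$ drops below $d_\Omega(x)$; one has to stop the telescoping at scale $\sim d_\Omega(x)$ and patch in interior estimates. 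The paper avoids this entirely by never separating a "pointwise expansion" from the barrier comparison: it bounds directly, for the varying-barrier function $H(t,x)=u(t,x)/\psi_{(t,x)}(x)$, the Campanato seminorm $\iint_{Q_\rho^\Omega(z_0)}|H-(H)_{Q_\rho^\Omega(z_0)}|\lesssim M\rho^{d+2s+\beta}$ via the algebraic identity $(ab-cd)=\tfrac12(a-c)(b+d)+\tfrac12(a+c)(b-d)$, splitting into a term $I_1$ controlled by the decay of $\Psi$ with fixed barrier $\psi_{z_0}$ and a term $I_2$ controlled by $\sup_{w_0\in Q_\rho^\Omega(z_0)}|\psi_{z_0}-\psi_{w_0}|$ together with $|u|\le cMd_\Omega^{s-\tilde\eps}$, and then concludes by Campanato embedding. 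This sidesteps both the circularity and the telescoping subtlety, so I recommend you adopt that decomposition rather than fighting the barrier-switch error scale by scale.
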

\begin{proof}
    Assume, without loss of generality, that $\| u \| _{L^\infty ((-1,0)\times \R ^d)} + \| f\| _{L_{t}^q L_x^{r} (Q^\Omega _1)} \leq 1$.

    \textbf{Step 1:}  
    We set $\overline{\eps}:= \sigma - \beta >0$.
    Let $R_0>0$ be the constant from \autoref{Lem:ExcessFunctionalEstimate} (with $\eps := \overline{\eps}  $ in \autoref{Lem:ExcessFunctionalEstimate}).
    First, we claim that for every $z_0\in Q^\Omega _{1/8}$ and for every $m\in \N$, it holds that
    \begin{equation} \label{eq:DecayEstimateForPsiU}
        \Psi (u;\theta ^{-m}R_0,z_0) \leq c (\theta^{-m}) ^{d+2s+\beta  } .
    \end{equation}
    
    Let $0<  R\leq R_0$ and $m\in \N$.
    Choose $k\in \N$ and $\mu \in (\theta ^{-1},1]$ such that $R=\theta ^{-k}\mu R_0$.
    Then, by applying \autoref{Lem:CampanatoEstimate} (with $\eps :=(\sigma +s)/2$) and \autoref{Lem:ExcessFunctionalEstimate} (with $\overline{\eps}=\sigma -\beta$, $\theta^{-m}R:=\theta^{-k}\mu R_0$, and $R=\mu R_0$), we obtain
    \begin{align*}
        & \Psi (u;\theta ^{-m}R) \leq  c (\theta^{-m})^{d + 2s + \eps} \Psi(u; R) + c R^{\sigma} \Phi_{\sigma}(u;R) + c R^{d+3s - \frac{d}{r} - \frac{2s}{q}} \\
        & \qquad \leq c (\theta^{-m})^{d + 2s + \eps} \Psi(u; R) + c R^{\sigma }\left( \theta ^{-k} \right) ^{d+2s -\overline{\eps}} \Phi_\sigma (u;\mu R_0) 
         + c  R^{\sigma+d+2s-\overline{\eps}}+ c R^{d+3s - \frac{d}{r} - \frac{2s}{q}} .
    \end{align*}
    Note that as in the proof of \autoref{Thm:Cs-epsRegWithCoefficients}, we have $\Phi_\sigma(u; \mu R_0) \leq c$ for some constant $c$ independent of $\mu$.
    Hence, we have
    \begin{equation*}
        \Psi (u;\theta ^{-m}R) \leq c (\theta^{-m})^{d + 2s + \frac{\sigma +s}{2}} \Psi(u; R) + c R^{d+2s+ \beta }  .
    \end{equation*}
     By applying the iteration lemma (\autoref{Lem:IterationLemmaOnDyadicScale}), we get    
    \begin{equation}\label{eq:GeneralDecayEstimateForPsiU}
            \Psi (u;\theta ^{-m}R) \leq c (\theta^{-m}) ^{d+2s+ \beta } \Psi (u;R) + c(\theta ^{-m}R)^{d+2s+ \beta }  
    \end{equation}
    for every $m\in \N$ and $0<R\leq R_0$.
    Note that also $\Psi (u;R_0) \leq c$.
    Consequently, \eqref{eq:DecayEstimateForPsiU} follows by setting $R=R_0$ in \eqref{eq:GeneralDecayEstimateForPsiU}.

    \textbf{Step 2:}
    In a second step we will show how \eqref{eq:DecayEstimateForPsiU} implies \eqref{eq:uOverPsiHoelderContinuous}.
    Let $0<\rho \leq R_0$ and $z_0 \in Q^\Omega_{1/8}$.
    Then, using the identity $(ab-cd)=\frac{1}{2}(a-c)(b+d)+\frac{1}{2}(a+c)(b-d)$, we get
    \begin{align*}
        &\iint _{Q^\Omega _\rho (z_0)} \left| \frac{u(t,x)}{\psi_{(t,x)}(x)} - \left( \frac{u}{\psi} \right) _{Q^\Omega_\rho (z_0)} \right| \intd x \intd t \leq
         2 \iint _{Q^\Omega _\rho (z_0)} \left| \frac{u(t,x)}{\psi_{z_0}(x)} \frac{\psi _{z_0}(x)}{\psi _{(t,x)}(x)} - \left( \frac{u}{\psi _{z_0}} \right) _{Q^\Omega_\rho (z_0)} \right| \intd x \intd t \\
         & \qquad \leq \iint _{Q^\Omega _\rho (z_0)} \left| \frac{u}{\psi_{z_0}} - \left( \frac{u}{\psi _{z_0}} \right) _{Q^\Omega_\rho (z_0)} \right| \left| \frac{\psi _{z_0}}{\psi} +1\right| \intd x \intd t \\
         & \qquad \qquad \qquad \qquad+
         \iint _{Q^\Omega _\rho (z_0)} \left| \frac{u}{\psi_{z_0}} + \left( \frac{u}{\psi _{z_0}} \right) _{Q^\Omega_\rho (z_0)} \right| \left| \frac{\psi _{z_0}}{\psi} -1\right| \intd x\intd t \\
         & \qquad = I_1 +I_2 .
    \end{align*}
    Choose $m\in \N$ such that $\theta ^{-m-1}R_0 \leq \rho \leq \theta ^{-m}R_0$.
    For $I_1$, we use that $\psi _{z_0}(x) \asymp \psi _{(t,x)}(x) \asymp d^s_\Omega (x)$ in $Q^\Omega _\rho (x_0)$, and \eqref{eq:DecayEstimateForPsiU}, to obtain
    \begin{align*}
        I_1 & \leq c \left\| \frac{\psi _{z_0}}{\psi} +1 \right\| _{L^\infty (Q^\Omega _\rho (z_0))} \iint _{Q^\Omega _{\theta ^{-m}R_0} (z_0)} \left| \frac{u}{\psi_{z_0}} - \left( \frac{u}{\psi _{z_0}} \right) _{Q^\Omega_{\theta ^{-m}R_0} (z_0)} \right|\intd x \intd t \\
        &\leq c (\theta ^{-m} )^{d+2s+\beta} 
        \leq c \rho ^{d+2s+\beta} .
    \end{align*}
    Next, we choose some $\tilde{\eps} >0 $ with $\tilde{\eps}<1-s$ and $\tilde{\eps} \leq (\sigma - \beta )/2$.
    Using that $|u|/\psi _{z_0} \leq c|u|/d_\Omega ^s \leq cd_\Omega ^{-\tilde{\eps}}$ in $Q^\Omega _{1/2}$ by \autoref{Lem:psi-properties}(ii) and \autoref{Thm:Cs-epsRegWithCoefficients}, we obtain
    \begin{align*}
        I_2 &\leq c \iint _{Q^\Omega_\rho (z_0)} \left[ \psi ^{-1} \left| \frac{u}{\psi _{z_0}} \right| + \psi ^{-1} \left( \frac{|u|}{\psi _{z_0}} \right) _{Q^\Omega_\rho (z_0)} \right] \left| \psi _{z_0} -\psi \right|\intd x\intd t  \\
        & \leq  c \rho^{2s} \left[ \int_{B^\Omega_\rho (x_0)}  d_\Omega ^{-s-\tilde{\eps}}\intd x + \rho ^{-d}\int_{B^\Omega_\rho (x_0)}  d_\Omega ^{-s} \intd x  \int_{B^\Omega_\rho (x_0)}  d_\Omega ^{-\tilde{\eps}}\intd x \right]  \sup _{ (t,x) \in Q^\Omega _\rho (x_0)} | \psi _{z_0}(x) - \psi _{(t,x)}(x) | \\
        & \leq c  \rho ^{d+2s}\max \{  \rho ,d_\Omega (x_0) \} ^{-s-\tilde{\eps} }\sup _{w_0\in Q_\rho ^\Omega(z_0)}\sup _{ x \in B^\Omega _\rho (x_0)} | \psi _{z_0}(x) - \psi _{w_0}(x) |,
    \end{align*}
    where in the last step we have distinguished between the cases $d_\Omega (x_0)\geq 2 \rho$ and $d_\Omega (x_0)\leq 2 \rho$ and used \cite[Lemma 2.5]{Kim2024} in the latter case.
    Using \autoref{Lem:ClosenessOfBarriers} and the assumption \eqref{eq:KernelHoelderCont} with $\mathcal{A}=(-1,0)\times \R ^d$, we conclude
    \begin{align*}
        I_2 & \leq c \rho ^{d+2s}\max \{  \rho ,d_\Omega (x_0) \} ^{-s-\tilde{\eps} } \max \{  \rho ,d_\Omega (x_0) \} ^{s-\tilde{\eps} } \sup _{w_0\in Q_\rho ^\Omega (x_0)} [\psi _{z_0} - \psi _{w_0}] _{C^{s-\tilde{\eps}_x} (B^\Omega _{1/2})} \\
        &\leq  c \rho ^{d+2s+\sigma} \max \{ \rho, d_\Omega (x_0)\} ^{-2\tilde{\eps}} \leq c \rho ^{d+2s+\sigma -2\tilde{\eps}}  \leq c \rho ^{d+2s+\beta }.
    \end{align*}
    Altogether, we obtain
    \begin{align*}
        \iint _{Q^\Omega _\rho (z_0)} \left| \frac{u}{\psi} - \left( \frac{u}{\psi} \right) _{Q^\Omega_\rho (z_0)} \right| \intd x \intd t \leq c\rho ^{d+2s+\beta} .
    \end{align*}
    By Campanato's embedding, we conclude \eqref{eq:uOverPsiHoelderContinuous}.
\end{proof}

\begin{proof}[Proof of \autoref{Thm:CsBoundaryRegularityGlobalAAssumption}]
    Without loss of generality, assume $\| u \| _{L^\infty ((-1,0)\times \R ^d)} + \| f\| _{L_{t}^q L_x^{r} (Q^\Omega _1)} \leq 1$.
    By \autoref{Lem:uOverPsiHoelderContinuous}, we have $[u/\psi] _{C^{ \beta}_{p} (Q^\Omega _{1/8})}\leq c$ for some small $\beta>0$.
    Fix some $z_1=(t_1,x_1) \in Q^\Omega _{1/8}$ with $d_\Omega (x_1)\geq c>0$.
    Then, for every $z_0=(t_0,x_0) \in Q^\Omega _{1/8} $, using \autoref{Lem:psi-properties}(ii) and local boundedness (\autoref{Prop:LocalBoundedness}), we have
    \begin{align*}
        \left| \frac{u(z_0)}{\psi _{z_0}(x_0)} \right| \leq  \left| \frac{u(z_0)}{\psi _{z_0}(x_0)} - \frac{u(z_1)}{\psi_{z_1}(x_1)} \right| + \left| \frac{u(z_1)}{\psi_{z_1}(x_1)} \right| \leq c (|x_0-x_1|^\beta +|t_0-t_1|^\frac{\beta}{2s}) +c \frac{|u(z_1)|}{d_\Omega^s(x_1)} \leq c .
    \end{align*}
    Hence, we have shown that $|u| \leq c d_\Omega^s$ in $Q^\Omega _{1/8}$.
    Combining this estimate with interior regularity (see step 2 in the proof of \autoref{Lem:BoundaryRegFromBoundOnU}) yields the $C^{s}_{p}$ regularity of $u$.
\end{proof}
We end this section with the following counterexamples.

\begin{Example}
\label{example:counterex-1}
Let $f\colon (0,1) \times B_1 \to \R $ be such that
\begin{equation} \label{eq:f-counterex-1}
    f(t,x) \asymp(1/2 - t)^{-1/2} |\log ( 1/2 - t)|^{-\alpha} 
\end{equation}
for some $\alpha \in (1/2,1)$.
Note that $f \in L^2_tL^\infty_x$ since $\alpha>1/2$.
Now, let $u$ be the solution to
\begin{equation} \label{eq:Equation-counterex-1}
    \left\{ \begin{aligned}
           \partial_t u + (-\Delta)^s u & =f & &\text{in} \quad (0,1) \times B_1 , \\
           u & =0 & & \text{in} \quad  (0,1) \times B^c_1 ,\\
           u & =0 & & \text{in} \quad  \{ 0 \} \times B_1 .
        \end{aligned} \right.
\end{equation}
We will show that $u \not \in L^{\infty}_tC^s_x((1/4,3/4) \times \overline{B_1})$.

 Let us fix $t = 1/2$ and consider $x \in B_1$. We denote $d(x):= d_{B_1}(x)$.

We claim that for every $\tau \in (0,t)$ with $d(x) < (t-\tau)^{1/2s}/2$, it holds
\begin{align}
\label{eq:heat-kernel-integral}
    \int_{B_1} p_{B_1}(t-\tau,x,y) \d y \ge c (1 \wedge d^s(x) (t-\tau)^{-\frac{1}{2}}) = c d^s(x) (t-\tau)^{-\frac{1}{2}}.
\end{align}

To prove \eqref{eq:heat-kernel-integral}, let $z = x - \frac{(t-\tau)^{1/2s}}{2} \frac{x}{|x|}$. Then, it holds $d(z) = d(x) + \frac{(t-\tau)^{1/2s}}{2}$. We define $B = B_{(t-\tau)^{1/2s}/4}(z)$ and observe that for any $y \in B$ it holds
\begin{align*}
    d(y) (t-\tau)^{-1/2s} \ge c, \qquad |x-y|(t-\tau)^{-1/2s} \le (|x-z| + |z-y|) (t-\tau)^{-1/2s} \le c .
\end{align*}
Hence,
\begin{align*}
    \int_{B_1} & p_{B_1}(t-\tau,x,y) \d y \\
    &\ge c d^s(x)(t-\tau)^{-\frac{1}{2}} \int_{B_1} (1 \wedge d^s(y) (t-\tau)^{-\frac{1}{2}}) \left( (t-\tau)^{-\frac{d}{2s}}  \wedge \frac{t-\tau}{|x-y|^{d+2s}} \right) \d y \\
    &\ge c d^s(x)(t-\tau)^{-\frac{1}{2}} \int_B (t-\tau)^{-d/2s} \d y \ge c d^s(x)(t-\tau)^{-\frac{1}{2}} ,
\end{align*}
as desired.

Then, by the representation formula, and using \eqref{eq:heat-kernel-integral}, which holds for $\tau < t - (2d(x))^{2s}$,
\begin{align*}
    u(t,x)/d^s(x) &= d^{-s}(x) \int_0^t  \int_{B_1} p_{B_1}(t-\tau,x,y) f(\tau ,y ) \d y \d \tau\\
    &\ge c \int_0^{1/2 - (2d(x))^{2s}} (1/2-\tau)^{-1} |\log (1/2 - \tau)|^{-\alpha} \d \tau \\
    &\ge c  |\log [(2d(x))^{2s}]|^{1-\alpha} \ge c |\log d(x)|^{1-\alpha} \to +\infty,
\end{align*}
as $x \to \partial B_1$, since $\alpha < 1$. Hence, $u \not \in L^{\infty}_tC^s_x((1/4,3/4) \times \overline{B_1})$.

Note that by a standard Campanato iteration one can easily show that $u \in C^{s}_{p}((1/4,3/4) \times B_{1/2})$ (see \autoref{Prop:InteriorRegularity}), so the boundary regularity differs from the interior regularity when considering source terms in the borderline space $L^{2}_t L^{\infty}_{x}$.

Furthermore, note that by following the arguments in \cite{Byun2025}, one could prove $u\in C^{s}_{p}((1/4,3/4) \times\overline{ B_1})$ if $f \in L^{2,1}_t L^{\infty}_x$.
\end{Example}

\begin{Example} \label{example:counterex-2}
    Let $u$ be the solution to 
    \begin{equation*}
    \left\{ \begin{aligned}
           \partial_t u + (-\Delta)^s u & =0 & &\text{in} \quad (0,1) \times B_1 , \\
           u (t,x)& = (1/2 - t)^{-1/2} |\log ( 1/2 - t)|^{-\alpha}\ind _{B} (x)  & & \text{in} \quad  (0,1) \times B^c_1 ,\\
           u & =0 & & \text{in} \quad  \{ 0 \} \times B_1 .
        \end{aligned} \right.
\end{equation*}
where $B:= B_1(3e_d)$ and $\alpha \in (1/2,1)$ is a constant.
Note that $u$ has a tail which is merely in $L^2_t$, i.e. $u \in L^2((0,1) ; L^1_{2s}(\R^d)) \setminus L^{2+\eps}((0,1) ; L^1_{2s}(\R^d))$ for every $\eps >0$.

Let $\overline{u}=u \ind _{(0,1)\times B_1}$. Then, $\overline{u}$ solves \eqref{eq:Equation-counterex-1} with some right-hand side $f$ which satisfies \eqref{eq:f-counterex-1}.
Hence, by \autoref{example:counterex-1}, we get that $u \not \in L^{\infty}_tC^s_x((1/4,3/4) \times \overline{B_1})$.
\end{Example}

\section{Hopf lemma}\label{sec:HopfLemma}
The goal of this section is to prove the Hopf lemma from \autoref{Thm:CsBoundaryRegAndHopfLemma}(ii).
Before we start with the proof of \autoref{Thm:CsBoundaryRegAndHopfLemma}(ii) in \autoref{sec:SubsectionProofHopf}, we show in \autoref{sec:SubsectionClosenessSolForHopf} that certain solutions for the divergence form operator $\mathcal{L}_t$ are close to solutions with respect to the frozen translation invariant operator $L_0$ (see \autoref{Prop:ClosenessOfSolutionsForHopf}).
\subsection{Closeness of solutions}\label{sec:SubsectionClosenessSolForHopf}
\begin{Proposition} \label{Prop:ClosenessOfSolutionsForHopf}
    Let $\alpha, \sigma \in (0,s)$, $\eps \in (0,1)$, and let $\Omega \subset \R^d$ be a $C^{1,\alpha }$ domain with $0\in \partial \Omega$.
    Furthermore, let $\mathcal{L}_t$ be an operator of the form \eqref{eq:OperatorDivergenceForm}-\eqref{eq:KernelDivergenceForm}, satisfying \eqref{eq:KernelHoelderCont} with $\mathcal{A}= Q_1$.
    Let $\phi$ be a solution to 
    \begin{equation*}
        \left\{ \begin{aligned}
          \partial _t \phi -\mathcal{L}_t\phi & = 0 & &\text{in} \quad Q^\Omega _\eps , \\
           \phi & =0 & & \text{in} \quad  \left( I^\ominus _\eps \times ( B_{2\eps} \setminus B^\Omega_\eps ) \right) \cup \left( \{ -\eps ^{2s} \} \times \R^d \right)  , 
        \end{aligned} \right.
    \end{equation*}
    with $0\leq \phi \leq 1$, and let $\phi _0$ be a solution to
    \begin{equation*}
        \left\{ \begin{aligned}
          \partial _t \phi _0 -L_0\phi_0 & = 0 & &\text{in} \quad Q^\Omega _\eps , \\
           \phi_0 & =\phi & & \text{in} \quad  \left( I^\ominus _\eps \times ( \R^d \setminus B^\Omega_\eps ) \right) \cup \left( \{ -\eps ^{2s} \} \times \R^d \right) , 
        \end{aligned} \right.
    \end{equation*}
    where $L_0$ denotes the frozen operator at $0$ (see \eqref{eq:FrozenKernel}).
    Then,
    \begin{equation} \label{eq:CloseHopfEstimate}
        \dashint _{Q^\Omega _\rho} |\phi -\phi _0 | \intd z \leq c \eps ^{-s+\sigma} \rho ^s \quad \text{for all}\quad 0<\rho \leq \eps ,
    \end{equation}
    for some constant $c=c(d,s,\lambda , \Lambda ,\alpha ,\sigma  ,\Omega)>0$.
\end{Proposition}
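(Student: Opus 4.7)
The function $w := \phi - \phi_0$ vanishes outside $Q_\eps^\Omega$ in space and at the initial time, and satisfies
\begin{equation*}
\partial_t w - L_0 w = (\mathcal{L}_t - L_0)\phi \quad \text{in } Q_\eps^\Omega
\end{equation*}
in the weak sense. My plan is to first establish the pointwise bound $|w(t,x)| \le c\,\eps^{\sigma-s}\,d_\Omega^s(x)$ on $Q_{\eps/2}^\Omega$, from which the target \eqref{eq:CloseHopfEstimate} follows immediately by integration using $\int_{B_\rho^\Omega} d_\Omega^s \, dx \lesssim \rho^{d+s}$ (since $0\in\partial\Omega$); the cases $\rho$ comparable to $\eps$ can be absorbed into the constant by a direct application of the $L^1$-type estimate in \autoref{Lem:FreezingEstimateIntW/ds}.

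The first step is to rescale to unit scale. Set $\bar w(t,x) := \eps^{-\sigma} w(\eps^{2s}t,\eps x)$, and analogously $\bar\phi,\bar\phi_0$, yielding a problem on $Q_1^{\bar\Omega}$ with $\bar\Omega := \eps^{-1}\Omega$ (which is $C^{1,\alpha}$ with norm controlled by that of $\Omega$). The rescaled kernels $\bar K$ and $\bar K_0$ satisfy \eqref{eq:KernelDivergenceForm} and \eqref{eq:TranslationalInvariantKernel}, respectively, and by \eqref{eq:DifferenceFrozenKernelToKernel} the difference $|\bar K - \bar K_0|$ carries a prefactor $\eps^\sigma\Lambda$, which is exactly canceled by the $\eps^{-\sigma}$ normalization of $\bar w$. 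Both $\bar\phi,\bar\phi_0$ are uniformly bounded by $1$, and since $\bar\phi$ vanishes on $B_2^{\bar\Omega}\setminus B_1^{\bar\Omega}$, their nonlocal tails are uniformly controlled.

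The main step is to show $\|\bar w\|_{C^s_p(\overline{Q_{1/2}^{\bar\Omega}})} \le c$ uniformly in $\eps$. First, adapting \autoref{Lem:EstimateOnHsw} and \autoref{Lem:FreezingEstimateIntW/ds} to scale $R=1/8$ in the rescaled setting gives uniform bounds on $\|\bar w\|_{L^\infty_tL^2_x} + \|\bar w\|_{L^2_t\dot H^s_x} + \|\bar w/d_{\bar\Omega}^s\|_{L^1}$, the $\eps^\sigma$ factor coming from Hölder closeness of the kernels cancelling the $\eps^{-\sigma}$ normalization exactly. To upgrade these integral bounds to a $C^s_p$ estimate, I rewrite the equation for $w$ as $\partial_t w - \mathcal L_t w = (\mathcal L_t - L_0)\phi_0$, so that the right-hand side is a nonlocal expression in $\phi_0$; by \autoref{Thm:BoundaryRegularityTranslationInvar}(ii) applied to $\phi_0$ (which solves a translation invariant equation with bounded data), one has $\phi_0 \in C^s_p$ with uniform norm and $\phi_0 \le c\,d_\Omega^s$. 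Using \eqref{eq:DifferenceFrozenKernelToKernel} together with this regularity, the splitting of the integrand into local ($|y-x|\lesssim 1$) and nonlocal ($|y-x|\gtrsim 1$) contributions, as in the proof of \autoref{Lem:EstimateOnHsw}, yields a pointwise bound on the rescaled right-hand side in a mixed-Lebesgue space of the form $L^q_tL^r_x$ compatible with \autoref{Thm:CsBoundaryRegAndHopfLemma}(i). Applying the boundary $C^s_p$ estimate then provides the desired uniform control of $\bar w$.

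Since $\bar w = 0$ on $\partial\bar\Omega \cap B_1$, the $C^s_p$ bound translates into $|\bar w(t,x)| \le c\, d_{\bar\Omega}^s(x)$ on $Q_{1/2}^{\bar\Omega}$; unscaling gives $|w(t,x)| \le c\,\eps^{\sigma-s}\,d_\Omega^s(x)$ on $Q_{\eps/2}^\Omega$, and integration concludes the proof. The main obstacle is in the third paragraph: the right-hand side $(\mathcal L_t - L_0)\phi_0$ is only a distribution a priori, and to bound it in a norm suitable for the $C^s_p$ regularity one must carefully exploit both the Hölder decay of $|K-K_0|$ from \eqref{eq:DifferenceFrozenKernelToKernel} and the fact that $\phi_0$ has $C^s_p$ regularity with $\phi_0 \le c\,d_\Omega^s$; the splitting into near and far contributions mirrors that in Section~\ref{sec:FreezingEstimates}, but some care is needed to extract the correct powers of $\eps$ after rescaling.
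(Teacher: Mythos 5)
Your plan hinges on placing the right-hand side $(\mathcal{L}_t - L_0)\phi_0$ (after rescaling) in a mixed-Lebesgue space $L^q_t L^r_x$ with $\tfrac{1}{q} + \tfrac{d}{2sr} < \tfrac12$, so that \autoref{Thm:CsBoundaryRegAndHopfLemma}(i) can be applied to obtain $\|\bar w\|_{C^s_p} \le c$ uniformly. This step fails, and the obstruction is not merely technical. Since $\phi_0$ is $L_0$-caloric with Dirichlet data, it has exactly $C^s$ regularity at $\partial\Omega$ and behaves like $d_\Omega^s$ there; consequently the pointwise quantity $(\mathcal{L}_t - L_0)\phi_0(t,x)$, even after exploiting the cancellation from \eqref{eq:DifferenceFrozenKernelToKernel}, blows up like $d_\Omega^{-s}(x)|x|^\sigma$ as $x\to\partial\Omega$. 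Because points of $\partial\Omega$ far from the origin are not protected by the factor $|x|^\sigma$, one has $\int_{B^\Omega_\eps} d_\Omega(x)^{-sr}\,\d x = +\infty$ for $r \ge 1/s$, and with $r < 1/s$ the condition $\tfrac1q + \tfrac{d}{2sr} < \tfrac12$ can never hold (already $\tfrac{d}{2sr} > \tfrac{d}{2} \ge \tfrac12$). So the forcing term is a genuine distribution near the boundary, acting only against $H^s_\Omega$ test functions as in \autoref{Lem:EstimateOnHsw}, and no strong $L^q_tL^r_x$ bound of the required type exists. You flag this as ``the main obstacle'' but offer no actual resolution; the obstacle is in fact fatal for this route. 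Relatedly, there is no reason for the intermediate pointwise claim $|w| \le c\,\eps^{\sigma - s}d_\Omega^s$ to be true — optimal $C^s$ boundary regularity for $w$ would require the forcing to be integrable against better exponents than are available, and the paper does not establish (nor need) such a pointwise bound.

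The paper's proof circumvents exactly this issue by never attempting a pointwise boundary estimate for $w$: it works with the energy/excess functionals $\Phi_\sigma$ and $\Psi$ from \eqref{eq:ExcessFunctional} and \eqref{eq:ExcessFunctional-higher}, which only require testing the equation for $w$ against $H^s_\Omega$ functions (\autoref{Lem:FreezingEstimateForCloSolHopf}, \autoref{Lem:ClosSolHopfBoundOnU}), and then runs a Morrey-type iteration followed by a Campanato-type iteration together with the limit-of-averages argument to obtain the $L^1$-averaged estimate \eqref{eq:CloseHopfEstimate} directly, without passing through a pointwise $d_\Omega^s$ bound. Your rescaling observation and the cancellation of the $\eps^{\sigma}$ versus $\eps^{-\sigma}$ normalization are correct and mirror what happens in \autoref{Lem:FreezingEstimateForCloSolHopf}, but the mechanism that turns the resulting energy bounds into the target estimate must be the iteration scheme rather than a boundary Schauder estimate for $w$.
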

To prove \autoref{Prop:ClosenessOfSolutionsForHopf}, we start with a freezing estimate (\autoref{Lem:FreezingEstimateForCloSolHopf}) which is an analogue of \autoref{Lem:FreezingEstimateIntW/ds}.
Note that in \autoref{Lem:FreezingEstimateForCloSolHopf}, we prove a stronger estimate compared to the one in \cite[Lemma 7.3]{Kim2024}, which does not include a factor $R^{-\delta}$ on the right-hand side.
\begin{Lemma}\label{Lem:FreezingEstimateForCloSolHopf}
    Assume that we are in the setting of \autoref{Prop:ClosenessOfSolutionsForHopf}.
    Set $u:=\phi -\phi _0$ and let $0<R\leq \eps / 8$.
    Furthermore, let $v_{R}$ be a solution to 
    \begin{equation}\label{eq:DefinitionVForHopf}
        \left\{ \begin{aligned}
          \partial _t v_{R} -L_0v_{R} & = 0 & &\text{in} \quad Q^\Omega _R , \\
           v_{R} & =u & & \text{in} \quad  \left( I^\ominus _R\times ( \R^d \setminus B^\Omega_R ) \right) \cup \left( \{ -R ^{2s} \} \times \R^d \right) , 
        \end{aligned} \right.
    \end{equation}
    and set $w_{R}:=u-v_{R}$.
    Then,
    \begin{equation*}
         \| w_{R}\| _{L^2_t \dot{H}^s_x (I^\ominus _R \times \R^d)} +  \sup _{t\in I^\ominus _R} \| w_{R}(t,\cdot )\| _{L^2 _x (B_R)} \leq c \eps ^{-s} R^{\frac{d}{2}+s+\sigma }
    \end{equation*}
    for some constant $c=c(d,s,\lambda ,\Lambda ,\alpha, \sigma  ,\Omega )>0$.
\end{Lemma}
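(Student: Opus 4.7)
I would follow the strategy of \autoref{Lem:EstimateOnHsw}, with the source $(\mathcal{L}_t - L_0)\phi$ playing the role of the mismatch term. Since $\phi$ and $\phi_0$ share the same initial and exterior data, $u = \phi - \phi_0$ satisfies $\partial_t u - L_0 u = (\mathcal{L}_t - L_0)\phi$ in $Q_\eps^\Omega$ with zero data at $t = -\eps^{2s}$ and on $\R^d \setminus B_\eps^\Omega$. As $v_R$ is the $L_0$-caloric replacement of $u$ on $Q_R^\Omega$, the remainder $w_R = u - v_R$ solves the same equation on $Q_R^\Omega$ with vanishing initial and exterior data. Testing against $w_R$ (after a standard time mollification, see, e.g., \cite[Appendix B]{Liao2023}) and using the ellipticity of $L_0$ yields
\begin{equation*}
\sup_{t \in I_R^\ominus} \Vert w_R(t,\cdot)\Vert_{L^2(B_R)}^2 + \Vert w_R\Vert_{L^2_t \dot H^s_x(I_R^\ominus \times \R^d)}^2 \leq c \int_{I_R^\ominus} \mathcal{E}^{K_0 - K}(\phi, w_R)\intd t =: cI.
\end{equation*}

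Next I would extract quantitative control on $\phi$. Since $0 \leq \phi \leq 1$ with $f = 0$ and bounded tail, a rescaled application of \autoref{Thm:CsBoundaryRegularityGlobalAAssumption} on $Q_\eps^\Omega$ yields $[\phi]_{C^s_p(\overline{Q_{\eps/2}^\Omega})} \leq c \eps^{-s}$, hence $|\phi(t,x)| \leq c \eps^{-s} d_\Omega^s(x) \leq c \eps^{-s}|x|^s$ on $Q_{\eps/2}^\Omega$. Feeding this bound into \autoref{Prop:CaccioppoliEstimate} at scale $4R \leq \eps/2$ produces the crucial estimates $\Vert\phi\Vert_{L^2_t \dot H^s_x(Q_{2R})}^2 \leq c \eps^{-2s} R^{d+2s}$ and $\sup_t \Tail(\phi;2R,0) \leq c \eps^{-s} R^s$. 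Then I would split $I$ into four pieces exactly as in the proof of \autoref{Lem:EstimateOnHsw}: (i) $x,y \in B_{2R}$; (ii)--(iii) $x \in B_R$, $y \in B_{\eps/2} \setminus B_{2R}$, involving $\phi(x)-(\phi)_{B_R}$ and $(\phi)_{B_R}-\phi(y)$ respectively; and (iv) $x \in B_R$, $y \in \R^d \setminus B_{\eps/2}$. The Hölder condition \eqref{eq:KernelHoelderCont} implies $|K_0(x,y) - K(t,x,y)| \leq c|y|^\sigma |x-y|^{-d-2s}$ whenever $|y|, |t|^{1/(2s)} \leq 1$; using this together with Young's inequality, the \Poincare--Wirtinger inequality, and the \Poincare-type estimate $\Vert w_R(t,\cdot)\Vert_{L^2(B_R)} \leq c R^s [w_R(t,\cdot)]_{H^s(\R^d)}$, the first three terms $I_1, I_2, I_3$ together contribute at most $c(\delta) \eps^{-2s} R^{d+2s+2\sigma} + \delta \Vert w_R\Vert_{L^2_t \dot H^s_x}^2$, mirroring the corresponding estimates in the proof of \autoref{Lem:EstimateOnHsw}.

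The main obstacle will be the far-field term $I_4$, since $\Vert\phi\Vert_\infty \leq 1$ does not decay with $\eps$. I would further split the $y$-integration into $B_1 \setminus B_{\eps/2}$, where \eqref{eq:KernelHoelderCont} still gives $|K_0 - K| \lesssim |y|^{\sigma - d - 2s}$, and $\R^d \setminus B_1$, where only the trivial bound $|K_0 - K| \lesssim |y|^{-d - 2s}$ is available. Exploiting $|\phi(y)| \lesssim \eps^{-s}|y|^s \wedge 1$, a direct computation yields
\begin{equation*}
\int_{\R^d \setminus B_{\eps/2}} \left( |\phi(x,t)| + |\phi(y,t)| \right) |K_0(x,y) - K(t,x,y)| \intd y \leq c \eps^{\sigma - 2s}.
\end{equation*}
Bounding $\iint_{Q_R^\Omega} |w_R| \leq c R^{d/2 + 2s} \sup_t \Vert w_R\Vert_{L^2_x}$ via Cauchy--Schwarz and applying Young's inequality produces $I_4 \leq c(\delta) \eps^{2\sigma - 4s} R^{d + 4s} + \delta \sup_t \Vert w_R\Vert_{L^2_x}^2$; the critical observation that $\eps^{2\sigma - 4s} R^{d + 4s} \leq \eps^{-2s} R^{d + 2s + 2\sigma}$, which holds since $R \leq \eps$ and $\sigma < s$, matches the desired scaling. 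Absorbing the $w_R$-terms on the left and taking square roots concludes the proof.
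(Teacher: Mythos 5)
Your proposal is correct and follows essentially the same strategy as the paper: test the $w_R$-equation with (a mollified version of) $w_R$, reduce to estimating $\int \mathcal{E}^{K_0-K}(\phi,w_R)\,\mathrm{d}t$, and split the integral into near-field, intermediate, and far-field regions. The key inputs — the rescaled $C^s_x$ bound $[\phi]_{C^s_x(Q^\Omega_{\eps/2})} \le c\eps^{-s}$, the Caccioppoli energy bound $\Vert\phi\Vert_{L^2_t\dot H^s_x(Q_{2R})}^2 \lesssim \eps^{-2s}R^{d+2s}$, the pointwise bound $|K_0(x,y)-K(t,x,y)| \lesssim |y|^\sigma|x-y|^{-d-2s}$ inside $Q_1$, and the final observation $\eps^{2\sigma-4s}R^{d+4s} \le \eps^{-2s}R^{d+2s+2\sigma}$ since $R\le\eps$ and $\sigma<s$ — all match. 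The only stylistic differences: the paper bounds $|\phi(t,x)-\phi(t,y)|\lesssim \eps^{-s}|y|^s$ directly on the intermediate annulus using the $C^s$ bound, rather than subtracting $(\phi)_{B_R}$ as you do following \autoref{Lem:EstimateOnHsw}; and the paper keeps the regions $B_1\setminus B_{\eps/2}$ and $\R^d\setminus B_1$ as two separate terms $I_3,I_4$ rather than merging them. One small citation slip: for the $C^s$ estimate you should invoke \autoref{Thm:CsBoundaryRegAndHopfLemma}(i), whose assumption $\mathcal{A}=Q_1$ matches the setting here, rather than \autoref{Thm:CsBoundaryRegularityGlobalAAssumption}, which asks for global regularity of the kernel.
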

\begin{proof}
Set $w:=w_{R}$.
    Note that $w$ solves
    \begin{equation} \label{eq:DefinitionWForHopf}
        \left\{ \begin{aligned}
          \partial _t w -L_0w & = (\mathcal{L}_t -L_0) \phi & &\text{in} \quad Q^\Omega _R , \\
           w & =0 & & \text{in} \quad  \left( I^\ominus _R\times ( \R^d \setminus B^\Omega_R ) \right) \cup \left( \{ -R ^{2s} \} \times \R^d \right) . 
        \end{aligned} \right.
    \end{equation}
    Testing this equation with approximations of $\ind _{[-R^{2s},t]} w$, as in \autoref{Lem:EstimateOnHsw}, yields
    \begin{align*}
        \| w\| ^2 _{L^2_t \dot{H}^s_x (I^\ominus _R \times \R^d)} &+  \sup _{t\in I^\ominus _R} \| w(t,\cdot )\| ^2_{L^2 _x (B_R)}
         \leq  c \int _{I^\ominus_R} |\mathcal{E}^{K_0-K} (w, \phi) |\intd t \\
        & \leq  c \int _{I^\ominus_R}R^\sigma \left( \int _{B^\Omega_R} \int _{B_{2R}} \frac{|\phi (t,x)-\phi (t,y)|^2}{|x-y|^{d+2s}} \intd y \intd x\right)^\frac{1}{2} [w] _{H^s_x(\R^d )} \intd t \\
        & \qquad + c  \int _{I^\ominus _R} \int _{B^\Omega _R} \int _{B_{\eps /2} \setminus B_{2R}} |\phi (t,x)- \phi (t,y)| |w(t,x)| \frac{|x|^\sigma + |y|^\sigma + |t| ^\frac{\sigma}{2s}}{|x-y|^{d+2s}} \intd y \intd x\intd t \\
        & \qquad + c  \int _{I^\ominus _R} \int _{B^\Omega _R} \int _{B_1 \setminus B_{\eps /2}}  |\phi (t,x)- \phi (t,y)| |w(t,x)| \frac{|x|^\sigma + |y|^\sigma + |t| ^\frac{\sigma}{2s}}{|x-y|^{d+2s}} \intd y \intd x\intd t\\
        & \qquad + c \int _{I _R^\ominus} \int _{B^\Omega _R} \int _{\R^d \setminus B_{1}} \frac{|\phi (t,x)-\phi (t,y)| |w(t,x)|}{|x-y|^{d+2s}} \intd y \intd x\intd t \\
        & = cI_1 +cI_2+cI_3 +cI_4,
    \end{align*}
    where we have used \eqref{eq:DifferenceFrozenKernelToKernel}.
    For $I_1$, we use Young's inequality to obtain
    \begin{align*}
        I_1 & \leq c (\eta) R^{2\sigma }  \| \phi \| ^2 _{L^2_t \dot{H}^s_x (I_{2R}^\ominus\times B _{2R})} + c \eta \| w\| ^2 _{L^2_t \dot{H}^s_x (I^\ominus _R \times \R^d)} ,
    \end{align*}
    where we choose $\eta>0$ so small that we can absorb the second summand on the left-hand side.
    For the first summand, we use \autoref{Prop:CaccioppoliEstimate}, as well as $\| \phi \| _{L^\infty (I^\ominus_\eps \times \R^d)} \leq 1$ and $[\phi ]_{C_x^s (Q^\Omega _{\eps /2})} \leq c \eps ^{-s}$, and get
    \begin{align*}
       & \| \phi \| ^2 _{L^2_t \dot{H}^s_x (I_{2R}^\ominus\times B _{2R})}  \leq
        c R^{-2s} \| \phi \| ^2_{L^2(Q^\Omega _{4R})} + c R^d \sup _{t\in I^\ominus_{4R}} \left( \Tail (\phi ,2R,0) \right) ^2 \\
        & \leq c R^{d+2s} \eps ^{-2s} + c R^{d+4s} \sup _{t\in I^\ominus_{4R}} \left(  \int _{B_{\eps /2}\setminus B_{2R}} \frac{| \phi(t,y)|}{|y|^{d+2s}} \intd y \right) ^2 +c R^{d+4s} \sup _{t\in I^\ominus_{4R}} \left(  \int _{ \R^d \setminus B_{\eps /2}} \frac{| \phi (t,y)|}{|y|^{d+2s}} \intd y \right) ^2 \\
        & \leq c R^{d+2s} \eps ^{-2s} + c R^{d+4s}  \left(  \int _{B_{\eps /2}\setminus B_{2R}} \frac{\eps^{-s}}{|y|^{d+s}} \intd y \right) ^2 +c R^{d+4s}  \left(  \int _{ \R^d \setminus B_{\eps /2}} \frac{1}{|y|^{d+2s}} \intd y \right) ^2 \\
        & \leq c R^{d+2s} \eps ^{-2s}.
    \end{align*}
    
    For $I_2$, note that $|x|\leq|y| \leq 2|x-y|$ as well as $|t|^{1/(2s)} \leq |y|$. Together with $|\phi (t,x)| \leq c \eps ^{-s}|x|^{s} \leq c\eps ^{-s}|y|^{s}$ and $|\phi (t,y)| \leq c\eps ^{-s}|y|^{s}$, we obtain
    \begin{align*}
        I_2 & \leq c\int _{I^\ominus _R} \int _{B^\Omega _R} \int _{B_{\eps /2} \setminus B_{2R}}  |w(t,x)| \frac{\eps ^{-s}|y|^s}{|y|^{d+2s-\sigma}} \intd y \intd x\intd t \\
        & \leq c \int _{I^\ominus _R}\eps^{-s}R^{\sigma -s} R^{\frac{d}{2}+s} [w(t,\cdot )]_{H^s(\R^d )} \intd t 
        \leq c(\eta ) \eps ^{-2s} R^{d+2s+2\sigma} + c \eta \| w\| ^2 _{L^2_t \dot{H}^s_x (I^\ominus _R \times \R^d)} ,
    \end{align*}
    where we have also used \eqref{eq:EstimateL1toL2toHs}.
    For $I_3$, we use again $|x|\leq|y| \leq 2|x-y|$ as well as $|t|^{1/(2s)} \leq |y|$, together with $|\phi| \leq 1$, \eqref{eq:EstimateL1toL2toHs}, and $\sigma <s$, to obtain
    \begin{align*}
        I_3 &\leq c \int _{I _R^\ominus}  \left(\int _{B_1 \setminus B_{\eps /2}} \frac{ |y|^\sigma}{|y|^{d+2s}} \intd y \right) \left( \int _{B^\Omega _R} |w(t,x)|\intd x\right) \intd t  
        \leq c \int _{I _R^\ominus}\eps ^{\sigma-2s} R^{\frac{d}{2}+s} [w(t,\cdot )]_{H^s (\R^d)}\intd t \\
        & \leq  c \int _{I _R^\ominus}\eps ^{-s} R^{\frac{d}{2}+\sigma} [w(t,\cdot )]_{H^s (\R^d)}\intd t  \leq c(\eta ) \eps ^{-2s} R^{d+2s+2\sigma} + c \eta \| w\| ^2 _{L^2_t \dot{H}^s_x (I^\ominus _R \times \R^d)} .
    \end{align*}
    For $I_4$, we use $|\phi|\leq 1$ and $\sigma <s$, to get
    \begin{align*}
        I_4 & \leq c \int _{I _R^\ominus}  \left(\int _{\R^d \setminus B_1} \frac{ 1}{|y|^{d+2s}} \intd y \right) \left( \int _{B^\Omega _R} |w(t,x)|\intd x\right) \intd t \leq c \int _{I _R^\ominus} R^{\frac{d}{2}+s} [w(t,\cdot )]_{H^s (\R^d)}\intd t \\
        & \leq c(\eta ) R^{d+4s}  +c \eta  \| w\| ^2 _{L^2_t \dot{H}^s_x (I^\ominus _R \times \R^d)} \leq c(\eta ) \eps ^{-2s} R^{d+2s+2\sigma}   +c \eta  \| w\| ^2 _{L^2_t \dot{H}^s_x (I^\ominus _R \times \R^d)} .
    \end{align*}
    By choosing $\eta>0$ small, we can absorb $\| w\| ^2 _{L^2_t \dot{H}^s_x (I^\ominus _R \times \R^d)}$ on the left-hand side.
\end{proof}
\begin{Lemma}\label{Lem:ClosSolHopfBoundOnU}
    Assume that we are in the setting of \autoref{Prop:ClosenessOfSolutionsForHopf}.
    Set $u:=\phi -\phi _0$.
    Then,
    \begin{equation*}
         \| u\| _{L^2_t \dot{H}^s_x (I^\ominus _{\eps } \times \R^d)} +  \sup _{t\in I^\ominus _{\eps }} \| u(t,\cdot )\| _{L^2 _x (B_{\eps })} \leq c  \eps^{\frac{d}{2}+\sigma }
    \end{equation*}
    for some constant $c=c(d,s,\lambda ,\Lambda ,\alpha, \sigma  ,\Omega )>0$.
\end{Lemma}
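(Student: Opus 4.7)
The plan is to carry out the same freezing energy argument as in the proof of Lemma \ref{Lem:FreezingEstimateForCloSolHopf}, but now directly at the full scale $R=\eps$. Setting $u := \phi - \phi_0$, I observe that $u$ satisfies $\partial_t u - L_0 u = (\mathcal{L}_t - L_0)\phi$ in $Q^\Omega_\eps$ with zero parabolic boundary data. Testing against the usual time-mollified approximation of $u\,\ind_{[-\eps^{2s},t]}$ yields the energy inequality
\begin{equation*}
\sup_{t \in I^\ominus_\eps}\|u(t,\cdot)\|^2_{L^2(B_\eps)} + \|u\|^2_{L^2_t\dot{H}^s_x(I^\ominus_\eps\times\R^d)} \le c\int_{I^\ominus_\eps}|\mathcal{E}^{K_0-K}(u,\phi)|\intd t,
\end{equation*}
so the task reduces to bounding the right-hand side by $c\eps^{d+2\sigma}$.

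Mirroring the four-region decomposition in the proof of Lemma \ref{Lem:FreezingEstimateForCloSolHopf}, I split the bilinear form according to the location of $(x,y)$: (i) $x, y \in B^\Omega_\eps$; (ii) $x \in B^\Omega_\eps$ and $y \in B_{2\eps}\setminus B^\Omega_\eps$, on which $\phi(y) \equiv 0$ by the prescribed boundary/initial condition on $\phi$; (iii) $x \in B^\Omega_\eps$ and $y \in B_1 \setminus B_{2\eps}$; (iv) $x \in B^\Omega_\eps$ and $y \in \R^d \setminus B_1$. The far pieces (iii) and (iv) are estimated exactly as the corresponding $I_3, I_4$ in Lemma \ref{Lem:FreezingEstimateForCloSolHopf}, using $|\phi| \le 1$, $|x-y| \gtrsim |y|$, and the kernel bound $|K_0-K| \le c \min(\eps^\sigma,1)|x-y|^{-d-2s}$. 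For region (ii), I exploit $\phi(y)=0$ together with the up-to-the-boundary H\"older bound $|\phi(x)| \le c\eps^{-s}d_\Omega^s(x)$ for $x \in B^\Omega_{\eps/2}$ coming from Theorem \ref{Thm:CsBoundaryRegAndHopfLemma}(i) applied at scale $\eps$, the singular-integral estimate $\int_{B_{2\eps}\setminus B^\Omega_\eps}|x-y|^{-d-2s}\intd y \lesssim d_{B^\Omega_\eps}(x)^{-2s}$, and Hardy's inequality; this produces a contribution of the form $c(\eta)\eps^{d+2\sigma}+\eta\|u\|^2_{L^2_t\dot{H}^s_x}$.

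The principal obstacle is the near-diagonal term (i). Cauchy-Schwarz and Young's inequality, combined with $|K_0-K|\le c\eps^\sigma|x-y|^{-d-2s}$ on $B^\Omega_\eps \times B^\Omega_\eps$, bound it by
\begin{equation*}
c(\eta)\eps^{2\sigma}\|\phi\|^2_{L^2_t\dot{H}^s_x(I^\ominus_\eps \times B^\Omega_\eps)} + \eta\|u\|^2_{L^2_t\dot{H}^s_x},
\end{equation*}
so one needs $\|\phi\|^2_{L^2_t\dot{H}^s_x(I^\ominus_\eps \times B^\Omega_\eps)} \le c\eps^d$. A direct application of Proposition \ref{Prop:CaccioppoliEstimate} at radius $\eps$ only provides this bound on the shrunken cylinder $Q^\Omega_{\eps/2}$, which is where the main difficulty lies. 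I circumvent it by testing the weak formulation of $\partial_t\phi-\mathcal{L}_t\phi=0$ with $\xi^2\phi$, where $\xi \in C^\infty_c(B_{2\eps})$ satisfies $\xi \equiv 1$ on $B_\eps$ and $\|\nabla\xi\|_\infty \le c\eps^{-1}$. This test function is admissible because $\phi$ vanishes identically on $B_{2\eps}\setminus B^\Omega_\eps$, so $\xi^2\phi$ belongs to $L^2(I^\ominus_\eps; H^s_{B^\Omega_\eps}(\R^d))$. A standard Caccioppoli-type computation, together with $|\phi| \le 1$ and the tail bound $\sup_{t\in I^\ominus_\eps}\Tail(\phi;\eps,0) \le c$, produces $\|\xi\phi\|^2_{L^2_t\dot{H}^s(\R^d)} \le c\eps^d$, which by restriction yields the required estimate. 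Absorbing $\eta\|u\|^2_{L^2_t\dot{H}^s_x}$ on the left-hand side then concludes the proof.
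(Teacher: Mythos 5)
Your overall strategy agrees with the paper's: test with (approximations of) $u\ind_{[-\eps^{2s},t]}$ to get the energy estimate, split the error bilinear form $\mathcal{E}^{K_0-K}(u,\phi)$ into near and far pieces, and handle the near piece by a Caccioppoli estimate on the full time interval that exploits the vanishing initial data $\phi(-\eps^{2s},\cdot)=0$. The observation that $\xi^2\phi$ with $\xi\in C^\infty_c(B_{2\eps})$, $\xi\equiv 1$ on $B_\eps$, is an admissible test function because $\phi\equiv 0$ on $B_{2\eps}\setminus B^\Omega_\eps$ is exactly the point the paper is making in its short remark, and your derivation of $\Vert\xi\phi\Vert^2_{L^2_t\dot{H}^s_x}\lesssim\eps^d$ is correct.

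However, your separate treatment of region (ii) has a genuine gap. You split the near piece into (i) $x,y\in B^\Omega_\eps$ and (ii) $x\in B^\Omega_\eps$, $y\in B_{2\eps}\setminus B^\Omega_\eps$, and for (ii) you integrate out the singular kernel first to get a factor $d_{B^\Omega_\eps}(x)^{-2s}$ and then try to control it with the pointwise bound $|\phi(x)|\le c\eps^{-s}d_\Omega^s(x)$. Two problems. First, this bound is only claimed (and only available from Theorem \ref{Thm:CsBoundaryRegAndHopfLemma}(i) after rescaling) on $B^\Omega_{\eps/2}$, not on all of $B^\Omega_\eps$. Second, and more fundamentally, the bound controls the decay of $\phi$ near $\partial\Omega$ but says nothing near $\partial B_\eps\cap\Omega$, whereas $d_{B^\Omega_\eps}(x)^{-2s}=(\eps-|x|)^{-2s}$ precisely on that part of the boundary. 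There $|\phi|$ need not be small in a pointwise sense, so the resulting integral $\int_{B^\Omega_\eps}|u(x)||\phi(x)|d_{B^\Omega_\eps}(x)^{-2s}\d x$, even after Cauchy--Schwarz and Hardy on $u$, leaves a factor $\bigl(\int_{B^\Omega_\eps}d_{B^\Omega_\eps}^{-2s}\d x\bigr)^{1/2}$ that diverges whenever $2s\ge 1$ (since near $\partial B_\eps$ this is $\eps^{d-1}\int_0^\eps\rho^{-2s}\d\rho$). So your argument as written only closes for $s<1/2$.

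The paper avoids this by never separating (ii) from (i): it keeps the near piece as the double integral over $B^\Omega_\eps\times B_{2\eps}$ and applies Cauchy--Schwarz with the kernel split $|K_0-K|^{1/2}\cdot|K_0-K|^{1/2}$, producing a factor $[\phi]_{H^s(B_{2\eps})}$ rather than a pointwise bound on $\phi$. Since $\xi\phi=\phi\ind_{B_{2\eps}}$ (because $\phi$ vanishes on $B_{2\eps}\setminus B^\Omega_\eps$ and $\xi\equiv 1$ on $B^\Omega_\eps$), one has $[\phi]_{H^s(B_{2\eps})}\le[\xi\phi]_{H^s(\R^d)}$, so your Caccioppoli estimate already supplies what is needed; the jump of $\phi$ across $\partial B_\eps\cap\Omega$ is automatically encoded in this seminorm and causes no problem. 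Alternatively, you could keep your split but replace the pointwise bound on $\phi$ in region (ii) by Hardy's inequality for $\xi\phi$ (legitimate, since $\xi\phi$ vanishes outside $B^\Omega_\eps$), yielding $\Vert\phi/d_{B^\Omega_\eps}^s\Vert_{L^2(B^\Omega_\eps)}\lesssim[\xi\phi]_{H^s(\R^d)}$. Either fix works; your current pointwise route does not, and incidentally the paper explicitly instructs you to use only $|\phi|\le 1$ in place of the $C^s$ bound when $R$ is comparable to $\eps$, precisely because the $C^s$ estimate only holds on the shrunken cylinder and does not address the $\partial B_\eps$ boundary.
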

\begin{proof}
    Note that $u$ solves
    \begin{equation*}
        \left\{ \begin{aligned}
          \partial _t u -L_0u & = (\mathcal{L}_t -L_0) \phi & &\text{in} \quad Q^\Omega _\eps , \\
           u & =0 & & \text{in} \quad \left( I^\ominus _\eps \times ( \R^d \setminus B^\Omega_\eps ) \right) \cup \left( \{ -\eps ^{2s} \} \times \R^d \right) . 
        \end{aligned} \right.
    \end{equation*}
    Hence, this lemma follows by the same arguments as in the proof of \autoref{Lem:FreezingEstimateForCloSolHopf}.
    Note that the restriction $R\leq \eps /8$ in \autoref{Lem:FreezingEstimateForCloSolHopf} is not necessary.
    For $R\in (\eps /8 ,\eps ]$, \autoref{Lem:FreezingEstimateForCloSolHopf} remains true, by following exactly the same proof as above, only using $|\phi| \leq 1$ instead of the bound $[\phi ]_{C_x^s (Q^\Omega _{\eps /2})} \leq c \eps ^{-s}$.
    Furthermore, for the first term in $I_1$, we explicitly use the initial data $\phi =0$ in $\{ - \eps ^{2s}\} \times B_\eps $ to get a Caccioppoli estimate which holds on the full time interval $I^\ominus _{\eps}$.
\end{proof}
\begin{proof}[Proof of \autoref{Prop:ClosenessOfSolutionsForHopf}]
    Set $u:=\phi -\phi _0$ and $\theta =128$.
    Let $ 0< \rho \leq \theta ^{-1}R \leq \theta ^{-1} \eps$.
    Then, let $v:=v_{R/8}$ and $w:=w_{R/8}$ be solutions to \eqref{eq:DefinitionVForHopf} and \eqref{eq:DefinitionWForHopf}.
    Following the proof of \eqref{eq:estimate-iteration-for-v-w-seperated}, using \autoref{Lem:FreezingEstimateForCloSolHopf} (together with Hardy's inequality, see \cite[Lemma 2.4]{Kim2024}) instead of \autoref{Lem:FreezingEstimateIntW/ds}, we obtain for every $\delta \in (0,s)$
    \begin{equation*}
    \begin{aligned}
        \rho ^{2s}\left\| \frac{v}{d_\Omega ^s} \right\| _{L^\infty _t L^1_x (Q^\Omega_\rho )} + \left\| \frac{w}{d_\Omega ^s} \right\| _{L^1 (Q^\Omega_{R/8})} &+  R^{s+\frac{d}{2}} \| w \| _{L^\infty_t L^2_x (Q_{R/8})} \\
        &  \leq c \left(\frac{\rho}{R} \right) ^{d+2s-\frac{\delta}{2}} \Phi _\sigma (u;R,0) + c \eps ^{-s} R^{d+2s+\sigma}  .
    \end{aligned}
\end{equation*}
     Furthermore, following the proof of \eqref{eq:TailEstimateForIteration}, we get
    \begin{align*}
          (\theta^{-m}R) ^{d+s} \sup_{t\in I^\ominus _{\theta^{-m}R} } \Tail _{\sigma ,B_1} (u;\theta^{-m-1}R  ,0)  \leq c  \left( \theta^{-m} \right) ^{d+2s-\frac{\delta}{2}} \Phi _\sigma (u;R) + c \eps ^{-s}R^{d+2s+\sigma}  
    \end{align*}
    for all $m\in \N$.
    Hence, we have shown that
    \begin{align*}
        \Phi _\sigma (u;\theta ^{-m}R,0) \leq c  \left( \theta^{-m} \right) ^{d+2s-\frac{\delta}{2}} \Phi _\sigma (u;R,0) + c \eps ^{\sigma +\delta -s}R^{d+2s-\delta}.
    \end{align*}
    Applying \autoref{Lem:IterationLemmaOnDyadicScale}, we obtain
    \begin{align*}
        \Phi _\sigma (u;\theta ^{-m}R,0) \leq c  \left( \theta^{-m} \right) ^{d+2s-\delta} \Phi _\sigma (u;R,0) + c \eps ^{\sigma +\delta -s} (\theta ^{-m}R)^{d+2s- \delta } .
    \end{align*}
    In particular, by setting $R=\eps $, and using \autoref{Lem:ClosSolHopfBoundOnU} (together with Hardy's inequality) to bound $\Phi _\sigma (u;\eps ,0)$, we get for all $m\in \N$
    \begin{equation}\label{eq:CloseHopfAverageUOverDs}
        \dashint _{Q^\Omega _{\theta ^{-m} \eps }} \left| \frac{u}{d_\Omega ^s} \right| \intd z \leq  c\theta ^{m\delta }\eps ^{-d-2s}   \Phi _\sigma (u;\eps ,0) +   c\theta ^{m \delta }\eps ^{\sigma -s  }  \leq c \eps ^{\sigma -s  }\theta ^{m \delta }
    \end{equation}

    Next, we follow the proof of \autoref{Lem:CampanatoEstimate} to show a Campanato-type estimate.
    Let $\psi =\psi_0$ be the barrier from \eqref{eq:EllipticBarrierUBar} for the frozen operator $L=L_0$.
    Then, following the proof of \autoref{Lem:CampanatoEstimate} using again \autoref{Lem:FreezingEstimateForCloSolHopf} instead of \autoref{Lem:FreezingEstimateIntW/ds}, we obtain
    \begin{align*}
        \Psi (u;\theta^{-m} R,0) &\le c (\theta^{-m})^{d + 2s + \gamma} \Psi (u; R,0) + c\eps ^{-s} R^{d+2s+\sigma } 
    \end{align*}
    for every $\gamma \in (0, s)$.
    Choosing $\gamma \leq 2 \sigma$ (which implies $\eps ^{-s} R^{d+2s+\sigma } \leq \eps ^{\sigma -s -\frac{\gamma}{2}}R^{d+2s+\frac{\gamma}{2}}$) and applying \autoref{Lem:IterationLemmaOnDyadicScale}, we get
    \begin{align*}
        \Psi (u;\theta^{-m} R,0) \leq c (\theta^{-m})^{d + 2s + \frac{\gamma}{2} } \Psi (u; R,0) +c \eps ^{\sigma -s -\frac{\gamma}{2}} (\theta ^{-m}R)^{d+2s+\frac{\gamma}{2}}.
    \end{align*}
    In particular, setting $R=\eps$, we obtain
    \begin{equation}\label{eq:CloseHopfCampanatoEstDyadicScale}
        \dashint _{Q^\Omega _{\theta ^{-m} \eps }} \left| \frac{u}{\psi} - \left( \frac{u}{\psi} \right) _{Q^\Omega _{\theta ^{-m} \eps }} \right| \intd z \leq c \eps ^{\sigma -s- \frac{\gamma}{2}} (\theta ^{-m}\eps )^{\frac{\gamma}{2}} .
    \end{equation}
    By the same arguments as in the proof of \cite[(5.8), (5.9)]{Giaquinta2012}, we get that the limit $q:= \lim _{m\to \infty} (u/\psi)_{Q^\Omega _{\theta ^{-m} \eps }}$ exists and satisfies
    \begin{equation} \label{eq:CloseHopfBoundQMinusAverage}
        \left| q - \left( \frac{u}{\psi} \right) _{Q^\Omega _{\theta ^{-m} \eps }} \right| \leq c \eps ^{\sigma -s- \frac{\gamma}{2}} (\theta ^{-m}\eps )^{\frac{\gamma}{2}}
    \end{equation}
    for all $m\in \N$.
    Together with \eqref{eq:CloseHopfAverageUOverDs}, this implies
    \begin{equation}\label{eq:CloseHopfBoundOnQ}
        |q| \leq \left| q - \left( \frac{u}{\psi} \right) _{Q^\Omega _{\eps }} \right| + \dashint _{Q^\Omega _{ \eps }} \left| \frac{u}{d_\Omega ^s} \right| \intd z \leq c \eps ^{\sigma -s} .
    \end{equation}
    Combining \eqref{eq:CloseHopfCampanatoEstDyadicScale}, \eqref{eq:CloseHopfBoundQMinusAverage}, and \eqref{eq:CloseHopfBoundOnQ} yields
    \begin{equation*}
        \dashint _{Q^\Omega _{\theta ^{-m} \eps }} \left| \frac{u}{\psi} \right| \intd z \leq |q| + \dashint _{Q^\Omega _{\theta ^{-m} \eps }} \left| \frac{u}{\psi} - \left( \frac{u}{\psi} \right) _{Q^\Omega _{\theta ^{-m} \eps }} \right| \intd z + \left| q- \left( \frac{u}{\psi} \right) _{Q^\Omega _{\theta ^{-m} \eps }} \right| \leq c \eps ^{\sigma -s},
    \end{equation*}
    which, by \autoref{Lem:psi-properties}(ii), implies 
    \begin{equation*}
        \dashint  _{Q^\Omega _{\theta ^{-m} \eps }} |u| \intd z\leq c(\theta ^{-m}\eps )^{s}  \dashint _{Q^\Omega _{\theta ^{-m} \eps }} \left| \frac{u}{\psi} \right| \intd z \leq c \eps ^{\sigma -s} (\theta ^{-m}\eps )^{s} .
    \end{equation*}
    Now, \eqref{eq:CloseHopfEstimate} follows by choosing $m\in \N$ such that $ \theta ^{-m-1}\eps \leq \rho \leq \theta^{-m}\eps $.
\end{proof}

\subsection{Proof of Hopf lemma} \label{sec:SubsectionProofHopf}
\begin{proof}[Proof of \autoref{Thm:CsBoundaryRegAndHopfLemma}(ii)]
    By the strong maximum principle, which follows from the interior Harnack inequality \cite[Theorem 1.1]{Kassmann2024}, we may assume that $u>0$ in $Q_1^\Omega$.
    We will show that there exists some $\eps =\eps (d,s,\lambda ,\Lambda , \alpha ,\Omega)>0$ such that for all $(t_0,x_0)\in Q_{1/2}^\Omega$ with $d_\Omega (x_0)\leq \eps /4$, 
    \begin{equation} \label{eq:HopfEstimateCloseToBoundary}
        u(t_0,x_0) \geq c_1 d_\Omega ^s (x_0)
    \end{equation}
    for some constant $c_1 =c_1 (d,s,\lambda ,\Lambda, \alpha ,p,q,r, \sigma ,\Omega ,u, f)>0$.

    To prove \eqref{eq:HopfEstimateCloseToBoundary}, let $\eps \in (0,1)$ be a constant which we will determine later, and fix some $z_0=(t_0,x_0) \in Q_{1/2}^\Omega$ with $d_\Omega(x_0) \leq \eps /4$.
    Furthermore, choose $\tilde{x}\in \partial \Omega $ such that $d_\Omega (x_0) = |x_0 - \tilde{x}|$.
    After a rotation and translation, we may assume that $t_0=0$, $\tilde{x}=0$, and $x_0= d_\Omega (x_0 ) e_d$ where $e_d = (0, \dots ,0,1) \in \R^d$, so that the outer unit normal vector at $\tilde{x}=0 \in \partial \Omega$ is $-e_d$.
    Furthermore, by choosing $\eps >0$ small, we may assume that $B^\eps := B_\eps (4\eps e_d) \subset \Omega$ with $\mathrm{dist}(B^\eps, \Omega ^c)>\eps$.

    Let $\phi _\eps$ be the barrier defined by
    \begin{equation*}
    \left\{ \begin{aligned}
          \partial _t \phi_\eps -\mathcal{L}_t\phi _\eps & = 0 & &\text{in} \quad Q^\Omega _\eps , \\
          \phi _\eps & =\ind _{B^\eps} & & \text{in} \quad  I^\ominus _\eps \times ( \R^d \setminus B^\Omega _\eps ) , \\
           \phi _\eps & =\ind _{B^\eps} & & \text{in} \quad  \{ -\eps ^{2s}\} \times \R^d .
        \end{aligned} \right.
    \end{equation*}
    Set $c_\eps = \inf _{(t,x)\in I^\ominus _\eps \times \{ d_\Omega \geq \eps \}} u>0$.
    Using the parabolic maximum principle, we conclude that
    \begin{equation}\label{eq:BoundUGeqPhiEps}
        u \geq c_\eps \phi _\eps \quad \text{in} \quad I^\ominus_\eps \times \R^d .
    \end{equation}
    Let $L_0$ be the frozen operator at $0$ (see \eqref{eq:FrozenKernel}) and let $\bar{\phi}_\eps$ be the solution to
    \begin{equation*}
    \left\{ \begin{aligned}
          \partial _t \bar{\phi}_\eps -L_0\bar{\phi} _\eps & = 0 & &\text{in} \quad Q^\Omega _\eps , \\
          \bar{\phi} _\eps & =\ind _{B^\eps} & & \text{in} \quad  I^\ominus _\eps \times ( \R^d \setminus B^\Omega _\eps ) , \\
           \bar{\phi} _\eps & =\ind _{B^\eps} & & \text{in} \quad  \{ -\eps ^{2s}\} \times \R^d .
        \end{aligned} \right.
    \end{equation*}

    We claim that 
    \begin{equation} \label{eq:BoundPhiBarEpsGeqDs}
        \bar{\phi}_\eps \geq c_2 \eps ^{-s}d_\Omega^s \quad \text{in} \quad I_{\eps /2}^\ominus \times B^\Omega _{\eps /2} \cap \{ x=x_d e_d \} 
    \end{equation}
    for some constant $c_2=c_2(d,s,\lambda, \Lambda ,\alpha ,\Omega )>0$.

  We intend to deduce \eqref{eq:BoundPhiBarEpsGeqDs} from the Hopf lemma for elliptic equations together with a barrier argument. To do so, let $\bar{\psi}_\eps$ be the solution to the elliptic problem
    \begin{equation*}
    \left\{ \begin{aligned}
          -L_0\bar{\psi} _\eps & = 0 & &\text{in} \quad B^\Omega _\eps , \\
           \bar{\psi} _\eps & =\ind _{B^\eps} & & \text{in} \quad  \R^d \setminus B^\Omega _\eps .
        \end{aligned} \right.
    \end{equation*}
    By \cite[Lemma 7.5]{Kim2024}, $\bar{\psi}_\eps$ satisfies
    \begin{equation}\label{eq:KWLemma7.5}
        \bar{\psi}_\eps (me_d) \geq c_3 \eps ^{-s}m^s \quad \text{for any}\quad m\in (0, \eps /2 ]
    \end{equation}
    for some constant $c_3=c_3(d,s,\lambda, \Lambda ,\alpha ,\Omega )>0$. \\
    Note that $-L_0 \ind _{B^\eps} (x) \asymp -\eps ^{-2s}$ for all $x\in B^\Omega _\eps$.
    Hence, we can choose a small constant $c_4=c_4(d,s,\lambda )\in (0,1/2)$ such that
    \begin{equation*}
        \inf _{x\in B^\Omega _\eps} (-L_0 \ind _{B^\eps} ) (x) \leq -2c_4 \eps ^{-2s} <0.
    \end{equation*}
    Now, let us define 
    \begin{align*}
        p_\eps (t,x)=c_4\eps ^{-2s}(t+\eps ^{2s})\bar{\psi}_\eps (x) + 1/2\ind _{B^\eps}(x).
    \end{align*}
    Then, $p_\eps$ satisfies
    \begin{equation*}
    \left\{ \begin{aligned}
          \partial_t p_\eps - L_0 p_\eps  = c_4 \eps ^{-2s} \bar{\psi}_\eps (x)+ 1/2 (-L_0 \ind _{B^\eps})& \leq  0 & &\text{in} \quad Q^\Omega _\eps , \\
           p _\eps = c_4\eps ^{-2s}(t+\eps ^{2s})\ind _{B^\eps} + 1/2 \ind _{B^\eps} & \leq\ind _{B^\eps} & & \text{in} \quad   I^\ominus _\eps \times  (\R^d \setminus B^\Omega _\eps )  , \\
           p _\eps = 1/2 \ind _{B^\eps}& \leq\ind _{B^\eps} & & \text{in} \quad   \{ -\eps ^{2s} \} \times  \R^d   ,
        \end{aligned} \right.
    \end{equation*}
    where we have used $\bar{\psi}_\eps\leq 1$ (by the maximum principle) and $c_4\leq 1/2$.

    Using the maximum principle (on $\bar{\phi}_\eps -p_\eps$) and \eqref{eq:KWLemma7.5}, we conclude that $\bar{\phi}_{\eps} \ge p_{\eps}$, which yields \eqref{eq:BoundPhiBarEpsGeqDs}, as claimed.

    We want to finish the proof of \eqref{eq:HopfEstimateCloseToBoundary}, by using the bounds \eqref{eq:BoundUGeqPhiEps} and \eqref{eq:BoundPhiBarEpsGeqDs}, together with the closeness of $\phi_\eps$ and $\bar{\phi}_\eps$ from \autoref{Prop:ClosenessOfSolutionsForHopf}.
    For every $\rho \in (0,\eps /4]$ let us define
    \begin{equation*}
        I^\oplus = I^\ominus _{\rho /4^\frac{1}{2s}} \quad \text{and} \quad I^\ominus = I^\ominus _{\rho /4 ^{\frac{1}{2s}}}(3\rho ^{2s}/4) .
    \end{equation*}
    Using that $\partial \Omega \in C^{1,\alpha}$, we get that for every $\rho \in (0,\eps /4]$ there exists some $y\in B^\Omega _\rho$ and $\kappa >0$ with $\kappa \asymp \rho$ such that $B_{2\kappa} (y) \subset B_\rho ^\Omega$ and $I^\ominus _{\kappa} (-c\kappa ^{2s})  \subset I^\ominus $ for some $c>0$.
    Using \eqref{eq:BoundPhiBarEpsGeqDs} and the interior Harnack inequality (\cite[Theorem 1.1]{Kassmann2024}), we obtain $\bar{\phi}_\eps \geq c_5 \eps ^{-s}\rho ^s$ in $I^\ominus _{\kappa} (-c\kappa ^{2s}) \times B_\kappa (y)$.
    Hence,
    \begin{equation}\label{eq:AverageBoundPhiBarEps}
        \dashint _{I^\ominus \times B^\Omega _\rho } \bar{\phi} _\eps \intd z \geq  \frac{| I^\ominus _{\kappa} (-c\kappa ^{2s}) \times B_\kappa (y)|}{| I^\ominus \times B^\Omega _\rho  |} \dashint _{I^\ominus _{\kappa} (-c\kappa ^{2s}) \times B_\kappa (y)} \bar{\phi} _\eps \intd z \geq c_6 \eps ^{-s}\rho ^s .
    \end{equation}
    By \autoref{Prop:ClosenessOfSolutionsForHopf}, we have for every $\rho \in (0,\eps /4]$
    \begin{equation}\label{eq:ClosenessBarriersInHopfProof}
        \dashint _{I^\ominus \times B^\Omega _\rho }| \phi _\eps -\bar{\phi}_\eps | \intd z \leq c_7 \rho ^s \eps ^{-s+\sigma } .
    \end{equation}
    Now, we choose $\eps>0$ small enough such that $c_7 \eps ^{\sigma } \leq c_6 /2$. Then, by \eqref{eq:AverageBoundPhiBarEps} and \eqref{eq:ClosenessBarriersInHopfProof}, we get
    \begin{equation*}
        \dashint _{I^\ominus \times B^\Omega _\rho }\phi _\eps \intd z \geq \dashint _{I^\ominus \times B^\Omega _\rho }\bar{\phi} _\eps \intd z - \dashint _{I^\ominus \times B^\Omega _\rho }|\phi _\eps -\bar{\phi}_\eps | \intd z \geq c_6 \eps ^{-s}\rho ^s - c_7 \rho ^{s} \eps ^{-s+\sigma } \geq \frac{c_6}{2} \eps ^{-s} \rho ^s .
    \end{equation*}
    Together with \eqref{eq:BoundUGeqPhiEps}, we obtain
    \begin{equation*}
        c_\eps \frac{c_6}{2} \eps ^{-s} \rho ^{s}\leq \dashint _{I^\ominus \times B^\Omega _\rho} u \intd z \quad \text{for all}\quad \rho \in (0, \eps /4] .
    \end{equation*}
    Now, fix some small $\eps >0$ that satisfies the above restrictions.
    Assume, without loss of generality, that $ \| u\|_{L^2 (Q_1)} +\|\Tail (u ;1/2,0) \|_{ L^{p}_t (I^\ominus _1)} + \| f\| _{L^q_tL^r_x (Q_1^\Omega)} \leq 1$.
    Then, by \autoref{Thm:CsBoundaryRegAndHopfLemma}(i), we have $[u]_{C^s_x (Q^\Omega _{1/2})} \leq c$.
    Therefore, for every $\eta \in (0,1)$, we get
    \begin{equation*}
        | I^\ominus \times B^\Omega _\rho |^{-1} \int _{I^\ominus \times (B^\Omega _\rho \cap \{ d_\Omega \leq \eta \rho \})} u \intd z \leq  (\eta \rho )^s [u]_{C^s_x (Q^\Omega _{1/2})} \leq c_8 \eta ^s \rho ^s .
    \end{equation*}
    Next, we choose $\eta \in (0,1)$ such that $c_8 \eta ^s \leq c_\eps c_6 \eps ^{-s}/4$. Note that $\eta$ depends on $\eps$ but not on $\rho$.
    Then, we get for any $\rho \in (0, \eps /4]$
    \begin{align*}
        \frac{c_\eps c_6 \eps ^{-s}}{4} \rho ^s &\leq \dashint _{I^\ominus \times B^\Omega _\rho} u \intd z - \frac{c_\eps c_6 \eps ^{-s}}{4} \rho ^s \\
        &\leq | I^\ominus \times B^\Omega _\rho |^{-1} \int _{I^\ominus \times (B^\Omega _\rho \cap \{ d_\Omega > \eta \rho \})} u \intd z \leq c_9 \inf _{I^\oplus \times (B_\rho ^\Omega \cap \{ d_\Omega >\eta \rho \} ) } u,
    \end{align*}
    where in the last step we have used the weak Harnack inequality for the supersolution $u$ (see, e.g., \cite[Theorem 1.1]{KassmannWeidner2022.1}) on balls with radii comparable to $\eta \rho $.
    Note that the constant $c_9$ also depends on $d,s,\lambda, \Lambda  , \eta$, and therefore also on $\eps$, but not on $\rho$.

    Now, by choosing $\rho =d_\Omega (x_0)$, we deduce \eqref{eq:HopfEstimateCloseToBoundary}, which concludes the proof.
\end{proof}
\begin{Remark}\label{Rem:HopfDependenceOnUAndF}
    Following the above proof, we can make the dependence of the constant $c$ on $u$ and $f$ in the Hopf Lemma $u\geq c d_\Omega ^s$ (see \autoref{Thm:CsBoundaryRegAndHopfLemma}(ii)) more explicit.
    After dividing the equation by a constant, we may assume that 
    \begin{equation*}
         \| u\|_{L^2 (Q_1)} +\|\Tail (u ;1/2,0) \|_{ L^{p}_t (I^\ominus _1)} + \| f\| _{L^q_tL^r_x (Q_1^\Omega)} \leq 1 .
    \end{equation*}
    Then, there exists some $\eps =\eps (d,s,\lambda ,\Lambda ,\alpha , \Omega)>0$ such that the constant $c$ in the Hopf Lemma depends on $u$ and $f$ only through a lower bound $c_0$ of $u$ in the interior
    \begin{equation*}
        0<c_0\leq \inf _{Q^\Omega _{1/2+\eps}\cap \{ d_\Omega \geq \eps \}}u .
    \end{equation*}
    Furthermore, note that the constant $c$ in the Hopf Lemma and $\eps$ depend on the domain $\Omega$ only through an upper bound of the $C^{1,\alpha}$ norm of the boundary $\partial \Omega$.
\end{Remark}

\section{Boundary Harnack result}\label{sec:BoundaryHarnack}
\begin{proof}[Proof of \autoref{Thm:BoundaryHarnack}]
    First, note that
    \begin{equation*}
        \left\| u_1 /u_2 \right\| _{L^\infty (Q _{1/2}^\Omega)} \leq  \| u_1 /d_\Omega^s\| _{L^\infty (Q _{1/2}^\Omega)} \left\| d_\Omega ^s / u_2 \right\| _{L^\infty (Q _{1/2}^\Omega)}.
    \end{equation*}
     By the $C^s$ boundary regularity (see \autoref{Thm:CsBoundaryRegAndHopfLemma}(i)), we have $\| u_1 /d_\Omega ^s \| _{L^\infty (Q _{1/2}^\Omega)} \leq c$.
     Furthermore, by the Hopf lemma (see \autoref{Thm:CsBoundaryRegAndHopfLemma}(ii) together with \autoref{Rem:HopfDependenceOnUAndF}), we have
    $\left\| d_\Omega^s /u_2 \right\| _{L^\infty (Q _{1/2}^\Omega)} \leq c$ for some constant $c$ which also depends on a lower bound of $u_2$ in the interior which in turn can be bounded in terms of $0<c_0 \leq u_2 (-(3/4)^{2s},x_0)$ by a Harnack chain argument.
    Hence, we get that $\left\| u_1 /u_2 \right\| _{L^\infty (Q _{1/2}^\Omega)} \leq c$.

    Now, set $h_i(t,x):=\frac{u_i(t,x)}{\psi _{(t,x)}(x)}$ where $\psi _{(t,x)}$ is the function $\psi$ from \eqref{eq:EllipticBarrierUBar} with respect to the frozen operator $L_{(t,x)}$. Then, for every $z_1 ,z_2 \in Q^\Omega _{1/2}$, we get
    \begin{align*}
        \left| \frac{h_1(z_1)}{h_2(z_1)} - \frac{h_1(z_2)}{h_2(z_2)} \right| & = \frac{1}{|h_2(z_1)h_2(z_2)|} \left| h_1(z_1)h_2(z_2) -h_1(z_2)h_2(z_1) \right| \\
        & \leq \| h_2^{-2}\| _{L^\infty (Q^\Omega _{1/2})}\left( |h_2(z_2)| |h_1(z_1) -h_1(z_2)| + |h_1(z_2)| |h_2(z_2)-h_2(z_1)| \right) .
    \end{align*}
    Hence, we obtain
    \begin{align*}
        [h_1/h_2] _{C^{\beta}_{p} (\overline{Q^\Omega _{1/2}})} 
       \leq \| h_2^{-2}\| _{L^\infty (Q^\Omega _{1/2})}\left(  \| h_2\| _{L^\infty (Q^\Omega _{1/2})}  [h_1] _{C^{\beta}_{p} (\overline{Q^\Omega _{1/2}})} + \| h_1\| _{L^\infty (Q^\Omega _{1/2})}  [h_2] _{C^{\beta}_{p} (\overline{Q^\Omega _{1/2}})} \right) .
    \end{align*}
    By the $C^s_x$-boundary regularity together with $\psi \asymp d_\Omega ^s$ (see \autoref{Lem:psi-properties}(ii)) we get $ \| h_i\| _{L^\infty (Q^\Omega _{1/2})} \leq c$.
    Moreover, we have $\| h_2 ^{-2}\| _{L^\infty (Q^\Omega _{1/2})} \leq c$ by the Hopf lemma and \autoref{Lem:psi-properties}(ii).
    
    It remains to show that $ [h_i] _{C^{\beta}_{p} (\overline{Q^\Omega _{1/2}})} \leq c$ for both $i=1,2$.
    First, note that by a cut-off argument as in the proof of  \autoref{Thm:CsBoundaryRegularityGlobalAAssumption} $\Rightarrow$ \autoref{Thm:CsBoundaryRegAndHopfLemma}(i), we can assume that $u_i$ is globally bounded by treating the tail of $u_i$ as a right-hand side.
    Now, we can apply \autoref{Lem:uOverPsiHoelderContinuous} and get $ [h_i] _{C^{\beta}_{p} (\overline{Q^\Omega _{1/2}})} \leq c$.
    This concludes the proof since $u_1/u_2= h_1/h_2$.    
\end{proof}
\begin{Remark}\label{Rem:BetaEquSigmaPossible}
    Note that in the proof of \autoref{Thm:BoundaryHarnack}, we have used the assumption $\beta < \sigma$ only when applying \autoref{Lem:uOverPsiHoelderContinuous}.
    In fact, one can improve \autoref{Lem:uOverPsiHoelderContinuous} (and hence \autoref{Thm:BoundaryHarnack}) to hold for the entire range $ 0 < \beta  \leq \min \{\sigma , s-\frac{2s}{q}-\frac{d}{r} , s-\frac{2s}{p}\}$.

    To see this, we have to run the proof of \autoref{Lem:uOverPsiHoelderContinuous} again and use the fact that we have already established the $C^s_p$ boundary regularity (from \autoref{Thm:CsBoundaryRegAndHopfLemma}(i)).
    To be more precise, in the proof of \autoref{Lem:uOverPsiHoelderContinuous}, we can choose $\overline{\eps}=0$ since $\Phi _{\sigma} (u;R) \leq c R^{d+2s}$ by using $|u| \leq c d_\Omega^s$ in $Q^\Omega _{3/4}$ (the tail part of $\Phi _{\sigma} (u;R)$ can be bounded as in \autoref{Lem:psi-properties}(iii)).
    Furthermore, we can choose $\tilde{\eps}=0$ in the proof of \autoref{Lem:uOverPsiHoelderContinuous}.
    However, for this, we need that \autoref{Lem:ClosenessOfBarriers} remains true for $\eps =0$. This can be proved by running another Campanato iteration scheme similar to the one in \cite[Section 6]{Kim2024}.
\end{Remark}

\section{Dirichlet heat kernel bounds}\label{sec:DirichletHeatKernelBounds}
The main goal of this section is to prove the Dirichlet heat kernel estimates from \autoref{Thm:HeatKernelBounds}. Before proving the upper bound in \autoref{sec:UpperHeatKernelBounds} and the lower bound in \autoref{sec:LowerHeatKernelBounds}, we start with some basic properties for the Dirichlet heat kernel.
\begin{Proposition}
    \label{prop:representation}
    Let $\Omega \subset \R^d$ be a bounded domain and let $\mathcal{L}_t$ be an operator of the form \eqref{eq:OperatorDivergenceForm}-\eqref{eq:KernelDivergenceForm}. Let $\eta \in [0,T)$. Then, there exists a function $(t,x,y) \mapsto p_{\eta,t}^{\Omega}(x,y)$, such that for any $f \in L^2(\Omega)$,
    \begin{align}
    \label{eq:representation}
        (t,x) \mapsto P_{\eta,t}^{\Omega}f(x) := \int_{\Omega} p_{\eta,t}^{\Omega}(x,y) f(y) \d y
    \end{align}
    is the unique weak solution to 
    \begin{equation} \label{eq:DirichletSemiGroup}
    \left\{ \begin{aligned}
        \partial _t P_{\eta,t}^{\Omega}f -\mathcal{L}_t P_{\eta,t}^{\Omega}f & =0 & &\text{in} \quad (\eta , T ) \times \Omega , \\
           P_{\eta,t}^{\Omega}f & =0 & & \text{in} \quad  (\eta ,T ) \times \Omega ^c , \\
           P_{\eta,t}^{\Omega}f & = f & & \text{in} \quad  \{ \eta \} \times \R ^d
    \end{aligned} \right.
\end{equation}
    in the sense of \autoref{Def:WeakSolConcept}, which attains its initial datum in the sense of $L^2_{loc}(\Omega)$. 
    Moreover, it holds for any $x,y \in \R^d$ and $0 \le \eta < \tau < t < T$ that 
    \begin{align}
    \label{eq:p-properties}
        p_{\eta,t}^{\Omega}(x,y) \ge 0,  ~~ \int_{\Omega} p_{\eta,t}^{\Omega}(x,y) \d y \le 1, ~~ p_{\eta,t}^{\Omega}(x,y) = \int_{\Omega} p_{\tau,t}^{\Omega}(x,z) p_{\eta,\tau}^{\Omega}(z,y) \d z.
    \end{align}
    Furthermore, we have $p_{\eta,t}^{\Omega}(x,y) = 0$ whenever $x \in \Omega^c$ or $y \in \Omega^c$.
   Likewise, for $0<t<\xi\le T$ there exists a function $(t,x,y) \mapsto \hat{p}_{\xi,t}^{\Omega}(x,y)$, such that for any $f \in L^2(\Omega)$ the function
        \begin{align}
    \label{eq:dual-representation}
        (t,x) \mapsto \hat{P}_{\xi,t}^{\Omega} f(x) := \int_{\Omega} \hat{p}_{\xi,t}^{\Omega}(x,y) f(y) \d y
    \end{align}
    solves the dual problem 
        \begin{equation} \label{eq:DirichletSemiGroupDual}
    \left\{ \begin{aligned}
        \partial _t \hat{P}_{\xi,t}^{\Omega}f + \mathcal{L}_t \hat{P}_{\xi,t}^{\Omega}f & =0 & &\text{in} \quad (0,\xi) \times \Omega , \\
           \hat{P}_{\xi,t}^{\Omega}f & =0 & & \text{in} \quad  (0,\xi) \times \Omega ^c , \\
           \hat{P}_{\xi,t}^{\Omega}f & = f & & \text{in} \quad  \{ \xi \} \times \R ^d .
    \end{aligned} \right.
\end{equation}
    Moreover, the following symmetry holds:
    \begin{align}
    \label{eq:dual-relation}
        p_{\eta,t}^{\Omega}(x,y) = \hat{p}_{t,\eta}^{\Omega}(y,x) \qquad \forall\, x,y \in \R^d, ~~ \forall\, 0 \le \eta < t \le T.
    \end{align}
\end{Proposition}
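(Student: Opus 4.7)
The plan is to construct the kernel and derive its properties in four stages, relying on standard well-posedness of nonlocal parabolic equations together with the maximum principle and the symmetry of the bilinear form.

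First, I would establish the existence and uniqueness of the solution operator $P_{\eta,t}^{\Omega}$ acting on $L^2(\Omega)$. Given $f\in L^2(\Omega)$, the zero-extension of $f$ lies in $L^2(\R^d)$ and the Cauchy-Dirichlet problem \eqref{eq:DirichletSemiGroup} admits a unique weak solution in $L^{\infty}((\eta,T); L^2(\Omega)) \cap L^2((\eta,T); H^s_{\Omega}(\R^d))$ by the standard Lions-type theory for non-autonomous coercive bilinear forms (see, e.g., \cite[Theorem 5.3]{Felsinger2014} for a version adapted to nonlocal kernels satisfying \eqref{eq:KernelDivergenceForm}). The solution lies in the weak solution class of \autoref{Def:WeakSolConcept} and attains its initial datum in $L^2_{\mathrm{loc}}(\Omega)$. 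Define the linear map $P_{\eta,t}^{\Omega}\colon L^2(\Omega)\to L^2(\Omega)$ in this way. The maximum principle (which holds for our class of operators, see, e.g., \cite[Lemma 5.2]{Kassmann2023}) gives $\Vert P_{\eta,t}^{\Omega} f\Vert_{L^{\infty}(\Omega)}\le \Vert f\Vert_{L^{\infty}(\Omega)}$ for $f\in L^{\infty}(\Omega)$ and preserves nonnegativity.

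Second, I would upgrade $P_{\eta,t}^{\Omega}$ to a bounded operator $L^1(\Omega)\to L^{\infty}(\Omega)$ in order to obtain the kernel. This is the ultracontractivity step: by the local boundedness estimate \autoref{Prop:LocalBoundedness} (applied on $Q_{R}(z_0)$ with $R$ comparable to $(t-\eta)^{1/2s}$ and $R\wedge \mathrm{diam}(\Omega)$, together with a standard covering argument to cover $\Omega$), combined with the $L^{\infty}\to L^{\infty}$ bound from Step 1 used to estimate the tail contribution, one obtains $\Vert P_{\eta,t}^{\Omega}f\Vert_{L^{\infty}(\Omega)}\le C(t-\eta)^{-d/(2s)}\Vert f\Vert_{L^1(\Omega)}$ for $0<t-\eta\le 1$. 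By duality with respect to the Markov property and the Dunford-Pettis theorem (or directly via the Schwartz kernel theorem applied to $L^1\to L^{\infty}$), there exists a measurable function $(x,y)\mapsto p_{\eta,t}^{\Omega}(x,y)\ge 0$ such that \eqref{eq:representation} holds. The bound $\int_{\Omega}p_{\eta,t}^{\Omega}(x,y)\,\d y\le 1$ then follows by testing against $f\equiv 1$ and using that the constant $1$ is a supersolution of \eqref{eq:DirichletSemiGroup} with zero exterior data. The semigroup identity in \eqref{eq:p-properties} is obtained by writing $P_{\eta,t}^{\Omega}=P_{\tau,t}^{\Omega}\circ P_{\eta,\tau}^{\Omega}$ (from uniqueness of the Cauchy problem) and applying Fubini. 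The support condition $p_{\eta,t}^{\Omega}(x,y)=0$ for $x\in\Omega^c$ or $y\in\Omega^c$ follows because $P_{\eta,t}^{\Omega}f\equiv 0$ in $\Omega^c$ and, by testing against any $f\in L^2(\Omega)$ supported in an arbitrary measurable $E\subset \Omega^c$, the $y$-symmetry.

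Third, I would repeat the same construction for the backward (dual) problem \eqref{eq:DirichletSemiGroupDual}. The change of variables $s\mapsto \xi-s$ converts \eqref{eq:DirichletSemiGroupDual} into a forward Cauchy-Dirichlet problem for the operator $\mathcal{L}_{\xi-s}$ whose kernel $K(\xi-s,x,y)$ still satisfies \eqref{eq:KernelDivergenceForm}, so Steps 1--2 apply verbatim and yield a measurable $\hat p_{\xi,t}^{\Omega}(x,y)\ge 0$ with the analogous properties. Finally, for the symmetry \eqref{eq:dual-relation}, fix $0\le \eta<t\le T$ and $f,g\in C_c^{\infty}(\Omega)$, set $u(\tau,x):=(P_{\eta,\tau}^{\Omega}f)(x)$ and $v(\tau,x):=(\hat P_{t,\tau}^{\Omega}g)(x)$, both weak solutions on $(\eta,t)$. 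A standard time-mollification argument shows that $\tau\mapsto \int_{\Omega} u(\tau,x)v(\tau,x)\,\d x$ is absolutely continuous on $[\eta,t]$ with
\begin{equation*}
\tfrac{\d}{\d\tau}\!\int_{\Omega}uv\,\d x=\langle\partial_{\tau}u,v\rangle+\langle u,\partial_{\tau}v\rangle=\cE_{\tau}(u,v)-\cE_{\tau}(u,v)=0,
\end{equation*}
where the symmetry $K(\tau,x,y)=K(\tau,y,x)$ was used. Evaluating at $\tau=\eta$ and $\tau=t$ gives $\int_{\Omega}f(x)(\hat P_{t,\eta}^{\Omega}g)(x)\,\d x=\int_{\Omega}(P_{\eta,t}^{\Omega}f)(y)g(y)\,\d y$, i.e.
\begin{equation*}
\iint f(x)g(y)\,\hat p_{t,\eta}^{\Omega}(x,y)\,\d x\,\d y=\iint f(x)g(y)\,p_{\eta,t}^{\Omega}(x,y)\,\d x\,\d y,
\end{equation*}
and by density of products of test functions this yields \eqref{eq:dual-relation} almost everywhere, hence everywhere upon choosing the canonical representative.

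The main obstacle I anticipate is the justification of the constancy of $\tau\mapsto\int_{\Omega}uv\,\d x$ when $u$ and $v$ are merely weak solutions rather than smooth. This requires a careful Steklov/time-mollification argument, exploiting the fact that $u\in L^2((\eta,t);V^s(\Omega|\R^d))$, $\partial_{\tau}u\in L^2((\eta,t);(H^s_{\Omega})^{\ast})$, and similarly for $v$, so that both terms $\langle\partial_{\tau}u,v\rangle$ and $\langle u,\partial_{\tau}v\rangle$ are well-defined duality pairings and the integration-by-parts identity holds along mollifications (cf.\ \cite[Appendix B]{Liao2023}). Once that technicality is settled, the symmetry of the bilinear form does the rest.
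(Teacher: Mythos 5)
Your proof follows essentially the same route as the paper's: construct the solution operator via \cite[Theorem 5.3]{Felsinger2014}, use local boundedness (\autoref{Prop:LocalBoundedness}) to upgrade to an $L^p\to L^\infty$ bound and extract a kernel, derive the properties in \eqref{eq:p-properties} from the maximum principle and the evolution family structure, construct the dual kernel by time reversal, and obtain the symmetry \eqref{eq:dual-relation} by showing $\tau\mapsto\int_\Omega u(\tau)v(\tau)\,\d x$ is constant via a time-mollification argument. The paper proceeds via an $L^2\to L^\infty$ bound and the Radon--Nikodym theorem (applied to $f=\ind_A$) rather than your $L^1\to L^\infty$ ultracontractivity and Dunford--Pettis route; both are valid, though you should be slightly careful invoking the ``Schwartz kernel theorem,'' which applies to $\mathcal{D}\to\mathcal{D}'$ rather than $L^1\to L^\infty$, so the Dunford--Pettis formulation (or the $L^2$ Riesz representation as in the paper) is the correct fallback. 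The technicality you flag at the end --- justifying the constancy of the pairing for weak solutions --- is exactly where the paper introduces the cutoffs $\zeta_k$ and appeals to \cite[Appendix B]{Liao2023} and \cite[Subsection A.4]{Liao2024}, so your concern is well-placed and is resolved as you anticipate.
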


\begin{proof}
    The proof closely follows the arguments of \cite[Proposition A.1]{Liao2024}. The existence and uniqueness of the solution $P_{\eta,t}^{\Omega}f$ to \eqref{eq:DirichletSemiGroup} are well known (see \cite[Theorem 5.3]{Felsinger2014}). Moreover, it holds
    \begin{align*}
        \sup_{t \in (\eta,T)} \int_{\Omega} |P_{\eta,t}^{\Omega} f(x)|^2 \d x \le c \Vert f \Vert_{L^2(\Omega)}^2,
    \end{align*}
    after testing the weak formulation with $P_{\eta,t}^{\Omega} f$ itself. Hence, using local boundedness up to the boundary (\autoref{Prop:LocalBoundedness}), we can show for any $f \in L^2(\Omega)$,
    \begin{align*}
        \Vert P_{\eta,t}^{\Omega} f \Vert_{L^{\infty}(\Omega)} \le c  \Vert f \Vert_{L^2(\Omega)},
    \end{align*}
    where $c = c(\eta,t,\Omega)$. Hence, upon choosing $f = \ind_A$ for Borel sets $A \subset \R^d$, we obtain the existence of $p_{\eta,t}^{\Omega}(x,y)$ by the Radon-Nikodym theorem as in \cite[Proof of Lemma A.4]{Liao2024}. Moreover, the properties in \eqref{eq:p-properties} follow in the exact same way as in \cite[Subsection A.3]{Liao2024} by using the weak maximum principle. Clearly, we have $p_{\eta,t}^{\Omega}(x,y) = 0$, for $x \in \Omega^c$ and $y \in \Omega$, since otherwise, the exterior condition in \eqref{eq:DirichletSemiGroup} would be violated for some $f \in L^2(\Omega)$. All these properties for the dual operator and the dual fundamental solution follow in the same way.
    
    To prove \eqref{eq:dual-relation}, we let $f,g \in L^2(\Omega)$ and $\zeta_k \in C_c^1(0,T)$ be a sequence of functions approximating $\ind_{[t_0,t_1]}$ with $\partial_t \zeta_k \to \delta_{\{t_0\}} - \delta_{\{t_1\}}$ for some $\eta < t_0 < t_1 < T$. Then, we obtain by testing the weak formulation for $P_{\eta,t}^{\Omega}f$ with $\zeta _k\hat{P}_{T,t}^{\Omega}g$ and vice versa, that
    \begin{align*}
        \int_{\eta}^T \partial_t \zeta_k(t) \int_{\Omega} P_{\eta,t}^{\Omega}f(x) \hat{P}_{T,t}^{\Omega}g(x) \d x \d t = 0. 
    \end{align*}
    Hence, by taking $k \to \infty$, we have that
    \begin{align*}
        t \mapsto \int_{\Omega} P_{\eta,t}^{\Omega}f(x) \hat{P}_{T,t}^{\Omega}g(x) \d x
    \end{align*}
    is constant, and from here the proof of \eqref{eq:dual-relation}  continues as in \cite[Subsection A.4]{Liao2024}.
\end{proof}

From now on, we will take $T = 1$ and denote $p_{\Omega}(t,x,y) = p_{0,t}^{\Omega}(x,y)$. Then, we have the following lemma:

\begin{Proposition} \label{Prop:PropertiesDirichletHeatKernel}
For all $y \in \Omega$, the function $(t,x) \mapsto p_{\Omega}(t,x,y)$ solves 
\begin{equation} \label{eq:DirichletHeatKernel}
    \left\{ \begin{aligned}
        \partial _t p_\Omega (t,\cdot ,y)-\mathcal{L}_t p_\Omega (t, \cdot ,y) & =0 & &\text{in} \quad (0,1 ) \times \Omega , \\
           p_\Omega (t,\cdot ,y) & =0 & & \text{in} \quad  (0 ,1 ) \times \Omega ^c , \\
           p_\Omega (t,\cdot ,y) & =\delta _{\{ y \} } & & \text{in} \quad  \{ 0\} \times \R ^d .
    \end{aligned} \right.
\end{equation}
Moreover, for any $x \in \Omega$ and $\tau \in (0,1)$, the function $(t,y) \mapsto p_{\tau-t,\tau}^{\Omega}(x,y)$ solves \eqref{eq:DirichletHeatKernel} with $\mathcal{L}_t$ replaced by $\mathcal{L}_{\tau-t}$ in $(0,\tau) \times \Omega$ and $p_{\tau-t,\tau}^{\Omega}(x,y) = \delta_{\{x\}}$ if $t = 0$.\\
Moreover, it holds for any $a>0$, $x,y \in \Omega$, and $\eta,t \in (0,1)$ with $\eta < t$,
    \begin{align*} 
    p^{\Omega}_{\eta,t}(x,y) &\leq c_1 \left( (t-\eta)^{-\frac{d}{2s}} \wedge\frac{t-\eta}{|x-y|^{d+2s}} \right), \\
    p^{\Omega}_{\eta,t}(x,y) &\geq c_2 \left( (t-\eta)^{-\frac{d}{2s}} \wedge\frac{t-\eta}{|x-y|^{d+2s}} \right), \qquad \text{ if } \min\{d_{\Omega}(x),d_{\Omega}(y)\} \ge a (t-\eta)^{\frac{1}{2s}},
    \end{align*}
    for some constants $c_1=c_1(d,s,\lambda , \Lambda)>0$ and $c_2=c_2(d,s,\lambda ,\Lambda ,a)>0$.
\end{Proposition}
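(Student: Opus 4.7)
For the PDE, I fix $y\in\Omega$ and approximate $\delta_y$ by mollifiers $f_n\in C_c^\infty(\Omega)$ with $\int f_n=1$. By \autoref{prop:representation}, the functions $u_n(t,x):=P_{0,t}^\Omega f_n(x)=\int_\Omega p_\Omega(t,x,z)f_n(z)\,dz$ are weak solutions of \eqref{eq:DirichletSemiGroup}. Combining the upper bound established below with the $C^s_p$ regularity from Sections \ref{sec:Cs-epsRegularity}--\ref{sec:CsRegularity} yields spatial continuity of $p_\Omega(t,x,\cdot)$, so $u_n(t,x)\to p_\Omega(t,x,y)$ locally uniformly on $(0,1)\times\Omega$; a standard stability argument then lets us pass to the limit in the weak formulation. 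For the initial trace in the distributional sense, \eqref{eq:dual-relation} gives
\[
\int p_\Omega(t,x,y)\varphi(x)\,dx=\hat P_{t,0}^\Omega\varphi(y)\xrightarrow{t\to 0^+}\varphi(y),\qquad\forall\,\varphi\in C_c(\R^d),
\]
by the attainment of initial data for $\hat P^\Omega$. The dual claim is a direct rewriting of \eqref{eq:dual-relation}: $(s,y)\mapsto\hat p_{\tau,s}^\Omega(y,x)$ solves $\partial_s+\mathcal{L}_s=0$ in $(0,\tau)\times\Omega$ by \autoref{prop:representation}, and under the substitution $s=\tau-t$ this becomes $\partial_t-\mathcal{L}_{\tau-t}=0$ in the $(t,y)$-variables, while at $t=0$ it equals $\hat p_{\tau,\tau}^\Omega(y,x)=\delta_{\{x\}}$ as a distribution in $y$.

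\textbf{Upper bound.} Let $p(\eta,t,x,y)$ denote the fundamental solution of $\partial_t-\mathcal{L}_t=0$ on $\R^d$; such Aronson-type upper bounds
\[
p(\eta,t,x,y)\le c\left((t-\eta)^{-d/2s}\wedge\frac{t-\eta}{|x-y|^{d+2s}}\right)
\]
are established in \cite{Liao2025} for divergence-form operators with time-dependent bounded measurable coefficients satisfying \eqref{eq:KernelDivergenceForm}. Extending $p^\Omega_{\eta,\cdot}(\cdot,y)$ by zero outside $\Omega$, the difference $g(t,x):=p(\eta,t,x,y)-p^\Omega_{\eta,t}(x,y)$ is a weak solution of $\partial_t-\mathcal{L}_t=0$ in $(\eta,1)\times\Omega$ with $g(\eta,\cdot)=0$ in the sense of distributions and $g(t,\cdot)=p(\eta,t,\cdot,y)\ge 0$ on $\R^d\setminus\Omega$. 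The maximum principle (e.g.\ \cite[Lemma 5.2]{Kassmann2023}) yields $g\ge 0$, whence the claimed upper bound.

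\textbf{Interior lower bound and main obstacle.} Write $r:=(t-\eta)^{1/2s}$, so that $d_\Omega(x),d_\Omega(y)\ge ar$. I split into a near-diagonal regime $|x-y|\le Mr$ and a far-diagonal regime $|x-y|>Mr$, with $M$ depending on $a$. The near-diagonal estimate $p^\Omega_{\eta,t}(x,y)\ge cr^{-d}$ follows from the semigroup identity
\[
p^\Omega_{\eta,t}(x,y)=\int_\Omega p^\Omega_{\eta,(\eta+t)/2}(x,z)\,p^\Omega_{(\eta+t)/2,t}(z,y)\,dz
\]
combined with the upper bound (localizing the integrand to a ball of radius $\lesssim r$ around $x$) and a survival estimate $\int_\Omega p^\Omega_{\eta,\tau}(x,z)\,dz\ge c$, itself obtained by bounding $\int_{\R^d\setminus\Omega}p(\eta,\tau,x,z)\,dz$ via the upper bound and using $|z-x|\ge d_\Omega(x)\gtrsim(\tau-\eta)^{1/2s}$ on this set. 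The main obstacle is the far-diagonal regime: a na\"{\i}ve Harnack-chain argument would produce a constant degrading exponentially in $|x-y|/r$, whereas the correct bound decays only polynomially. The standard nonlocal remedy is the ``single long-jump'' decomposition: apply the interior parabolic Harnack inequality of \cite{Kassmann2024} on chains of cylinders of radii $\asymp ar$ around $x$ and around $y$ (admissible by the distance assumption), interspersed with one explicit integration step against the kernel lower bound $K\ge\lambda|\cdot|^{-d-2s}$ to traverse the long distance $|x-y|$ in a single jump, producing the factor $(t-\eta)|x-y|^{-d-2s}$ and completing the proof.
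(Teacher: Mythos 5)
Your overall strategy matches the paper's: the PDE claims and duality are read off from \autoref{prop:representation}, the upper bound comes from dominating $p^\Omega$ by the global fundamental solution of \cite{Liao2025} via the weak maximum principle, and the lower bound is split into a near-diagonal regime and a far-diagonal ``single long-jump'' regime. Your far-diagonal argument is essentially the paper's. For the first two claims, the paper simply cites the time-dependent analogue \cite[Lemma 4.3]{Liao2025}, whereas you rebuild it by mollifying $\delta_y$ and passing to the limit — a fine alternative, although you invoke the $C^s_p$ boundary regularity, which requires the $C^{1,\alpha}$ hypothesis and is a stronger input than the paper needs here.

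The genuine gap is in the near-diagonal survival estimate. You claim that $\int_\Omega p^\Omega_{\eta,\tau}(x,z)\,dz\ge c$ follows by ``bounding $\int_{\R^d\setminus\Omega}p(\eta,\tau,x,z)\,dz$ via the upper bound.'' That computation controls the mass of the \emph{global} heat kernel $p$ outside $\Omega$, giving $\int_\Omega p(\eta,\tau,x,z)\,dz\ge 1-c/a^{2s}$, but it says nothing about $p^\Omega$. Since $p^\Omega\le p$ pointwise on $\Omega$, you still need to control the loss $\int_\Omega\bigl(p(\eta,\tau,x,z)-p^\Omega_{\eta,\tau}(x,z)\bigr)\,dz$, i.e.\ the exit probability, and the obvious maximum-principle comparison (bounding the caloric function $1-\int_\Omega p^\Omega_{\eta,\cdot}(\cdot,z)\,dz$, which has exterior data $\equiv 1$ on $\Omega^c$, against $P_{\eta,\cdot}\ind_{\Omega^c}$, whose exterior data on $\Omega^c$ drops below $1$) fails to close. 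Some additional input is required — either an explicit polynomial barrier for the exit-time estimate, or a Harnack-type inequality. The paper circumvents the survival estimate entirely: in the near-diagonal case it applies the \emph{improved weak Harnack inequality} \cite[(3.5)]{Liao2025} to the dual caloric function with initial datum $\ind_{B_{2R}(z)}$, combines with the symmetry \eqref{eq:dual-relation}, and then applies the same Harnack bound a second time to $(t,x)\mapsto p^\Omega_{0,t}(x,y_0)$. This bypasses both the Chapman–Kolmogorov localization and the survival estimate in your sketch. Your plan can be repaired by replacing the $\int_{\R^d\setminus\Omega}p$ argument with the barrier $\phi(t,z)=C(t-\eta)/(ar)^{2s}+\bigl(1-(1-|z-x|^2/(ar)^2)_+\bigr)$, which is a supersolution in $(\eta,\eta+(ar)^{2s}/C)\times B_{ar/2}(x)$ and dominates the exit function there, but as written the step does not go through.
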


\begin{proof}
    The proof of the first property follows in the same way as in \cite[Lemma 4.3]{Liao2025}. Moreover, \cite[Lemma 4.3]{Liao2025} implies that for $\tau \in (0,t)$, we have that $(t,y) \mapsto p_{t,\tau}^{\Omega}(x,y)$ solves \eqref{eq:DirichletHeatKernel} with $\mathcal{L}_t$ replaced by $- \mathcal{L}_t$ in $(0,\tau) \times \Omega$, and $p_{t,\tau}^{\Omega}(\cdot,x) = \delta_{\{ x \}}$ for $t = \tau$. Hence, we observe that $(t,y) \mapsto p_{\tau - t,\tau}^{\Omega}(x,y)$ solves \eqref{eq:DirichletHeatKernel} in $(0,\tau) \times \Omega$ with $\mathcal{L}_t$ replaced by $\mathcal{L}_{\tau-t}$.

    The upper bound for $p^{\Omega}_{\eta,t}(x,y)$ follows from \cite[Theorem 1.3]{Liao2025} since $p^{\Omega}_{\eta,t}(x,y)$ is the fundamental solution $\tilde{p}_{\Omega}(t,x,y)$ corresponding to $(\mathcal{L}_{\eta+t})_t$ for $t \in (0,T -\eta)$, and by observing that $p^{\Omega}_{\eta,t}(x,y) \le p_{\eta,t}(x,y)$ due to the comparison principle, where $p_{\eta,t}(x,y)$ denotes the fundamental solution to the Cauchy problem, which was constructed in \cite[Proposition 4.1]{Liao2025}.

   Finally, we prove the lower heat kernel bound. Again, it suffices to consider the case $\eta = 0$. Let $x_0,y_0 \in \Omega$ and $t_0 \in (0,1)$ such that $ \min\{d_{\Omega}(x_0),d_{\Omega}(y_0)\} \ge a t_0^{1/(2s)}$. The proof follows by proceeding exactly as in the proof for the global heat kernel in \cite[Proposition 4.7, Lemma 4.8, Lemma 4.9]{Liao2025}.

   \textit{Case 1:} If $|x_0-y_0| \leq a t_0^{1/(2s)}/4$, then by setting $R=at_0^{1/(2s)}/8$, we can choose $z\in \Omega$ such that $x_0, y_0 \in B_{2R}(z)$ and $B_{4R}(z)\subset\Omega$.
    Following the proof of \cite[Lemma 4.8]{Liao2025}, we get
   \begin{equation} \label{eq:Twice-Improved-Harnack-For-LowerBound}
       c \leq \int _{B_{2R}(z)} \hat{p}^\Omega _{(4R/a)^{2s},0} ( y_0 , x ) \intd x = \int _{B_{2R}(z)} p^\Omega _{0,(4R/a)^{2s}} (x, y_0 ) \intd x \lesssim R^d p ^\Omega _{0,(8R/a)^{2s}} (x_0,y_0) ,
   \end{equation}
   where in the first inequality, we have used the improved weak Harnack inequality from \cite[(3.5)]{Liao2025} for the solution to the dual problem \eqref{eq:DirichletSemiGroupDual} with initial datum $f=\ind _{B_{2R}(z)}$. Furthermore, we have used the symmetry from \eqref{eq:dual-relation}, and finally, we have again applied \cite[(3.5)]{Liao2025} to the caloric solution $(t,x) \mapsto p^\Omega _{0,t}(x,y_0)$. Note that \eqref{eq:Twice-Improved-Harnack-For-LowerBound} implies $ t_0^{-d/(2s)} \lesssim p ^\Omega _{0,t_0} (x_0,y_0)$.

   \textit{Case 2:} If $|x_0-y_0| \geq a t_0^{1/(2s)}/4$, then, we set $(4R/a)^{2s}=t_0/4$.
    By the Harnack inequality from \cite[(3.2)]{Liao2025} and the fact that $B_{R}(x_0) \subset \R^d \setminus B_R(y_0)$, we obtain
   \begin{equation*}
       p^\Omega _{0,t_0}(x_0,y_0) \gtrsim  \int _{t_0 - 2 \left( \frac{4R}{a}\right) ^{2s}}^{t_0 -\left( \frac{4R}{a}\right)^{2s}} \int _{\R^d \setminus B_{R}(y_0)} \frac{p^\Omega _{0,\tau }(x_0,z)}{|y_0-z|^{d+2s}} \intd z \intd \tau
       \gtrsim  \int _{t_0/2}^{3t_0/4} \int _{B_{R}(x_0)} \frac{p^\Omega_{0,\tau }(x_0,z)}{|y_0-z|^{d+2s}} \intd z \intd \tau .
   \end{equation*}
    Note that for all $z\in B_R(x_0)$ and $\tau \in (t_0/2 , 3t_0/4 )$, we can use the lower bound for $p^\Omega_{0,\tau }(x_0,z)$ from the first case.
    Following the computation from the proof of \cite[Lemma 4.9]{Liao2025}, we get $p^\Omega _{0,t_0}(x_0,y_0) \gtrsim t_0 / |x_0-y_0|^{d+2s}$.
   Together with the first case, this proves the lower heat kernel bound.
\end{proof}

\subsection{Upper Dirichlet heat kernel bounds}\label{sec:UpperHeatKernelBounds}

In this subsection we prove the upper bound in \autoref{Thm:HeatKernelBounds}. The following lemma demonstrates how upper heat kernel bounds can be deduced from boundary regularity estimates for parabolic equations.

\begin{Lemma}\label{Lem:UpperHeatKernelBounds}
    Fix a bounded domain $\Omega \subset \R^d$ and let $\mathcal{L}_t$ be an operator of the form \eqref{eq:OperatorDivergenceForm}-\eqref{eq:KernelDivergenceForm}.
    Assume that there exists some $\beta \in (0,s]$ and $c_1>0$ such that for any $\tilde{x} \in \partial \Omega$, $t_0 \geq R^{2s} >0$,  
    and any solution $v$ to
    \begin{equation*}
        \left\{ \begin{aligned}
          \partial _t v -\mathfrak{L}_tv & =f & &\text{in} \quad I^\ominus _R(t_0) \times (B_R(\tilde{x})\cap \Omega ) , \\
           v & =0 & & \text{in} \quad  I^\ominus _R(t_0) \times ( B_R(\tilde{x}) \setminus \Omega ) ,
        \end{aligned} \right.
    \end{equation*}
    where $\mathfrak{L}_t=\mathcal{L}_t$ or $\mathfrak{L}_t=\mathcal{L}_{t_0-t}$,
    the following boundary regularity estimate holds
    \begin{equation}\label{eq:BoundRegulAssumUpperHeatKernel}
        [v] _{C^\beta _x (\overline{Q^\Omega_{R/2} (t_0,\tilde{x})})} \leq c_1 R^{-\beta} \left( \| v\| _{L^\infty ( I^\ominus _R(t_0) \times \R^d) }+ R^{2s}\| f\| _{L^\infty (Q^\Omega_R (t_0,\tilde{x}))} \right) .
    \end{equation}
    Then, for all $x,y\in \Omega$ and $t\in (0,1)$, the following upper bound holds
    \begin{equation}\label{eq:UpperHeatKernelBoundWithBeta}
        p_\Omega (t,x,y) \leq c \left( 1 \wedge \frac{d_\Omega (x)}{t^\frac{1}{2s}} \right) ^\beta  \left( 1 \wedge \frac{d_\Omega (y)}{t^\frac{1}{2s}} \right) ^\beta \left( t^{-\frac{d}{2s}} \wedge\frac{t}{|x-y|^{d+2s}} \right)
    \end{equation}
    for some constant $c=c(d,s, \lambda ,\Lambda ,c_1, \beta , \Omega )>0$.
\end{Lemma}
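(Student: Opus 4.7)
The plan is to combine the global off-diagonal upper bound $p^\Omega_{\eta,t}(x,y) \le c\,G(t-\eta, x, y)$, where $G(\tau, x, y) := \tau^{-d/(2s)} \wedge \tau/|x-y|^{d+2s}$, from \autoref{Prop:PropertiesDirichletHeatKernel} with the assumed boundary H\"older regularity \eqref{eq:BoundRegulAssumUpperHeatKernel}, applied in both time directions, and to stitch the bounds together via the Chapman--Kolmogorov identity \eqref{eq:p-properties}. The fact that \eqref{eq:BoundRegulAssumUpperHeatKernel} is assumed for both $\mathfrak L_\tau = \mathcal L_\tau$ and $\mathfrak L_\tau = \mathcal L_{t_0 - \tau}$ is crucial, since the first case governs the forward flow (and hence boundary decay in $x$) while the second case governs the time-reversed dual flow (and hence boundary decay in $y$).

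First I would establish the one-sided diagonal decay $p_\Omega(t,x,y) \le c\bigl(1 \wedge d_\Omega(x)/t^{1/(2s)}\bigr)^\beta t^{-d/(2s)}$, together with its $y$-analog. Fixing $y$, the function $u(\tau, x') := p^\Omega_{0,\tau}(x',y)$ solves $\partial_\tau u - \mathcal L_\tau u = 0$ in $(0,1) \times \Omega$ with zero exterior data by \autoref{Prop:PropertiesDirichletHeatKernel}; when $d_\Omega(x) \le t^{1/(2s)}$, applying \eqref{eq:BoundRegulAssumUpperHeatKernel} at scale $R = (t/2)^{1/(2s)}$ around a nearest boundary point $\tilde x \in \partial \Omega$, together with $u(t, \tilde x) = 0$ and the global bound $\|u\|_{L^\infty(I_R^\ominus(t) \times \R^d)} \le c\,t^{-d/(2s)}$, yields the claim. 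The $y$-analog follows by running the same argument on $v(\tau, y') := p^\Omega_{t-\tau, t}(x, y')$, which by \autoref{Prop:PropertiesDirichletHeatKernel} solves the equation with the time-reversed operator $\mathcal L_{t-\tau}$. The same regularity argument applied to the survival probability $P(\tau, x) := \int_\Omega p^\Omega_{0,\tau}(x, z)\,\intd z \in [0,1]$---which also solves the forward equation and vanishes on $\partial \Omega$---delivers $P(\tau, x) \le c\bigl(1 \wedge d_\Omega(x)/\tau^{1/(2s)}\bigr)^\beta$, and analogously for the dual survival probability $\hat P(\tau, y) := \int_\Omega p^\Omega_{0, \tau}(z, y)\,\intd z$ via the time-reversed operator.

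With these preparations I would combine them using a triple Chapman--Kolmogorov decomposition
\begin{equation*}
p_\Omega(t, x, y) = \iint_{\Omega\times\Omega} p^\Omega_{0,t/3}(x, z_1)\, p^\Omega_{t/3, 2t/3}(z_1, z_2)\, p^\Omega_{2t/3, t}(z_2, y)\,\intd z_1\,\intd z_2.
\end{equation*}
When $|x-y| \le t^{1/(2s)}$, bounding the middle kernel by $\|p^\Omega_{t/3, 2t/3}\|_{L^\infty} \le c t^{-d/(2s)}$ and integrating out the two outer kernels using $P(t/3, x)$ and $\hat P(t/3, y)$ gives the desired product bound (recall that $G \asymp t^{-d/(2s)}$ in this regime). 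When $r := |x-y| > t^{1/(2s)}$, I would partition $\Omega \times \Omega$ into the balanced region $\{(z_1, z_2) : |z_1 - x| < r/3, \; |z_2 - y| < r/3\}$ and its complement. On the balanced region, $|z_1 - z_2| \ge r/3$ allows the middle kernel to be estimated off-diagonally by $c\,t/r^{d+2s}$, and the two outer integrals are absorbed into $P$ and $\hat P$, yielding the desired $c(d_\Omega(x)/t^{1/(2s)})^\beta(d_\Omega(y)/t^{1/(2s)})^\beta \cdot t/r^{d+2s}$. The complementary region (where either $|z_1 - x| \ge r/3$ or $|z_2 - y| \ge r/3$) is handled by applying the global off-diagonal bound on the corresponding outer kernel---which produces the factor $t/r^{d+2s}$---together with the one-sided boundary decay from Step~1 on one of the remaining kernels and the survival probability on the other.

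The main obstacle will be the complementary-region analysis: extracting \emph{both} boundary decay factors in the off-diagonal regime is delicate because a naive double $L^\infty$ bound along Chapman--Kolmogorov generates a spurious extra $t^{-d/(2s)}$, while the symmetric near-$x$/near-$y$ splitting alone yields only the sum $(d_\Omega(x))^\beta + (d_\Omega(y))^\beta$ rather than the product. The resolution is to balance the survival probability estimate against the off-diagonal kernel bound on the carefully chosen spatial sub-regions, so that one boundary factor is produced by $P$ or $\hat P$ (at no cost in $t^{-d/(2s)}$), the other by the $L^\infty$ boundary decay of Step~1, and the factor $t/r^{d+2s}$ by the remaining kernel.
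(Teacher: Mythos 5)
Your Step 1 only establishes the one-sided \emph{diagonal} decay $p_\Omega(t,x,y)\le c(1\wedge d_\Omega(x)/t^{1/(2s)})^\beta\,t^{-d/(2s)}$, because you apply the boundary regularity estimate \eqref{eq:BoundRegulAssumUpperHeatKernel} to $u(\tau,x')=p^\Omega_{0,\tau}(x',y)$ directly, with the global bound $\|u\|_{L^\infty(I_R^\ominus(t)\times\R^d)}\le c\,t^{-d/(2s)}$ on the right-hand side. What the Chapman--Kolmogorov machinery in your Step 2 actually needs in the complementary region is the \emph{full one-sided bound} $p_\Omega(t,x,y)\le c(1\wedge d_\Omega(x)/t^{1/(2s)})^\beta\bigl(t^{-d/(2s)}\wedge t/|x-y|^{d+2s}\bigr)$, and this cannot be produced by your Step 1 as written. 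Concretely: in your triple decomposition, the only factor in the integrand that depends on $x$ is the first kernel $p^\Omega_{0,t/3}(x,z_1)$. In the region $|z_1-x|\ge r/3$ you can extract \emph{either} the off-diagonal factor $t/r^{d+2s}$ (via the global off-diagonal bound) \emph{or} the boundary factor $(d_\Omega(x)/t^{1/(2s)})^\beta$ (via the diagonal decay or the survival probability), but not both simultaneously; the survival probability bound $P(t/3,x)\le c(d_\Omega(x)/t^{1/(2s)})^\beta$ gives no off-diagonal gain, and the diagonal bound pays with $t^{-d/(2s)}$, which is much larger than $t/r^{d+2s}$ when $r>t^{1/(2s)}$. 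Iterating Chapman--Kolmogorov or refining the spatial split does not escape this: the $x$-boundary decay has no other place to come from.

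The paper resolves exactly this issue not via Chapman--Kolmogorov, but by truncating: $v(t,x):=p_\Omega(t,x,y_0)\ind_{B_{\tilde R}(\tilde x)}(x)$ with $\tilde R=t_0^{1/(2s)}$, which makes $\|v\|_{L^\infty(I_{\tilde R/2}^\ominus(t_0)\times\R^d)}$ comparable to $t_0^{-d/(2s)}\wedge t_0/|x_0-y_0|^{d+2s}$ rather than just $t_0^{-d/(2s)}$ (because $B_{\tilde R}(\tilde x)$ sits far from $y_0$ in the off-diagonal regime). The price is a nonzero right-hand side $f=(\partial_t-\mathcal{L}_t)v$, which the paper bounds by $c\tilde R^{-2s}(t_0^{-d/(2s)}\wedge t_0/|x_0-y_0|^{d+2s})$ using the global two-sided estimates on $p_\Omega$. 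Plugging this into the assumed estimate \eqref{eq:BoundRegulAssumUpperHeatKernel} then yields the full one-sided bound in one shot, without any Chapman--Kolmogorov. The second boundary factor is obtained by rerunning the argument for the dual kernel (paper's Step~3) and then combining (paper's Step~4), which plays the structural role your Chapman--Kolmogorov step was meant to play. If you upgrade your Step~1 to the truncated argument and thereby obtain the one-sided full bound, your Chapman--Kolmogorov approach in Step~2 \emph{would} then close (use the one-sided full bound on the outer kernel in each unbalanced region, and survival probabilities in the balanced one), so that part of your plan is a viable alternative to the paper's Step~4. But as written, the truncation idea is missing, and it is precisely the technical heart of the paper's proof.
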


\begin{proof}
    Fix some $t_0 \in (0,1)$, and $x_0 ,y_0\in \Omega$. We will show \eqref{eq:UpperHeatKernelBoundWithBeta} for $(t,x,y)=(t_0,x_0,y_0)$.

    \textbf{Step 1:} Note that by \autoref{Prop:PropertiesDirichletHeatKernel}, we have
    \begin{equation} \label{eq:UpperBoundGlobalHeatKernelInStep1}
        p_\Omega (t_0,x_0,y_0) \leq c \left( t_0^{-\frac{d}{2s}} \wedge\frac{t_0}{|x_0-y_0|^{d+2s}} \right) 
    \end{equation}
    for all $(t_0, x_0,y_0) \in (0,1)\times \Omega \times \Omega$.

    \textbf{Step 2:} Assume that $4d_\Omega (x_0) < t_0^{\frac{1}{2s}}$.
    Set $\tilde{R}=t_0^{\frac{1}{2s}}$ and choose $\tilde{x}\in \partial \Omega$ so that $|x_0-\tilde{x}|=d_\Omega (x_0)$, then $x_0 \in B_{\tilde{R}/4}(\tilde{x})$.
    Define $v(t,x):= p_\Omega (t,x,y_0) \ind _{B_{\tilde{R}}(\tilde{x})} (x)$.
    We start with the following claim
    \begin{equation}\label{eq:ClaimBoundOperatorOnV}
        f(t,x):=(\partial_t - \mathcal{L}_t) v(t,x) \leq c \tilde{R}^{-2s}\left( t_0^{-\frac{d}{2s}} \wedge \frac{t_0}{|x_0-y_0| ^{d+2s}}\right) \qquad\forall (t,x) \in Q_{\tilde{R}/2}(t_0,\tilde{x}) .
    \end{equation}
    To prove \eqref{eq:ClaimBoundOperatorOnV}, we first assume $|x_0-y_0| \leq 4\tilde{R}$.
    Then, using that $p_\Omega ( \cdot ,\cdot ,y_0)$ is caloric (see \eqref{eq:DirichletHeatKernel}), and $p_\Omega(t,z,y_0) \leq c t ^{-\frac{d}{2s}}$ (see \eqref{eq:UpperBoundGlobalHeatKernelInStep1}), as well as
    \begin{equation} \label{eq:zMinusXComparedToZMinusTildeX}
    |z-x| \geq  |z-\tilde{x}| -|\tilde{x}-x|\geq |z-\tilde{x}|-1/2 \tilde{R} \geq |z-\tilde{x}| -1/2 |z-\tilde{x}| =1/2 |z-\tilde{x}| , \quad \forall z \in B_{\tilde{R}}(\tilde{x}) ^c ,
    \end{equation}
    we obtain for all $(t,x) \in Q_{\tilde{R}/2}(t_0,\tilde{x})$
    \begin{align*}
        (\partial_t - \mathcal{L}_t) v(t,x) & = \int _{B_{\tilde{R}}(\tilde{x})^c} \frac{p_\Omega(t,z,y_0)}{|z-x|^{d+2s}} \intd z
        \leq c t^{-\frac{d}{2s}} \int _{B_{\tilde{R}}(\tilde{x})^c} \frac{1}{|z-x|^{d+2s}} \intd z \\
        & \leq c t_0^{-\frac{d}{2s}} \int _{B_{\tilde{R}}(\tilde{x})^c} \frac{1}{|z-\tilde{x}|^{d+2s}} \intd z \leq c t_0^{-\frac{d}{2s}} \tilde{R}^{-2s} .
    \end{align*}
    This proves \eqref{eq:ClaimBoundOperatorOnV} in the case $|x_0-y_0| \leq 4\tilde{R}$.
    
    Next, we assume $4\tilde{R} \leq|x_0-y_0|$.
    Set $R:=|x-y|$.
    Using the fact that $p_\Omega (\cdot ,\cdot ,y_0)$ is caloric (\autoref{Prop:PropertiesDirichletHeatKernel}) and \eqref{eq:UpperBoundGlobalHeatKernelInStep1}, for every $(t,x)\in Q_{\tilde{R}/2}(t_0,\tilde{x})$, we get
    \begin{align*}
        & (\partial _t -\mathcal{L}_t) v (t,x)  =\int _{B_{\tilde{R}}(\tilde{x})^c} \frac{p_\Omega(t,z,y_0)}{|z-x|^{d+2s}} \intd z \\
        &\quad \leq c\int _{\{ \tilde{R} \leq|z-\tilde{x}|  , ~ t^\frac{1}{2s} \leq |z-y_0|\}} \frac{t}{|z-x|^{d+2s}|z-y_0|^{d+2s}} \intd z
         +c\int _{\{ \tilde{R} \leq |z-\tilde{x}|, ~  |z-y_0| \leq t^\frac{1}{2s} \}} \frac{t^{-\frac{d}{2s}}}{|z-x|^{d+2s}} \intd z\\
        &\quad = I_1 +I_2 .
    \end{align*}
    Note that
    \begin{align*}
        I_2 \leq c \int _{\{  |z-y_0| \leq t^\frac{1}{2s} \}} \frac{t^{-\frac{d}{2s}}}{|z-x|^{d+2s}} \intd z \leq c | \{  B_{t^\frac{1}{2s}} (y_0)\} |  \frac{t^{-\frac{d}{2s}}}{R^{d+2s}} \leq c R^{-d-2s},
    \end{align*}
    where we have used that 
    \begin{equation*}
    |z-x| \geq |x_0-y_0| - |x_0 -\tilde{x}| - | \tilde{x} -x| -|z -y_0| \geq R- R/16-R/8-R/4 \geq 9/16R    
    \end{equation*}
    for all $z \in B_{t^\frac{1}{2s}}(y_0)\subset B_{\tilde{R}} (y_0)$.
    For $I_1$, we split $\R^d $ into the two half-spaces $A:= \{ z\in \R^d \mid |z-y_0| \leq |z-x_0| \}$ and $B = \R^d \setminus A$.
    Then, we use that $z\in A$ implies
    \begin{align*}
        & |z-\tilde{x}| \geq |z-x_0| -|x_0-\tilde{x}| \geq R/2 -\tilde{R}/4 \geq R/2 -R/16 \geq 7/16R \geq \tilde{R} ,\\
        & |z-x|\geq |z-\tilde{x}| - |x-\tilde{x}| \geq 7/16R-\tilde{R}/2\geq 7/16R-R/8 \geq 5/16 R,
    \end{align*}
    and $z\in B$ implies
    \begin{align*}
         |z-y_0| \geq R/2 \geq 2\tilde{R}\geq 2 t^\frac{1}{2s} ,
    \end{align*}
    which together with \eqref{eq:zMinusXComparedToZMinusTildeX} yields 
    \begin{align*}
        I_1 &\leq c\int _{A \cap \{ t^\frac{1}{2s} \leq |z-y_0| \}}  \frac{t}{R^{d+2s}|z-y_0|^{d+2s}} \intd z   + 
        c\int _{B\cap \{ \tilde{R} \leq|z-\tilde{x}| \}}  \frac{t}{ |z-\tilde{x}|^{d+2s}R^{d+2s}} \intd z\\
         &\leq c R^{-d-2s} + c R^{-d-2s}t \tilde{R}^{-2s} \leq c R^{-d-2s}.
    \end{align*}    
    This proves the claim from \eqref{eq:ClaimBoundOperatorOnV}.

    Note that by applying \eqref{eq:UpperBoundGlobalHeatKernelInStep1}, we also get
    \begin{equation*}
        \| v \|_{L^\infty (I^\ominus _{\tilde{R}/2}(t_0) \times \R^d)} = \| p_\Omega (\cdot , \cdot , y_0 )\| _{L^\infty (I^\ominus _{\tilde{R}/2}(t_0) \times B_{\tilde{R}}(\tilde{x}))} \leq c \left( t_0^{-\frac{d}{2s}} \wedge \frac{t_0}{|x_0-y_0|^{d+2s}}  \right) .
    \end{equation*}
    Here, we also used that in case $|x_0 - y_0| \ge 2\tilde{R}$, we have for any $x \in B_{\tilde{R}}(\tilde{x})$,
    \begin{align*}
        |x - y_0| \ge |y_0 - x_0| - |\tilde{x} - x_0| - |\tilde{x} - x| \ge |y_0 -x_0| - \tilde{R}/4 - \tilde{R} \ge 3|y_0 - x_0|/8.
    \end{align*}
    Hence, since $v \equiv 0$ in $I_{\tilde{R}}^{\ominus}(t_0) \times (B_{\tilde{R}}(\tilde{x}) \setminus \Omega)$ by construction, we can use \eqref{eq:BoundRegulAssumUpperHeatKernel}, and recalling $x_0 \in B_{\tilde{R}/4}(\tilde{x})$, we conclude
    \begin{align*}
        p_\Omega(t_0,x_0,y_0) &= v(t_0,x_0) - v(t_0,\tilde{x})
        \leq [v]_{C^{\beta }_x (Q_{\tilde{R}/4}(t_0,\tilde{x}))} d^{\beta} _\Omega (x_0)  \\
        &\leq c_1\frac{d^\beta_\Omega (x_0)}{\tilde{R}^\beta } \left( \| v\| _{L^\infty ( I^\ominus _{\tilde{R}/2} (t_0) \times \R^d )} + \tilde{R}^{2s}\| f\| _{L^\infty (Q_{\tilde{R}/2}(t_0,\tilde{x}))} \right) \\
        & \leq  c \left( \frac{d_\Omega (x_0) }{t_0^\frac{1}{2s}} \right) ^\beta  \left( t_0^{-\frac{d}{2s}} \wedge \frac{t_0}{|x_0-y_0| ^{d+2s}}\right) 
    \end{align*}
    under the assumption $4d_\Omega (x_0) < t_0^{\frac{1}{2s}}$.
    This proves
    \begin{equation*}
        p_\Omega(t_0,x_0,y_0) \leq c \left( 1 \wedge \frac{d_\Omega (x_0)}{t_0^\frac{1}{2s}} \right) ^\beta   \left( t_0^{-\frac{d}{2s}} \wedge\frac{t_0}{|x_0-y_0|^{d+2s}} \right)
    \end{equation*}
    for all $(t_0, x_0,y_0) \in (0,1)\times \Omega \times \Omega$.

    \textbf{Step 3:}
     Since, by \autoref{prop:representation}, $p_\Omega(t_0,x_0,y_0)= p_{0,t_0}^{\Omega}(x_0,y_0) = \hat{p}_{t_0,0}^{\Omega}(y_0,x_0)$ where $(t,y) \mapsto \hat{p}_{t_0,t}^{\Omega}(y,x_0)$ is the dual Dirichlet heat kernel, we can repeat the proof from Step 2 by defining
        \begin{align*}
            \hat{v}(t,y) = \hat{p}_{t_0,t_0-t}^{\Omega}(y,x_0) \ind_{B_{\tilde{R}}(\tilde{y})}(y),
        \end{align*}
        where $\tilde{y} \in \partial \Omega$ is so that $|y_0 - \tilde{y}| = d_{\Omega}(y_0)$.
        Following the exact same arguments as in Step 2 for $\hat{v}$ instead of $v$, together with \autoref{Prop:PropertiesDirichletHeatKernel}, yields
        \begin{align}  \label{eq:UpperHeatKernelBoundWithoutDx}
    \begin{split}
        p_\Omega(t_0,x_0,y_0)   &\leq c \left( 1 \wedge \frac{d_\Omega (y_0)}{t_0^\frac{1}{2s}} \right) ^\beta   \left( t_0^{-\frac{d}{2s}} \wedge\frac{t_0}{|x_0-y_0|^{d+2s}} \right)
        \end{split}
    \end{align}
    for all $(t_0, x_0,y_0) \in (0,1)\times \Omega \times \Omega$.

    \textbf{Step 4:}
    The results from the first three steps already imply the desired result unless we are in the case $4d_\Omega (x_0) < t_0^{\frac{1}{2s}}$.  Hence, finally, let us assume that we have $4d_\Omega (x_0) < t_0^{\frac{1}{2s}}$. We will repeat the proof of Step 2 using the upper bound from \eqref{eq:UpperHeatKernelBoundWithoutDx} instead of \eqref{eq:UpperBoundGlobalHeatKernelInStep1}.
    As above, we set $\tilde{R}=t_0^{\frac{1}{2s}}$ and choose $\tilde{x}\in \partial \Omega$ so that $|x_0-\tilde{x}|=d_\Omega (x_0)$.
    Furthermore, we define $v(t,x):= p_\Omega (t,x,y_0) \ind _{B_{\tilde{R}}(\tilde{x})} (x)$.
    Then, following the proof in Step 2, using the upper bound from \eqref{eq:UpperHeatKernelBoundWithoutDx} instead of \eqref{eq:UpperBoundGlobalHeatKernelInStep1}, we get
    \begin{equation*}
        f(t,x):=(\partial_t - \mathcal{L}_t) v(t,x) \leq c \tilde{R}^{-2s} \left( 1 \wedge \frac{d_\Omega (y_0)}{t_0^\frac{1}{2s}} \right) ^\beta \left( t_0^{-\frac{d}{2s}} \wedge \frac{t_0}{|x_0-y_0| ^{d+2s}}\right) 
    \end{equation*}
    for all $(t,x) \in Q_{\tilde{R}/2}(t_0,\tilde{x})$, as well as
    \begin{equation*}
         \| v \|_{L^\infty (I^\ominus _{\tilde{R}/2}(t_0) \times \R^d)}  \leq c  \left( 1 \wedge \frac{d_\Omega (y_0)}{t_0^\frac{1}{2s}} \right) ^\beta\left( t_0^{-\frac{d}{2s}} \wedge \frac{t_0}{|x-y_0|^{d+2s}}  \right) .
    \end{equation*}
    Hence, using \eqref{eq:BoundRegulAssumUpperHeatKernel}, we conclude
    \begin{align*}
        p_\Omega(t_0,x_0,y_0) &\leq [v]_{C^{\beta }_x (Q_{\tilde{R}/4}(t_0,\tilde{x}))} d^{\beta} _\Omega (x_0)  \\
        & \leq c_1\frac{d^\beta_\Omega (x_0)}{\tilde{R}^\beta } \left( \| v\| _{L^\infty ( I^\ominus _{\tilde{R}/2} (t_0) \times \R^d )} + \tilde{R}^{2s}\| f\| _{L^\infty (Q_{\tilde{R}/2}(t_0,\tilde{x}))} \right) \\
        & \leq  c \left( \frac{d_\Omega (x_0) }{t_0^\frac{1}{2s}} \right) ^\beta \left( 1 \wedge \frac{d_\Omega (y_0)}{t_0^\frac{1}{2s}} \right) ^\beta  \left( t_0^{-\frac{d}{2s}} \wedge \frac{t_0}{|x_0-y_0| ^{d+2s}}\right) 
    \end{align*}
    under the assumption $4d_\Omega (x_0) < t_0^{\frac{1}{2s}}$, which proves \eqref{eq:UpperHeatKernelBoundWithBeta}.
\end{proof}

\begin{proof}[Proof of the upper bound in \autoref{Thm:HeatKernelBounds}]
The boundary regularity estimate \eqref{eq:BoundRegulAssumUpperHeatKernel} for $\mathfrak{L}_t=\mathcal{L}_t$ and $\mathfrak{L}_t=\mathcal{L}_{t_0-t}$ with $\beta = s$ was proved in \autoref{Thm:CsBoundaryRegAndHopfLemma}. Hence, the result follows immediately by application of \autoref{Lem:UpperHeatKernelBounds}.  
\end{proof}

\subsection{Lower Dirichlet heat kernel bounds}\label{sec:LowerHeatKernelBounds}
\begin{Lemma} \label{Lem:LowerHeatKernelBounds}
    Let $\Omega \subset \R^d$ be a bounded $C^{1,\alpha}$ domain and let $\mathcal{L}_t$ be an operator of the form \eqref{eq:OperatorDivergenceForm}-\eqref{eq:KernelDivergenceForm} satisfying \eqref{eq:KernelHoelderCont} with $\mathcal{A}= (0,1) \times \Omega '$ for some $\Omega \Subset \Omega ' \subset \R^d$.
    Then, for all $x,y\in \Omega$ and $t\in (0,1)$, the following lower bound holds
    \begin{equation}\label{eq:LowerHeatKernelBound}
        p_\Omega (t,x,y) \geq c \left( 1 \wedge \frac{d_\Omega (x)}{t^\frac{1}{2s}} \right) ^s  \left( 1 \wedge \frac{d_\Omega (y)}{t^\frac{1}{2s}} \right) ^s \left( t^{-\frac{d}{2s}} \wedge\frac{t}{|x-y|^{d+2s}} \right)
    \end{equation}
    for some constant $c=c(d,s, \sigma , \lambda ,\Lambda , \alpha , \Omega , \Omega ' )>0$.
\end{Lemma}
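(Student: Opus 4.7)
The plan is to combine the interior lower bound of \autoref{Prop:PropertiesDirichletHeatKernel} with the parabolic Hopf lemma from \autoref{Thm:CsBoundaryRegAndHopfLemma}(ii), the semigroup property \eqref{eq:p-properties}, and a L\'evy-type one-jump argument in the spirit of Case 2 of the proof of \autoref{Prop:PropertiesDirichletHeatKernel}. All localized estimates will be performed on parabolic cylinders of radius $R = \delta t^{1/2s}$ with a small but fixed $\delta > 0$, chosen so that the parabolic rescaling preserves the assumptions \eqref{eq:KernelDivergenceForm}-\eqref{eq:KernelHoelderCont} on the kernel and the $C^{1,\alpha}$ bound on $\Omega$.

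I will distinguish three cases. If $d_\Omega(x), d_\Omega(y) \geq a t^{1/2s}$ for a suitable constant $a$, then the boundary factors in \eqref{eq:LowerHeatKernelBound} are $\asymp 1$ and the statement reduces to the interior bound in \autoref{Prop:PropertiesDirichletHeatKernel}. In the asymmetric case $d_\Omega(y) \geq a t^{1/2s}$, $d_\Omega(x) < a t^{1/2s}$, I fix $\tilde x \in \partial \Omega$ with $|x-\tilde x|=d_\Omega(x)$ and consider $u(\tau,z) := p_\Omega(\tau,z,y)$, which by \autoref{Prop:PropertiesDirichletHeatKernel} is a nonnegative weak solution of the parabolic Dirichlet problem with $\mathcal{L}_\tau$. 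The interior bound furnishes a base point $z_0$ with $d_\Omega(z_0)\asymp R$ and $|z_0-y|\lesssim t^{1/2s}$ such that $u(t-cR^{2s},z_0)\gtrsim R^{-d}$; after parabolic rescaling by $R$ around $\tilde x$, \autoref{Thm:CsBoundaryRegAndHopfLemma}(ii) together with \autoref{Rem:HopfDependenceOnUAndF} yields $u(t,x) \gtrsim R^{-d}(d_\Omega(x)/R)^s$, which is the claim in the short-range regime $|x-y|\lesssim t^{1/2s}$.

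In the case $d_\Omega(x), d_\Omega(y) < a t^{1/2s}$, I will use \eqref{eq:p-properties} at $\tau = t/2$ and restrict integration to a set $A \subset \Omega$ of measure $\asymp t^{d/2s}$ on which $d_\Omega \gtrsim t^{1/2s}$:
\begin{equation*}
    p_\Omega(t,x,y) \ge \int_A p_{t/2,t}^\Omega(x,z)\, p_{0,t/2}^\Omega(z,y) \d z.
\end{equation*}
The second factor is bounded below by the asymmetric case directly. For the first factor I pass to the dual via \eqref{eq:dual-relation}: by the second part of \autoref{Prop:PropertiesDirichletHeatKernel}, the function $(\bar\tau,\bar x) \mapsto p_{t-\bar\tau, t}^\Omega(\bar x, z)$ solves a Dirichlet parabolic problem of the same class in the $\bar x$-variable with $\mathcal{L}_\tau$ replaced by $\mathcal{L}_{t-\bar\tau}$, so the asymmetric case applies and produces the boundary factor $(d_\Omega(x)/t^{1/2s})^s$. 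Integrating over $A$ gives \eqref{eq:LowerHeatKernelBound} whenever $|x-y|\lesssim t^{1/2s}$.

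Finally, to treat the long-range regime $|x-y|\gtrsim t^{1/2s}$, I will choose $R\asymp t^{1/2s}$ with $B_R(x)\cap B_R(y)=\emptyset$ and use the one-jump lower bound
\begin{equation*}
    p_\Omega(t,x,y) \gtrsim \int_{c_1 t}^{c_2 t} \int_{B_R(x) \cap \Omega} p_\Omega(\tau, x, z)\, K(\tau, z, y) \d z \d \tau
\end{equation*}
(and, symmetrically, a version peeling off a jump near $y$), which is derived exactly as in Case 2 of the proof of \autoref{Prop:PropertiesDirichletHeatKernel}. Here, $K(\tau,z,y)\gtrsim |x-y|^{-d-2s}$ on the integration domain, and the lower bound on $p_\Omega(\tau,x,z)$ from the previous cases restricted to an interior subset of $B_R(x)$ contributes the factor $(d_\Omega(x)/t^{1/2s})^s \tau^{-d/2s}$; combining both peelings yields the desired $t|x-y|^{-d-2s}$ times the product of boundary factors. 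The main technical obstacle is the asymmetric case: it is essential to check that after rescaling by $R=\delta t^{1/2s}$ the function $u = p_\Omega(\cdot,\cdot,y)$ satisfies the hypotheses of \autoref{Thm:CsBoundaryRegAndHopfLemma}(ii) (in particular, that $\Tail(u;1/2,0)\in L^p_t$ for some $p>2$, which here follows from the already-established upper heat kernel bound), and that the dependence of the Hopf constant on the interior value, recorded in \autoref{Rem:HopfDependenceOnUAndF}, is tracked uniformly along the rescaling.
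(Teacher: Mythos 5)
Your plan takes a genuinely different route from the paper's proof. The paper never applies the Hopf lemma directly to the heat kernel and never uses a one-jump Harnack argument at the lower-bound stage. Instead, for each case it constructs an \emph{auxiliary barrier} $u$ solving a localized Dirichlet problem on $Q^\Omega_{3\tilde R/8}(t_0,\tilde x)$ with exterior data $\ind_{B_{\kappa\tilde R}(z)}$, where $B_{4\kappa\tilde R}(z)$ is an interior ball of radius $\asymp t^{1/2s}$; the rescaled Hopf lemma gives $u \gtrsim \tilde R^{-s} d_\Omega^s$, and the interior estimate of \autoref{Prop:PropertiesDirichletHeatKernel} gives a lower bound for $p_\Omega(t,\cdot,y_0)$ on $B_{\kappa\tilde R}(z)$ that already carries the full factor $t^{-d/2s}\wedge t|x_0-y_0|^{-d-2s}$ (i.e.\ both the short- and the long-range regime at once, because $B_{\kappa\tilde R}(z)$ and $y_0$ are both interior). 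The maximum principle then transfers that bound to $x_0$ through the barrier. This is why the paper needs no separate long-range argument and needs only one application of Hopf, to a function with \emph{fixed, normalized} exterior data, so the Hopf constant is manifestly uniform under the parabolic rescaling.

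Your proposal, by contrast, applies the Hopf lemma directly to $p_\Omega(\cdot,\cdot,y)$, uses the Chapman--Kolmogorov identity for the symmetric near-boundary case, and appeals to a one-jump Harnack argument for long range. The asymmetric short-range step (normalizing $p_\Omega(\cdot,\cdot,y)$ on $Q_R(t,\tilde x)$, checking the $L^p_t$-tail via the already-proved upper bound, invoking \autoref{Rem:HopfDependenceOnUAndF}) is fine, and the Chapman--Kolmogorov step is valid once the asymmetric case is known in all ranges. The genuine gap is your long-range argument. The one-jump inequality
\begin{equation*}
p_\Omega(t,x,y) \gtrsim \int_{c_1 t}^{c_2 t}\int_{B_R(x)\cap\Omega} p_\Omega(\tau,x,z)\, K(\tau,z,y)\,\d z\,\d\tau
\end{equation*}
is derived (as in Case~2 of \autoref{Prop:PropertiesDirichletHeatKernel}) by applying the interior weak Harnack inequality to the caloric function $w\mapsto p_\Omega(\tau,x,w)$ in a cylinder \emph{centered at $y$ and contained in $\Omega$}. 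When $d_\Omega(y) \lesssim t^{1/2s}$ no such cylinder of radius $\asymp t^{1/2s}$ exists, so this inequality is simply not available. ``Combining both peelings'' does not fix this: a jump peeled at $x$ has the same problem. The repair is either the paper's barrier device, or to observe that once the asymmetric case is established in both ranges (one-jump at the genuinely interior point $y$ is fine for the asymmetric long range), the very Chapman--Kolmogorov identity at $\tau=t/2$ you use for the symmetric short-range case also gives the symmetric long-range case: take $A$ to be an interior set near $x$, use the short-range asymmetric bound on $p^\Omega_{t/2,t}(x,\cdot)$ and the long-range asymmetric bound on $p^\Omega_{0,t/2}(\cdot,y)$. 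As written, your plan restricts the Chapman--Kolmogorov step to $|x-y|\lesssim t^{1/2s}$ and then delegates all of the long range to the one-jump argument, which leaves the symmetric long-range case unproved.

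One further point worth recording in a cleaned-up write-up: tracking the Hopf constant for $p_\Omega(\cdot,\cdot,y)$ through the rescaling requires dividing by the size $\asymp t^{-d/2s}\wedge t|x-y|^{-d-2s}$ of the solution on the rescaled cylinder before invoking \autoref{Rem:HopfDependenceOnUAndF}; the paper sidesteps this entirely by running Hopf on the barrier $u$, whose data and interior lower bound are scale-free. Your approach is workable, but the extra bookkeeping it introduces is precisely what the paper's barrier construction is designed to avoid.
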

\begin{proof}
    Fix some $t_0 \in (0,1)$, and $x_0 ,y_0\in \Omega$. We will show \eqref{eq:LowerHeatKernelBound} for $(t,x,y)=(t_0,x_0,y_0)$.

    \textbf{Case 1:}
    If $\min \{ d_\Omega(x_0) , d_\Omega (y_0)\} \geq t_0^{\frac{1}{2s}}/4$, then \eqref{eq:LowerHeatKernelBound} follows from \autoref{Prop:PropertiesDirichletHeatKernel}.

    \textbf{Case 2:}
    Now, let us assume that $d_\Omega (x_0)< t_0^{\frac{1}{2s}}/4$ and $d_\Omega (y_0) \geq t_0^{\frac{1}{2s}}/4$.
    We set $\tilde{R}=t_0^{\frac{1}{2s}}$ and choose $\tilde{x}\in \partial \Omega$ so that $|x_0-\tilde{x}|=d_\Omega (x_0)$.
    Let $u$ be the solution to
    \begin{equation} \label{eq:BarrierForLowerHeatKernelEst}
    \left\{ \begin{aligned}
          \partial _t u -\mathcal{L}_tu & = 0 & &\text{in} \quad Q^\Omega _{3\tilde{R}/8} (t_0,\tilde{x}) , \\
          u & =\ind _{B_{\kappa \tilde{R}}(z)} & & \text{in} \quad ( I^\ominus _{\tilde{R}} (t_0) \times  \R^d ) \setminus Q^\Omega _{3\tilde{R}/8 }(t_0,\tilde{x})  ,   
        \end{aligned} \right.
    \end{equation}
    where we choose some unit vector $\nu =\nu (\tilde{x},\Omega ) \in \Sph ^{d-1}$ and $\kappa = \kappa (\Omega ) >0$ and set $z=\tilde{x}+\frac{7\nu}{16}\tilde{R}$ such that $B_{4\kappa \tilde{R}}(z) \subset  \Omega \setminus B _{3\tilde{R}/8}(\tilde{x})$.

    Next, we claim that
    \begin{equation}\label{eq:RescaledHopfLemmaOnBarrier}
        u \geq c_1 \tilde{R}^{-s} d_\Omega ^s \quad \text {in}\quad Q^\Omega_{\tilde{R}/4} (t_0 ,\tilde{x})
    \end{equation}
    for some constant $c_1=c_1(d,s,\lambda,\Lambda,\alpha , \sigma , \Omega)>0$.
    First, after a shift and a rotation, we may assume that $\tilde{x}=0$ and $\nu =e_d$.
    Then, the rescaled function $u_{\tilde{R}}(t,x)=u(\tilde{R} ^{2s}t, \tilde{R}x)$ satisfies
    \begin{equation} \label{eq:BarrierForLowerHeatKernelEstRescaled}
    \left\{ \begin{aligned}
          \partial _t u_{\tilde{R}} -\mathcal{L}^{\tilde{R}}_tu_{\tilde{R}} & = 0 & &\text{in} \quad Q^{\tilde{R} ^{-1}\Omega } _{3/8} (1,0) , \\
          u_{\tilde{R}} & =\ind _{B_{\kappa }(7e_d/16)} & & \text{in} \quad ( I^\ominus _{1} (1) \times  \R^d ) \setminus Q^{\tilde{R} ^{-1}\Omega} _{3/8 }(1,0)  ,   
        \end{aligned} \right.
    \end{equation}
    where $\mathcal{L}^{\tilde{R}}_t$ is the operator with rescaled kernel $K^{\tilde{R}}(t,x,y)=K(\tilde{R}^{2s}t,\tilde{R}x ,\tilde{R}y)$.
    Note that $K^{\tilde{R}}$ also satisfies \eqref{eq:KernelDivergenceForm} and \eqref{eq:KernelHoelderCont} with the same constants.    
    By the Hopf lemma (\autoref{Thm:CsBoundaryRegAndHopfLemma}(ii)), we obtain $u_{\tilde{R}}\geq c_1 d_{\tilde{R}^{-1}\Omega} ^s$ in $Q^{\tilde{R} ^{-1}\Omega} _{1/4 }(1,0)$ for some constant $c_1 = c_1(d,s,\sigma ,\lambda ,\Lambda, \alpha ,\tilde{R}^{-1}\Omega ,u_{\tilde{R}} )>0$ which, a priori, depends on $\tilde{R}.$
    We aim to show that $c_1$ does not depend on $\tilde{R}$, since then
    \begin{equation*}
        u(t,x) = u_{\tilde{R}}(\tilde{R}^{-2s}t,\tilde{R}^{-1}x)\geq c_1 d^s _{\tilde{R}^{-1}\Omega} (\tilde{R}^{-1}x) = c_1 \tilde{R}^{-s} d_\Omega ^s(x), \quad \forall (t,x) \in Q^\Omega_{\tilde{R}/4} (1,0),
    \end{equation*}
     which would imply \eqref{eq:RescaledHopfLemmaOnBarrier}.
     First, note that the dependence of $c_1$ on the domain $\tilde{R}^{-1}\Omega$ is only through an upper bound on the $C^{1,\alpha}$ norm of the boundary, which can be chosen to be independent of $\tilde{R}$ since $\tilde{R}\leq 1$.
    The dependence of $c_1$ on $u_{\tilde{R}}$ can be made precise as follows (see \autoref{Rem:HopfDependenceOnUAndF}).
    There exists some $\eps = \eps (d,s,\lambda, \Lambda , \Omega )>0$ such that $c_1$ depends on $u_{\tilde{R}}$ only through a lower bound $0<\tilde{c} \leq \inf _{Q^{\tilde{R}^{-1}\Omega} _{1/4+\eps}\cap \{ d_{\tilde{R}^{-1}\Omega} \geq \eps \}}u _{\tilde{R}} $.
    Note that we can choose such a $\tilde{c}$ depending only on $d,s,\lambda,\Lambda,\alpha , \sigma  $, and $\Omega$, for example by using the representation formula for solutions and known lower bounds on the Dirichlet heat kernel in the interior (see \autoref{Prop:PropertiesDirichletHeatKernel}).
    This proves \eqref{eq:RescaledHopfLemmaOnBarrier}.
    
    Next, we claim that
    \begin{equation} \label{eq:LowerBoundOnHeatKernelOnBallBz}
        p_\Omega(t,x,y_0) \geq c_2 \left( t_0^{-\frac{d}{2s}} \wedge\frac{t_0}{|x_0-y_0|^{d+2s}} \right) \quad \text{for all}\quad (t,x) \in I^\ominus _{3\tilde{R}/8}(t_0) \times B_{\kappa \tilde{R}}(z).
    \end{equation}
    To prove \eqref{eq:LowerBoundOnHeatKernelOnBallBz}, first, assume $|x_0-y_0|\leq \tilde{R}$. Then, 
    \begin{equation*}
        |x-y_0| \leq |x-x_0| + |x_0-y_0| \leq c \tilde{R} +\tilde{R} \leq c \tilde{R} \leq c t^{\frac{1}{2s}} ,
    \end{equation*}
    hence, by the lower bound from \autoref{Prop:PropertiesDirichletHeatKernel}, we get $p_\Omega (t,x,y_0) \geq  c t ^{-\frac{d}{2s}} \geq c t_0^{-\frac{d}{2s}}$, i.e., \eqref{eq:LowerBoundOnHeatKernelOnBallBz} follows in this case.
    Now, if we assume $|x_0-y_0|\geq \tilde{R}$, then
    \begin{equation*}
        |x-y_0| \geq |x_0-y_0| - |x_0-\tilde{x}| - |\tilde{x}-x| \geq \tilde{R} - \tilde{R}/4 -\tilde{R}/2= \tilde{R}/4 \geq c t^{\frac{1}{2s}},
    \end{equation*}
    and therefore, again by the lower bound from \autoref{Prop:PropertiesDirichletHeatKernel}, we get
    \begin{equation*}
        p_\Omega(t,x,y_0)\geq c \frac{t}{|x-y_0|^{d+2s}} \geq c \frac{t_0}{|x_0-y_0|^{d+2s}} ,
    \end{equation*}
    where in the last inequality we have used $t \asymp t_0$ and $|x-y_0|\leq |x_0-y_0| + |x-\tilde{x}|+|\tilde{x}-x_0| \leq |x_0-y_0| +c \tilde{R} \leq c |x_0-y_0|$.    
    This proves \eqref{eq:LowerBoundOnHeatKernelOnBallBz}.

    Hence, by using the maximum principle for
    \begin{equation*}
        p_\Omega (t,x,y_0)-c_2 \left( t_0^{-\frac{d}{2s}} \wedge\frac{t_0}{|x_0-y_0|^{d+2s}} \right) u \quad \text{on} \quad  Q^\Omega _{3\tilde{R}/8} (t_0,\tilde{x}) ,
    \end{equation*}
    we obtain together with \eqref{eq:RescaledHopfLemmaOnBarrier} and \eqref{eq:LowerBoundOnHeatKernelOnBallBz}
    \begin{equation*}
        p_\Omega(t_0,x_0,y_0 )  \geq c_1 c_2 \left( t_0^{-\frac{d}{2s}} \wedge\frac{t_0}{|x_0-y_0|^{d+2s}} \right) \left( \frac{d_\Omega(x_0)}{t_0^{\frac{1}{2s}}} \right) ^s ,
    \end{equation*}
    which proves \eqref{eq:LowerHeatKernelBound} in the case $d_\Omega (x_0)< t_0^{\frac{1}{2s}}/4$ and $d_\Omega (y_0) \geq t_0^{\frac{1}{2s}}/4$.

    \textbf{Case 3:}
    Now, let us assume that $d_\Omega (y_0)< t_0^{\frac{1}{2s}}/4$ and $d_\Omega (x_0) \geq t_0^{\frac{1}{2s}}/4$.
    Then, \eqref{eq:LowerHeatKernelBound} follows by the same arguments as in Case 2 by using the dual Dirichlet heat kernel (compare also with Step 3 in the proof of \autoref{Lem:UpperHeatKernelBounds}).

    \textbf{Case 4:}
    Finally, let us assume $d_\Omega (x_0)< t_0^{\frac{1}{2s}}/4$ and $d_\Omega (y_0)< t_0^{\frac{1}{2s}}/4$.
    We again set $\tilde{R}=t_0^{\frac{1}{2s}}$ and choose $\tilde{x}\in \partial \Omega$ so that $|x_0-\tilde{x}|=d_\Omega (x_0)$.
    Let $u$ be the solution to \eqref{eq:BarrierForLowerHeatKernelEst}, which satisfies the bound \eqref{eq:RescaledHopfLemmaOnBarrier}.
    Next, from Case 3, we get
    \begin{equation}\label{eq:LowerBoundOnHeatKernelOnBzForCase4}
         p_\Omega(t,x,y_0) \geq c \left( \frac{d_\Omega(y_0)}{t_0^{\frac{1}{2s}}} \right) ^s \left( t_0^{-\frac{d}{2s}} \wedge\frac{t_0}{|x_0-y_0|^{d+2s}} \right) \qquad\forall (t,x) \in I^\ominus _{3\tilde{R}/8}(t_0) \times B_{\kappa \tilde{R}}(z),
    \end{equation}
    where we have also used that $B_{4\kappa\tilde{R}}(z) \subset  \Omega$.
    Hence, using \eqref{eq:RescaledHopfLemmaOnBarrier} and \eqref{eq:LowerBoundOnHeatKernelOnBzForCase4} together with the maximum principle proves \eqref{eq:LowerHeatKernelBound} in this case.
\end{proof}

\begin{proof}[Proof of the lower bound in \autoref{Thm:HeatKernelBounds}]
The result follows immediately from \autoref{Lem:LowerHeatKernelBounds}.
\end{proof}

	
\printbibliography

\enlargethispage{1.3cm}

\end{document}